\theoremstyle{plain}
\newtheorem{thm}{Theorem}[section]
\newtheorem{lem}[thm]{Lemma}
\newtheorem{cor}[thm]{Corollary}
\newtheorem{prop}[thm]{Proposition}
\newtheorem{conj}[thm]{Conjecture}
\theoremstyle{definition}
\newtheorem{defn}[thm]{Definition}
\newtheorem{ex}[thm]{Example}
\newtheorem{examples}[thm]{Examples}
\newtheorem*{claim}{Claim}
\newtheorem{rmk}[thm]{Remark}
\newtheorem{rmks}[thm]{Remarks}
\newcommand{\gr}{{\rm gr}}
\newcommand{\surj}{\twoheadrightarrow}
\newcommand{\inj}{\hookrightarrow}
\newcommand{\Br}{{\rm Br}}
\newcommand{\rank}{{\rm rank}}
\newcommand{\Hom}{{\rm Hom}}
\newcommand{\im}{{\rm im}}
\newcommand{\Tr}{{\rm Tr}}
\newcommand{\Gal}{{\rm Gal}}
\newcommand{\sA}{{\mathcal A}}
\newcommand{\sB}{{\mathcal B}}
\newcommand{\sC}{{\mathcal C}}
\newcommand{\sE}{{\mathcal E}}
\newcommand{\sG}{{\mathcal G}}
\newcommand{\sI}{{\mathcal I}}
\newcommand{\sN}{{\mathcal N}}
\newcommand{\sO}{{\mathcal O}}
\newcommand{\sS}{{\mathcal S}}
\newcommand{\sZ}{{\mathcal Z}}
\newcommand{\F}{{\mathbb F}}
\renewcommand{\H}{{\mathbb H}}
\newcommand{\N}{{\mathbb N}}
\newcommand{\Q}{{\mathbb Q}}
\newcommand{\R}{{\mathbb R}}
\newcommand{\U}{{\mathbb U}}
\newcommand{\Z}{{\mathbb Z}}
\newcommand{\fN}{{\mathfrak N}}
\newcommand{\fx}{{ F^\times}}
\newcommand{\fxs}{{ (F^\times)^2 }}
\newcommand{\kx}{{ K^\times}}
\newcommand{\kxs}{{ (K^\times)^2 }}
\newenvironment{preliminary}%
{\pagestyle{plain}\pagenumbering{roman}}%
{\pagenumbering{arabic}}
\def\ps@thesis{\let\@mkboth\markboth
   \def\@oddfoot{}
   \let\@evenfoot\@oddfoot
   \def\@oddhead{
      {\sc\rightmark} \hfil \rm\thepage
      }
   \def\@evenhead{
      \rm\thepage \hfil {\sc\leftmark}
      }
   \def\chaptermark##1{\markboth{\ifnum \c@secnumdepth >\m@ne
      Chapter\ \thechapter. \ \fi ##1}{}}
   \def\sectionmark##1{\markright{\ifnum \c@secnumdepth >\z@
      \thesection. \ \fi ##1}}}
\def\ps@thesis{\let\@mkboth\markboth
   \def\@oddfoot{}
   \def\@oddhead{
      {\sc\rightmark} \hfil \rm\thepage
      }
   \def\chaptermark##1{\markright{\ifnum \c@secnumdepth >\m@ne
      Chapter\ \thechapter. \ \fi ##1}}}
\newcommand\isco[1]{%
  \edef\@tempa{#1}%
  \def\@tempb{}%
  \ifx\@tempa\@tempb
	\else \\\underline{Co-Supervisor:}\vspace{0.35in}\\\dots\dots\dots\dots\dots\dots\dots\\{#1}\\
  \fi
}
\newcommand\isjoint[1]{%
  \edef\@tempa{#1}%
  \def\@tempb{}%
  \ifx\@tempa\@tempb
	\else \\\underline{Joint Supervisor:}\vspace{0.35in}\\\dots\dots\dots\dots\dots\dots\dots\\{#1}\\
  \fi
}
\newcommand\isalt[1]{%
  \edef\@tempa{#1}%
  \def\@tempb{}%
  \ifx\@tempa\@tempb
	\else \\\underline{Alternate Supervisor:}\vspace{0.35in}\\\dots\dots\dots\dots\dots\dots\dots\\{#1}\\
  \fi
}
\newcommand\isdefinedsig[1]{%
  \edef\@tempa{#1}%
  \def\@tempb{}%
  \ifx\@tempa\@tempb
	\else \\ \dots\dots\dots\dots\dots\dots\dots\\{#1}\\
  \fi
}
\newcommand\isdefinedspinetitle[1]{%
  \edef\@tempa{#1}%
  \def\@tempb{}%
  \ifx\@tempa\@tempb
	\else (Spine title: #1)\\
  \fi
}
\newcommand\coauthor[1]{%
  \edef\@tempa{#1}%
  \def\@tempb{}%
  \ifx\@tempa\@tempb
	\else \newpage \Large Co-Authorship Statement\normalsize\\\indent\\#1\\
  \fi
}
\newcommand\acknowlege[1]{%
  \edef\@tempa{#1}%
  \def\@tempb{}%
  \ifx\@tempa\@tempb
	\else \newpage \Large Acknowlegements\normalsize\\\indent\\#1\newpage
  \fi
}
\newcommand{\department}{Mathematics}
\newcommand{\degree}{Doctor of Philosophy}
\newcommand{\firstname}{Michael}
\newcommand{\middlename}{L.}
\newcommand{\lastname}{Rogelstad}
\newcommand{\authorname}{{\firstname} {\middlename} {\lastname}}
\newcommand{\titl}{Combinatorial Techniques in the Galois Theory of $p$-Extensions}
\newcommand{\spinetitle}{}%only if the above is more than 60 characters
\newcommand{\thesisformat}{Monograph} %or Integrated Article
\newcommand{\gyear}{\number\year}
\newcommand{\makecoauthor}{
%Type information about coauthorship here/
\Large\begin{center}\textbf{Co-Authorship Statement}\end{center}\normalsize

\vspace{.5 in}
\begin{spacing}{1.5}

\noindent Chapters 3, 4 and 5 of this thesis incorporate material that is the result of joint research with Professor J\'an Min\'a\v{c} and Dr. Nguy\~{\^e}n Duy T\^an.  Chapter 4 is based on the paper \cite{MRT}: \\

Dimensions of Zassenhaus Filtration Subquotients of Some Pro-$p$ Groups, 

\textit{arXiv:1405.6980v2}, Mar 2015,
\\

\noindent which is to appear in the Israel Journal of Mathematics.

\end{spacing} 
}
\newcommand{\makeacknowlege} {
%Type in acknowlegements here
\Large\begin{center}\textbf{Acknowledgements}\end{center}\normalsize
\vspace{.5in}
\begin{spacing}{1.5}
I would like to extend a most sincere thank you to my supervisor, Professor J\'an Min\'a\v{c}, for the opportunity to pursue this research, for many interesting and valuable discussions and for his constant encouragement, enthusiastic assistance and unending patience in seeing this work to completion.  My friend and colleague Dr. Nguy\~{\^e}n Duy T\^an provided a tremendous amount of help along the way and for all of his efforts I am also exceedingly grateful.  This thesis would not have been possible without the kind and generous assistance of these two extraordinary mathematicians.

I very much appreciate the helpful suggestions and guidance provided by supervisory committee members, Professors Dan Christensen and Nicole Lemire, and examiners, Professors Tatyana Barron, Sunil Chebolu, Masoud Khalkhali and Marc Moreno Maza.  A special thank you to Professor Chebolu for making the trip to London, Ontario for my thesis examination and for subsequent discussions regarding the thesis material which I found very useful and encouraging.  I would like to thank Janet Williams for her kind help with various administrative tasks over the past few years.

My thanks are also extended to the faculty of the Department of Mathematics for many excellent courses and seminars.  I owe a particular debt of gratitude to Professor Stuart Rankin and the late Professors Richard Kane and Andr\'e Boivin.  Dedicated and outstanding teachers, they were an early source of inspiration leading to my pursuit of graduate studies in mathematics.  

And last but not least, to my family, for the many sacrifices and unwavering support that made this fascinating journey possible, I will be forever grateful.  
   
\end{spacing}
}
\renewcommand{\maketitle}
{\begin{titlepage}
   \setcounter{page}{1}
   %% Set the line spacing to 1 for the title page.
   %\begin{spacing}{1} 
   \begin{large}
   \begin{center}
      \mbox{}
      \vfill
      {\MakeUppercase{\titl}}\\
      \isdefinedspinetitle{\spinetitle}
      (Thesis format: \thesisformat)\\
      \vfill
      by \\
      \vfill
      {\authorname}\\
      \vfill
      Graduate Program in {\department}\\
      \vfill
		A thesis submitted in partial fulfilment\\
		of the requirements for the degree of\\
		\degree\\
		\vfill
		The School of Graduate and Postdoctoral Studies\\
		The University of Western Ontario\\
		London, Ontario, Canada\\
		\vfill
      {\copyright} {\authorname} {\gyear}  \\
      \vspace*{.2in}
   \end{center}
   \end{large}
   %\end{spacing}
   \end{titlepage}

}%\maketitle
\begin{document}

%% This sets the page style and numbering for preliminary sections.
\begin{preliminary}

%% This generates the title page from the information given above.
\maketitle
%\addcontentsline{toc}{chapter}{Certificate of Examination}
%\makecert
%\newpage
\setcounter{page}{2}
\addcontentsline{toc}{chapter}{Abstract}
\Large\begin{center}\textbf{Abstract}\end{center}\normalsize
%%  ***  Put your Abstract here.   ***
%% (150 words for M.Sc. and 350 words for Ph.D.)

\vspace{.5 in}
\begin{spacing}{1.5}

A major open problem in current Galois theory is to characterize those profinite groups which appear as absolute Galois groups of various fields.  Obtaining detailed knowledge of the structure of quotients and subgroup filtrations of Galois groups of $p$-extensions is an important step toward a solution.  We illustrate several techniques for counting Galois $p$-extensions of various fields, including pythagorean fields and local fields.  An expression for the number of extensions of a formally real pythagorean field having Galois group the dihedral group of order 8 is developed.  We derive a formula for computing the $\F_p$-dimension of an $n$-th graded piece of the Zassenhaus filtration for various finitely generated pro-$p$ groups, including free pro-$p$ groups, Demushkin groups and their free pro-$p$ products.  Several examples are provided to illustrate the importance of these dimensions in characterizing pro-$p$ Galois groups.  We also show that knowledge of small quotients of pro-$p$ Galois groups can provide information regarding the form of relations among the group generators.

\vfill
\noindent
\textbf{Keywords:} Galois theory, $p$-extension, pro-$p$ group, absolute Galois group, local field, formally real pythagorean field, Zassenhaus filtration, Demushkin group.
\end{spacing}
 
\newpage

\addcontentsline{toc}{chapter}{Co-Authorship Statement}
\makecoauthor %\coauthor{\makecoauthor}  %comment this out if none
\newpage
%\addcontentsline{toc}{chapter}{Acknowlegements}
%\acknowlege{\makeacknowlege}	%as above
\addcontentsline{toc}{chapter}{Acknowledgements}
\makeacknowlege
\newpage

\tableofcontents\newpage
\newpage
%\addcontentsline{toc}{chapter}{List of Figures}
%\listoffigures
%\newpage
%\addcontentsline{toc}{chapter}{List of Tables}
%\listoftables\newpage
%\addcontentsline{toc}{chapter}{List of Appendices}
%\listofmyappendices\newpage
%\addcontentsline{toc}{chapter}{List of Abbreviations, Symbols, and Nomenclature}
%\large List of Abbreviations, Symbols, and Nomenclature \normalsize
%\newpage
\end{preliminary}
%% End of the preliminary sections: reset page style and numbering.

%%%%%%%%%%%%%%%%%%%%%%%%%%%%%%%%%%%%%%%%%%%%%%%%%%%%%%%%%%%%%%%%%%%%%%%%
%%                                                                    %%
%%                    ***   I M P O R T A N T   ***                   %%
%%                                                                    %%
%% Put your Chapters here; the easiest way to do this is to keep each %%
%% chapter in a separate file and \include all the files right here.  %%
%% Note that each chapter file should start with the line             %%
%% "\chapter{ChapterName}".  Note that using "\include" instead of    %%
%% "\input" makes each chapter start on a new page.                   %%
%%%%%%%%%%%%%%%%%%%%%%%%%%%%%%%%%%%%%%%%%%%%%%%%%%%%%%%%%%%%%%%%%%%%%%%%

\begin{spacing}{1.5}

\chapter{Introduction}
\label{ch:chapter1}

Nearly 200 years after his untimely death in a duel at the age of 20, the legacy of \'Evariste Galois lives on in the theory that bears his name.  While remarkable progress has certainly been made, a number of open questions remain.  One major problem is, given a field $F$ with separable closure $F_s$, to characterize, among other profinite groups, the absolute Galois group $G_F:=\Gal(F_s/F)$ of $F$.  One means of approaching this question is to study the structure of subgroups and quotients of certain profinite groups.  Such fundamental problems in current Galois theory also have important implications in other areas of mathematics.  For example, knowing how much information Galois groups carry about base fields is closely related to the problem in algebraic geometry of determining how much information about algebraic varieties is contained in the knowledge of fundamental groups. 

Given a prime $p$, one can consider the Galois group $G_F(p)$ of the maximal $p$-extension of $F$.  This is a pro-$p$ group which is the maximal pro-$p$ quotient of $G_F$.  To shed some light on the structure of this group, one can then ask whether, for a given pro-$p$ group $G$, there exists a normal extension $K/F$ with Galois group isomorphic to $G$.  This is the inverse Galois problem and the solution is closely related to properties of the base field $F$.  The next question which naturally arises is to determine the number of such extensions. 

Similarly, the structure of subgroups, particularly certain central subgroup filtrations, of profinite groups has a close connection with Galois theory.  For example, in 1947, I. R. Shafarevich \cite{Sha} showed that $G_F(p)$, for local fields $F$ not containing a primitive $p$-th root of unity, was a free pro-$p$ group simply by determining the cardinality of some of its filtration quotients.  Zassenhaus filtrations of groups were introduced in \cite{Zas} and, in the case of absolute Galois groups, these filtrations and their subquotients have recently been investigated in \cite{CEM,Ef1,Ef2, EM,MT,MTE}.  Other filtrations of absolute Galois groups, such as the descending $p$-central series, are also of interest (see, for example, \cite{La2,EfMin}). 

Our focus is on the Galois theory of $p$-extensions, which are Galois extensions $K/F$ of a base field $F$ whose Galois group is a pro-$p$ group, and our goal here is twofold.  First, we endeavour to illustrate several methods for counting finite $p$-extensions.  We compare these methods to show that, by employing various algebraic tools, one can develop relatively efficient counting techniques.  Our second goal, and main result, is to develop a method for determining the $\F_p$-dimension of subquotients of the Zassenhaus filtration of finitely generated pro-$p$ groups.  We derive an explicit formula for these dimensions in a number of specific cases and point out several examples of their importance in the Galois theory of $p$-extensions.

These various techniques are dependent upon results from a broad range of subjects, including Galois cohomology, the theory of quadratic forms and quaternion algebras, and the general theory of M\"{o}bius functions.  The necessary background is presented in the next chapter.  We begin that chapter with an overview of the theory of profinite groups and discuss several results that will be needed in the study of filtration quotients in chapter \ref{ch:dimensions}.  For example, in section \ref{sec:filtrations} the descending $q$-central series and the Zassenhaus $p$-filtrations are defined and  the connections between filtrations of a finitely generated pro-$p$ group $G$ and the structure of both the completed group algebra $\F_p[[G]]$ of $G$ and the Magnus algebra with coefficients in $\F_p$ are described.  These connections are important in developing the technique for counting $\F_p$-dimensions of Zassenhaus filtration subquotients in section \ref{sec:hilbert series}.

In chapter \ref{ch:extensions} we turn to the problem of counting Galois $p$-extensions of a field $F$.  Beginning with the case $p=2$, we point out the connection between a small quotient of the group $G_F(2)$, called the W-group of $F$, and the number of Galois extensions of $F$ having Galois group isomorphic to the dihedral group $D_4$ of order 8, which are referred to as $D_4$-extensions.  The W-group is the quotient $G_F^{[3]}=G_F/G_F^{(3)}$ in the descending 2-central series $(G_F^{(i)})_{i\geq 1}$ of $G_F$, and is considered a Galois-theoretic analog of the cohomology ring $H^*(G_F,\F_2)$ \cite{CEM,MS2}.  This further suggests that enumerating $p$-extensions and filtration quotient dimensions can play an important role in elucidating the structure of absolute Galois groups. 

We first consider the case of $p$-extensions of local fields.  Section \ref{sec:constructing ext} describes the method of directly constructing $D_4$-extensions of the field of $p$-adic numbers, $\Q_p$, for both $p$ odd and $p=2$, due to H. Naito \cite{Naito}.  In the proof of Proposition \ref{prop:D4Qp}, we illustrate an alternative, group-theoretic approach based on knowledge of the W-group of $\Q_p$ and in section \ref{sec:quat alg Qp} we outline an approach based on the theory of quaternion algebras over $\Q_p$.  

Turning, in section \ref{sec:mobius}, to the more general case of a local field $K$ which is a finite extension of $\Q_p$, we describe an interesting method of counting $p$-extensions of $K$ using M\"{o}bius functions and complex characters, due to M. Yamagishi \cite{yam} which relies on lemma \ref{lem:muG} from the general theory of M\"{o}bius functions.  In example \ref{ex:shaf}, we show that if $K$ does not contain a primitive $p$-th root of unity, this method can be used to obtain an earlier result of Shafarevich \cite{Sha}.  The case in which $K$ is a finite extension of $\Q_2$ of odd degree not containing a primitive 4-th root of unity is shown in detail in example \ref{ex:Q2}.  This example illustrates that the method produces a simple expression for the number of $D_4$-extensions of $K$ which depends only on $n$, but requires detailed knowledge of the classification of Demushkin groups as well as the complex character theory of $D_4$ and all of its subgroups.

In section \ref{sec:cup}, we assume that $K$ is a finite extension of $\Q_p$ containing a primitive $p$-th root of unity and develop a method based on Galois cohomology and the solution of embedding problems to count the number of $\U_3(\F_p)$-extensions of $K$, where $\U_3(\F_p)$ is the group of unipotent three by three matrices over $\F_p$.  This is the `cup product analogue' of a technique using Massey products to compute the number of $\U_4(\F_p)$-extensions of $K$ developed by J. Min\'a\v{c} and N. D. T\^an \cite{MT1}.  Since $D_4\cong \U_3(\F_2)$, this provides another method of enumerating the $D_4$-extensions of certain local fields.

We introduce the theory of formally real pythagorean fields in section \ref{sec:pyth} and use properties of the set of orderings, cup products and quaternion algebras to derive an expression for the number of $D_4$-extensions of a formally real pythagorean SAP field $F$ with finite square class group $\fx/\fxs$.  We also characterize the group $G_F(2)$ for both pythagorean SAP fields and superpythagorean fields.  The structure of these groups is then studied further in chapter \ref{ch:dimensions}.

The main results appear in chapter \ref{ch:dimensions}.  We introduce the Hilbert-Poincar\'e series and define $c_n(G)$ to be the $\F_p$ dimension of the $n$-th graded piece $G_{(n)}/G_{(n+1)}$ of the Zassenhaus filtration of a finitely generated pro-$p$ group $G$.  Using a beautiful theorem of Jennings and Lazard, we develop an explicit formula for $c_n(G)$ for various families groups $G$, including finitely generated free pro-$p$-groups,  Demushkin groups, and free pro-2 products of finitely many copies of the cyclic  group $C_2$ of order 2.  In Proposition \ref{prop:wn}, we point out a relationship between $c_n(G)$ and the $\Z_p$-rank of the $n$-th graded piece of the descending central series of $G$.

Section \ref{sec:free} deals with free pro-$p$ groups.  The Magnus homomorphism introduced in Theorem \ref{thm:magnusiso} allows us, in Lemma \ref{lemHPfree}, to characterize a finitely generated free pro-$p$ group by its Hilbert-Poincar\'e series and in Remark \ref{rmk:fgGalois}, we show that determining finitely generated free pro-$p$ groups within the family of all Galois groups of the maximal $p$-extensions of fields containing a primitive $p$-th root of unity actually requires only the two numbers, $c_1(G)$ and $c_2(G)$.  We observe also that, for a free pro-$p$ group $S$, the numbers $c_n(S)$ determine the minimal number of generators of the Zassenhaus subgroups of $S$ and we give an explicit $\F_p$-basis for $S_{(n)}/S_{(n+1)}$, for each $n$ in terms of Hall commutators.  In Lemma \ref{lem:coefficient} and Corollary~\ref{cor:unipotent}, we again meet the group $\U_n(\F_p)$ and provide an interesting, purely group theoretical result based on the formula for $c_n(S)$.

Following up on the characterization of the Galois groups of maximal $p$-extensions of pythagorean fields given in section \ref{sec:pythGF2}, we study, in section \ref{sec:freeprod}, groups $G$ which are free products of a finite number of cyclic groups of order 2.  We show that each such group $G$ contains a free pro-2 subgroup $H$ of index 2 and in Corollary \ref{cor:quotient} we obtain, for each $n\geq 2$, the interesting relation $H_{(n)} = H\cap G_{(n)}$ using knowledge of the numbers $c_n(G)$ and $c_n(H)$.  In Remarks \ref{rmks:SAP} we observe that $c_1(G_F(2))$ and $c_2(G_F(2))$ are sufficient to determine the group $G_F(2)$ if $F$ is either a pythagorean SAP field or a superpythagorean field.  These examples illustrate the fact that the numbers $c_n(G)$ can be very useful in group theory and Galois theory.     

We conclude chapter \ref{ch:dimensions} by looking at Demushkin groups as well as some other groups.  We observe again that, for a Demushkin group $G$, the numbers $c_n(G)$ determine the minimal number of generators of the Zassenhaus subgroups of $G$.    

Finally in chapter \ref{ch:relations}, we consider relations among the generators of finitely generated pro-$p$ Galois groups and show that knowledge of small quotients of these groups can be useful in determining the form of these relations.

The following is a list of the theorems/propositions/lemmas/corollaries which constitute the main results of this thesis:  3.4.10, 3.4.12, 3.4.15, 4.1.13, 4.1.14, 4.2.1, 4.2.9, 4.2.13, 4.3.3, 4.3.4, 4.3.5, 4.4.1, 5.1.2, 5.2.1.

\chapter{Background}
\label{ch:background}

In this chapter we review some of the basic theory of profinite groups and introduce the tools that will be needed in subsequent chapters.  We outline a connection between central filtrations of profinite groups and filtrations of completed group algebras which will be important in developing a technique for computing filtration subquotient dimensions in chapter \ref{ch:dimensions}.  Galois cohomology as well as the theories of quaternion algebras and quadratic forms lead to interesting methods for counting Galois $p$-extensions.  We review the pertinent background and relevant connections as a prelude to illustrating several combinatorial techniques in chapter \ref{ch:extensions}.  Many problems of enumeration, including those that we study in chapters \ref{ch:extensions} and \ref{ch:dimensions}, are closely related to the theory of M\"obius functions.  The final section of this chapter outlines that general theory.

\section{Profinite Groups}

\begin{defn}
A {\it profinite group} is a compact Hausdorff topological group whose open subgroups form a neighbourhood basis at the identity.
\end{defn}

There are several other equivalent definitions, the most important of which is based on the concept of an {\it inverse} (or {\it projective}) {\it limit}.  We briefly outline this construction.  A {\it directed set} is a non-empty partially ordered set $(\Lambda,\leq)$ such that for every $\lambda,\mu \in \Lambda$ there exists $\nu \in \Lambda$ with $\nu \geq \lambda$ and $\nu \geq \mu$.  An {\it inverse system} of groups over $\Lambda$ is a family of groups $(G_\lambda)_{\lambda \in \Lambda}$ together with homomorphisms $\pi_{\lambda\mu} \colon G_\lambda \to G_\mu$ whenever $\lambda \geq \mu$, satisfying the conditions
\[
\pi_{\lambda\lambda}=\textnormal{Id}_{G_\lambda} \hspace{.1in}\textnormal {and}\hspace{.1in}  \pi_{\lambda\nu}=\pi_{\mu\nu}\pi_{\lambda\mu} \textnormal { whenever } \lambda \geq \mu \geq \nu.
\]   
  The {\it inverse limit}
\[
\varprojlim G_\lambda = \varprojlim (G_\lambda)_{\lambda \in \Lambda}  
\]
is the subgroup of the direct product $\prod_{\lambda \in \Lambda}G_\lambda$ consisting of all elements $(g_\lambda)_{\lambda \in \Lambda}$ such that $\pi_{\lambda\mu}(g_\lambda)=g_\mu$ whenever $\lambda \geq\ \mu$.  An analogous construction can also be applied to other structures such as sets, rings or topological spaces. 

A profinite group can then be defined as a topological group that can be realized as an inverse limit of finite groups endowed with the discrete topology.  It can be shown (see, for example \cite[Proposition 1.3]{DDMS}) that these two definitions are equivalent.

\begin{ex}
\begin{enumerate}

\item Given a group $G$, let $\Lambda$ be the set of all normal subgroups of finite index in $G$, directed by reverse inclusion.  Then the family of quotients $(G/N)_{N\in\Lambda}$ forms an inverse system of finite groups.  The inverse limit
\[
\hat G=\varprojlim(G/N)_{N\in\Lambda}
\]  
is a profinite group, called the {\it profinite completion} of $G$.

\item Profinite groups arise in this way as the Galois groups of algebraic field extensions.  If $K/F$ is a Galois extension, its Galois group
\[
\textnormal{Gal}(K/F)\cong \varprojlim \textnormal{Gal}(M/F)_{M\in\Gamma},
\]
where $\Gamma$ is the set of all finite Galois sub-extensions $M/F$ of $K/F$.  In the case that $K=F_s$ is the separable closure of $F$, then $G_F:=\Gal(F_s/F)$ is the \textit{absolute Galois group} of $F$.

\item If $F$ is a finite field with algebraic closure $\bar F$, then Gal$(\bar F/F)\cong \hat \Z$, the profinite completion of $\Z$.
\end{enumerate}
\end{ex}

\subsection{Pro-$p$ groups}

\begin{defn}
Let $p$ be a prime number.  A {\it pro-p group} is a profinite group in which every open normal subgroup has index equal to some power of $p$.
\end{defn}  

In an analogous way to the case of profinite groups, it can be shown that a topological group $G$ is a pro-$p$ group if and only if $G$ is topologically isomorphic to an inverse limit of finite $p$-groups.

\begin{ex}
\begin{enumerate}

\item Given a group $G$, let $\Lambda$ be the set of all normal subgroups of $G$ whose index is a power of $p$, directed by reverse inclusion.  Then 
\[
\hat G_p=\varprojlim(G/N)_{N\in\Lambda}
\]  
is a pro-$p$ group, called the {\it pro-p completion} of $G$.

\item Historically, the subject began with what is regarded as the prototype of all pro-$p$ groups, the additive group of $p$-adic integers,
\[
\Z_p=\varprojlim (\Z/p^n\Z)_{n\in \N} = \{(x_n)_{n\in \N}\mid x_i \equiv x_j (\textnormal{mod} p^j) \textnormal{ if } i\geq j\}
\]

\item The {\it maximal p-extension} $F(p)$ of a field $F$ is the compositum of all finite Galois sub-extensions $K/F$ of $F_s/F$ with $[K:F]$ a power of $p$.  The group $G_F(p)=$Gal$(F(p)/F)$ is the maximal pro-$p$ quotient of the absolute Galois group $G_F$ of $F$. 
\end{enumerate}
\end{ex}

\begin{defn}
Let $G$ be a pro-$p$ group.  A {\it system of generators} of $G$ is a subset $X$ of $G$ with the following properties:
\begin{enumerate}
\item $G$ is the smallest (closed) subgroup containing $X$;
\item every neighbourhood of the identity in $G$ contains almost all (i.e. all but finitely many) elements of $X$.
\end{enumerate}
\end{defn}

\begin{defn}
A system $X$ of generators of the pro-$p$ group $G$ is called {\it minimal} if no proper subset of $X$ is a system of generators of $G$.  The cardinality of a minimal system of generators of $G$ will be denoted $d(G)$ and is often referred to as the \textit{rank} of $G$.  $G$ is said to be {\it finitely generated} if $d(G)< \infty$.
\end{defn}

\begin{defn}
Let $G$ be a profinite group.  The {\it Frattini subgroup} of $G$ is 
\[
\Phi(G)=\cap\{M\mid M  \textnormal { is a maximal proper open subgroup of }  G\}.
\]
\end{defn}

The Frattini subgroup plays an important role in the study of pro-$p$ groups.  In particular, we have

\begin{thm} [Burnside's Basis Theorem]
Let G be a pro-p group and $X=\{x_i\mid i\in I\}$ a subset of G such that every neighbourhood of $1\in G$ contains almost all elements of X.  Then X is a system of generators of G if and only if $\{x_i\Phi (G)\mid i\in I\}$ generates $G/ \Phi(G)$.
\end{thm}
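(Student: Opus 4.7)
The plan is to establish the forward implication directly and to reduce the backward implication to the classical Burnside basis theorem for finite $p$-groups via an inverse limit argument.

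For the forward direction, if $X$ is a system of generators of $G$, the continuous projection $G \to G/\Phi(G)$ immediately gives that $\{x_i\Phi(G)\mid i\in I\}$ generates the quotient, so this half is essentially formal.

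The backward direction is the substantive one. I would first set $H=\overline{\langle X\rangle}$, the closed subgroup of $G$ topologically generated by $X$, and invoke the standard fact that in a profinite group every closed subgroup satisfies $H=\bigcap_N HN$, where $N$ ranges over the open normal subgroups of $G$. It therefore suffices to prove that $HN=G$ for each such $N$. Fix an open normal subgroup $N$; since $G$ is pro-$p$, the quotient $P:=G/N$ is a finite $p$-group, and the convergence hypothesis on $X$ ensures that the image $\overline X$ of $X$ in $P$ is finite (only finitely many $x_i$ lie outside $N$). The next step is to verify the inclusion $\pi(\Phi(G))\subseteq \Phi(P)$ for the projection $\pi\colon G\to P$; this follows from the observation that the maximal open subgroups of $G$ containing $N$ are in bijection, under $\pi$, with the maximal subgroups of $P$, whereas maximal open subgroups not containing $N$ are sent to all of $P$. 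Consequently the hypothesis that $\{x_i\Phi(G)\}$ generates $G/\Phi(G)$ descends to the statement that $\overline X$ generates $P/\Phi(P)$. At this point one invokes the classical version of Burnside's basis theorem for the finite $p$-group $P$ — proved by the standard non-generator argument, which exploits that maximal subgroups of finite $p$-groups are normal of index $p$ and thus contain $\Phi(P)$ — to conclude that $\overline X$ generates $P$, i.e. $HN=G$. Intersecting over all open normal $N$ then yields $H=G$.

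The main obstacle is the topological bookkeeping: verifying the inclusion $\pi(\Phi(G))\subseteq \Phi(P)$ and ensuring that the convergence hypothesis on $X$ really produces a finite image in every finite quotient. Once these pieces are in place the argument reduces cleanly to the finite case, where the non-generator argument is standard.
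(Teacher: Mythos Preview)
Your argument is correct and is the standard inverse-limit reduction to the finite case. Note, however, that the paper does not actually supply its own proof of this theorem: it simply refers to \cite[Theorem 4.10]{Ko}. The approach you outline---passing to finite $p$-quotients $P=G/N$, checking $\pi(\Phi(G))\subseteq\Phi(P)$ via the correspondence between maximal open subgroups of $G$ containing $N$ and maximal subgroups of $P$, and then invoking the classical Burnside theorem---is precisely the argument one finds in Koch, so there is no substantive difference to discuss.
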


\begin{proof}
See \cite[Theorem 4.10]{Ko}.
\end{proof}

We will be primarily interested in finitely generated pro-$p$ groups and in this case we have the following useful results. 

\begin{prop}
If G is a pro-p group then G is finitely generated if and only if $\Phi (G)$ is open in G.
\end{prop}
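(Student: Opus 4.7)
The plan is to prove each direction by combining Burnside's Basis Theorem with one structural observation about maximal open subgroups in a pro-$p$ group.

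First, for the forward direction, suppose $G$ is finitely generated, with minimal generating set $X=\{x_1,\dots,x_d\}$. By Burnside's Basis Theorem the images $\{x_i\Phi(G)\}$ generate $G/\Phi(G)$ topologically. The step I would insert here is the observation that every maximal open subgroup $M$ of a pro-$p$ group $G$ is normal of index $p$: since $M$ is open, $G/M$ is a finite $p$-group which by maximality is simple, forcing $G/M\cong\Z/p\Z$. Consequently each such $M$ contains $G^p[G,G]$, so $\Phi(G)\supseteq\overline{G^p[G,G]}$ and $G/\Phi(G)$ is an elementary abelian pro-$p$ group, i.e.\ a topological $\F_p$-vector space. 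Topologically generated by $d$ elements, it must be a finite $\F_p$-vector space of dimension $\leq d$, so $[G:\Phi(G)]<\infty$. Since $\Phi(G)$ is an intersection of open (hence closed) subgroups it is closed, and a closed subgroup of finite index in a profinite group is open.

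For the converse, assume $\Phi(G)$ is open. Then $G/\Phi(G)$ is a finite discrete $p$-group, so one can pick finitely many elements $x_1,\dots,x_n\in G$ whose cosets generate $G/\Phi(G)$. Setting $X=\{x_1,\dots,x_n\}$, condition (2) in the definition of a system of generators is vacuous because $X$ is finite, so Burnside's Basis Theorem applies and gives that $X$ generates $G$. Hence $d(G)\leq n<\infty$.

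The only step requiring any real thought is the forward direction, and specifically the remark that maximal open subgroups of a pro-$p$ group have index exactly $p$ — this is what upgrades $G/\Phi(G)$ from an abstract finitely generated pro-$p$ group to a finite $\F_p$-vector space, and without it one could not conclude finiteness from finite generation. Everything else is a direct application of Burnside's Basis Theorem together with the standard fact that in a profinite group ``open'' equals ``closed of finite index''.
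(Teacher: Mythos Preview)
Your proof is correct and follows the standard argument. The paper does not actually supply a proof of this proposition; it simply cites \cite[Proposition 1.14]{DDMS}. Your argument is essentially the one found there: the key observation that every maximal open subgroup of a pro-$p$ group has index $p$ (hence is normal and contains $G^p[G,G]$) forces $G/\Phi(G)$ to be an $\F_p$-vector space, after which Burnside's Basis Theorem handles both directions.
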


\begin{proof}
See \cite[Proposition 1.14]{DDMS}.
\end{proof}

\begin{thm}[Serre]
If G is a finitely generated pro-p group then every subgroup of finite index in G is open.
\end{thm}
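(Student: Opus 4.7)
The plan is to follow Serre's classical strategy, reducing in two steps and then confronting a deep closedness lemma. First, I would replace $H$ by its normal core $N = \bigcap_{g \in G} gHg^{-1}$, which is a normal subgroup of $G$ of finite index (dividing $[G:H]!$ by the standard permutation-representation bound). Since $H$ is a finite union of cosets of $N$, openness of $H$ will follow once $N$ is shown to be open, so one may assume throughout that the finite-index subgroup in question is normal in $G$.

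Next, I would argue by induction on $[G:N]$. For the induction step it suffices to exhibit a proper open normal subgroup of $G$ contained in $N$, because that subgroup has finite index smaller than $[G:N]$ and so, combined with the induction hypothesis applied to smaller-index subgroups, forces $N$ itself to be open. The natural candidate is the \emph{closed} subgroup $\overline{N^p[G,N]}$ generated topologically by $p$-th powers of elements of $N$ together with commutators $[g,n]$ for $g \in G$ and $n \in N$. This is closed, normal in $G$, and strictly contained in $N$ whenever $N \neq 1$, by a Frattini-type argument applied to the continuous surjection $N \twoheadrightarrow N/\overline{N^p[G,N]}$: using that $N$, inheriting finite generation from $G$ via Burnside's Basis Theorem, has a nontrivial elementary abelian $p$-quotient.

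The main obstacle, and the heart of Serre's theorem, is showing that $\overline{N^p[G,N]}$ is actually \emph{open} in $G$. Equivalently, one must show that the abstract subgroup $N^p[G,N]$ coincides with its topological closure and has finite index. This is precisely where topological finite generation is used essentially. I would first establish that any finite-index subgroup of a topologically finitely generated pro-$p$ group is itself topologically finitely generated, using Burnside's Basis Theorem together with the preceding proposition on openness of $\Phi(G)$. Then I would prove the key closedness lemma: in a finitely generated pro-$p$ group the set of products of a bounded number of commutators $[g,n]$ and $p$-th powers $n^p$ is closed, argued by induction along the lower central / Frattini filtration together with a uniform-bound argument on the number of factors required to express an arbitrary element of $\overline{N^p[G,N]}$.

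Once closedness is in hand the proof finishes quickly: $\overline{N^p[G,N]} = N^p[G,N]$ is closed of $p$-power index in $G$, hence open; it is a proper open normal subgroup contained in $N$; and the induction on $[G:N]$ closes the argument, giving that $N$, and therefore $H$, is open. The single step that cannot be avoided is the closedness lemma in the previous paragraph, which is the substantial portion of the proof and the place where finite generation is truly needed — without it the theorem famously fails, as one can exhibit non-open subgroups of finite index in, for example, $\prod_{\mathbb{N}} \mathbb{Z}/p\mathbb{Z}$.
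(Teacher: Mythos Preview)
The paper does not prove this theorem; it simply cites \cite[Theorem 1.17]{DDMS}. So there is nothing in the text to compare your argument against, and your outline is already more detailed than what appears here. That said, since you have sketched the standard strategy, two places where it goes off the rails are worth naming.

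First, the induction is set up backwards. If you succeed in exhibiting an \emph{open} subgroup contained in $N$, then $N$ is a finite union of its cosets and is open immediately; no induction on $[G:N]$ is needed at that point. The usual induction runs the other way: one finds an open $M$ with $N \le M < G$ (for instance any maximal subgroup of $G$ containing $N$, once maximal subgroups are known to be open), and then applies the induction hypothesis to $N$ inside the finitely generated pro-$p$ group $M$, where $[M:N] < [G:N]$.

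Second, working with $\overline{N^p[G,N]}$ rather than $G^p[G,G]$ introduces a circularity. You do not yet know that $N$ is closed, so $\overline{N^p[G,N]}$ is a priori only contained in $\bar N$, not in $N$; and to know that $N^p[G,N]$ has finite index you would need $N$ to be finitely generated, which is precisely what is unavailable until the theorem is proved. The argument in \cite{DDMS} avoids this by reducing to the case $[G:N]=p$, where $N$ abstractly contains $G^p[G,G]$, and then proving the closedness lemma for $G^p[G,G]$ using only the finite generation of $G$ itself. Your instinct that a ``closedness lemma'' is the heart of the matter is exactly right; the point is that it must be formulated for $G$, not for $N$.
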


\begin{proof}
See \cite[Theorem 1.17]{DDMS}.
\end{proof}

\begin{cor}
If G is a finitely generated pro-p group then $\Phi (G)=G^p[G,G]$, where $[G,G]$ is the subgroup generated by commutators $g^{-1}h^{-1}gh$ with $g, h\in G$, and $G^p=<g^p \mid g\in G>$.
\end{cor}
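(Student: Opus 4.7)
The plan is to prove both inclusions separately. The easy direction comes from an analysis of maximal open subgroups, while the reverse requires the full strength of the finite-generation hypothesis via Serre's theorem (the theorem immediately preceding the corollary).

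For $G^p[G,G]\subseteq\Phi(G)$: any maximal open subgroup $M$ of $G$ has index $p$ in $G$ since $G$ is pro-$p$, so $G/M$ is cyclic of order $p$, hence abelian of exponent $p$. Consequently every $p$-th power and every commutator lies in $M$, giving $G^p[G,G]\subseteq M$. Intersecting over all such $M$ yields the containment.

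For $\Phi(G)\subseteq G^p[G,G]$, fix a minimal topological generating set $\{x_1,\ldots,x_d\}$ of $G$, where $d=d(G)$. Burnside's Basis Theorem together with the preceding proposition gives that $\Phi(G)$ is open of index exactly $p^d$, and that $\{\bar x_1,\ldots,\bar x_d\}$ is an $\F_p$-basis of $G/\Phi(G)$. The approach is to pass to finite quotients: for each open normal $N\trianglelefteq G$, $G/N$ is a finite $p$-group, so the classical identity $\Phi(G/N)=(G/N)^p[G/N,G/N]$ holds. Under the correspondence theorem this reads $\Phi(G)\cdot N=G^p[G,G]\cdot N$, and intersecting over all open normal $N$ (using that $\Phi(G)$ is closed and that $\bigcap_N HN=\overline{H}$ for any subgroup $H$) yields the closure version $\Phi(G)=\overline{G^p[G,G]}$.

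It remains to upgrade from $\overline{G^p[G,G]}$ to $G^p[G,G]$. The strategy is to show that the abstract subgroup $G^p[G,G]$ already has finite index in $G$; once that is done, Serre's theorem forces this finite-index subgroup to be open, hence closed, and the inclusion $G^p[G,G]\subseteq\Phi(G)$ together with the common index $p^d$ forces equality. For the finite-index claim one analyses the $p^d$ standard products $x_1^{a_1}\cdots x_d^{a_d}$, $0\le a_i<p$, and shows by a careful argument — working in each finite quotient $G/N$, where the identity $\Phi(G)\cdot N=G^p[G,G]\cdot N$ makes the analogous statement automatic, and then assembling coherent choices across the inverse system — that these products form a complete set of coset representatives for the abstract subgroup $G^p[G,G]$ (not merely for its closure). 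The main obstacle is precisely this bridging step: replacing $\overline{G^p[G,G]}$ with $G^p[G,G]$. For a general (non-finitely-generated) pro-$p$ group only the closure version of the identity holds, and it is Serre's theorem, applied through the finite-index consequence just described, that allows the corollary as stated.
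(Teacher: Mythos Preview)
The paper does not give its own proof of this corollary; it simply cites \cite[Corollary 1.20]{DDMS}. Your overall strategy is the one used there: establish $G^p[G,G]\subseteq\Phi(G)$ and $\Phi(G)=\overline{G^p[G,G]}$, then close the gap between $G^p[G,G]$ and its closure via Serre's theorem. However, your argument for the crucial step---showing that the \emph{abstract} subgroup $G^p[G,G]$ has finite index---has a genuine gap.

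Your proposed method, ``working in each finite quotient and assembling coherent choices across the inverse system,'' is circular as stated. From $g\in G^p[G,G]\cdot N$ for every open normal $N$ you can only conclude $g\in\bigcap_N G^p[G,G]\cdot N=\overline{G^p[G,G]}$, which is precisely what you already have. The difficulty is that in different quotients $G/N$ the element $g$ may be expressed as a product of $p$-th powers and commutators of \emph{different lengths}, so there is no single finite product space in which to take a limit. What makes the DDMS argument go through is an additional uniformity (established essentially inside the proof of Serre's theorem): in any $d$-generator finite $p$-group, every element of the Frattini subgroup is a product of a number of $p$-th powers bounded in terms of $d$ alone. With that uniform bound, compactness of the finite product $G\times\cdots\times G$ genuinely lets one extract a limiting expression in $G$, and hence every element of $\Phi(G)$ lies in the abstract subgroup $G^p[G,G]$. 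Your outline is missing exactly this bounded-generation ingredient; without it you have not established finite index, so you are not yet entitled to invoke Serre's theorem.
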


\begin{proof}
See \cite[Corollary 1.20]{DDMS}.
\end{proof}

Note that $G/G^p[G,G]$ is an $\F_p$-vector space, so if $G$ is a finitely generated pro-$p$ group, then $d(G)=\dim_{\F_p}(G/\Phi(G))$.

\begin{cor}
\label{cor:finitegen}
The topology of a finitely generated pro-p group is determined by its group structure.
\end{cor}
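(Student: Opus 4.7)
The plan is to reformulate the assertion as: every abstract group isomorphism $\phi \colon G \to G'$ between finitely generated pro-$p$ groups is automatically a homeomorphism. Once this is established, any such $G$ carries exactly one pro-$p$ topology, namely the one read off from its abstract subgroup lattice, so the topology is an invariant of the group structure alone.

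First I would recall the topological input: in any profinite group, the collection of open normal subgroups forms a neighbourhood basis at the identity, and in a pro-$p$ group these are precisely the normal subgroups of $p$-power (hence finite) index. Consequently, to test continuity of a group homomorphism between pro-$p$ groups at the identity, it suffices to check that the preimage of every open normal subgroup of the target is open in the source.

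Next, given an abstract isomorphism $\phi \colon G \to G'$ and an open normal subgroup $N' \trianglelefteq G'$, set $N = \phi^{-1}(N')$. Then $N$ is a normal subgroup of $G$ and $[G : N] = [G' : N'] < \infty$. Now I invoke the decisive input, Serre's theorem (the Theorem just preceding this Corollary): every finite-index subgroup of a finitely generated pro-$p$ group is open. Applied to $N \le G$, this yields that $N = \phi^{-1}(N')$ is open in $G$. Hence $\phi$ is continuous at $1$, and therefore continuous on all of $G$ since it is a group homomorphism between topological groups. Reversing the roles of $G$ and $G'$ (both of which are finitely generated pro-$p$ groups), the inverse $\phi^{-1}$ is continuous as well, so $\phi$ is a homeomorphism.

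I do not expect any genuine obstacle here: all of the work is packed into Serre's theorem, which the excerpt has already put at our disposal. The only point that warrants care is ensuring that checking continuity at the identity is enough — this uses the standard fact that a group homomorphism between topological groups is continuous iff it is continuous at one point — and that one may restrict attention to preimages of a neighbourhood basis of $1 \in G'$, for which the open normal subgroups are the natural choice in the profinite setting.
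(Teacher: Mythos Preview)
Your argument is correct and is the standard deduction of this corollary from Serre's theorem. The paper itself does not give a proof but simply cites \cite[Corollary 1.21]{DDMS}, where exactly this reasoning appears. One small imprecision: your remark that in a pro-$p$ group the open normal subgroups are ``precisely the normal subgroups of $p$-power index'' already presupposes Serre's theorem (and hence finite generation) for the nontrivial direction; but since you invoke Serre explicitly in the next paragraph, this does not affect the validity of the proof.
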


\begin{proof}
See \cite[Corollary 1.21]{DDMS}.
\end{proof}

An important method of describing a pro-$p$ group is via a presentation by generators and relations.
 
\begin{defn}
Let $I$ be an index set and let $S_I$ be the free discrete group on the generators $\{s_i\mid i \in I\}$.  Let $\fN$ be the set of normal subgroups $N$ of $S_I$ such that:
\begin{enumerate}
\item $S_I/N$ is a finite $p$-group;
\item $N$ contains almost all elements of $\{s_i\mid i \in I\}$.
\end{enumerate}  

Then $\{S_I/N \mid N\in \fN\}$ is an inverse system and $S(I)=\varprojlim(S_I/N)_{N\in\fN}$ is a pro-$p$ group called the {\it free pro-p group with system of generators} $\{s_i\mid i \in I\}$.

When $I=\{1,\ldots, n \}$ one often writes $S(n)$ instead of $S(I)$ and refers to $S(n)$ as the {\it free pro-p group of rank n}. 
\end{defn}

\begin{defn}
Let G be a pro-$p$ group.  An exact sequence
\[
\begin{tikzcd}
1 \arrow{r} &R \arrow{r} &S \arrow{r}{\varphi} &G \arrow{r} &1,
\end{tikzcd}
\]
where $S$ is a free pro-$p$ group with system of generators $\{s_i\mid i \in I\}$ is called a {\it presentation} of $G$ by $S$.  

If $\{\varphi(s_i)\mid i \in I\}$ is a minimal system of generators of $G$, then the presentation is called {\it minimal}.

A subset $E\subseteq R$ is called a {\it system of relations} of $G$ if
\begin{enumerate}
\item $R$ is the smallest closed normal subgroup of $S$ containing $E$;
\item every open normal subgroup of $R$ contains almost all elements of $E$.
\end{enumerate}
We say $E$ is {\it minimal} if no subset of $E$ is a system of relations for $G$. 
\end{defn}

\begin{ex}
Demushkin groups (see Definition \ref{def:Demushkin}) are an interesting example of finitely generated pro-$p$ groups having only a single relation among a minimal system of generators.  These groups play an important role in the Galois theory of local fields and we will have much more to say about them in subsequent chapters.
\end{ex}

\subsection{Completed group algebras}

In order to study the structure of pro-$p$ groups in greater detail, we introduce two related objects; the completed group algebra of a pro-$p$ group and the Magnus algebra.

Let $ \varLambda $ be a compact commutative ring with identity, let $G$ be a profinite group and let $\fN_G$ be the set of all open normal subgroups of $G$.  For $N, N' \in \fN_G$ with $N\supseteq N'$ the natural map
\[
G/N'\longrightarrow G/N
\]
induces an epimorphism
\[
\varLambda[G/N']\longrightarrow \varLambda[G/N]
\]
of group algebras.  These maps define an inverse system $\{ \varLambda[G/N] \mid N\in \fN_G \}$ of compact rings.

\begin{defn}
The {\it completed group algebra} $\varLambda [[G]]$ of the profinite group $G$ over the compact ring $\varLambda$ is the inverse limit of the system $\{ \varLambda[G/N] \mid N\in \fN_G \}$.
\end{defn}

By the map
\[
g\longmapsto \prod_{N \in \fN_G} gN,
\]
$G$ embeds into $\varLambda [[G]]$ and the subring $\varLambda [G]$, which is given the subspace topology, is dense in $\varLambda [[G]]$.

\begin{thm}
\begin{enumerate}
\item[(i)] Let A be a compact $\varLambda$-algebra.  Every morphism $\phi \colon G \to A^\times$ from G into the group of units $A^\times$ of A can be extended uniquely to a morphism $\varLambda [[G]] \to A$.
\item[(ii)] Let $\phi \colon G \to G'$ be a morphism of profinite groups with kernel N.  The kernel of the induced morphism $\varLambda[[G]] \to \varLambda[[G']]$ is the closed ideal I(N) generated by $\{h-1\mid h \in N\}$.
\end{enumerate}
\end{thm}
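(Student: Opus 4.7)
The plan is to build the extension by working through finite quotients and then assembling by inverse limit. Since $A$ is a compact $\varLambda$-algebra, it admits a neighbourhood basis of $0$ consisting of open two-sided ideals $J$, so $A = \varprojlim_{J} A/J$ with each $A/J$ discrete. For each such $J$, the composite
\[
G \xrightarrow{\phi} A^\times \longrightarrow (A/J)^\times
\]
is a continuous homomorphism into a discrete group and its kernel is therefore an open normal subgroup $N_J \in \fN_G$. The classical universal property of the (abstract) group algebra $\varLambda[G/N_J]$ yields a unique $\varLambda$-algebra map $\tilde\phi_J \colon \varLambda[G/N_J] \to A/J$ extending the map on group elements. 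If $J' \subseteq J$, then $N_{J'} \subseteq N_J$ and the diagrams involving the quotient maps $A/J' \to A/J$ and $\varLambda[G/N_{J'}] \to \varLambda[G/N_J]$ commute, so the $\tilde\phi_J$ are compatible. Precomposing with the canonical projections $\varLambda[[G]] \to \varLambda[G/N_J]$ and passing to the inverse limit yields a continuous $\varLambda$-algebra morphism $\varLambda[[G]] \to A$. Uniqueness is forced by continuity and the density of $\varLambda[G]$ in $\varLambda[[G]]$.

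\textbf{Plan for (ii).} The inclusion $I(N) \subseteq \ker(\varLambda[[G]] \to \varLambda[[G']])$ is immediate: for $h \in N$, $h - 1 \mapsto \phi(h) - 1 = 0$, and the kernel is a closed ideal. For the reverse inclusion, the plan is to factor the induced map as $\varLambda[[G]] \twoheadrightarrow \varLambda[[G/N]] \hookrightarrow \varLambda[[G']]$ and identify the first kernel with $I(N)$. First I would establish the isomorphism $\varLambda[[G]]/I(N) \cong \varLambda[[G/N]]$. Applying (i) to $G \to G/N \hookrightarrow \varLambda[[G/N]]^\times$ gives $\pi \colon \varLambda[[G]] \to \varLambda[[G/N]]$, which kills $I(N)$ and so factors as $\bar\pi \colon \varLambda[[G]]/I(N) \to \varLambda[[G/N]]$. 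In the other direction, the assignment $gN \mapsto g + I(N)$ is a well-defined continuous homomorphism $G/N \to (\varLambda[[G]]/I(N))^\times$ (because $gh - g = g(h-1) \in I(N)$ for $h \in N$), so (i) produces $\sigma \colon \varLambda[[G/N]] \to \varLambda[[G]]/I(N)$. The uniqueness assertion of (i), applied to both composites, forces $\sigma$ and $\bar\pi$ to be mutually inverse.

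\textbf{Plan for the injection and main obstacle.} To conclude (ii), I must show that the map $\varLambda[[G/N]] \to \varLambda[[G']]$ induced by the inclusion $G/N \hookrightarrow G'$ (the latter identifying $G/N$ with the closed subgroup $\phi(G) \leq G'$) is injective. Here I would use the standard fact that $\{(G/N) \cap M' : M' \in \fN_{G'}\}$ is cofinal in $\fN_{G/N}$, so at each finite level the induced map $\varLambda[(G/N)/((G/N)\cap M')] \to \varLambda[G'/M']$ is an injection of finite $\varLambda$-modules, and inverse limits of such compatible injections remain injective. The step I expect to be the most delicate is precisely this cofinality-plus-inverse-limit argument: one must carefully check that the induced maps at the finite level line up into a commuting system in which injectivity passes to the limit, as opposed to merely checking the property level by level. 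Everything else is a clean application of universal properties, compatibility of inverse systems, and the topological density of $\varLambda[G]$ inside $\varLambda[[G]]$.
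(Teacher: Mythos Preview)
Your proof plan is correct and follows the standard argument; the paper itself does not supply a proof but simply cites \cite[Theorem 7.2]{Ko}, where essentially this construction via finite quotients and inverse limits is carried out. One small remark on the step you flagged as delicate: for the injectivity of $\varLambda[[G/N]] \to \varLambda[[G']]$, the cofinality of $\{(G/N)\cap M' : M' \in \fN_{G'}\}$ in $\fN_{G/N}$ is a standard fact about closed subgroups of profinite groups, and inverse limits of injections of abelian groups (or modules) are always injective since $\varprojlim$ is left exact, so this step presents no real difficulty.
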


\begin{proof}
See \cite[Theorem 7.2]{Ko}.
\end{proof}

\begin{thm}
Let $\varLambda$ be a finite ring and G a profinite group.  The system $\{I(N)\mid N\in\fN_G \}$ is a neighbourhood basis at $0\in \varLambda[[G]]$.
\end{thm}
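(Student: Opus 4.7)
The plan is to identify each $I(N)$ with the kernel of the canonical projection from $\varLambda[[G]]$ onto a finite discrete ring, and then appeal to the universal property of the inverse limit topology.

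First, I would unpack the topology on $\varLambda[[G]]=\varprojlim_{N\in\fN_G}\varLambda[G/N]$. Because $\varLambda$ is finite and each $G/N$ is finite (as $N$ is open in the profinite group $G$), each group algebra $\varLambda[G/N]$ is finite and so carries the discrete topology. The inverse limit topology on $\varLambda[[G]]$ therefore agrees with the subspace topology inherited from the product $\prod_{N}\varLambda[G/N]$, for which a neighbourhood basis at $0$ is given by the sets $\bigcap_{i=1}^k \pi_{N_i}^{-1}(\{0\})$, where $\pi_{N_i}\colon\varLambda[[G]]\to\varLambda[G/N_i]$ is the canonical projection and $N_1,\dots,N_k\in\fN_G$.

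Second, I would identify $\ker\pi_N$ with $I(N)$. Since $G/N$ is finite, the inverse system defining $\varLambda[[G/N]]$ has a terminal object, giving the canonical identification $\varLambda[[G/N]]=\varLambda[G/N]$, and under this identification $\pi_N$ coincides with the morphism $\varLambda[[G]]\to\varLambda[[G/N]]$ induced by the quotient map $G\to G/N$. Applying part (ii) of the preceding theorem to this quotient (whose kernel is $N$) yields $\ker\pi_N=I(N)$. In particular, each $I(N)$ is an open (and closed) ideal of $\varLambda[[G]]$, hence a neighbourhood of $0$.

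Third, to verify the basis property, let $U$ be any neighbourhood of $0$ in $\varLambda[[G]]$. By the first step, $U$ contains a basic neighbourhood $\bigcap_{i=1}^k \pi_{N_i}^{-1}(\{0\})=\bigcap_{i=1}^k I(N_i)$ for some $N_1,\dots,N_k\in\fN_G$. Setting $N=\bigcap_{i=1}^k N_i$, which is again an open normal subgroup of $G$, the inclusion $N\subseteq N_i$ shows that every generator $h-1$ with $h\in N$ already appears among the generators of $I(N_i)$, so $I(N)\subseteq I(N_i)$ for every $i$ and therefore $I(N)\subseteq U$. This establishes that $\{I(N)\mid N\in\fN_G\}$ is a neighbourhood basis at $0$.

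The only mild subtlety, rather than a genuine obstacle, is the identification $\varLambda[[G/N]]=\varLambda[G/N]$ for a finite quotient, which allows the previous theorem's description of the kernel to be applied directly to the projections defining the inverse limit topology.
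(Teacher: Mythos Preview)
Your argument is correct and is essentially the standard proof of this fact. The paper itself does not give an argument at all; it simply refers the reader to \cite[Theorem 7.3]{Ko}, so your write-up supplies precisely the details that the cited reference would contain: identify each $\varLambda[G/N]$ as finite discrete, recognise $\ker\pi_N$ as $I(N)$ via part (ii) of the preceding theorem, and use the cofinality of single open normal subgroups under finite intersection to pass from basic product-open neighbourhoods to a single $I(N)$.
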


\begin{proof}
See \cite[Theorem 7.3]{Ko}.
\end{proof}

\begin{defn}
Let K be a commutative ring with identity, let I be an index set and let $U=\{u_i \mid i\in I \}$.  The {\it Magnus algebra} $K \langle\langle U \rangle\rangle$ is the associative algebra of formal power series in the non-commuting indeterminates $u_i$, $i\in I$, with coefficients in $K$.
\end{defn}

Let $\sI$ be the ideal of $K \langle\langle U \rangle\rangle$ consisting of all formal power series having constant term 0.  If $u\in \sI$, then $1+u$ is invertible and
\[
(1+u)^{-1} = 1-u+u^2-u^3+\ldots + (-1)^nu^n+\ldots
\] 
The invertible elements $1+u_i$ generate a subgroup of the group of units of $K \langle\langle U \rangle\rangle$.  This group is the image of the free group $S_I$ under the homomorphism $\psi \colon S_I \to K \langle\langle U \rangle\rangle^\times$ given by $s_i \mapsto 1+u_i$ and is referred to as the {\it Magnus group}. 

\begin{lem}
The map $\psi$ is injective.
\end{lem}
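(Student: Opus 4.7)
The plan is to show that for any non-identity $w \in S_I$, the power series $\psi(w)$ is not equal to $1$ in $K\langle\langle U\rangle\rangle$. I will do this by exhibiting a single monomial whose coefficient in $\psi(w)$ is non-zero in $K$. Since $w$ involves only finitely many generators, I may assume $I$ is finite, and write $w$ in reduced syllable form
\[
w = s_{j_1}^{a_1} s_{j_2}^{a_2} \cdots s_{j_s}^{a_s},
\]
with consecutive indices distinct, $j_k \neq j_{k+1}$, and each exponent $a_k \in \Z \setminus \{0\}$. Then
\[
\psi(w) = \prod_{k=1}^{s} (1 + u_{j_k})^{a_k},
\]
each factor being the formal power series $\sum_{i \geq 0} \binom{a_k}{i} u_{j_k}^i$ in the single variable $u_{j_k}$.

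For each $k$, I choose an integer $d_k \geq 1$ with $\binom{a_k}{d_k} \neq 0$ in $K$. Such a $d_k$ exists because $(1+X)^{a_k} \neq 1$ in $K[[X]]$: for $a_k \geq 1$ the coefficient of $X^{a_k}$ is $\binom{a_k}{a_k} = 1 \neq 0$, and for $a_k \leq -1$ an identity $(1+X)^{a_k} = 1$ would force $(1+X)^{|a_k|} = 1$, contradicting the previous observation.

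With these choices fixed, I compute the coefficient in $\psi(w)$ of the monomial
\[
M \;=\; u_{j_1}^{d_1} u_{j_2}^{d_2} \cdots u_{j_s}^{d_s}.
\]
A general term of the expansion of $\prod_k (1+u_{j_k})^{a_k}$ has the form $\prod_k \binom{a_k}{e_k}\, u_{j_k}^{e_k}$ for some non-negative integers $e_k$, and contributes to the monomial $u_{j_1}^{e_1} u_{j_2}^{e_2} \cdots u_{j_s}^{e_s}$ in the free non-commutative monoid on $\{u_i : i \in I\}$. For this monomial to equal $M$, the two words must share the same block factorization into maximal runs of identical letters. Because $j_k \neq j_{k+1}$ and each $d_k \geq 1$, the block factorization of $M$ consists of exactly $s$ blocks, one per syllable of $w$; any term with some $e_k = 0$ collapses or merges syllables and hence produces a word with fewer than $s$ blocks. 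So $e_k = d_k$ for every $k$ is forced, and the coefficient of $M$ in $\psi(w)$ is the single product $\prod_{k=1}^{s} \binom{a_k}{d_k}$, a product of non-zero elements of $K$, hence non-zero. In particular $\psi(w) \neq 1$.

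The main subtlety is the uniqueness of the block factorization in the free non-commutative monoid on $\{u_i\}$, which is what guarantees that no accidental cancellations occur among the terms contributing to the coefficient of $M$. The reducedness hypothesis on $w$ — equivalently, the alternation $j_k \neq j_{k+1}$ in the syllable form — is precisely the input that ensures this uniqueness and makes the whole argument go through.
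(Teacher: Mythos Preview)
Your argument is the classical Magnus proof and is correct; the paper itself does not supply a proof but simply cites \cite[Lemma~4.4]{Ko}, so there is nothing to compare against beyond noting that your route is the standard one.

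One small caveat: the paper allows $K$ to be an arbitrary commutative ring with identity, and your final step --- ``a product of non-zero elements of $K$, hence non-zero'' --- tacitly assumes the prime subring of $K$ has no zero divisors. This is automatic when $\Char(K)$ is $0$ or a prime (in particular for $K=\F_p$, the only case the paper actually uses), so your argument goes through verbatim there. For $K$ of composite characteristic $n$ one reduces to the field case: pick any prime $p\mid n$, observe that it suffices to exhibit a coefficient of $\psi(w)-1$ not divisible by $p$, and run your argument over $\F_p$ to produce one.
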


\begin{proof}
See \cite[Lemma 4.4]{Ko}.
\end{proof}

Identifying $S_I$ with its image in $K \langle\langle U \rangle\rangle$, we have $s_i =1+u_i$ and if $s\in S_I, s\neq 1$, we have $s=1+u$ with $u\in \sI^n, u\notin \sI^{n+1}$, for some $n\geq 1$.  Magnus called $n$ the {\it dimension} of $s$. 

\subsection{Filtrations}
\label{sec:filtrations}

In attempting to elucidate the structure of Galois groups, the study of certain filtrations can be very useful.  We are particularly interested in filtrations of pro-$p$ groups and the corresponding filtrations on the completed group algebra and the Magnus algebra.

Let $G$ be a group.  A {\it central filtration} (or {\it central series} or {\it central sequence}) of $G$ is a sequence $(G_n)_{n\geq 1}$ of subgroups of $G$ such that
\[
G_1=G,\qquad G_{i+1}\leq G_i,\qquad [G_i,G_j]\leq G_{i+j},
\]  
where $[H,K]$ denotes the subgroup of $G$ generated by the commutators $[h,k]=h^{-1}k^{-1}hk$ with $h\in H$, $k\in K$.  The name arises from the fact that for any such filtration and any $i\geq 1$, $G_i \trianglelefteq G$ and $G_i/G_{i+1}$ lies in the centre of $G/G_{i+1}$.

Let $G$ be a profinite group and let $q$ be either a $p$-power or 0.  The {\it descending} (or {\it lower}) {\it q-central series} $(G^{(n,q)})_{n\geq 1}$ of $G$ is defined inductively by
\[
G^{(1,q)} =G,\qquad G^{(i+1,q)}=(G^{(i,q)})^q[G^{(i,q)},G],\hspace{.1in} i=1,2,\ldots,
\]
where, given closed subgroups $H$ and $K$ of $G$, $[H,K]$ (respectively $H^q$, $HK$) denotes the closed subgroup topologically generated by all commutators $[h,k]$ (respectively $q$-th powers, products $hk$) with $h\in H$, $k\in K$.  Note that $G^{(i,q)}$ is normal in $G$.  For $i\geq 1$, let $G^{[i,q]}=G/G^{(i,q)}$.  If $q$ is understood, we generally abbreviate
\[
G^{(i)}=G^{(i,q)},\qquad G^{[i]}=G^{[i,q]}.
\]

When $q=0$ the series $G^{(i,0)}$ is called the {\it descending} (or {\it lower}) {\it central series} of $G$.  In this case we will adopt the commonly used notation
\[
G_i=G^{(i,0)}.
\]

We can define another central filtration $(G_{(n)})_{n\geq 1}$ on the profinite group $G$ by
\[
G_{(1)}=G,\qquad G_{(n)}=G_{(\lceil n/p\rceil)}^p \prod_{i+j=n} [G_{(i)},G_{(j)}],\hspace{.1in} n=2,3,\ldots,
\]
where $p$ is a fixed prime and $\lceil n/p\rceil$ is the least integer $r$ such that $pr\geq n$.  Then $[G_{(i)}, G_{(j)}]\leq G_{(i+j)}$ and $G_{(i)}^p\leq G_{(pi)}$ for all $i,j\geq 1$ and $(G_{(n)})_{n=1,2,\ldots}$ is the fastest descending sequence of closed subgroups of $G$ having these properties.  This sequence is the {\it Zassenhaus p-filtration} of $G$.  For quotients we again adopt the notation $G_{[i]}=G/G_{(i)}$.  We will usually be considering the Zassenhaus $p$-filtration of a pro-$p$ group $G$ and will often refer to this simply as the \textit{Zassenhaus filtration} of $G$.

We now turn our attention to the special case in which $G$ is a finitely generated pro-$p$ group and look at the connections between the filtrations of $G$ and the structure of both the completed group algebra $\F_p[[G]]$ of $G$ and the Magnus algebra with coefficients in $\F_p$.  

\begin{defn}
The {\it augmentation ideal} $I(G)$ of $\F_p[[G]]$ is the closed two-sided ideal generated by the elements $g-1$, $g\in G$ and $I^n(G)$ is the closure of the $n$-th power of $I(G)$ in $\F_p[[G]]$.
\end{defn}

\begin{thm}
Let G be a finitely generated pro-p group.  Then $\{I^n(G)\mid n=1,2,\ldots\}$ is a neighbourhood basis at $0\in \F_p[[G]]$.
\end{thm}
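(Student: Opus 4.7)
My plan is to prove that the filtration $\{I^n(G)\}_{n \geq 1}$ is cofinal with the neighbourhood basis $\{I(N) : N \in \fN_G\}$ at $0$ furnished by the previous theorem; this suffices. Concretely, I must show: (i) for every $n \geq 1$ there is an open normal $N \subseteq G$ with $I(N) \subseteq I^n(G)$ (so each $I^n(G)$ is a neighbourhood of $0$); and (ii) for every open normal $N \subseteq G$ there is $n \geq 1$ with $I^n(G) \subseteq I(N)$ (so the $I^n(G)$ are arbitrarily small).

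Direction (ii) is the easier half. The continuous ring epimorphism $\pi_N \colon \F_p[[G]] \surj \F_p[G/N]$ has kernel $I(N)$ and carries $I(G)$ onto the augmentation ideal $I(G/N)$ of the finite $p$-group algebra. Because $\F_p[G/N]$ is a local Artin $\F_p$-algebra, $I(G/N)$ is nilpotent, so $\pi_N(I^n(G)) = I(G/N)^n = 0$ for $n$ sufficiently large, giving $I^n(G) \subseteq \ker \pi_N = I(N)$.

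For direction (i), I would take $N = G^{(n,p)}$, the $n$-th term of the descending $p$-central series. Because $G$ is finitely generated pro-$p$, each $G^{(n,p)}$ is an open normal subgroup (the base $n = 2$ is $\Phi(G) = G^p[G,G]$ from the corollary above, and induction combined with Serre's theorem plus the fact that open subgroups of finitely generated pro-$p$ groups remain finitely generated handles higher $n$). I then claim by induction on $n$ that $g - 1 \in I^n(G)$ for every $g \in G^{(n,p)}$. The base $n = 1$ is immediate. For the inductive step, each generator of $G^{(n+1,p)} = (G^{(n,p)})^p[G^{(n,p)}, G]$ is either a $p$-th power $h^p$ with $h \in G^{(n,p)}$ or a commutator $[a,b]$ with $a \in G^{(n,p)}$, $b \in G$. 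Since $1 \in \F_p[[G]]$ is central and $\binom{p}{k} \equiv 0 \pmod{p}$ for $0 < k < p$, the binomial expansion collapses to
\[
h^p - 1 = ((h-1) + 1)^p - 1 = (h-1)^p \in I^{np}(G) \subseteq I^{n+1}(G).
\]
For the commutator, a direct computation gives
\[
[a,b] - 1 = a^{-1}b^{-1}\bigl((a-1)(b-1) - (b-1)(a-1)\bigr),
\]
which lies in $I^{n+1}(G)$ because $I^{n+1}(G)$ is a two-sided ideal and both summands in the bracket lie in $I^n(G)\cdot I(G) \subseteq I^{n+1}(G)$. The claim extends from generators to arbitrary $g \in G^{(n+1,p)}$ via $xy - 1 = (x-1)(y-1) + (x-1) + (y-1)$ and $x^{-1} - 1 = -x^{-1}(x-1)$ together with the closedness of $I^{n+1}(G)$. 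Since $I(G^{(n,p)})$ is the closed two-sided ideal generated by $\{g - 1 : g \in G^{(n,p)}\}$, we conclude $I(G^{(n,p)}) \subseteq I^n(G)$.

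The main technical hurdle is the non-commutative Frobenius-type identity $h^p - 1 = (h-1)^p$ in the inductive step; it depends simultaneously on the centrality of $1$ in $\F_p[[G]]$ (so the ordinary binomial expansion applies even though $\F_p[[G]]$ is non-commutative) and on the vanishing of the intermediate binomial coefficients modulo $p$. Everything else is bookkeeping with closed two-sided ideals and the standard observation that finite $p$-group algebras in characteristic $p$ have nilpotent augmentation ideals.
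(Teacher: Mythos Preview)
Your argument is correct. The paper itself gives no proof of this theorem, merely citing \cite[Theorem 7.8]{Ko}; what you have written is essentially the standard argument one finds there, so there is nothing substantive to compare.
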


\begin{proof}
See \cite[Theorem 7.8]{Ko}.
\end{proof}

There is a close relationship between the filtration $I^n(G)$ of $\F_p[[G]]$ and the Zassenhaus $p$-filtration $G_{(n)}$ of $G$.  

\begin{thm}
Let $G$ be a finitely generated pro-$p$ group and $I(G)$ the augmentation ideal of $\F_p[[G]]$.  Then
\[
G_{(n)}=(1+I^n(G))\cap G,\qquad n\geq 1
\]
\end{thm}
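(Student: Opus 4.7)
The plan is to prove equality by showing two inclusions, one routine and one deep. Set $D_n := (1+I^n(G))\cap G$, so the claim becomes $G_{(n)} = D_n$. For the inclusion $G_{(n)} \subseteq D_n$, I would verify that the sequence $(D_n)_{n\ge1}$ satisfies the defining properties of the Zassenhaus $p$-filtration, since that filtration was defined as the fastest descending central series with those properties.

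First, $D_1 = G$ because every $g\in G$ satisfies $g-1\in I(G)$. Next, if $g=1+u$, $h=1+v$ with $u\in I^i(G)$, $v\in I^j(G)$, then
\[
gh-hg \;=\; (1+u)(1+v) - (1+v)(1+u) \;=\; uv-vu \;\in\; I^{i+j}(G),
\]
hence $[g,h] - 1 = g^{-1}h^{-1}(gh-hg) \in I^{i+j}(G)$, giving $[D_i,D_j]\subseteq D_{i+j}$. The key verification is $D_i^p \subseteq D_{pi}$: since $1$ commutes with $u$ in $\F_p[[G]]$ and the ring has characteristic $p$,
\[
(1+u)^p \;=\; \sum_{k=0}^{p}\binom{p}{k}u^k \;=\; 1 + u^p,
\]
because $\binom{p}{k}\equiv0\pmod p$ for $0<k<p$. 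If $u\in I^i(G)$ then $u^p\in I^{pi}(G)$, so $g^p\in D_{pi}$. Because $(D_n)$ is thus a central filtration of closed subgroups satisfying the Zassenhaus conditions, minimality of $(G_{(n)})$ yields $G_{(n)}\subseteq D_n$ for all $n$.

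The reverse inclusion $D_n\subseteq G_{(n)}$ is the heart of the matter; this is the classical theorem of Jennings and Lazard. I would first reduce to the free case by choosing a minimal presentation $1\to R\to S\to G\to 1$ with $S$ free pro-$p$ of finite rank. The induced surjection $\F_p[[S]]\surj\F_p[[G]]$ sends $I(S)$ onto $I(G)$, and one checks that an element of $G$ lies in $(1+I^n(G))\cap G$ iff it lifts to some $s\in S$ with $s-1\in I^n(S) + I(R\cdot S)$; combined with knowledge of $S_{(n)}$ and the fact that $R\subseteq S_{(2)}$ in a minimal presentation, the general case follows from the free case.

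For the free case, I would use the Magnus embedding $\psi\colon S\inj \F_p\langle\langle U\rangle\rangle^\times$, $s_i\mapsto 1+u_i$, which extends to an isomorphism of completed group algebras $\F_p[[S]]\cong \F_p\langle\langle U\rangle\rangle$ sending $I^n(S)$ onto $\sI^n$. The task then reduces to proving that an element $s\in S$ with $\psi(s)\in 1+\sI^n$ lies in $S_{(n)}$. The main obstacle, and the place where real work is required, is matching the two graded objects explicitly: one shows that the associated graded algebra $\bigoplus \sI^n/\sI^{n+1}$ is the restricted enveloping algebra of the restricted Lie algebra $\bigoplus S_{(n)}/S_{(n+1)}$, and then invokes the PBW-type theorem for restricted enveloping algebras to conclude that $S_{(n)}/S_{(n+1)}$ embeds faithfully into $\sI^n/\sI^{n+1}$. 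Explicit $\F_p$-bases of $S_{(n)}/S_{(n+1)}$ built from $p$-th powers of Hall basic commutators are matched against the basis of $\sI^n/\sI^{n+1}$ given by non-commutative monomials in the $u_i$ of total degree $n$, and the Hilbert–Poincar\'e series machinery developed in Section~\ref{sec:hilbert series} will make this matching bookkeeping transparent. Once injectivity at the level of graded pieces is established, a straightforward induction on $n$ shows that $\psi(s)\in 1+\sI^n$ forces $s\in S_{(n)}$, completing the proof.
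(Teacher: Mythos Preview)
The paper's own proof is a one-line citation: it invokes \cite[Theorem 12.9]{DDMS} together with Corollary~\ref{cor:finitegen} (that the topology of a finitely generated pro-$p$ group is determined by its group structure), and says nothing more. So your proposal is far more ambitious than what the paper actually does, and the easy inclusion $G_{(n)}\subseteq D_n$ is handled correctly.

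There are, however, two genuine problems with the hard direction as you sketch it. First, the reduction to the free case does not work as stated. Given $g\in D_n(G)$ and a lift $s\in S$, you only know $s-1\in I^n(S)+J$ where $J=\ker(\F_p[[S]]\to\F_p[[G]])$; knowing the theorem for $S$ tells you about $(1+I^n(S))\cap S$, not about $(1+I^n(S)+J)\cap S$. The fact that $R\subseteq S_{(2)}$ gives $J\subseteq I^2(S)$, but this is far too weak to push $s$ into $S_{(n)}R$. The standard proofs (Jennings, Lazard, Quillen, as presented in \cite[Ch.~11--12]{DDMS}) do \emph{not} reduce to the free case; they prove directly, for arbitrary $G$, that the natural map ${\rm gr}(G)\to{\rm gr}(\F_p[[G]])$ is injective by identifying ${\rm gr}(\F_p[[G]])$ with the restricted universal envelope of ${\rm gr}(G)$ and invoking PBW.

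Second, your appeal to ``the Hilbert--Poincar\'e series machinery developed in Section~\ref{sec:hilbert series}'' is circular in the logical structure of the thesis: Theorem~\ref{thm:JL}, on which that entire section rests, is explicitly stated there as a consequence of the Jennings--Lazard theory that \emph{identifies} the Zassenhaus subgroups with dimension subgroups---i.e., precisely the theorem you are trying to prove. You would need an independent computation of $c_n(S)$ (via Lazard's formula $S_{(n)}=\prod_{ip^j\ge n}S_i^{p^j}$ and a direct linear-independence argument for the images of $p$-powered Hall commutators in $\sI^n/\sI^{n+1}$), not the packaged formula~(\ref{eq:fundamental}).
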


\begin{proof}
Since the topology of a finitely generated pro-$p$ group is determined by its group structure, the result follows from \cite[Theorem 12.9]{DDMS}. 
\end{proof}

So for each $n\geq 1$, $G_{(n)}$ is the kernel of the natural homomorphism of $G$ into the group of units of $\F_p[[G]]/I^n(G)$; that is, 
\[
G_{(n)}=\{g\mid g-1\in I^n(G) \}.
\]

\begin{rmk}
In view of this theorem, the Zassenhaus filtration has also been referred to as the dimension series, with the subgroups $G_{(n)}$ being called  the dimension subgroups in characteristic $p$. 
\end{rmk}

Now let $U=\{u_1,\ldots,u_d \}$ and consider the Magnus algebra $\F_p\langle\langle U \rangle\rangle$.  Let $\sI^n$ denote the ideal of all power series in $\F_p\langle\langle U \rangle\rangle$ whose homogeneous components have degree at least $n$.  Then $\sI^n/\sI^{n+1}$ is the submodule of $\F_p\langle\langle U \rangle\rangle$ generated by the monomials of degree $n$. Also, $\{\sI^n\mid n=1,2,\ldots \}$ is a basis of open neighbourhoods of $0\in \F_p\langle\langle U \rangle\rangle$ and with this topology, $\F_p\langle\langle U \rangle\rangle$ is the direct product of its homogeneous components, $\sI^n/\sI^{n+1}$, hence compact.

Let $S=S(d)$ be the free pro-$p$ group with system of generators $\{ s_1,\ldots, s_d\}$.  We have the Magnus embedding $S\hookrightarrow \F_p\langle\langle U \rangle\rangle ^\times$ given by $s_i\mapsto 1+u_i$, $i=1,\ldots,d$.  Furthermore, we have

\begin{thm}
\label{thm:magnusiso}
The map 
\[
s_i \longmapsto 1+u_i,\qquad i=1,\ldots,d,
\]
can be extended to an isomorphism $\F_p[[S]]\cong \F_p\langle\langle U \rangle\rangle $.
\end{thm}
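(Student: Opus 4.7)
The plan is to build the isomorphism and its inverse using two universal properties already recorded in the excerpt---Theorem 7.2(i) for the completed group algebra $\F_p[[S]]$ and the analogous $\sI$-adic completion description of the Magnus algebra $\F_p\langle\langle U\rangle\rangle$---and then to verify that the two constructions are mutually inverse on dense subrings.

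First, I would produce the forward map $\Phi$. Each element $1+u_i$ lies in $\F_p\langle\langle U\rangle\rangle^\times$, with inverse $\sum_{n\geq 0}(-u_i)^n$, so the assignment $s_i\mapsto 1+u_i$ agrees with the Magnus embedding and gives a continuous homomorphism $\phi\colon S\to \F_p\langle\langle U\rangle\rangle^\times$. The algebra $\F_p\langle\langle U\rangle\rangle$ is compact, being the direct product of its finite-dimensional homogeneous components $\sI^n/\sI^{n+1}$. Applying Theorem 7.2(i) to this compact $\F_p$-algebra extends $\phi$ uniquely to a continuous $\F_p$-algebra homomorphism $\Phi\colon \F_p[[S]]\to \F_p\langle\langle U\rangle\rangle$.

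Next, I would construct a candidate inverse $\Psi$ going the other way. The Magnus algebra is, by construction, the completion of the free associative algebra $\F_p\langle U\rangle$ with respect to its augmentation ideal. The assignment $u_i\mapsto s_i-1$ defines an $\F_p$-algebra map $\F_p\langle U\rangle\to \F_p[[S]]$ whose image lies in the augmentation ideal $I(S)$. Since $\{I^n(S)\}$ is a neighbourhood basis of $0$ in $\F_p[[S]]$ (by the theorem quoted just above the statement), a monomial of degree $n$ in the $u_i$ lands in $I^n(S)$, so this map is continuous for the $\sI$-adic topology and extends uniquely to a continuous $\F_p$-algebra homomorphism $\Psi\colon \F_p\langle\langle U\rangle\rangle\to \F_p[[S]]$.

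Finally, I would check that $\Phi$ and $\Psi$ are mutually inverse. On generators one computes directly: $\Psi\Phi(s_i)=\Psi(1+u_i)=1+(s_i-1)=s_i$, and $\Phi\Psi(u_i)=\Phi(s_i-1)=(1+u_i)-1=u_i$. Thus $\Psi\circ\Phi$ fixes $S$ pointwise, so it agrees with the identity on the subring $\F_p[S]$, which is dense in $\F_p[[S]]$; by continuity $\Psi\circ\Phi=\id$. Symmetrically, $\Phi\circ\Psi$ agrees with the identity on the dense polynomial subalgebra $\F_p\langle U\rangle\subset\F_p\langle\langle U\rangle\rangle$, so $\Phi\circ\Psi=\id$, and $\Phi$ is the desired isomorphism. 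The one step requiring genuine care is the continuity of the map defining $\Psi$: one must verify that a degree-$n$ monomial in the $u_i$ is sent into $I^n(S)$ so that formal infinite sums in $\F_p\langle\langle U\rangle\rangle$ actually converge in $\F_p[[S]]$. Everything else is essentially forced by the two universal properties together with the fact that both target algebras are complete topological rings whose dense subrings of ``polynomial'' elements are easy to compare.
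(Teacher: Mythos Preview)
Your argument is correct and is essentially the standard proof (build $\Phi$ from the universal property of $\F_p[[S]]$, build $\Psi$ by extending $u_i\mapsto s_i-1$ by $\sI$-adic continuity, and check the composites on dense subrings). The paper itself does not give an argument here---it simply refers to \cite[Theorem 7.16]{Ko}---so there is no independent approach to compare against; your write-up fills in exactly the details one would expect, and the one point you flag as delicate (that degree-$n$ monomials in the $u_i$ land in $I^n(S)$, ensuring convergence of $\Psi$) is handled correctly since each $s_i-1$ lies in $I(S)$.
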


\begin{proof}
See \cite[Theorem 7.16]{Ko}.
\end{proof}

Identifying $\F_p\langle\langle U \rangle\rangle$ and $\F_p[[S]]$, we then have

\begin{thm}
Let G be a finitely generated pro-p group with presentation
\[
\begin{tikzcd}
1 \arrow{r} &R \arrow{r} &S \arrow{r} &G \arrow{r} &1.
\end{tikzcd}
\]
Let $\{ r_i\mid i\in I \}$ be a system of relations with respect to this presentation.  Then the kernel of the induced map $\F_p[[S]]\rightarrow \F_p[[G]]$ is generated as an ideal of $\F_p[[S]]$ by $\{r_i-1\mid i\in I \}$.
\end{thm}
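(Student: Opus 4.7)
The plan is to apply the earlier theorem on kernels of induced maps on completed group algebras, and then reduce from the element-set $\{r-1 \mid r \in R\}$ to the smaller set $\{r_i - 1 \mid i \in I\}$ via a density/closure argument.

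First, I would invoke the previously stated result that says: for a morphism $\phi \colon S \to G$ of profinite groups with kernel $R$, the kernel of the induced morphism $\F_p[[S]] \to \F_p[[G]]$ is the closed ideal $I(R)$ generated by $\{r - 1 \mid r \in R\}$. So it suffices to prove that $I(R)$ coincides with the closed two-sided ideal $J$ of $\F_p[[S]]$ generated by $\{r_i - 1 \mid i \in I\}$. The inclusion $J \subseteq I(R)$ is immediate from $r_i \in R$, so the real content is the reverse inclusion $I(R) \subseteq J$.

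Next, I would establish that $r - 1 \in J$ for every $r$ in the \emph{abstract} normal subgroup $N$ of $S$ generated by $\{r_i\}$. This is a purely algebraic manipulation using three identities in $\F_p[[S]]$:
\[
ab - 1 = a(b-1) + (a-1), \qquad a^{-1} - 1 = -a^{-1}(a-1), \qquad sas^{-1} - 1 = s(a-1)s^{-1}.
\]
Since every element of $N$ is a finite product of conjugates $s r_i^{\pm 1} s^{-1}$, repeated application of these identities shows that $r - 1$ lies in the two-sided ideal of $\F_p[[S]]$ generated by $\{r_i - 1\}$, hence in $J$.

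Finally, I would promote this from $N$ to its closure $R$ using the fact that $J$ is closed. By the definition of a system of relations, $R$ is the smallest closed normal subgroup of $S$ containing $\{r_i\}$; since the topological closure of a normal subgroup is again normal, $R$ is exactly the closure $\overline{N}$ of $N$ in $S$. Given $r \in R$, choose a net $r_\lambda \in N$ with $r_\lambda \to r$; then $r_\lambda - 1 \to r - 1$ in $\F_p[[S]]$, and since each $r_\lambda - 1 \in J$ and $J$ is closed, we conclude $r - 1 \in J$. This proves $I(R) \subseteq J$, completing the proof. The main subtlety here is the interplay between the abstract and topological normal closures: one must be careful that $R$ really is the closure of the abstract normal subgroup generated by $\{r_i\}$ (not merely generated by the conjugates as a closed subgroup in some other sense), but this is precisely the definition of a system of relations, so it costs us nothing.
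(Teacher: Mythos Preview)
Your argument is correct and is essentially the standard proof (reducing to the closed ideal $I(R)$ via the earlier kernel theorem, then using the algebraic identities and a closure/density argument to pass from the generating set $\{r_i\}$ to all of $R$). The paper itself gives no proof at all---it simply cites \cite[Theorem 7.17]{Ko}---so your write-up is in fact more detailed than what appears here; the only point worth flagging is that ``generated as an ideal'' must be read as ``generated as a closed ideal'' (consistent with the earlier theorem's phrasing), which you implicitly use when invoking that $J$ is closed.
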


\begin{proof}
See \cite[Theorem 7.17]{Ko}.
\end{proof}

Hence we have the following commutative diagram:
\begin{center}
\begin{tikzcd}
S \arrow{d} \arrow{r} &\F_p\langle\langle U \rangle\rangle \arrow{dd}\\ 
G\arrow{d} \\
\F_p[[G]] \arrow{r} &\F_p\langle\langle U \rangle\rangle /J  
\end{tikzcd}
\end{center}
where $J$ is the ideal of $\F_p\langle\langle U \rangle\rangle$ generated by the set $\{r_i\mid i\in I \}$ and all other maps are defined in the obvious way.

\section{Galois Cohomology}
\label{sec:cohom}

Given a field $F$ and a Galois extension $K/F$ with Galois group $G=\Gal(K/F)$, the cohomology groups $H^n(G,A)$, where $A$ is a $G$-module, often contain important arithmetic information.  The study of these groups is referred to as Galois cohomology.  Since Galois groups are profinite groups, we begin by looking at the more general case of the cohomology of profinite groups.  Subsequently, we look in more detail at the groups $H^n(G,A)$ for $n=0,1,2$ and consider some specific choices of $G$ and $A$ which will be of interest in the chapters that follow.

\subsection{Cohomology of profinite groups} 

Let $G$ be a profinite group and $A$ a discrete $G$-module.  An (inhomogeneous) \textit{n-cochain} of $G$ with coefficients in $A$ is a continuous function $y:G^n\to A$.  The set of all such functions is an abelian group denoted $C^n(G,A)$ and these groups form a complex  
\[
\begin{tikzcd}
C^0(G,A) \arrow{r}{d^1} &C^1(G,A) \arrow{r}{d^2} &C^2(G,A) \arrow{r} &\cdots,
\end{tikzcd}
\]
where the coboundary operator $d^{n+1}:C^n(G,A)\to C^{n+1}(G,A)$ is given by
\[
\begin{aligned}
(d^{n+1}y)(g_1,\ldots ,g_{n+1})
&=g_1y(g_2,\ldots ,g_{n+1})\\
&+\sum_{i=1}^{n} (-1)^iy(g_1,\ldots ,g_{i-1},g_ig_{i+1},g_{i+2},\ldots ,g_{n+1})\\
&+(-1)^{n+1}y(g_1,\ldots ,g_n)
\end{aligned}
\]
for $y\in C^n(G,A)$ and $n\geq 0$.  The group of \textit{n-cocycles} is  
\[
\begin{tikzcd}
Z^n(G,A):=\ker (C^n(G,A)\arrow{r}{d^{n+1}} &C^{n+1}(G,A)).
\end{tikzcd}
\]
The group of \textit{n-coboundaries} is the subgroup of $Z^n(G,A)$ defined by
\[
\begin{tikzcd}
B^n(G,A):=\im (C^{n-1}(G,A)\arrow{r}{d^{n}} &C^n(G,A))
\end{tikzcd}
\]
with $B^0(G,A):=0$.  Then, for $n\geq 0$, the quotient group $H^n(G,A)=Z^n(G,A)/B^n(G,A)$ is the \textit{n-dimensional cohomology group} of $G$ with coefficients in $A$.

If $f:A\to B$ is a $G$-module homomorphism, then we have the induced homomorphism
\[
f:C^n(G,A)\to C^n(G,B),\quad x(g_1,\ldots,g_n)\mapsto fx(g_1,\ldots,g_n)
\]
and the commutative diagram
\[
\begin{tikzcd}
\cdots \arrow{r} &C^n(G,A) \arrow{r}{d^{n+1}} \arrow{d}{f}  &C^{n+1}(G,A) \arrow{r} \arrow{d}{f} &\cdots\\
\cdots \arrow{r} &C^n(G,B) \arrow{r}{d^{n+1}} &C^{n+1}(G,B) \arrow{r} &\cdots.
\end{tikzcd}
\]
So $f:A\to B$ induces a homomorphism $f:C^\bullet (G,A)\to C^\bullet (G,B) $ of complexes and hence we obtain homomorphisms $ f:H^n(G,A)\to H^n(G,B) $.  

In addition, using the Snake lemma of homological algebra, one can show (see for example {\cite[\S 1.3]{NSW}}) that every exact sequence $0\to A\to B\to C\to 0$ of $G$-modules gives rise to a canonical \textit{connecting homomorphism}
\[
\delta :H^n(G,C)\to H^{n+1}(G,A)
\]  
and we obtain the \textit{long exact cohomology sequence} 
\[
\begin{tikzcd}
0\arrow{r} &A^G \arrow{r} &B^G \arrow{r} &C^G \arrow{r}{\delta} &H^1(G,A) \arrow{r} &\cdots\\ \cdots \arrow{r} &H^n(G,A) \arrow{r} &H^n(G,B) \arrow{r} &H^n(G,C) \arrow{r}{\delta} &H^{n+1}(G,A) \arrow{r} &\cdots.
\end{tikzcd}
\]

There are also several important maps between cohomology groups involving a ``change of groups'' rather than a ``change of modules''.  Given two profinite groups $G$ and $G'$, a $G$-module $A$, a $G'$-module $A'$, we say that two homomorphisms
\[
f:A\to A',\quad \varphi:G'\to G
\]
are a \textit{compatible pair} if $f(\varphi (g')a)=g'f(a)$ for $g'\in G',\ a\in A$.  From such a pair of homomorphisms we obtain a homomorphism
\[
C^n(G,A)\to C^n(G',A'),\quad a\mapsto f\circ a\circ \varphi.
\]
This commutes with $d$ and hence induces a homomorphism of cohomology groups
\[
H^n(G,A)\to H^n(G',A').
\]

\begin{ex}
Let $H$ be a closed normal subgroup of $G$ and $A$ a $G$-module.  Then $A^H$ is a $G/H$-module.  The projection and injection
\[
G\surj G/H,\quad A^H\inj A
\]
form a compatible pair of homomorphisms, which induce the \textit{inflation} homomorphism
\[
inf:H^n(G/H,A^H)\to H^n(G,A), 
\]
given by
\[
(inf\ x)(g_1,\ldots,g_n)=x(\bar{g_1},\ldots,\bar{g_n}),\quad g_i\in G,
\]
where the image of $g\in G$ in $G/H$ is denoted $\bar{g}$.
\end{ex}
\begin{ex}
For an arbitrary closed subgroup $H$ of $G$, the $G$-module $A$ is also an $H$-module.  The compatible homomorphisms
\[
H\inj G,\quad id:A\to A
\]
induce the \textit{restriction} homomorphism
\[
res:H^n(G,A)\to H^n(H,A), 
\]
given by
\[
(res\ x)(h_1,\ldots,h_n)=x(h_1,\ldots,h_n),\quad h_i\in H.
\]
\end{ex}

If $H$ is an open subgroup of $G$, then in addition to the restriction, we have a map in the opposite direction called the \textit{corestriction}
\[
cor:H^n(H,A)\to H^n(G,A).
\]
This is a kind of norm map and for $n=0$, is the usual norm map
\[
N_{G/H}:A^H\to A^G,\quad a\mapsto \sum_{\sigma \in G/H} \sigma a.
\]
One of the main properties of the corestriction map is that $cor\circ res =[G:H]$.

We also have a bilinear map defined on the cohomology groups of $G$ which plays an important role in Galois cohomology.  It arises as follows.  Let $A,\ B,\ C$ be $G$-modules and suppose there exists a continuous bilinear map $A\times B\to C$, $(a,b)\mapsto a\cdot b$, such that $g(a\cdot b)=(ga)\cdot (gb)$ for $g\in G,\ a\in A,\ b\in B$.  For any pair $p,\ q\geq 0$, we can define a bilinear map, called the \textit{(cochain) cup product}
\[
\cup :C^p(G,A)\times C^q(G,B)\to C^{p+q}(G,C)
\] 
by
\[
(x\cup y) (g_1,\ldots ,g_p,h_1,\ldots,h_q)=x(g_1,\ldots,g_p)\cdot g_1g_2\ldots g_py(h_1,\ldots,h_q).
\]
For this map, we have the formula
\[
d(x\cup y)=dx\cup y + (-1)^px\cup dy.
\]
It follows that if $x$ and $y$ are cocycles then $x\cup y$ is a cocycle, and if one of the cochains $x$ and $y$ is a coboundary and the other a cocycle, then $x\cup y$ is a coboundary.  Hence we obtain a cup product on cohomology
\[
\cup :H^p(G,A)\times H^q(G,B)\to H^{p+q}(G,C).
\]
 
The cohomology groups of a profinite group $G$ can be built up from those of the finite quotient groups of $G$. Let $U$, $V$ run through the open normal subgroups of $G$.  If $V\subseteq U$, the projections
\[
\begin{tikzcd}
G^n \arrow{r} & (G/V)^n \arrow{r} &(G/U)^n
\end{tikzcd}
\]
induce homomorphisms
\[
\begin{tikzcd}
C^n(G/U,A^U) \arrow{r} &C^n(G/V,A^V) \arrow{r} &C^n(G,A),
\end{tikzcd}
\]
which commute with the operators $d^n$.  Hence, we obtain homomorphisms
\[
\begin{tikzcd}
H^n(G/U,A^U) \arrow{r} &H^n(G/V,A^V) \arrow{r} &H^n(G,A).
\end{tikzcd}
\]
The groups $ H^n(G/U,A^U) $ form a direct system giving a canonical homomorphism 
\[
\varinjlim _{U} H^n(G/U,A^U)\to H^n(G,A).
\]

\begin{prop}
\label{prop:finite}
The above homomorphism is an isomorphism:
\[
H^n(G,A) \cong \varinjlim _{U} H^n(G/U,A^U). 
\]
\end{prop}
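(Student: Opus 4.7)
The plan is to verify that the canonical map $\varphi\colon \varinjlim_U H^n(G/U, A^U) \to H^n(G,A)$ is bijective by exploiting the continuity of cochains together with the discreteness of $A$ and the compactness of $G^n$. The key input, from which both surjectivity and injectivity follow essentially formally, is a \emph{factorization lemma}: every continuous cochain $x\in C^m(G,A)$ factors through the natural projection $G^m \surj (G/U)^m$ for some open normal subgroup $U\trianglelefteq G$, as a map $\bar{x}\colon (G/U)^m \to A^U$.

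To prove this factorization lemma, observe that the continuity of $x$, the compactness of $G^m$, and the discreteness of $A$ together force the image $x(G^m)$ to be finite, since $G^m$ is covered by the disjoint open sets $x^{-1}(a)$ as $a$ ranges over the image. Because $A$ is a discrete $G$-module, the stabilizer in $G$ of each element of $x(G^m)$ is open, so the intersection $U_0$ of these finitely many stabilizers is an open normal subgroup of $G$ satisfying $x(G^m)\subseteq A^{U_0}$. Continuity of $x$ at each tuple $(g_1,\ldots,g_m)\in G^m$ provides an open normal subgroup $W_{(g_1,\ldots,g_m)}\trianglelefteq G$ such that $x$ is constant on $g_1 W\times\cdots\times g_m W$; a standard compactness argument then yields an open normal $U\subseteq U_0$ such that $x$ is constant on every coset of $U^m$ in $G^m$. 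The induced map $\bar{x}\colon (G/U)^m\to A^U$ has the required properties.

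For surjectivity of $\varphi$, given a class $[x]\in H^n(G,A)$ with cocycle representative $x$, the factorization lemma yields $U$ and $\bar{x}\in C^n(G/U, A^U)$ whose inflation equals $x$. Since inflation of cochains is precomposition with a surjection, it is an injective chain map, so $dx=0$ forces $d\bar{x}=0$; thus $\bar{x}\in Z^n(G/U, A^U)$ and its class is sent to $[x]$ by $\varphi$. For injectivity, suppose $[\bar{x}]\in H^n(G/U, A^U)$ maps to zero in $H^n(G,A)$, so the inflation $x$ of $\bar{x}$ equals $dy$ for some $y\in C^{n-1}(G,A)$. Applying the factorization lemma to $y$ produces an open normal $V\subseteq U$ and $\bar{y}\in C^{n-1}(G/V, A^V)$ lifting $y$; the inflations to $C^n(G,A)$ of $d\bar{y}$ and of the $G/V$-inflation of $\bar{x}$ both equal $x$, so by the injectivity of cochain inflation these two cochains already agree in $C^n(G/V, A^V)$. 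Hence the image of $[\bar{x}]$ vanishes in $H^n(G/V, A^V)$, and therefore in the direct limit.

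The main obstacle is the factorization lemma; once it is in hand, the rest of the proof is routine diagram chasing. The delicate point in establishing the lemma is extracting a single open normal subgroup $U$ that works uniformly over all of $G^m$, rather than a neighborhood-by-neighborhood choice — this is where both the compactness of $G^m$ and the profinite structure (guaranteeing a cofinal family of open normal subgroups) are essential.
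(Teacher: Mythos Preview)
Your argument is correct and is essentially the standard proof; the paper itself does not prove the statement but simply cites \cite[Proposition 1.2.5]{NSW}, and what you have written is precisely the argument given there. One very minor imprecision: the stabilizers of elements of $A$ are open but not a priori normal, so $U_0$ should be taken to be the normal core of their intersection (still open, since the index is finite) rather than the intersection itself.
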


\begin{proof}
See \cite[Proposition 1.2.5]{NSW}.
\end{proof}

Hence, in what follows, we may assume that $G$ is finite.  We will be interested primarily in the cases $n=0,1,2$.

\subsection{The lower dimensional cohomology groups}

\noindent \textbf{The group $H^0(G,A)$:}  We identify $C^0(G,A)$ with $A$ via the natural isomorphism $y\mapsto y(1)$.  Then, for $a\in A$, $(d^1a)(g)=ga-a$, so $H^0(G,A)=A^G$, the set of elements of $A$ left invariant by $G$.\\

\noindent \textbf{The group $H^1(G,A)$:}  A 1-cocycle is a continuous function $y:G\to A$ such that
\[
y(gh)=y(g)+gy(h) \quad \textnormal{for all} \ g,h\in G.
\]                
Such a function is also called a \textit{crossed homomorphism}.  It is a coboundary if there exists an element $a\in A$ such that $y(g)=ga-a$ for all $g\in G$.  A crossed homomorphism of this type is called \textit{principal}.  

Let $F$ be a field and $K/F$ a Galois extension with $G=\Gal (K/F)$.  Both the additive group $K^+$ and the multiplicative group $\kx$ are $G$-modules.  We have
\begin{thm}[Hilbert's Satz 90]
$H^1(G,\kx)=1$
\end{thm}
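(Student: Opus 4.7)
The plan is to follow the classical argument using linear independence of characters, after first reducing to the case where $G$ is finite. By Proposition \ref{prop:finite}, $H^1(G,\kx)$ is the direct limit of $H^1(\Gal(L/F),L^\times)$ over finite Galois subextensions $L/F$, so it suffices to prove the statement when $K/F$ is a finite Galois extension and $G=\Gal(K/F)$ is finite.

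So fix a finite $G$ and let $y\colon G\to \kx$ be a 1-cocycle, written multiplicatively; this means $y(gh)=y(g)\cdot g(y(h))$ for all $g,h\in G$. I want to produce $a\in \kx$ such that $y(g)=g(a)\cdot a^{-1}$ for all $g\in G$, i.e., that $y$ is principal in the multiplicative sense. The key tool is Dedekind's theorem on the linear independence of characters: the distinct automorphisms $h\in G$, viewed as maps $K\to K$, are linearly independent over $K$. Consequently the $K$-linear combination
\[
c = \sum_{h\in G} y(h)\cdot h(b)
\]
is nonzero for a suitable choice of $b\in K$, because the function $b\mapsto c$ is not identically zero.

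Having fixed such a $b$ and the resulting $c\in \kx$, I compute, for any $g\in G$,
\[
g(c) = \sum_{h\in G} g(y(h))\cdot gh(b).
\]
The cocycle condition rearranges to $g(y(h)) = y(g)^{-1}\cdot y(gh)$, so substituting and then reindexing the sum via $h\mapsto g^{-1}h$ yields
\[
g(c) = y(g)^{-1}\sum_{h\in G} y(gh)\cdot gh(b) = y(g)^{-1}\cdot c.
\]
Setting $a:=c^{-1}\in \kx$, this rearranges to $y(g) = c\cdot g(c)^{-1} = g(a)\cdot a^{-1}$, which is exactly the condition that $y\in B^1(G,\kx)$. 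Hence every 1-cocycle is a coboundary, proving $H^1(G,\kx)=1$.

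The only non-routine step is the existence of a $b$ making $c\neq 0$, and this is precisely where the Dedekind independence of characters enters; everything else is a direct manipulation of the cocycle identity. I would expect to cite Dedekind's lemma rather than reprove it, since it is a standard preliminary fact about Galois extensions.
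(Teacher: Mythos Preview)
Your proof is correct and follows essentially the same argument as the paper: reduce to the finite case via Proposition~\ref{prop:finite}, use linear independence of the automorphisms to produce a nonzero element $c=\sum_{h}y(h)\,h(b)$, and then compute $g(c)=y(g)^{-1}c$ from the cocycle relation. The only differences are cosmetic (variable names are swapped relative to the paper's version).
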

\begin{proof}
By Proposition~\ref{prop:finite}, we may assume that $K/F$ is finite.  Let $a:G\to \kx$ be a 1-cocycle.  Since the automorphisms $\sigma \in G$ are linearly independent over $K$, there exists $c\in \kx$ such that 
\[
b=\sum_{\sigma \in G} a(\sigma)\sigma c \neq 0.
\]
Then, for $\tau \in G$, we have
\[
\tau b=\sum_{\sigma \in G} \tau (a(\sigma))\tau \sigma c = \sum_{\sigma \in G} a(\tau)^{-1} a(\tau \sigma)\tau \sigma c = a(\tau)^{-1}b.
\]
Thus $ a(\tau)=b\tau (b)^{-1} $, so $a$ is a 1-coboundary.
\end{proof}
This in turn leads to the following important result, called \textit{Kummer theory}.  Choose $n\in \N$ prime to the characteristic of $F$.  Denote the group of $n$-th roots of unity in the separable closure $F_s$ of $F$ by $ \mu_n $ and the absolute Galois group $\Gal (F_s/F)$ of $F$ by $G_F$.  From the exact sequence of $G_F$-modules
\[
\begin{tikzcd}
1 \arrow{r} &\mu_n \arrow{r} &F_s^\times \arrow{r}{a\mapsto a^n} &F_s^\times \arrow{r} &1.
\end{tikzcd}
\]
we obtain, by Hilbert's Satz 90, the exact sequence
\[
\begin{tikzcd}
H^0(G_F,F_s^\times)=\fx \arrow{r}{a\mapsto a^n} &\fx \arrow{r}{\delta} &H^1(G_F,\mu_n) \arrow{r} &H^1(G_F,F_s^\times)=1,
\end{tikzcd}
\]
and hence
\[
H^1(G_F,\mu_n)\cong \fx/(\fx)^n.
\]
Note that if $p$ is a prime such that $F$ contains a primitive $p$-th root of unity $\zeta_p$, then $\mu_p$ is a trivial $G_F$-module and we have
\[
\fx/(\fx)^p\cong H^1(G_F,\F_p)=\Hom(G_F,\F_p),
\]
where we consider $\F_p$ to be a $G_F$-module with trivial action.  For each $a\in \fx$, we have an element $\chi_a\in H^1(G_F,\F_p)$ defined by $\sigma(\sqrt[p]{a})=\zeta_p^{\chi_a(\sigma)}\sqrt[p]{a}$, for all $\sigma\in G_F$.

We obtain another important result by considering the case in which $G$ is a pro-$p$ group and $\F_p$ is a $G$-module with trivial $G$ action.  $H^1(G, \F_p)$ is then the $\F_p$-vector space of all continuous homomorphisms of $G$ into the discrete group $\F_p$.  Since each such homomorphism vanishes on $G^p[G,G]$, $H^1(G, \F_p)\cong H^1(G/G^p[G,G], \F_p)=\Hom (G/G^p[G,G], \F_p)$.  This implies that $H^1(G, \F_p)$ and $G/G^p[G,G]$ are dual to each other.  By Burnside's Basis Theorem, if $\dim_{\F_p} H^1(G, \F_p) =n<\infty$, $G$ is a finitely generated pro-$p$ group with $n$ as the minimal number of generators.\\

\noindent \textbf{The group $H^2(G,A)$:}  A 2-cocycle is a continuous function $x:G\times G\to A$ such that $d^3x=0$, that is
\[
x(gh,k)+x(g,h)=x(g,hk)+gx(h,k) \quad \textnormal{for all} \ g,h,k\in G.
\]
Such a function is a 2-coboundary if
\[
x(g,h)=y(g)-y(gh)+gy(h)
\]
for a 1-cochain $y:G\to A$.

The 2-cocycles occur in connection with group extensions.  Let
\[
\begin{tikzcd}
1 \arrow{r} &A \arrow{r}{\iota} &E \arrow{r}{\pi} &G \arrow{r} &1.
\end{tikzcd}
\]
be a group extension with abelian kernel $A$ such that $\iota (\sigma a)=\hat{\sigma} \iota (a) \hat{\sigma}^{-1}$ for $\sigma \in G$ and $a\in A$, where $\hat{\sigma}\in E$ is a pre-image of $\sigma$.  We refer to this as an extension of $G$ by the $G$-module $A$.  An extension of $G$ by a trivial $G$-module is called \textit{central}.  Two extensions
\[
\begin{tikzcd}
1 \arrow{r} &A \arrow{r} &E \arrow{r} &G \arrow{r} &1.
\end{tikzcd}
\] \
and
\[
\begin{tikzcd}
1 \arrow{r} &A \arrow{r} &F \arrow{r} &G \arrow{r} &1.
\end{tikzcd}
\]
are said to be \textit{equivalent} if there exists a homomorphism $\varphi :E\to F$ making the diagram
\[
\begin{tikzcd}
1 \arrow{r} &A \arrow{r} \arrow{d}{id} &E \arrow{r} \arrow{d}{\varphi} &G \arrow{d}{id} \arrow{r} &1\\
1 \arrow{r} &A \arrow{r} &F \arrow{r} &G \arrow{r} &1
\end{tikzcd}
\]
commute.  In this case $\varphi $ is an isomorphism and both extensions induce the same $G$-module structure on $A$.

If $s:G\to E$ is a section, we obtain a map $c:G\times G \to A$ by 
\[
s_\sigma s_\tau = \iota (c_{\sigma ,\tau})s_{\sigma\tau}
\]
for $\sigma,\tau \in G$, where, for convenience of notation, we write $s_\sigma$ for $s(\sigma)$ and similarly for $c$.  

Then, by associativity in $E$, we have
\[
c_{\rho\sigma,\tau}+ c_{\rho,\sigma} = c_{\rho,\sigma\tau} + \rho c_{\sigma,\tau}  \quad \textnormal{for all} \ \rho,\sigma,\tau \in G.
\]
Such a map $c$ is called a \textit{factor system}.  The set of factor systems forms an abelian group $\sZ^2(G,A)$ under point-wise addition.

Let $a:G\to A$ be an arbitrary function.  The map $(\sigma ,\tau) \mapsto a_\sigma + \sigma a_\tau - a_{\sigma\tau}$ is a factor system, referred to as \textit{split}, and the map $a$ is called a set of \textit{splitting factors} for the factor system. The split factor systems form a subgroup $\sB^2(G,A)$ of $\sZ^2(G,A)$ and 
\[
H^2(G,A)\cong \sZ^2(G,A)/\sB^2(G,A).
\]

Given another section $t:G\to E$, we must have $t_\sigma = \iota (a_\sigma)s_\sigma$ for some map $a:G\to A$, and the factor system given by $t$ is 
\[
(\sigma ,\tau)\mapsto c_{\sigma,\tau} + (a_\sigma + \sigma a_\tau -a_{\sigma\tau}).
\]
Hence, the cohomology class of $c$ is well defined and is referred to as the cohomology class of the extension
\[
\begin{tikzcd}
1 \arrow{r} &A \arrow{r}{\iota} &E \arrow{r}{\pi} &G \arrow{r} &1.
\end{tikzcd}
\]  
We then have

\begin{thm}
Two extensions of G by the G-module A are equivalent if and only if they have the same cohomology class.  Furthermore, for $\gamma \in H^2(G,A)$ there exists an extension with cohomology class $\gamma$. 
\end{thm}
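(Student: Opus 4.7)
The plan is to handle the three implications separately: equivalent extensions yield the same class, extensions with the same class are equivalent, and every class arises from some extension.

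First I would show that the cohomology class of an extension is independent of the choice of section. Given an extension $1\to A\to E\xrightarrow{\pi} G\to 1$ with two sections $s,t\colon G\to E$, I would write $t_\sigma=\iota(a_\sigma)s_\sigma$ for a suitable map $a\colon G\to A$; expanding $t_\sigma t_\tau=\iota(c'_{\sigma,\tau})t_{\sigma\tau}$ and comparing with the factor system $c$ given by $s$, a direct calculation shows
\[
c'_{\sigma,\tau}=c_{\sigma,\tau}+(a_\sigma+\sigma a_\tau-a_{\sigma\tau}),
\]
so $c$ and $c'$ differ by an element of $\sB^2(G,A)$ and define the same class in $H^2(G,A)$. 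Next, suppose two extensions $E$ and $F$ are equivalent via $\varphi\colon E\to F$. Then for any section $s\colon G\to E$ of $E$, $\varphi\circ s$ is a section of $F$, and the factor systems computed from $s$ and $\varphi\circ s$ agree under the identification $A\to A$, so both extensions yield the same cohomology class.

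Conversely, suppose $E$ and $F$ have the same class, with factor systems $c$ (from a section $s$) and $c'$ (from a section $t$) satisfying $c'_{\sigma,\tau}-c_{\sigma,\tau}=a_\sigma+\sigma a_\tau-a_{\sigma\tau}$ for some $a\colon G\to A$. Replacing $s$ by $s'_\sigma:=\iota(a_\sigma)s_\sigma$ (which changes $c$ into $c'$ by the calculation above), I may assume $c=c'$. Then every element of $E$ is uniquely $\iota(x)s_\sigma$ for $x\in A$, $\sigma\in G$, and similarly for $F$ with $t_\sigma$. Define $\varphi\colon E\to F$ by $\varphi(\iota(x)s_\sigma)=\iota(x)t_\sigma$. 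Using the shared factor system $c$, a direct check shows $\varphi$ is a homomorphism fitting into the commutative diagram with identity on $A$ and $G$; by the five lemma (or direct inspection) it is an isomorphism. Hence $E$ and $F$ are equivalent.

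For the existence part, given a continuous 2-cocycle $c\in \sZ^2(G,A)$ representing $\gamma$, I would build $E:=A\times G$ as a set, equipped with the multiplication
\[
(x,\sigma)(y,\tau):=(x+\sigma y+c_{\sigma,\tau},\sigma\tau).
\]
The cocycle condition $c_{\rho\sigma,\tau}+c_{\rho,\sigma}=c_{\rho,\sigma\tau}+\rho c_{\sigma,\tau}$ is exactly what is needed for associativity; the identity is $(-c_{1,1},1)$ (or one can normalize $c$ so that $c_{1,\sigma}=c_{\sigma,1}=0$ first, after which the identity is $(0,1)$), and inverses exist by a straightforward formula. The maps $\iota\colon A\to E$, $x\mapsto(x,1)$, and $\pi\colon E\to G$, $(x,\sigma)\mapsto\sigma$, give a short exact sequence inducing the given $G$-action on $A$, and the tautological section $\sigma\mapsto(0,\sigma)$ produces the factor system $c$. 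Thus the cohomology class of this extension is $\gamma$.

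The main technical obstacle I anticipate is handling continuity, since $G$ is a profinite group and $A$ is a discrete module: one must verify that the set-theoretic section and the resulting group structure on $A\times G$ are compatible with the topology so that $E$ is a topological extension. This can be addressed by choosing continuous sections (which exist since the cocycles are continuous) and by giving $E$ the topology making $\pi$ open with $A$ discrete in each fibre; the profinite case of Proposition~\ref{prop:finite} reduces verifications to finite quotients, where continuity is automatic.
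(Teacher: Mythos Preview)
Your argument is correct and is the standard proof of this classical result. The paper itself does not supply a proof here; its ``proof'' is simply a citation (``See \cite[Theorem 2.3.1]{Ledet}''). Note also that the paragraph immediately preceding the theorem in the paper already records the well-definedness of the cohomology class (your first step), so in the paper's logic only the equivalence and existence parts remain, both deferred to the reference. Your write-up thus supplies what the paper outsources, and the approach you take---normalize sections so the factor systems coincide, then transport $\iota(x)s_\sigma\mapsto\iota(x)t_\sigma$; for existence, build $E=A\times G$ with multiplication twisted by $c$---is exactly the classical construction one finds in Ledet or any comparable text. Your final remark about continuity is well placed given the profinite setting of the chapter, though the paper does not engage with it either.
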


\begin{proof}
See \cite[Theorem 2.3.1]{Ledet}.
\end{proof}

\begin{ex}
The extension with cohomology class $0\in H^2(G,A)$ is
\[
\begin{tikzcd}
1 \arrow{r} &A \arrow{r}{a\mapsto (a,1)} &A\rtimes G \arrow{r}{(a,\sigma)\mapsto \sigma} &G \arrow{r} &1,
\end{tikzcd}
\]
where $A\rtimes G$ is the \textit{semi-direct product}, i.e. the set $A\times G$ equipped with the composition $(a,\sigma)(b,\tau)=(a+\sigma b, \sigma\tau)$.
\end{ex}

\begin{ex}
In the case where $G=C_n$ is a cyclic group of order $n$ generated by $\sigma$, we can choose the section $G\to E$ as $\sigma^i \mapsto s^i,\ i=0,1,\ldots,n-1$, where $s\in E$ is a pre-image of $\sigma$.  Then $s^n=\iota(a)$ for some $a\in A$ and since $\iota(\sigma a)=s\iota(a)s^{-1}$, we have $a\in A^{C_n}$.  So every cohomology class in $H^2(C_n,A)$ is represented by a factor system of the form
\[
\begin{aligned}
c_{\sigma^i,\sigma^j}&= 
\begin{cases}
0,\quad i+j<n\\
a,\quad i+j\geq n 
\end{cases}\\
\end{aligned}
\]
for $i,j\in \{0,1,\ldots,n-1\}$, where $a\in A^{C_n}$.  Conversely, for any such $a$ this is a factor system.  A set of splitting factors for $c$ is given by $a_1=0,\ a_\sigma=b,\ a_{\sigma^2}=b+\sigma b,\ldots,a_{\sigma^{n-1}}=b+\sigma b+\cdots +\sigma^{n-2}b$, where $b\in A$ with $\Tr_{C_n}(b)=a$.  Hence
\[
H^2(C_n,A)\cong A^{C_n}/\Tr_{C_n}A.
\]
\end{ex}

We obtain another important result when we consider the case in which $G$ is a finitely generated pro-$p$ group.  Suppose $\{x_1,\dots,x_n\}$ is a minimal system of generators of $G$.  Then $G$ has a minimal presentation
\[
\begin{tikzcd}
1 \arrow{r} &R \arrow{r} &S(n) \arrow{r} &G \arrow{r} &1,
\end{tikzcd}
\]
where $S(n)$ is the free pro-$p$ group of rank $n$.  The \textit{rank} of the (closed normal) subgroup $R$ is the number of relations between the $x_i$'s.  

\begin{prop}
The following two conditions are equivalent:
\begin{enumerate}
\item[(i)] The subgroup $R$ is of finite rank (as a closed normal subgroup of $S(n)$
\item[(ii)] $H^2(G,\F_p)$ is of finite dimension
\end{enumerate}
If these conditions are satisfied, one has the equality
\[
r=n-h_1+h_2,
\]
where $r$ is the rank of the normal subgroup $R$ and $h_i=\dim_{\F_p}H^i(G,\F_p)$.
\end{prop}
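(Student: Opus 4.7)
My plan is to apply the Hochschild--Serre five-term exact sequence to the extension $1 \to R \to S(n) \to G \to 1$ with coefficients in the trivial module $\F_p$, obtaining
\[
0 \to H^1(G, \F_p) \to H^1(S(n), \F_p) \to H^1(R, \F_p)^G \to H^2(G, \F_p) \to H^2(S(n), \F_p).
\]
Since $S(n)$ is a free pro-$p$ group, it has cohomological dimension $1$, so the rightmost term vanishes. Moreover, minimality of the presentation implies, via Burnside's basis theorem, that the surjection $S(n)/\Phi(S(n)) \twoheadrightarrow G/\Phi(G)$ is an isomorphism of $\F_p$-vector spaces; $\F_p$-duality then yields that the inflation $H^1(G, \F_p) \to H^1(S(n), \F_p)$ is also an isomorphism. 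The sequence therefore collapses to a canonical isomorphism
\[
H^2(G, \F_p) \cong H^1(R, \F_p)^G.
\]

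Next, I intend to identify $\dim_{\F_p} H^1(R, \F_p)^G$ with $r$. A continuous homomorphism $\phi \colon R \to \F_p$ factors automatically through $R/R^p[R,R]$, and it is $G$-invariant (where $G$ acts on $R$ by conjugation via any lift to $S(n)$) exactly when $\phi([s,r]) = 0$ for all $s \in S(n), r \in R$, i.e.\ when $\phi$ vanishes on $[S(n), R]$. Hence
\[
H^1(R, \F_p)^G \cong \Hom\bigl(R / R^p[S(n), R],\, \F_p\bigr).
\]
Then I will invoke a normal-closure analogue of Burnside's basis theorem: a finite subset $\{r_1, \ldots, r_m\} \subseteq R$ generates $R$ as a closed normal subgroup of $S(n)$ if and only if its image spans $R/R^p[S(n), R]$ as an $\F_p$-vector space. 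Consequently $r = \dim_{\F_p} R/R^p[S(n), R] = h_2$.

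Since the presentation is minimal, $n = d(G) = h_1$, so the formula $r = n - h_1 + h_2$ reduces to the identity $r = h_2$ just established. The equivalence of (i) and (ii) is then immediate: $R$ has finite rank as a closed normal subgroup of $S(n)$ precisely when $H^2(G, \F_p)$ is finite-dimensional, since these two dimensions coincide.

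The main obstacle I anticipate is making rigorous the normal-closure version of Burnside's basis theorem used in the second paragraph. This requires showing that the maximal proper $S(n)$-invariant open subgroups of $R$ are precisely the kernels of $G$-invariant continuous surjections $R \twoheadrightarrow \F_p$, and that their intersection equals $R^p[S(n), R]$. The key points are that any pro-$p$ group acts trivially on $\F_p$ (since $|\mathrm{Aut}(\F_p)| = p-1$ is coprime to $p$), so every open $S(n)$-invariant subgroup of index $p$ in $R$ contains $[S(n), R]$, and standard compactness arguments guarantee that a lift of any $\F_p$-basis of $R/R^p[S(n),R]$ topologically normally generates $R$.
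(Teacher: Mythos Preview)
Your argument is correct and is precisely the standard proof of this result. The paper itself does not give a proof; it simply cites Serre's \emph{Cohomologie Galoisienne} (Proposition 27), where the argument you outline---the five-term exact sequence for $1\to R\to S(n)\to G\to 1$, vanishing of $H^2(S(n),\F_p)$ by freeness, the identification $H^1(R,\F_p)^G\cong\Hom(R/R^p[S(n),R],\F_p)$, and the normal-closure Burnside theorem---is exactly what is carried out. Your anticipated obstacle is handled in the standard references (e.g.\ Koch or Neukirch--Schmidt--Wingberg) just as you sketch, so there is nothing missing.
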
 

\begin{proof}
See \cite[Proposition 27]{Se2}
\end{proof}

Since $\{x_1,\dots,x_n\}$ is a minimal system of generators of $G$, $\dim_{\F_p}H^1(G,\F_p)=n$.  Hence $r=\dim_{\F_p}H^2(G,\F_p)$ is the minimal number of relations between the generators of $G$.

\section{Central Simple Algebras and the Brauer Group}
\label{sec:csa}
 
\begin{defn}
A \textit{central simple algebra} (CSA) $A/F$ over a field $F$ is a finite dimensional $F$-algebra having  center $F$ and no proper nontrivial two-sided ideals.  We say that an extension field $K/F$ is a \textit{splitting field} for $A/F$, or that $A/F$ \textit{splits} over $K$, if $A/F\otimes_F K \cong M_n(K)$ for some $n$.  Two CSA's $A/F$ and $B/F$ are called \textit{similar} if $A/F\otimes_{F} M_r(F) \cong B/F\otimes_{F} M_s(F)$ for some $r, s$ and we write $A/F \sim B/F$.  If $F$ is understood, we will abbreviate $A/F$ to $A$.
\end{defn}

We have the following characterization and properties of central simple algebras.

\begin{prop}
\label{prop:csa}
For a finite dimensional $F$-algebra $A$ the following are equivalent:
\begin{enumerate}
\item[(i)] $A$ is a CSA.
\item[(ii)] If $F_s$ is the separable closure of $F$, then $A$ splits over $F_s$.
\item[(iii)] There exists a finite Galois extension $K/F$ such that $A$ splits over $K$.
\item[(iv)] $A\cong M_n(D)$ for some $n$, where $D$ is a skew field over $F$ of finite degree. 
\end{enumerate}
\end{prop}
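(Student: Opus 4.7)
The plan is to prove the cycle of implications (i) $\Leftrightarrow$ (iv), (iii) $\Rightarrow$ (ii), (ii) $\Rightarrow$ (iii), and (iv) $\Rightarrow$ (iii), so that every condition is connected. I would structure the argument so that the structural Wedderburn-type result does the heavy lifting on the algebraic side, and a finiteness/Galois-closure argument handles the passage between $F_s$ and a finite Galois extension.

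First I would establish (i) $\Leftrightarrow$ (iv) using the Wedderburn structure theorem for simple Artinian rings: any finite dimensional simple $F$-algebra is isomorphic to $M_n(D)$ for some skew field $D$ of finite $F$-dimension, and the center of $M_n(D)$ is naturally identified with the center of $D$. Thus $A$ has center $F$ if and only if $D$ has center $F$. Conversely, $M_n(D)$ is simple whenever $D$ is a skew field, so (iv) implies the simple half of (i), and the center computation gives the centrality half. This is essentially a reference to a standard noncommutative algebra fact, so I would just cite it and check the center claim.

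Next (iii) $\Rightarrow$ (ii) is immediate because any finite Galois extension $K/F$ embeds in $F_s$ and tensor products are transitive, so if $A\otimes_F K \cong M_n(K)$ then $A\otimes_F F_s \cong M_n(F_s)$. For (ii) $\Rightarrow$ (iii), I would use the finite dimensionality of $A$: an explicit isomorphism $\varphi\colon A\otimes_F F_s \xrightarrow{\sim} M_n(F_s)$ is determined by the images of a finite $F$-basis of $A$, and each such image involves only finitely many elements of $F_s$. Hence $\varphi$ already takes values in $M_n(L)$ for some finite separable extension $L/F$, and a dimension count shows the induced map $A\otimes_F L \to M_n(L)$ is an isomorphism. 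Passing to the Galois closure $K$ of $L/F$ gives (iii).

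The main obstacle is (iv) $\Rightarrow$ (iii), which requires producing an honest splitting field. Here I would invoke the theorem that a maximal subfield $K$ of a finite dimensional central skew field $D/F$ satisfies $[K:F]^2 = \dim_F D$ and splits $D$, i.e. $D\otimes_F K \cong M_m(K)$ with $m=[K:F]$. Combining this with (iv), one gets $A\otimes_F K \cong M_n(D\otimes_F K) \cong M_n(M_m(K)) \cong M_{nm}(K)$. If $K/F$ is not already separable/Galois, one replaces $K$ by a maximal separable subfield (a standard fact in characteristic $p$, using that any central skew field over $F$ possesses such a maximal subfield), and then takes its Galois closure; the splitting property is preserved under further extension of scalars because $M_{nm}(K)\otimes_K K' \cong M_{nm}(K')$. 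This closes the loop and completes the equivalence.
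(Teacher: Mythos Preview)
The paper itself gives no argument here: its proof is simply a citation to \cite[Proposition 6.3.1]{NSW}. So there is no ``paper's approach'' to compare against, and your write-up is already more detailed than what appears in the thesis.

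That said, your chain of implications does not close. You establish (i)~$\Leftrightarrow$~(iv), (ii)~$\Leftrightarrow$~(iii), and (iv)~$\Rightarrow$~(iii), which yields
\[
(\text{i}) \Longleftrightarrow (\text{iv}) \Longrightarrow (\text{iii}) \Longleftrightarrow (\text{ii}),
\]
but nowhere do you prove an arrow from the right-hand cluster back to the left. You still need either (ii)~$\Rightarrow$~(i) or (iii)~$\Rightarrow$~(iv). The standard fix is (ii)~$\Rightarrow$~(i): if $A\otimes_F F_s \cong M_n(F_s)$, then any two-sided ideal $I\subsetneq A$ yields, by flatness, a proper two-sided ideal $I\otimes_F F_s$ of $M_n(F_s)$, forcing $I=0$; and since $Z(A\otimes_F F_s)=Z(A)\otimes_F F_s$ for a field extension, comparing $F_s$-dimensions gives $Z(A)=F$. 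This is short, but without it the equivalence is not established.

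A minor point on your (ii)~$\Rightarrow$~(iii): the phrase ``a dimension count shows the induced map $A\otimes_F L \to M_n(L)$ is an isomorphism'' hides the injectivity step. Cleanest is to note that base-changing this $L$-algebra map along $L\hookrightarrow F_s$ recovers the given isomorphism $A\otimes_F F_s \to M_n(F_s)$, and faithful flatness of $F_s$ over $L$ then forces the $L$-level map to be an isomorphism. Once you know (ii)~$\Rightarrow$~(i) you could alternatively argue that $A\otimes_F L$ is simple, but as written you have not yet established simplicity of $A$ at that point.
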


\begin{proof}
See \cite[Proposition 6.3.1]{NSW}.
\end{proof}

\begin{prop}
Let $A/F$ be a CSA.  Then $A\otimes_F A^{op}\cong M_n(F)$, where $n=\dim_F A$, and $A^{op}$ is the opposite algebra, i.e., $A$ equipped with the multiplication $a\cdot b= ba$.
\end{prop}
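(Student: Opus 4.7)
The plan is to produce an explicit isomorphism by considering the natural action of $A\otimes_F A^{op}$ on $A$ itself (by left and right multiplication), and then deducing that this map must be an isomorphism from simplicity plus a dimension count.

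First I would define
\[
\phi : A\otimes_F A^{op} \longrightarrow \End_F(A),\qquad \phi(a\otimes b)(x)=axb.
\]
One checks that $\phi$ is well-defined $F$-bilinear and that it respects multiplication: using the reversed multiplication in $A^{op}$, one has $\phi((a_1\otimes b_1)(a_2\otimes b_2))=\phi(a_1a_2\otimes b_2b_1)$, and this equals $\phi(a_1\otimes b_1)\circ \phi(a_2\otimes b_2)$ as endomorphisms of $A$. So $\phi$ is an $F$-algebra homomorphism, and $\phi(1\otimes 1)=\id_A\neq 0$.

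Next I would show that $A\otimes_F A^{op}$ is itself a CSA over $F$. By Proposition~\ref{prop:csa}(ii), it suffices to see that $(A\otimes_F A^{op})\otimes_F F_s$ is a matrix algebra over $F_s$. But $A\otimes_F F_s\cong M_n(F_s)$ by hypothesis, and since passing to the opposite commutes with base change, $A^{op}\otimes_F F_s\cong M_n(F_s)^{op}$, which is isomorphic to $M_n(F_s)$ via transposition. Therefore
\[
(A\otimes_F A^{op})\otimes_F F_s\cong M_n(F_s)\otimes_{F_s} M_n(F_s)\cong M_{n^2}(F_s),
\]
so $A\otimes_F A^{op}$ is a CSA of dimension $n^2$ over $F$. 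In particular it is simple.

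Finally I would invoke simplicity: the kernel of the algebra map $\phi$ is a two-sided ideal of $A\otimes_F A^{op}$, and since $\phi\neq 0$ the kernel must be zero, so $\phi$ is injective. A dimension count then finishes the argument: $\dim_F(A\otimes_F A^{op})=n^2=\dim_F\End_F(A)$, so $\phi$ is an isomorphism, and picking an $F$-basis of $A$ identifies $\End_F(A)$ with $M_n(F)$. The main point that needs care is the simplicity of $A\otimes_F A^{op}$; once that is in hand from Proposition~\ref{prop:csa}(ii), the rest is an immediate dimension-count from a natural candidate map.
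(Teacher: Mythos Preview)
Your argument is correct and is the standard proof of this fact. The paper does not actually give a proof here; it simply cites \cite[Proposition 3.3.1]{Ledet}, so there is nothing substantive to compare your approach against beyond noting that what you have written is precisely the usual argument one finds in such references.
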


\begin{proof}
See \cite[Proposition 3.3.1]{Ledet}.
\end{proof}

The $F$-algebra $A^e=A\otimes_F A^{op}$ is sometimes known as the \textit{enveloping algebra} of $A$.  Also, by \cite[Corollary 3.3.4]{Ledet}, the tensor product $A\otimes_F B$ of two central simple $F$-algebras is again central simple and $\sim $ respects tensor products, so we have a well defined composition on the similarity classes of CSA's over $F$ given by
\[
[A][B]=[A\otimes_F B].
\]
This is associative, commutative, has identity $1=[F]$ and $[A]^{-1}=[A^{op}]$, so the similarity classes constitute an abelian group, known as the \textit{Brauer group} $\Br(F)$ of $F$.

\begin{ex}
Let $K/F$ be a Galois extension of degree $n=[K:F]$ with Galois group $G=\Gal(K/F)$.  Let $x:G\times G \to \kx$ be a normalized (i.e. $x(\sigma ,1)=x(1,\sigma)=1$) 2-cocycle and consider the $n$-dimensional $K$-vector space
\[
K^{(G)}=\bigoplus_{\sigma \in G}Ke_\sigma
\]
with coordinates indexed by $G$.  Define a multiplication on $K^{(G)}$ by
\[
(\sum_\sigma a_\sigma e_\sigma)(\sum_\tau b_\tau e_\tau)=\sum_{\sigma,\tau}a_\sigma \sigma b_\tau x(\sigma, \tau)e_{\sigma \tau}.
\]
This multiplication has identity $1=e_1$ and is associative due to the cocycle relation
\[
x(\sigma,\tau)x(\sigma\tau,\rho)=\sigma x(\tau,\rho)x(\sigma, \tau\rho),
\]
hence making $K^{(G)}$ an $n^2$-dimensional $F$-algebra, which is called the \textit{crossed product} of $K$ and $G$ by $x$, denoted $(K,G,x)$.
\end{ex}

\begin{prop}
\label{prop:crossed product}
Crossed product algebras have the following properties:
\begin{enumerate}
\item[(i)] $(K,G,x)$ is a central simple $F$-algebra which splits over $K$.
\item[(ii)] The normalized cocycles $x$ and $y$ are cohomologous if and only if $ (K,G,x)\cong (K,G,y) $.
\item[(iii)] $ (K,G,xy)\sim (K,G,x)\otimes_F (K,G,y)$.
\item[(iv)] Every central simple $F$-algebra which splits over $K$ is similar to a crossed product algebra.
\end{enumerate}
\end{prop}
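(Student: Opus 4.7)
The plan is to establish the four properties in order, leaning on direct computation in the $e_\sigma$-basis for the first three and invoking Skolem--Noether only at the end.

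For (i), I view $K = Ke_1$ as a subalgebra and note the key commutation rule $e_\sigma \lambda = \sigma(\lambda) e_\sigma$ for $\lambda \in K$. To prove simplicity, I take a nonzero two-sided ideal $I$ and pick a nonzero $z = \sum_{\sigma \in T} a_\sigma e_\sigma \in I$ with $|T|$ minimal. If $|T| \geq 2$, I choose distinct $\sigma,\tau \in T$ and $\lambda \in K$ with $\sigma(\lambda) \neq \tau(\lambda)$; then $\lambda z - z\cdot \tau(\lambda)$ kills the $e_\tau$-term while keeping the $e_\sigma$-term, contradicting minimality. Hence $|T|=1$, and since each $e_\sigma$ is a unit (the cocycle takes values in $\kx$), we get $e_1 \in I$, so $I = (K,G,x)$. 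For the center, $z = \sum a_\sigma e_\sigma$ central forces $a_\sigma = 0$ when $\sigma \neq 1$ (from commuting with $K$) and then $\tau(a_1) = a_1$ for all $\tau$ (from commuting with $e_\tau$), so $a_1 \in F$. Finally, $(K,G,x)$ has dimension $n^2$ over $F$ and contains $K$ as a strictly maximal commutative subalgebra; tensoring with $K$ makes $K\otimes_F K \cong \prod_\sigma K$ split, and a CSA of dimension $n^2$ over $K$ containing a split \'etale subalgebra of dimension $n$ is forced to be $M_n(K)$.

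For (ii), if $y(\sigma,\tau) = x(\sigma,\tau)\cdot a_\sigma \sigma(a_\tau) a_{\sigma\tau}^{-1}$ with $a:G\to \kx$ normalized, the map $\varphi(\lambda e_\sigma) = \lambda a_\sigma e'_\sigma$ is a well-defined $K$-linear algebra isomorphism by a direct check on products. Conversely, given an isomorphism $\varphi: (K,G,x) \to (K,G,y)$, Skolem--Noether applied to the two embeddings of $K$ in $(K,G,y)$ lets me compose $\varphi$ with an inner automorphism so that it restricts to the identity on $K$; then $\varphi(e_\sigma)$ transforms $K$ via $\sigma$, hence by the centralizer computation already performed $\varphi(e_\sigma) = a_\sigma e'_\sigma$ for some $a_\sigma \in \kx$, and writing out $\varphi(e_\sigma e_\tau) = \varphi(e_\sigma)\varphi(e_\tau)$ gives precisely the coboundary relation.

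For (iii), I expect this to be the main obstacle, since a direct isomorphism on the tensor product is messy. The cleanest route is to work inside $A := (K,G,x) \otimes_F (K,G,y)$: let $\Delta K$ be the image of $K$ under $\lambda \mapsto \lambda \otimes 1$, and consider the subalgebra $B$ generated by $\Delta K$ together with the elements $f_\sigma := e_\sigma^x \otimes e_\sigma^y$. A direct computation using $e_\sigma \lambda = \sigma(\lambda)e_\sigma$ shows $f_\sigma \lambda = \sigma(\lambda) f_\sigma$ and $f_\sigma f_\tau = x(\sigma,\tau)y(\sigma,\tau) f_{\sigma\tau}$, so $B \cong (K,G,xy)$. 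The centralizer of $\Delta K$ in $A$ has $K$-dimension $n^2$ (by the double-centralizer theorem applied to a maximal subfield of a CSA), while $B$ has $F$-dimension $n^2$, so $A$ is similar to $B$, proving (iii). Alternatively, once (iv) is available, (iii) follows by comparing cohomology classes and invoking (ii).

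For (iv), let $A$ be a CSA over $F$ split by $K$ with $\dim_F A = n^2$. The standard consequence of the splitting hypothesis is that $A$ contains a copy $B$ of $K$ as a maximal subfield, with $C_A(B) = B$. For each $\sigma \in G$, Skolem--Noether applied to the two $F$-algebra embeddings $B \hookrightarrow A$ and $\sigma \colon B \xrightarrow{\sim} B \hookrightarrow A$ yields a unit $u_\sigma \in A^\times$ with $u_\sigma \lambda u_\sigma^{-1} = \sigma(\lambda)$ for $\lambda \in B$. Then $u_\sigma u_\tau u_{\sigma\tau}^{-1}$ centralizes $B$, so it lies in $B^\times = \kx$; this defines a normalized $2$-cocycle $x(\sigma,\tau)$, and the $F$-subalgebra generated by $B$ and the $u_\sigma$ is a homomorphic image of $(K,G,x)$ by the universal property encoded in the defining relations. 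Since $(K,G,x)$ is simple by (i) and the subalgebra has $F$-dimension at most $n^2 = \dim_F A$, the map $(K,G,x) \to A$ is an isomorphism, so $A \sim (K,G,x)$ on the nose.
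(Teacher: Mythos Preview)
The paper does not give its own proof of this proposition; it simply cites \cite[Proposition 6.3.3]{NSW}. Your arguments for (i) and (ii) are the standard ones and are correct.

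Your argument for (iii), however, has a genuine error. The tensor product $A=(K,G,x)\otimes_F(K,G,y)$ is taken over $F$, not over $K$, so
\[
f_\sigma f_\tau \;=\; (e_\sigma^x e_\tau^x)\otimes(e_\sigma^y e_\tau^y)\;=\;\bigl(x(\sigma,\tau)\,e_{\sigma\tau}^x\bigr)\otimes\bigl(y(\sigma,\tau)\,e_{\sigma\tau}^y\bigr),
\]
and this is \emph{not} $(x(\sigma,\tau)y(\sigma,\tau)\otimes 1)\,f_{\sigma\tau}$ unless the cocycle values happen to lie in $F^\times$ (which they do for cyclic $G$, but not in general). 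So the $F$-subalgebra generated by $K\otimes 1$ and the $f_\sigma$ is not a copy of $(K,G,xy)$. Your fallback (``once (iv) is available, (iii) follows by comparing cohomology classes and invoking (ii)'') does not close the gap either: parts (i), (ii), (iv) give a \emph{bijection} of sets $H^2(G,K^\times)\to\Br(K/F)$, but the content of (iii) is precisely that this bijection is a group homomorphism, which still has to be proved. The standard repair is to exploit the decomposition $K\otimes_F K\cong\prod_{\sigma\in G}K$ inside $A$ and cut down to the corner at the identity idempotent, where the two copies of $K$ are identified and the computation you wrote becomes valid.

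In (iv) there is a smaller gap: you begin with ``let $A$ be a CSA over $F$ split by $K$ with $\dim_F A=n^2$'' and then assert that $K$ embeds as a maximal subfield, but the proposition concerns an arbitrary CSA split by $K$. The reduction to an algebra of degree exactly $n$ containing $K$ is a standard but nontrivial step (pass to the underlying division algebra $D$, use that its index divides $n$, and replace $A$ by $M_{n/\ind(D)}(D)$); it should at least be acknowledged.
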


\begin{proof}
See \cite[Proposition 6.3.3]{NSW}.
\end{proof}

If $L/F$ is a field extension, we have the \textit{restriction} homomorphism
\[
res_{L/F}:\Br(F)\to \Br(L),\quad [A]\mapsto [A\otimes_F L].
\]
The kernel of $res_{L/F}$ is the \textit{relative Brauer group}, $\Br(L/F)$, which is the group of central simple $F$-algebras which split over $L$.  If $K/F$ runs through the finite Galois subextensions of $F_s/F$, then by Proposition \ref{prop:csa}(3)
\[
\Br(F)=\bigcup_K \Br(K/F).
\]
Given a normalized 2-cocycle $x$, we can associate to the cohomology class $[x]\in H^2(\Gal(K/F),\kx)$ the class $[(K,G,x)]$ to obtain a map
\[
H^2(\Gal(K/F),\kx)\to \Br(K/F),
\]
which, by Proposition \ref{prop:crossed product}, is a group isomorphism.  If $F\subseteq K\subseteq L$ are two finite Galois extensions, then the diagram
\[
\begin{tikzcd}
H^2(\Gal(L/F),L^\times ) \arrow{r} &\Br(L/F)\\
H^2(\Gal(K/F),K^\times ) \arrow{r} \arrow{u}{inf} &\Br(K/F) \arrow[hook]{u}
\end{tikzcd}
\]
commutes and taking direct limits gives

\begin{thm}
For every Galois extension $K/F$ we have a canonical isomorphism
\[
H^2(\Gal(K/F),\kx)\cong \Br(K/F).
\]
In particular,
\[
H^2(G_F,F_s^\times)\cong \Br(F),
\]
so $\Br(F)$ is a torsion group.
\end{thm}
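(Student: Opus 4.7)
The plan is to deduce the theorem from the already-established finite case by passing to a direct limit, and then to derive torsion of $\Br(F)$ from the known annihilation of cohomology of finite groups.

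First I would treat the case of a finite Galois extension $K/F$ with $G=\Gal(K/F)$. The isomorphism
\[
H^2(G,K^\times)\xrightarrow{\sim}\Br(K/F),\qquad [x]\mapsto [(K,G,x)],
\]
is exactly what Proposition \ref{prop:crossed product} provides: part (i) says the crossed product is a CSA split by $K$, part (ii) says the assignment is injective on cohomology classes, part (iii) says it is a group homomorphism, and part (iv) says every class in $\Br(K/F)$ is hit.

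Next I would handle a general (possibly infinite) Galois extension $K/F$. Let $\mc{K}$ be the directed set of finite Galois subextensions $L/F$ of $K/F$, ordered by inclusion. On the cohomology side, Proposition \ref{prop:finite} gives
\[
H^2(\Gal(K/F),K^\times)\;\cong\;\varinjlim_{L\in\mc{K}} H^2(\Gal(L/F),L^\times),
\]
using that $(K^\times)^{\Gal(K/L)}=L^\times$. On the Brauer side, any class $[A]\in\Br(K/F)$ splits over some finite Galois subextension of $K/F$: indeed, $A\otimes_F K\cong M_n(K)$ and the finitely many matrix entries that witness this isomorphism lie in a finite subextension, which we may enlarge to a finite Galois subextension $L\subseteq K$. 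Hence $\Br(K/F)=\bigcup_{L\in\mc{K}}\Br(L/F)$, and the commutative diagram displayed just before the theorem shows that the finite-level isomorphisms $H^2(\Gal(L/F),L^\times)\cong \Br(L/F)$ are compatible with the inflation maps on the left and the inclusions on the right. Taking direct limits yields the desired isomorphism $H^2(\Gal(K/F),K^\times)\cong \Br(K/F)$. Specializing to $K=F_s$ and combining with $\Br(F)=\bigcup_K\Br(K/F)$ (where $K$ runs over finite Galois subextensions of $F_s/F$) gives $H^2(G_F,F_s^\times)\cong \Br(F)$.

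Finally, I would show $\Br(F)$ is torsion. It suffices to show each $\Br(K/F)$ with $K/F$ finite Galois of degree $n$ is annihilated by $n$; then $\Br(F)=\bigcup_K\Br(K/F)$ is a union of torsion groups, hence torsion. Via the isomorphism just established, this reduces to the standard fact that for a finite group $G$ of order $n$ and any $G$-module $A$, the group $H^i(G,A)$ is annihilated by $n$ for $i\geq 1$: apply $\cor\circ\res$ with respect to the trivial subgroup, using that $\cor\circ\res=[G:1]=n$ while $H^i(\{1\},A)=0$ for $i\geq 1$.

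The main obstacle is the passage to the limit on the Brauer side, specifically the verification that every class in $\Br(K/F)$ is already split by a finite Galois subextension; once this is in hand the rest is diagram-chasing with the compatibility square preceding the statement, and the torsion claim follows formally from the cohomological description.
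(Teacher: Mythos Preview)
Your proposal is correct and follows exactly the approach the paper sketches: the finite case is handled by Proposition~\ref{prop:crossed product}, and the general case is obtained by taking the direct limit over finite Galois subextensions using the compatibility diagram displayed just before the theorem (the paper then simply cites \cite[Theorem 6.3.4]{NSW} for details). Your torsion argument via $\cor\circ\res=n$ is the standard one and is precisely what underlies the cited result.
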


\begin{proof}
See \cite[Theorem 6.3.4]{NSW}.
\end{proof}

We can again consider, for $n$ prime to the characteristic of $F$, the exact sequence
\[
\begin{tikzcd}
1 \arrow{r} &\mu_n \arrow{r} &F_s^\times \arrow{r}{a\mapsto a^n} &F_s^\times \arrow{r} &1
\end{tikzcd}
\]
of $G_F$-modules.  The associated exact cohomology sequence, together with Hilbert's Satz 90 and the above theorem, then yields the isomorphism
\[
H^2(G_F,\mu_n)\cong \Br_n(F)=\{b\in \Br(F)\mid b^n=1\}.
\]

If $K/F$ is a cyclic extension of degree $n$ with Galois group $C_n=\langle \sigma \rangle $, we have
\[
\Br(K/F)\cong H^2(C_n,\kx)\cong \fx/N_{K/F}(\kx),
\]
where the class of $a\in \fx$ corresponds to the similarity class of the algebra $(K,C_n,x)=K[s]$ having relations $s^n=a$ and $sb=\sigma bs$ for $b\in K$.  In this case the crossed product algebra $(K,C_n,x)$ is often written $(K,\sigma,a)$ and is referred to as a \textit{cyclic algebra}.

\begin{ex}
Let $F$ be a field of characteristic $\neq 2$ and let $K=F(\sqrt{a})$ for some $a\in \fx\setminus \fxs$.  The cyclic algebra $(K,\sigma,b)=F[i,j]$, where $i=\sqrt{a},\ j^2=b\in \fx$, $\sigma$ generates $C_2=\Gal(K/F)$ and $ji=\sigma ij=-ij$.  This algebra is 4-dimensional over $F$ with basis $\{1,i,j,ij\}$ and is split if and only if $b$ is a norm in $K/F$.  
\end{ex}

\section{Quadratic Forms and Quaternion Algebras}
\label{sec:quad}

\begin{defn}
An algebra $Q/F$ in characteristic $\neq2$ is called a \textit{quaternion algebra} if $Q$ is generated over $F$ by elements $i$ and $j$ such that $i^2=a,\ j^2=b$ and $ji=-ij$ for some $a,b\in \fx$.
\end{defn}

\begin{ex}
$(F(\sqrt{a}),\sigma,b)$ is a quaternion algebra whenever $a\notin \fxs$.  In fact, for any $a,b\in \fx$ there is a corresponding quaternion algebra $Q$, denoted
\[
Q=(\frac{a,b}{F}) =(a,b)_F
\]
which is a 4-dimensional central simple algebra and is either split or a division algebra.
\end{ex}
 
Quaternion algebras have a close connection to quadratic forms.  We briefly recall some of the basic theory and notation.

Let $F$ be a field of characteristic $\neq2$ and let $V$ be a finite-dimensional $F$-vector space.  A symmetric bilinear form on $V$ is a map $B:V\times V \to F$, such that
\begin{enumerate}
\item $B(au+bv,w)=aB(u,w)+bB(v,w)$, and
\item $B(u,v)=B(v,u)$
\end{enumerate}
for all $u,v,w\in V$ and $a,b\in F$.  The associated \textit{quadratic form} is the diagonal map
\[
q:V\to F,\quad u\mapsto B(u,u).
\]
Since $q(u+v)=q(u)+q(v)+2B(u,v)$, one can recover $B$ from $q$.  The pair $(V,q)$ is called a \textit{quadratic space}.

\begin{ex}
Let $V=F^n$ and let $a_1,\ldots,a_n \in F$.  Then the map
\[
V\to F,\quad (x_1,\ldots,x_n)\mapsto a_1x_1^2+\cdots +a_nx_n^2
\]
is an $n$-ary quadratic form on $F^n$, called a \textit{diagonal form} and denoted by $\langle a_1,\ldots,a_n \rangle $.  
\end{ex}

Given two quadratic spaces $(V_1,q_1)$ and $(V_2,q_2)$ over $F$, where $q_1=\langle a_1,\ldots,a_n \rangle$ and $q_2=\langle b_1,\ldots,b_m \rangle$, one can define the \textit{orthogonal sum} $(V_1\oplus V_2,q_1\perp q_2):=(V_1,q_1)\perp (V_2,q_2)$ and \textit{tensor product} $(V_1\otimes V_2,q_1\otimes q_2):=(V_1,q_1)\otimes (V_2,q_2)$, where
\[
q_1\perp q_2:=\langle a_1,\ldots,a_n,b_1,\ldots,b_m \rangle
\]
and
\[
q_1\otimes q_2:=\langle a_1b_1,\ldots,a_1b_m,\ldots,a_nb_1,\ldots,a_nb_m \rangle.
\]

Two $n$-ary quadratic forms $q$ and $q'$ are said to be \textit{equivalent} $(\sim)$ if there exists an invertible matrix $C\in GL_n(F)$ such that $q'(Cv)=q(v)$ and two quadratic spaces $(V,q)$ and $(V',q')$ are said to be \textit{isometric} $(\cong)$ if there exists an $F$-isomorphism $\varphi:V\to V'$ such that $q'(\varphi(v))=q(v)$.  It can be shown that every quadratic form is equivalent to a diagonal form and that there is a one-one correspondence between the equivalence classes of $n$-ary quadratic forms and the isometry classes of $n$-dimensional quadratic spaces. 

\begin{defn}
Let $q$ be an $n$-ary quadratic form over $F$ and let $a\in \fx$.  We say that \textit{q represents a} if there exist $x_1,\ldots,x_n \in F$ such that $q(x_1,\ldots,x_n)=a$.  The set of values in $\fx$ represented by $q$, or the \textit{value set} of $q$, is denoted $D_F(q)$.  Note that this set depends only on the equivalence class of $q$.
\end{defn}

Let $(V,q)$ be a quadratic space and let $B$ be the symmetric bilinear form corresponding to $q$.  Two vectors $u,v \in V$ are said to be orthogonal, written $u\perp v$, if $B(u,v)=0$.  If $U$ is a subspace of $V$, the orthogonal complement of $U$ is the subspace
\[
U^\perp = \{v\in V\mid \forall u\in U: u\perp v\}.
\]
If $U\cap U^\perp =0$, we say that $U$ is a \textit{regular} subspace.  If $V^\perp =0$, we say that $q$ is regular or \textit{nonsingular}.  We say that a vector $v$ is \textit{isotropic} if $q(v)= 0$ and \textit{anisotropic} otherwise. The quadratic form $q$ is called isotropic, or is said to \textit{represent zero}, if $q$ has a  non-zero isotropic vector, otherwise it is called anisotropic.  An isotropic form can be regular, for example the binary form $\langle 1,-1 \rangle$. 

\begin{thm}
\label{thm:hyperbolic}
Let (V,q) be a 2-dimensional quadratic space.  The following are equivalent:
\begin{enumerate}
\item[(i)] V is regular and isotropic.
\item[(ii)] V is isometric to $\langle1,-1\rangle$.
\item[(iii)] V corresponds to the equivalence class of the binary quadratic form $x_1x_2$.
\end{enumerate}
\end{thm}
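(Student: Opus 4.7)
The plan is to prove the cycle of implications (i)$\Rightarrow$(ii)$\Rightarrow$(iii)$\Rightarrow$(i), with the main content lying in producing a hyperbolic basis from a regular isotropic plane.

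For (i)$\Rightarrow$(ii), I would start from a nonzero isotropic vector $v\in V$, so $q(v)=0$. Since $V$ is regular, $v\notin V^\perp$, so I can find $w\in V$ with $B(v,w)\neq 0$. After scaling $w$, I may assume $B(v,w)=\tfrac{1}{2}$. Then I kill the self-pairing of $w$ by replacing it with $u:=w-q(w)v$: using bilinearity and $q(v)=0$, one checks $q(u)=q(w)-2q(w)B(w,v)+q(w)^2q(v)=0$, while $B(v,u)=B(v,w)=\tfrac12$. The vectors $v,u$ are linearly independent (any dependence would force $B(v,u)=0$ since $\Char F\neq 2$), so they form a basis of $V$. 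Finally I pass to the basis $e_1:=v+u$, $e_2:=v-u$: then $q(e_1)=2B(v,u)=1$, $q(e_2)=-2B(v,u)=-1$, and $B(e_1,e_2)=q(v)-q(u)=0$, exhibiting an isometry $(V,q)\cong\langle 1,-1\rangle$.

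For (ii)$\Rightarrow$(iii), the substitution $y_1=x_1+x_2$, $y_2=x_1-x_2$ is invertible (since $\Char F\neq 2$) and converts $x_1^2-x_2^2$ into $y_1y_2$, giving an equivalence of quadratic forms over $F$. For (iii)$\Rightarrow$(i), the form $q(x_1,x_2)=x_1x_2$ is visibly isotropic at $(1,0)$ and its associated symmetric bilinear form has matrix $\tfrac12\bigl(\begin{smallmatrix}0&1\\1&0\end{smallmatrix}\bigr)$, which is nonsingular, so $V^\perp=0$ and $V$ is regular.

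The only step that is not a routine verification is (i)$\Rightarrow$(ii); the subtlety there is ensuring one can actually produce a \emph{second} isotropic vector paired with $v$ (the step $w\mapsto u=w-q(w)v$) and that the resulting pair is a basis, both of which rely crucially on $\Char F\neq 2$. Once the hyperbolic pair $\{v,u\}$ is in hand, rediagonalizing to $\langle 1,-1\rangle$ and then changing variables to $x_1x_2$ are purely formal.
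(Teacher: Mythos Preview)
Your argument is correct and is the standard proof of this classical fact; the paper itself does not give a proof but simply cites \cite[Theorem I.3.2]{Lam}, where essentially the same hyperbolic-pair construction appears. One tiny quibble: in the linear-independence step, the reason a relation $u=\lambda v$ is impossible is that it would give $B(v,u)=\lambda q(v)=0$, which uses $q(v)=0$ rather than $\Char F\neq 2$ directly (though of course $\Char F\neq 2$ is needed elsewhere, e.g.\ to scale to $B(v,w)=\tfrac12$).
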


\begin{proof}
See \cite[Theorem I.3.2]{Lam}
\end{proof}

The isometry class of a 2-dimensional quadratic space satisfying the conditions of Theorem \ref{thm:hyperbolic} is called the \textit{hyperbolic plane}, denoted $\H$.  An orthogonal sum of hyperbolic planes is called a \textit{hyperbolic space}.  E. Witt \cite{Wit} showed that any regular quadratic space $(V,q)$ splits into an orthogonal sum $(V_h,q_h)\perp (V_a,q_a)$ where $V_h$ is hyperbolic, $V_a$ is anisotropic and the isometry types of $V_h$ and $V_a$ are uniquely determined.

\begin{defn}
The set of equivalence classes of anisotropic nonsingular quadratic forms over a field $F$, together with the binary operations $\perp$ and $\otimes$, form a commutative ring, known as the \textit{Witt ring} $W(F)$ of $F$.  
\end{defn}

Returning now to the quaternion algebra $ Q=(a,b)_F $ with basis $\{1,i,j,k=ij\}$, we can make $Q$ into a quadratic space as follows.  For an arbitrary quaternion $x=\alpha+\beta i+\gamma j+\delta k \in Q$, we define a map $N:Q\to F$ by $N(x)= \alpha^2-a\beta^2-b\gamma^2+ab\delta^2$.  Then $N=\langle 1,-a,-b,ab \rangle$ is a quadratic form, referred to as the norm form of $Q$, and $N(x)$ is called the norm of the quaternion $x$.

\begin{thm}
For $a,b,c,d\in \fx$, the following are equivalent:
\begin{enumerate}
\item[(i)] $\langle 1,-a,-b,ab \rangle \sim \langle 1,-c,-d,cd \rangle$.
\item[(ii)] $\langle -a,-b,ab \rangle \sim \langle -c,-d,cd \rangle$.
\item[(iii)] $(a,b)_F \cong (c,d)_F$.
\end{enumerate}
\end{thm}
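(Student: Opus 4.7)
The plan is to verify the cyclic chain $(iii) \Rightarrow (i) \Leftrightarrow (ii) \Rightarrow (iii)$. The equivalence $(i) \Leftrightarrow (ii)$ is a direct application of Witt's cancellation theorem: both 4-dimensional forms are nonsingular and share a common $\langle 1 \rangle$ summand, so canceling it preserves equivalence, while orthogonal summation with $\langle 1 \rangle$ is its inverse.

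For $(iii) \Rightarrow (i)$, one observes that the norm form of a quaternion algebra $Q$ is intrinsic: $N(x) = x\bar{x}$, where the standard involution $\bar{\cdot}$ is characterized by the property that $x + \bar{x}$ and $x \bar{x}$ both lie in $F$ for every $x \in Q$ (equivalently, it is the reduced norm coming from any splitting field). Any $F$-algebra isomorphism $(c,d)_F \cong (a,b)_F$ pulls the norm form back, yielding an isometry of the norm spaces and hence the equivalence stated in $(i)$.

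The substantive direction is $(ii) \Rightarrow (iii)$. Let $V \subset (a,b)_F$ and $W \subset (c,d)_F$ denote the pure-quaternion subspaces. Since $\bar{x} = -x$ for a pure $x$, the formula $N(x) = -x^2$ identifies the restrictions of the norm forms to $V$ and $W$ with $\langle -a, -b, ab \rangle$ and $\langle -c, -d, cd \rangle$ respectively. The hypothesis in $(ii)$ then provides an isometry $\tau : W \to V$; set $I := \tau(i')$ and $J := \tau(j')$, where $i', j'$ are the standard generators of $(c,d)_F$. By the isometry property, $N(I) = N(i') = -c$ and $N(J) = N(j') = -d$, hence $I^2 = c$ and $J^2 = d$. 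The key bridge from quadratic-form data to algebra structure is the identity
\[
xy + yx = -2B(x,y), \qquad x, y \in V,
\]
obtained by expanding $-(x+y)^2 = N(x+y) = N(x) + N(y) + 2B(x,y)$. Since $i'$ and $j'$ are orthogonal in $W$ (immediate from the diagonal form $\langle -c, -d, cd \rangle$), their images $I, J$ are orthogonal in $V$, forcing $IJ = -JI$. The pair $(I, J)$ thus satisfies the defining relations of $(c,d)_F$, producing an $F$-algebra homomorphism $\varphi : (c,d)_F \to (a,b)_F$. Since $(c,d)_F$ is central simple, $\varphi$ is injective, and equality of $F$-dimensions (both equal to $4$) promotes it to an isomorphism. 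The main obstacle is precisely this final construction: extracting the multiplicative relation $IJ = -JI$ from the isometry $\tau$, which hinges on the identity above converting orthogonality of pure quaternions into anticommutativity.
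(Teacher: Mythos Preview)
Your proof is correct. The paper does not actually prove this theorem; it simply cites Scharlau's book \cite[Ch.~2, Theorem 11.9]{scharlau}. Your argument---Witt cancellation for $(i)\Leftrightarrow(ii)$, intrinsicness of the reduced norm for $(iii)\Rightarrow(i)$, and the construction of an algebra map from an isometry of pure-quaternion spaces via the identity $xy+yx=-2B(x,y)$ for $(ii)\Rightarrow(iii)$---is essentially the standard proof one finds in the references (Scharlau, or Lam's book on quadratic forms), so there is nothing to compare beyond noting that you have supplied the details the paper omits.
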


\begin{proof}
See \cite[Ch. 2, Theorem 11.9]{scharlau}.
\end{proof}

Hence a quaternion algebra is completely determined by its norm form. This also leads to the following criteria for the splitting of a quaternion algebra.

\begin{cor}
The following are equivalent:
\begin{enumerate}
\item[(i)] $\langle -a,-b,ab \rangle$ is isotropic.
\item[(ii)] $(a,b)_F$ is split.
\item[(iii)] The binary form $\langle a,b \rangle$ represents 1.
\item[(iv)] $b\in N_{K/F}(K)$, where $K=F(\sqrt{a})$ and $N_{K/F}$ is the field norm.
\end{enumerate}
\end{cor}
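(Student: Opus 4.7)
The strategy is to go around the cycle (ii)~$\Leftrightarrow$~(iv)~$\Leftrightarrow$~(iii), and then establish (i)~$\Leftrightarrow$~(ii) separately by examining the norm form on pure quaternions. All four conditions are classical reformulations of the split/division dichotomy for $(a,b)_F$; the work is in translating among them.

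For (ii)~$\Leftrightarrow$~(iv), I would invoke the cyclic algebra description recalled just before the corollary: when $a\notin\fxs$, the quaternion algebra $(a,b)_F$ is the cyclic algebra $(K,\sigma,b)$ with $K=F(\sqrt{a})$, and this cyclic algebra is split precisely when $b\in N_{K/F}(\kx)$. The degenerate case $a\in\fxs$ must be handled separately: then $K=F$, the element $i-\sqrt{a}$ is a nontrivial zero divisor in $(a,b)_F$, so $(a,b)_F\cong M_2(F)$ is split, and $b=b\cdot 1^2-a\cdot 0^2$ is trivially a norm from $K=F$.

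For (iii)~$\Leftrightarrow$~(iv), I would argue by elementary manipulation of $b=u^2-av^2$. If $ax^2+by^2=1$ with $y\neq 0$, then $b=y^{-2}(1-ax^2)=N_{K/F}((1-x\sqrt{a})/y)$; if $y=0$ then $a\in\fxs$ and the norm map is surjective onto $\fx$, so $b$ is a norm. Conversely, given $b=u^2-av^2$: if $u\neq 0$, dividing by $u^2$ yields $1=a(v/u)^2\cdot(-1)\cdot(-1)+b(1/u)^2$ after rearranging $u^2=av^2+b$; and if $u=0$, then $-ab\in\fxs$, which forces $\langle a,b\rangle$ to be isotropic, hence universal, hence representing $1$ (using the standard fact that a regular isotropic binary form is hyperbolic).

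The remaining equivalence (i)~$\Leftrightarrow$~(ii) is the most structural. Using $i^2=a$, $j^2=b$, $(ij)^2=-ab$ and the anticommutation relations, a direct computation shows that for a pure quaternion $x=\beta i+\gamma j+\delta ij$ one has
\[
N(x)=-a\beta^2-b\gamma^2+ab\delta^2,
\]
so isotropy of $\langle -a,-b,ab\rangle$ is the existence of a nonzero pure quaternion with $N(x)=0$. Such an $x$ satisfies $x\bar x=0$ and is therefore a zero divisor, which by Proposition~\ref{prop:csa}(iv) forces $(a,b)_F\cong M_2(F)$, giving (i)~$\Rightarrow$~(ii). For the converse, when $(a,b)_F\cong M_2(F)$ the pure part corresponds to trace-zero matrices; the elementary matrix $E_{12}$ is trace zero with determinant zero, producing an isotropic vector for $\langle -a,-b,ab\rangle$. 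The main obstacle is bookkeeping the degenerate cases (when $a$ or $b$ is a square, or when $-ab\in\fxs$) so that each implication remains valid without tacit regularity hypotheses; the computation of $N$ on pure quaternions and the appeal to the already-proved classification of quaternion algebras by their norm form handle the main content.
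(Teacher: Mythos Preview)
Your argument is correct and complete. The paper does not actually supply a proof of this corollary: it is stated immediately after the theorem that $(a,b)_F\cong(c,d)_F$ iff $\langle -a,-b,ab\rangle\sim\langle -c,-d,cd\rangle$, and is meant to be read as a direct consequence. The intended one-line deduction for (i)~$\Leftrightarrow$~(ii) is that $(a,b)_F$ is split iff $(a,b)_F\cong(1,1)_F$, which by the preceding theorem holds iff $\langle -a,-b,ab\rangle\sim\langle -1,-1,1\rangle$; since a regular ternary form of discriminant $1$ is isotropic iff it is isometric to $\langle 1,-1,-1\rangle$ (Witt decomposition), this is equivalent to (i). The equivalences with (iii) and (iv) are left implicit, relying on the cyclic-algebra splitting criterion already mentioned in the example preceding the definition of quaternion algebras.

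Your route is essentially the same in spirit but unpacks everything explicitly: you work directly with pure quaternions and zero divisors rather than quoting the norm-form classification, and you spell out the elementary manipulations behind (iii)~$\Leftrightarrow$~(iv). One cosmetic point: in the converse direction of (iii)~$\Rightarrow$~(iv) when $u\neq 0$, the display ``$1=a(v/u)^2\cdot(-1)\cdot(-1)+b(1/u)^2$'' is garbled; you simply mean $u^2=av^2+b$ gives $1=a(v/u)^2+b(1/u)^2$. Otherwise the bookkeeping of degenerate cases is handled correctly.
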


We denote the equivalence class of the quaternion algebra $(a,b)_F$ in $\Br(F)$ by $(a,b)$ and call it a \textit{quaternion class}.  We have

\begin{thm}
\label{thm:quat}
Let $a,a',b,b',x,y \in \fx$.  Then
\begin{enumerate}
\item[(i)] $(a,b)=(b,a)$ and $(ax^2,by^2)=(a,b)$.
\item[(ii)] $ (a,-a)=1 $ and $(a,1-a)=1$ if $a\neq1$.
\item[(iii)] $(a,a)=(a,-1)$.
\item[(iv)] $(aa',b)=(a,b)(a',b)$ and $(a,bb')=(a,b)(a,b')$.
\end{enumerate}
\end{thm}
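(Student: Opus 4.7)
The plan is to deduce each part from the concrete structure of the quaternion algebras and, for the bilinearity statement in (iv), from the cup-product interpretation of the quaternion class developed earlier in the chapter. Part (i) is pure algebra: symmetry follows from the substitution $i'=j,\ j'=i$, which satisfies $i'^2=b,\ j'^2=a,\ j'i'=-i'j'$, giving an $F$-algebra isomorphism $(a,b)_F\cong(b,a)_F$; and scale-invariance follows from $i'=xi,\ j'=yj$, which satisfy $i'^2=ax^2,\ j'^2=by^2,\ j'i'=-i'j'$, giving $(ax^2,by^2)_F\cong(a,b)_F$.

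For part (ii), I would use the splitting criterion in the corollary preceding the theorem: when $a\notin\fxs$, $(a,b)_F$ splits iff $b\in N_{K/F}(\kx)$, where $K=F(\sqrt{a})$. Since $N_{K/F}(\sqrt{a})=-a$, this gives $(a,-a)=1$; and since $N_{K/F}(1+\sqrt{a})=1-a$ (for $a\neq 1$), this gives $(a,1-a)=1$. If instead $a\in\fxs$, both algebras are automatically split, since the presentation then reduces to $M_2(F)$.

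The heart of the theorem is (iv), which I would prove cohomologically. Kummer theory gives the isomorphism $\fx/\fxs\cong H^1(G_F,\mu_2)$, $a\mapsto\chi_a$, and using $\mu_2\otimes\mu_2\cong\mu_2$ the cup product yields a $\Z$-bilinear pairing
\[
H^1(G_F,\mu_2)\times H^1(G_F,\mu_2)\longrightarrow H^2(G_F,\mu_2)\cong\Br_2(F).
\]
The key identification, which I expect to be the main obstacle, is that under these isomorphisms $\chi_a\cup\chi_b$ corresponds to the quaternion class $(a,b)$. I would verify this by splitting $(a,b)_F$ over $K=F(\sqrt{a})$, writing the crossed-product 2-cocycle on $\Gal(K/F)\cong C_2$ arising from the cyclic-algebra presentation $(K,\sigma,b)$, and matching it with the inhomogeneous 2-cocycle formula for $\chi_a\cup\chi_b$ after inflation to $G_F$. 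Bilinearity of the cup product then gives
\[
(aa',b)=\chi_{aa'}\cup\chi_b=(\chi_a+\chi_{a'})\cup\chi_b=(a,b)(a',b),
\]
and the corresponding identity in the second slot follows from this together with (i).

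Finally, part (iii) is a formal consequence of (ii) and (iv): writing $a=(-1)\cdot(-a)$, we have $(a,a)=(a,-1)(a,-a)=(a,-1)\cdot 1=(a,-1)$.
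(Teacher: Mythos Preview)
The paper does not give its own proof here; it simply cites \cite[Theorem 3.5.3]{Ledet}. Your proposal is correct and would constitute a complete proof.

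One organizational remark on (iv): in the paper's logical flow, Theorem~\ref{thm:quat} is what is \emph{used} to conclude that the quaternion map is a bilinear Steinberg symbol (see the paragraph immediately following it), and the Galois symbol and the identification of $\chi_a\cup\chi_b$ with $(a,b)$ in $\Br_2(F)$ are developed only afterward (Tate's theorem). Your approach is not actually circular, since you propose to verify the identification independently via the crossed-product cocycle for the cyclic algebra $(F(\sqrt a),\sigma,b)$; but it does invert the order of the exposition and is heavier than needed. The more elementary route---and presumably what Ledet does---is a direct algebra computation: one exhibits an explicit isomorphism $(a,b)_F\otimes_F(a',b)_F\cong(aa',b)_F\otimes_F M_2(F)$, for instance by finding a pair of anticommuting elements squaring to $aa'$ and $b$ inside the tensor product and checking that their centralizer is a matrix algebra. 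Either route is fine; yours buys you a conceptual explanation of why bilinearity holds, at the cost of front-loading the cohomological identification.
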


\begin{proof}
See \cite[Theorem 3.5.3]{Ledet}.
\end{proof}

\begin{defn}
A pairing $f:\fx \times \fx \to A$ into a multiplicative abelian group $A$ is said to be a \textit{Steinberg symbol} if $f$ is bimultiplicative and has the \textit{Steinberg property}; $f(a,b)=1$ whenever $a+b=1$.
\end{defn}

Every such symbol factors through a group $K_2(F)$, called the \textit{second $K$-group} of the field $F$, which is defined by
\[
K_2(F)=(\fx \otimes_\Z \fx)/ \langle a\otimes b\mid a+b=1 \rangle.
\]
The natural pairing $\varphi: \fx \times \fx \to K_2(F)$ is then a \textit{universal Steinberg symbol}; i.e. for any arbitrary Steinberg symbol $f:\fx \times \fx \to A$ there exists a unique group homomorphism $g:K_2(F)\to A$ such that $f=g\varphi$.

We define the group $k_2(F)$ to be the quotient $K_2(F)/(K_2(F))^2$.  Any Steinberg symbol into an abelian group $A$ with $A^2=1$ then factors uniquely through $k_2(F)$.  Theorem~\ref{thm:quat} shows that the quaternion map $(-,-):\fx \times \fx \to \Br(F)$ is a symmetric bilinear form defined on the square classes of $\fx$, which defines a Steinberg symbol into $\Br_2(F)$.  This symbol is induced by the unique group homomorphism $k_2(F)\to \Br_2(F)$ given by $[a]\otimes [b] \mapsto (a,b)$. 

\begin{thm}[Merkurjev]
\label{thm:Merk}
The map $k_2(F)\to \Br_2(F)$ is an isomorphism.
\end{thm}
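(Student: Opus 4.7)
The plan is to prove the two directions separately: surjectivity (every $2$-torsion Brauer class is a product of quaternion classes) and injectivity (the only relations among quaternion symbols in $\Br_2(F)$ are those forced by the Steinberg relations).

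First I would establish surjectivity by reducing to the following claim: every central division algebra $D/F$ of exponent $2$ is Brauer-equivalent to a tensor product of quaternion $F$-algebras. I would argue by induction on $r$, where $\ind(D)=2^r$. The base case $r=1$ is immediate since such a $D$ is itself a quaternion algebra. For the induction step, exponent $2$ forces $D$ to admit an involution of the first kind (Albert's theorem), and one extracts a quaternion subalgebra $Q=(a,b)_F \subseteq D$; then $D \otimes_F Q^{op}$ has strictly smaller index by standard index-reduction, and the induction hypothesis applies. Writing $[D] = (a_1,b_1)\cdots(a_n,b_n)$ in $\Br(F)$ shows the image of $\{a_1,b_1\}\cdots\{a_n,b_n\} \in k_2(F)$ equals $[D]$.

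The injectivity direction is the deep part and constitutes the main obstacle. Suppose $\sum_{i=1}^n (a_i,b_i) = 0$ in $\Br_2(F)$; we must lift this relation to a relation in $k_2(F)$. The strategy I would follow is Merkurjev's geometric approach via Severi--Brauer varieties. Let $X$ be the Severi--Brauer variety of the algebra $A = (a_1,b_1)_F \otimes \cdots \otimes (a_n,b_n)_F$, so that $A$ splits over the function field $F(X)$. The proof then proceeds in three stages: (1) use Arason's theorem identifying the kernel of $W(F)\to W(F(\pi))$ for a $3$-fold Pfister form $\pi$ as $\pi \cdot W(F)$, which controls how quaternion classes can vanish upon passing to function fields of conics; (2) analyze the Gersten--Quillen spectral sequence computing $K_\bullet(X)$ in terms of residue fields of points of $X$, together with Quillen's computation of $K_\bullet$ of a Severi--Brauer variety, to obtain an exact sequence controlling the kernel of $k_2(F)\to k_2(F(X))$; and (3) specialize back from $F(X)$ to $F$, combining these pieces to conclude that the given relation in $\Br_2(F)$ must already hold in $k_2(F)$.

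The hardest step is unambiguously the K-theoretic analysis in stage (3), which is the technical heart of Merkurjev's original paper and requires substantial input from algebraic K-theory of twisted projective bundles. For the purposes of this thesis, I would not attempt to reproduce the full proof; instead I would cite Merkurjev's original work and note that the surjectivity portion, while classical in spirit, also relies on nontrivial index reduction results. The payoff of assuming the theorem is that it allows us to replace cohomological computations in $\Br_2(F) = H^2(G_F,\mu_2)$ by symbol manipulations in $k_2(F)$, which is the form in which we will use it in subsequent chapters.
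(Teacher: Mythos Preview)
The paper's own ``proof'' of this theorem is simply a citation: ``See \cite{Merk}.'' No argument is given or sketched, since Merkurjev's theorem is being invoked as a deep external input rather than something proved within the thesis. In that sense your proposal already goes well beyond what the paper does, and your concluding paragraph --- deferring to Merkurjev's original work --- is exactly what the paper does.

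That said, your surjectivity sketch contains a genuine gap. You write that from an involution of the first kind ``one extracts a quaternion subalgebra $Q=(a,b)_F \subseteq D$'' and then peels it off via the centralizer. But the existence of a quaternion subalgebra in an arbitrary division algebra of exponent~$2$ is \emph{not} an elementary consequence of having an involution; in fact, prior to Merkurjev, this was known only in low degree (Albert for degree~$4$, Rowen for degree~$8$). The statement that every exponent-$2$ class is a product of quaternion classes was itself one of the major new consequences of Merkurjev's theorem, so your inductive argument is circular as stated. The actual proof establishes surjectivity and injectivity together via the cohomological/$K$-theoretic machinery you allude to in your injectivity discussion, rather than by a separate elementary surjectivity argument.

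Since you ultimately plan to cite the result rather than reprove it, this does not affect the thesis, and your plan aligns with the paper's treatment. But if you retain the surjectivity sketch as motivation, you should flag that the ``extract a quaternion subalgebra'' step is itself nontrivial and not available by pre-Merkurjev methods in general.
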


\begin{proof}
See \cite{Merk}.
\end{proof}

This leads to the notion of a \textit{symbol} for any field $F$ as a multi-multiplicative map
\[
\fx \times \cdots \times \fx \to A, \quad (a_1,\ldots,a_n)\mapsto [a_1,\ldots,a_n],
\]
into a (multiplicatively written) abelian group $A$ such that $[a_1,\ldots,a_n]=1$ whenever $a_i+a_j=1$ for some $i\neq j$.  Every such symbol factors through a group $K_n^M(F)$, which is the universal target of symbols and is defined as follows.

\begin{defn}
The $n$-th \textit{Milnor $K$-group} of a field $F$ is the quotient
\[
K_n^M(F)=(\fx \otimes_\Z \cdots \otimes_\Z \fx)/I_n,
\]
where $I_n$ is the subgroup generated by the elements $a_1\otimes \cdots \otimes a_n$ such that $a_i+a_j=1$ for some $i\neq j$.
\end{defn}

The Milnor $K$-groups have a close connection to Galois cohomology, which arises as follows.

Let $k\in \N$ be prime to the characteristic of $F$.  Recall that the exact sequence
\[
\begin{tikzcd}
1 \arrow{r} &\mu_k \arrow{r} &F_s^\times \arrow{r}{a\mapsto a^k} &F_s^\times \arrow{r} &1
\end{tikzcd}
\]
gives a surjective homomorphism
\[
\delta_F:\fx \to H^1(G_F,\mu_k)
\]
with kernel $(\fx)^k$.  Also, for each $n\geq1$, we have the cup product
\[
\begin{tikzcd}
H^1(G_F,\mu_k) \times \cdots \times H^1(G_F,\mu_k) \arrow{r}{\cup} &H^n(G_F,\mu_k^{\otimes n}),
\end{tikzcd} 
\]
hence a map
\[
\fx \times \cdots \times \fx \to H^n(G_F,\mu_k^{\otimes n}), \quad (a_1,\ldots,a_n)\mapsto (a_1,\ldots,a_n)_F:=\delta_F a_1 \cup \cdots \cup \delta_F a_n.
\]
\begin{thm}[Tate]
The above map induces a homomorphism
\[
h_F:K_n^M(F)\to H^n(G_F,\mu_k^{\otimes n}),
\]
called the Galois symbol (or the norm residue map).
\end{thm}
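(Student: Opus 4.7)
The plan is to verify that the map
\[
(a_1, \ldots, a_n) \mapsto \delta_F a_1 \cup \cdots \cup \delta_F a_n
\]
satisfies the universal property defining $K_n^M(F)$; namely, that it is multi-multiplicative in each argument and annihilates every tuple in which some pair $a_i, a_j$ with $i \neq j$ satisfies $a_i + a_j = 1$.

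Multi-multiplicativity is immediate. The map $\delta_F \colon \fx \to H^1(G_F, \mu_k)$ is a group homomorphism, being the connecting homomorphism of the Kummer sequence, and the cup product is bilinear in each factor. So the main content is the Steinberg relation.

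Associativity and graded-commutativity of the cup product on $H^1$-classes (permuting two adjacent $H^1$-factors introduces only a sign, together with a harmless permutation of the coefficient tensor product) allow me to reduce to the fundamental case $n = 2$ and to the assertion
\[
\delta_F a \cup \delta_F(1-a) \;=\; 0 \quad \text{in } H^2(G_F, \mu_k^{\otimes 2})
\]
for every $a \in \fx$ with $a \neq 1$. To establish this identity, fix a root $\alpha \in F_s$ of $X^k - a$ and set $E = F(\alpha)$. Factoring $X^k - a = \prod_{i=1}^{k}(X - \alpha_i)$ in $F_s[X]$ and evaluating at $X = 1$ yields
\[
1 - a \;=\; \prod_{i=1}^{k}(1 - \alpha_i) \;=\; N_{E/F}(1 - \alpha).
\]
The standard compatibility between the Kummer connecting homomorphism and the norm/corestriction gives
\[
\delta_F(1-a) \;=\; \mathrm{cor}_{E/F}\bigl(\delta_E(1-\alpha)\bigr),
\]
and the projection formula $x \cup \mathrm{cor}(y) = \mathrm{cor}(\res(x) \cup y)$ then yields
\[
\delta_F a \cup \delta_F(1-a) \;=\; \mathrm{cor}_{E/F}\bigl(\res_{E/F}(\delta_F a) \cup \delta_E(1-\alpha)\bigr).
\]
Since $a = \alpha^k \in (E^\times)^k$, we have $\res_{E/F}(\delta_F a) = \delta_E(a) = 0$, and the whole expression vanishes.

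The main obstacle is marshalling the three standard ingredients --- the norm-corestriction compatibility $\delta_F \circ N_{E/F} = \mathrm{cor}_{E/F} \circ \delta_E$, the projection formula for cup products, and graded-commutativity --- formulated correctly for the coefficient module $\mu_k^{\otimes n}$ over the profinite group $G_F$. All three are standard facts in the theory of profinite cohomology developed earlier in this chapter, but the bookkeeping with the tensor-power coefficients requires care; once they are in place, the calculation above is essentially all that is needed, and the factorization $X^k - a = \prod_i (X - \alpha_i)$ is the single concrete input.
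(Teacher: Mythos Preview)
Your reduction to $n=2$ and the projection-formula strategy are exactly right, and they match the paper's argument. However, there is a genuine gap in the step
\[
1 - a \;=\; \prod_{i=1}^{k}(1-\alpha_i) \;=\; N_{E/F}(1-\alpha).
\]
This equality holds only when $X^k - a$ is irreducible over $F$, so that $[E:F]=k$ and the $k$ roots $\alpha_i$ are precisely the $F$-conjugates of $\alpha$. In general $X^k - a$ need not be irreducible (for instance $F=\Q$, $k=2$, $a=4$ already gives a counterexample: $E=\Q$ and $N_{E/F}(1-2)=-1\neq -3=1-a$), and then $N_{E/F}(1-\alpha)$ is only the product over the roots of the minimal polynomial of $\alpha$, not over all $k$ roots.

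The fix is precisely what the paper does: factor $X^k - a = \prod_i f_i(X)$ into monic irreducibles over $F$, choose a root $\alpha_i$ of each $f_i$, set $F_i = F(\alpha_i)$, and observe that
\[
1-a \;=\; \prod_i f_i(1) \;=\; \prod_i N_{F_i/F}(1-\alpha_i).
\]
Your projection-formula computation then goes through verbatim for each factor $(N_{F_i/F}(1-\alpha_i),a)_F$, since $a=\alpha_i^k$ in $F_i$ and hence $\delta_{F_i}(a)=k\,\delta_{F_i}(\alpha_i)=0$. Everything else in your write-up is correct.
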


\begin{proof}\cite[Theorem 6.4.2]{NSW}
The multiplicativity in each argument follows from the definition.  It remains to show that $(a_1,\ldots ,a_n)_F =1$ if $a_i+a_j=1$ for $i\neq j$ and it suffices to consider the case $n=2$ since if $n>2$ and, say $i=1,\ j=2$, then $(a_1,\ldots ,a_n)_F =(a_1,a_2)_F\cup (a_3,\ldots,a_n)_F$.

Let $n=2$ and let $a\in \fx,\ a\neq 1$.  Let $X^n-a=\prod_{i}f_i(X)$ with $f_i(X)$ monic and irrreducible in $F[X]$.  For each $i$, let $a_i$ be a root of $f_i(X)$ and let $F_i=F(a_i)$.  Then
\[
1-a=\prod_{i}f_i(1)=\prod_{i}N_{F_i/F}(1-a_i).
\]
Hence
\[
(1-a,a)_F=(\prod_{i}N_{F_i/F}(1-a_i),a)_F=\prod_{i}(N_{F_i/F}(1-a_i),a)_F.
\]
The formula $cor(\alpha \cup res \beta)=(cor \alpha)\cup \beta$ together with the fact that $cor$ is the norm on $H^0$ and commutes with $\delta$ gives
\[
\begin{aligned}
(N_{F_i/F}(1-a_i),a)_F &=cor(1-a_i,a)_{F_i}\\
&=cor (1-a_i, a_i^k)\\
&=cor (1-a_i,a_i)^k\\
&=1,
\end{aligned}
\]
hence $(1-a,a)=1$.
\end{proof}

For the Galois symbol we have

\begin{conj}[Bloch-Kato]
For every field $F$ and every $k\in \N$ prime to the characteristic of $F$, the Galois symbol yields an isomorphism
\[
h_F:K_n^M(F)/NK_n^M(F)\ \tilde{\longrightarrow} \ H^n(G_F,\mu_k^{\otimes n}).
\]
\end{conj}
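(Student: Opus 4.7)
The plan is to proceed by induction on $n$ and reduce to the case of prime $k=\ell$ by standard dévissage through the short exact sequences $1\to \mu_{\ell}\to \mu_{\ell^{m+1}}\to \mu_{\ell^m}\to 1$. The base case $n=1$ is Kummer theory, which is already embedded in the proof of Theorem~\ref{thm:Merk} via the isomorphism $H^1(G_F,\mu_k)\cong \fx/(\fx)^k$ coming from Hilbert's Satz 90. The case $n=2$, $k=2$ is precisely Merkurjev's theorem (Theorem~\ref{thm:Merk}), and the case $n=2$, general $k$ is the Merkurjev--Suslin theorem, both of which I would invoke as the inductive anchor.

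For the inductive step one fixes a nonzero symbol $\{a_1,\ldots,a_n\}\in K_n^M(F)/\ell$ and attempts to show both injectivity and surjectivity of the norm residue map $h_F$ in degree $n$. The essential input is the \emph{Beilinson--Lichtenbaum property} $\mathrm{BL}(n,\ell)$, which predicts that the natural comparison map from mod-$\ell$ motivic cohomology $H^{p,q}_{\mathrm{mot}}(F,\Z/\ell)$ to \'etale cohomology $H^p_{\mathrm{\acute et}}(F,\mu_\ell^{\otimes q})$ is an isomorphism for $p\leq q$. A theorem of Suslin--Voevodsky (and Geisser--Levine in the relevant characteristic) shows that $\mathrm{BL}(n,\ell)$ is equivalent to the norm residue map being an isomorphism in degrees $\leq n$, so the whole problem is converted into a statement about motivic cohomology.

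The plan would then be to follow the Voevodsky--Rost program: first construct, for the given symbol, a \emph{norm variety} (Rost variety) $X/F$ of dimension $\ell^{n-1}-1$ that splits the symbol generically and whose motive decomposes with a direct summand $M$ enjoying very rigid cohomological properties; second, use the motivic Steenrod algebra, together with Rost's degree formula and the structure of $M$, to set up a spectral argument showing that the obstruction to $\mathrm{BL}(n,\ell)$ in degree $n$ vanishes assuming $\mathrm{BL}(n-1,\ell)$. Passing from $F$ to $F(X)$, where the symbol is split, and chasing a long exact sequence in motivic cohomology would give the inductive conclusion.

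The hard part, by a very wide margin, is the construction of Rost varieties with the required motivic properties and the control of the motivic cohomology operations on them; this is essentially the heart of Voevodsky's proof and relies on the full theory of motivic cohomology, motivic Steenrod operations, and the triangulated category of motives. By contrast, the algebraic reductions (to prime $\ell$, to the Beilinson--Lichtenbaum formulation, and to the inductive step) are comparatively formal once that machinery is in place, and the very low-dimensional cases $n\leq 2$ are covered directly by Kummer theory and Theorem~\ref{thm:Merk}.
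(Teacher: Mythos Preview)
The paper does not prove this statement at all: it is stated as a \emph{conjecture}, and immediately afterward the paper simply records that it ``has recently been proved by V.~Voevodsky with contributions by M.~Rost and C.~Weibel.'' There is no proof in the paper to compare against.

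Your proposal is not a proof but a (broadly accurate) outline of the Voevodsky--Rost program, which is indeed the route the paper alludes to. As a roadmap it is fine: the reduction to prime $\ell$, the equivalence with the Beilinson--Lichtenbaum condition, the induction on $n$ anchored by Kummer theory and Merkurjev(--Suslin), and the use of Rost/norm varieties together with the motivic Steenrod algebra are the correct structural ingredients. But nothing here rises to the level of an argument: the existence of Rost varieties with the required motivic summand, the construction and properties of the motivic Steenrod operations, and the actual spectral/vanishing arguments that close the induction are each substantial theorems whose proofs occupy many papers. Saying ``chasing a long exact sequence in motivic cohomology would give the inductive conclusion'' skips precisely the part where all the work happens. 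So your write-up matches the paper's treatment in the sense that both defer to Voevodsky--Rost--Weibel; if the intent was to supply an independent proof, what you have is a table of contents, not a proof.
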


This famous conjecture has recently been proved by V. Voevodsky with contributions by M. Rost and C. Weibel.  The result is often referred to as the \textit{norm residue isomorphism}.

\section{The Incidence Algebra and M\"{o}bius Functions}

Often a set of objects to be counted possesses a natural partial ordering.  As a result, many problems of enumeration are closely related to the theory of M\"{o}bius functions.  In this section we recall some pertinent aspects of that general theory (see \cite{Rota,Bend,Wall}).  

Consider a partially ordered set $P=(S,\leqq)$, where $\leqq$ is an order relation on the set $S$.  For any $x,y\in P$, the \textit{segment} $[x,y]:=\{z\in P\mid x\leqq z\leqq y\}$.  A partially ordered set $P$ is \textit{locally finite} if every segment in $P$ is finite.

Let $P$ be a locally finite partially ordered set.  The \textit{incidence algebra} of $P$ is defined as follows.  Consider the set of all real-valued functions of two variables $f(x,y)$, defined for $x,y\in P$, with the property that $f(x,y)=0$ if $x\nleqq y$.  The sum of two such functions as well as multiplication by scalars are defined as usual.  The product $h=fg$ is defined as follows:
\[
h(x,y)=\sum_{x\leqq z \leqq y}f(x,z)g(z,y).
\]
Since $P$ is locally finite, the sum on the right is well defined.  This is an associative algebra over the reals and has an identity element which is the Kronecker delta function, $\delta (x,y)$.

The \textit{zeta function} of $P$ is the element of the incidence algebra of $P$ given by $\zeta (x,y)=1$ if $x\leqq y$ and $\zeta (x,y)=0$ otherwise.  The function $n(x,y)=\zeta(x,y)-\delta(x,y)$ is called the \textit{incidence function}.

\begin{prop}
The zeta function of a locally finite partially ordered set $P$ is invertible in the incidence algebra.
\end{prop}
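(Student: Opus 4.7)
The plan is to construct the inverse explicitly, following the standard definition of the M\"obius function of a locally finite poset. Write the zeta function as $\zeta = \delta + n$, where $n$ is the incidence function. The key observation is that for any fixed $x \leqq y$ in $P$, the segment $[x,y]$ is finite (by local finiteness), so every chain $x = z_0 < z_1 < \cdots < z_k = y$ in $[x,y]$ has length bounded by some integer $N(x,y)$. Consequently, $n^k(x,y) = 0$ for $k > N(x,y)$, since each nonzero term in the expansion of $n^k(x,y)$ corresponds to a strict chain of length $k$ in $[x,y]$.

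With this observation in hand, I would define $\mu \in$ (incidence algebra of $P$) by the formally geometric series
\[
\mu(x,y) := \sum_{k=0}^{\infty} (-1)^k n^k(x,y),
\]
which makes sense pointwise because, for each fixed pair $x \leqq y$, only finitely many terms ($k \leq N(x,y)$) contribute. One then checks directly that $\mu \cdot \zeta = \mu \cdot (\delta + n) = \mu + \mu n$ telescopes to $\delta$, and likewise $\zeta \cdot \mu = \delta$, so $\mu$ is the two-sided inverse of $\zeta$ in the incidence algebra.

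Equivalently — and this is the form most useful in applications — one can define $\mu$ recursively by setting $\mu(x,x) = 1$, $\mu(x,y) = 0$ if $x \nleqq y$, and
\[
\mu(x,y) = -\sum_{x \leqq z < y} \mu(x,z) \quad \text{for } x < y.
\]
Local finiteness again guarantees the sum is finite, so the recursion determines $\mu(x,y)$ unambiguously (induct on the cardinality of $[x,y]$). By construction this gives $\sum_{x \leqq z \leqq y} \mu(x,z)\zeta(z,y) = \sum_{x \leqq z \leqq y} \mu(x,z) = 0$ for $x < y$ and equals $1$ for $x = y$, i.e.\ $\mu \cdot \zeta = \delta$. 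A symmetric recursion from the right produces a right inverse, and by associativity of the incidence algebra the two coincide with $\mu$.

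The only technical point — really the only thing that can go wrong — is the well-definedness of the series or the recursion, and this is precisely where local finiteness of $P$ is used. No deeper obstacle arises: once finiteness of each segment $[x,y]$ is invoked, both the direct telescoping verification and the inductive construction are entirely formal.
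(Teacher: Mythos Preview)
Your proposal is correct. Your second approach --- the recursive definition $\mu(x,x)=1$, $\mu(x,y)=-\sum_{x\leqq z<y}\mu(x,z)$ --- is exactly what the paper does; the paper defines $\mu$ by this recursion, verifies $(\mu\zeta)(x,y)=\delta(x,y)$ directly, and asserts the other side ``similarly.''

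Your first approach, writing $\zeta=\delta+n$ and setting $\mu=\sum_{k\geq 0}(-1)^k n^k$ with the observation that $n^k(x,y)$ counts strict chains of length $k$ in $[x,y]$ and hence vanishes for large $k$, is a genuinely different route that the paper does not take. It has the minor advantage that the telescoping identity $(\delta+n)\cdot\sum(-1)^k n^k = \delta = \sum(-1)^k n^k\cdot(\delta+n)$ is manifestly two-sided from the outset, so you avoid the small step (present in both your second argument and implicitly in the paper's) of matching a left inverse with a right inverse via associativity. On the other hand, the recursive form is what one actually uses in computations (as in the paper's later Corollary), so the paper's choice is the more practical one to present.
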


\begin{proof}
Let $\mu(x,y)$ be the function defined inductively over the elements in the segment [x,y] as follows.  Set $\mu(x,x)=1$ for all $x\in P$.  Now suppose that $\mu(x,z)$ has been defined for all $z$ such that $x\leqq z < y$ and set
\[
\mu(x,y)=-\sum_{x\leqq z < y}\mu(x,z).
\]
Then
\[
\begin{aligned}
(\zeta \mu)(x,y)&=\sum_{x\leqq z \leqq y}\zeta(x,z) \mu(z,y)\\
&=\sum_{x\leqq z \leqq y}\mu(z,y)\\
&=\delta(x,y),
\end{aligned}
\]
and similarly $(\mu\zeta)(x,y)=\delta(x,y)$.  The function $\mu$, the inverse of $\zeta$, is called the \textit{M\"{o}bius function} of the partially ordered set $P$.
\end{proof}

\begin{prop}[M\"{o}bius inversion formula I]
Let $f:P\to \R$ be defined for all $x$ in a locally finite partially ordered set $P$ and assume there exists an element $m\in P$ such that $f(x)=0$ unless $x\geqq m$.  Suppose that $g:P\to \R$ is given by
\[
g(x)=\sum_{y\leqq x}f(y).
\]
Then
\[
f(x)=\sum_{y\leqq x}g(y)\mu(y,x).
\]
\end{prop}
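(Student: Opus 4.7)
The plan is a direct computation in the incidence algebra: I would substitute the given expression for $g$ into the right-hand side of the formula, interchange the order of summation, and collapse the inner sum using the identity $\zeta\mu=\delta$ established in the proof of the preceding proposition.

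First I would check that every sum in sight is a finite one. Since $f(y)=0$ unless $y\geqq m$ and $P$ is locally finite, the nonzero terms in $\sum_{y\leqq x}f(y)$ come only from $y\in[m,x]$, which is a finite segment. The same observation lets me rewrite the double sum appearing below as a sum over the finite set of pairs $(z,y)$ with $m\leqq z\leqq y\leqq x$, which is what makes the interchange of summation legal.

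Then, substituting $g(y)=\sum_{z\leqq y}f(z)$ into the right-hand side of the claimed identity and swapping the order of summation gives
\[
\sum_{y\leqq x}g(y)\mu(y,x)\;=\;\sum_{y\leqq x}\sum_{z\leqq y}f(z)\mu(y,x)\;=\;\sum_{z\leqq x}f(z)\sum_{z\leqq y\leqq x}\mu(y,x).
\]
By the defining relation $(\zeta\mu)(z,x)=\delta(z,x)$ proved for the M\"obius function, the inner sum equals $\delta(z,x)$. Only the term $z=x$ survives, and the right-hand side collapses to $f(x)$, as desired.

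The argument is essentially formal once one has the existence of $\mu$ together with the two-sided identity $\zeta\mu=\mu\zeta=\delta$, so I do not anticipate any real obstacle. The only delicate point is justifying the interchange of summation, and this is precisely why the hypothesis on the element $m$ enters: without it, the sum defining $g(x)$ might have infinitely many nonzero terms and the manipulation above would make no sense, even though the right-hand side of the inversion formula is a finite sum for each fixed $x$.
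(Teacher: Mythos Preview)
Your proof is correct and follows essentially the same approach as the paper: substitute the definition of $g$, swap the order of summation (justified by the finiteness coming from the hypothesis on $m$ and local finiteness), and collapse the inner sum via $\zeta\mu=\delta$. The paper's write-up differs only cosmetically, inserting an explicit factor of $\zeta(z,y)$ before interchanging sums so that the identity $(\zeta\mu)(z,x)=\delta(z,x)$ appears verbatim.
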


\begin{proof}
Since $P$ is locally finite, $\sum_{y\leqq x}f(y)=\sum_{m\leqq y\leqq x}f(y)$ is a finite sum.  Hence the function $g$ is well-defined.  Then
\[
\begin{aligned}
\sum_{y\leqq x}g(y)\mu(y,x)&=\sum_{y\leqq x} \sum_{z\leqq y}f(z)\mu(y,x)\\
&=\sum_{y\leqq x} \sum_{z}f(z)\zeta(z,y)\mu(y,x)\\
&=\sum_{z}f(z)\sum_{y\leqq x}\zeta(z,y)\mu(y,x)\\
&=\sum_{z}f(z)\delta(z,x)\\
&=f(x).
\end{aligned}
\]
\end{proof}

A similar argument establishes

\begin{prop}[M\"{o}bius inversion formula II]
Let $f:P\to \R$ be defined for all $x$ in a locally finite partially ordered set $P$ and assume there exists an element $M\in P$ such that $f(x)=0$ unless $x\leqq M$.  Suppose that $g:P\to \R$ is given by
\[
g(x)=\sum_{y\geqq x}f(y).
\]
Then
\[
f(x)=\sum_{y\geqq x}\mu(x,y)g(y).
\]
\end{prop}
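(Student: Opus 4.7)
The plan is to mirror the proof of the first Möbius inversion formula, substituting the dual identity $\mu\zeta = \delta$ in place of $\zeta\mu = \delta$. Both identities hold because $\mu$ was constructed as the two-sided inverse of $\zeta$ in the incidence algebra.

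First I would check that $g(x)$ is well-defined. Since $f(y)=0$ unless $y\leqq M$, the apparently infinite sum collapses to
\[
g(x)=\sum_{y\geqq x}f(y)=\sum_{x\leqq y\leqq M}f(y),
\]
which is finite because the segment $[x,M]$ is finite by local finiteness of $P$. The same support bound shows that every sum appearing below is indexed by a subset of the finite segment $[x,M]$, so all rearrangements of summation order are justified.

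Next I would substitute the definition of $g$ into the target expression and commute the sums, using $\mu(x,y)=0$ unless $x\leqq y$ and $\zeta(y,z)=1$ exactly when $y\leqq z$:
\[
\sum_{y\geqq x}\mu(x,y)\,g(y)=\sum_{y\geqq x}\mu(x,y)\sum_{z\geqq y}f(z)=\sum_{z\geqq x}f(z)\sum_{x\leqq y\leqq z}\mu(x,y)\zeta(y,z).
\]
The inner sum is precisely $(\mu\zeta)(x,z)$ in the incidence algebra, which equals $\delta(x,z)$ since $\mu$ is the two-sided inverse of $\zeta$. Hence the whole expression collapses to
\[
\sum_{z\geqq x}f(z)\,\delta(x,z)=f(x),
\]
as required.

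There is no serious obstacle in this argument: the proof of Proposition I already verified $\zeta\mu=\delta$, and the identical inductive construction (or a symmetric recursion) yields $\mu\zeta=\delta$, which is the only ingredient beyond bookkeeping. The one point worth stating carefully is the validity of interchanging the two summations, and this is secured by the fact that the support condition on $f$ together with local finiteness of $P$ reduces everything to finite sums over the segment $[x,M]$.
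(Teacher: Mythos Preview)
Your proof is correct and is exactly the ``similar argument'' the paper alludes to: dualize the proof of M\"obius inversion formula~I by summing upward instead of downward and invoking $\mu\zeta=\delta$ in place of $\zeta\mu=\delta$, with finiteness of all sums guaranteed by the support condition $f(y)=0$ unless $y\leqq M$ and local finiteness of $P$.
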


\begin{cor}
The M\"{o}bius function $\mu$ of a locally finite partially ordered set can be computed recursively by either of the formulae
\[
\begin{aligned}
\mu(x,z)&=-\sum_{x\leqq y < z}\mu(x,y),\quad x<z,\\
\mu(x,z)&=-\sum_{x< y \leqq z}\mu(y,z),\quad x<z,
\end{aligned}
\]
together with $\mu(x,x)=1$.
\end{cor}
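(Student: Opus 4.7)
The plan is to derive both recursive formulas directly from the fact, already established in the proof of the preceding proposition, that $\mu$ is the two-sided inverse of $\zeta$ in the incidence algebra of $P$, namely $\mu\zeta=\zeta\mu=\delta$. The first formula is essentially the inductive defining relation used to construct $\mu$, while the second is its mirror image arising from viewing $\mu$ as a right rather than left inverse. Both then furnish terminating recursions because $P$ is locally finite and so the segments $[x,z]$ are finite.

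First, I would spell out the identity $\mu\zeta=\delta$ at a pair $(x,z)$ with $x<z$. Expanding,
\[
0=\delta(x,z)=(\mu\zeta)(x,z)=\sum_{x\leqq y\leqq z}\mu(x,y)\zeta(y,z)=\sum_{x\leqq y\leqq z}\mu(x,y).
\]
Peeling off the top term $y=z$ gives $\mu(x,z)=-\sum_{x\leqq y<z}\mu(x,y)$, which is the first formula. Together with $\mu(x,x)=1$, this determines $\mu(x,z)$ by induction on the length of the longest chain in $[x,z]$, so the recursion is legitimate.

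Next I would do the symmetric computation with $\zeta\mu=\delta$. For $x<z$,
\[
0=\delta(x,z)=(\zeta\mu)(x,z)=\sum_{x\leqq y\leqq z}\zeta(x,y)\mu(y,z)=\sum_{x\leqq y\leqq z}\mu(y,z).
\]
Isolating the term $y=x$ yields $\mu(x,z)=-\sum_{x<y\leqq z}\mu(y,z)$, the second formula. Again local finiteness of $P$ makes the sum finite, and the recursion terminates by downward induction from $z$.

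There is no real obstacle here; the only point to verify is that $\mu$ truly is a two-sided inverse of $\zeta$, which was already handled in the preceding proposition (the construction gave $\mu\zeta=\delta$, and an analogous argument, or the general fact that in an associative algebra a left inverse that is also shown to be a right inverse must coincide, yields $\zeta\mu=\delta$). So the corollary is essentially an immediate reformulation of the identities $\mu\zeta=\zeta\mu=\delta$ evaluated at pairs $(x,z)$ with $x<z$.
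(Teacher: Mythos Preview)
Your proof is correct and is exactly the intended argument: the paper states this corollary without proof immediately after establishing $\zeta\mu=\mu\zeta=\delta$ in the proof of the invertibility proposition, so the two recursions are just these identities evaluated at $(x,z)$ with $x<z$ and one term isolated. There is nothing to add.
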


\begin{ex} \cite[Example 1]{Rota}
The classical M\"{o}bius function defined on the set of positive integers is given by $\mu(d)=(-1)^r$ if $d$ is a product of $r$ distinct primes and 0 otherwise.  The classical inversion formula, first derived by M\"{o}bius in 1832, is:
\[
g(m)=\sum_{n|m}f(n);\quad f(m)=\sum_{n|m}g(n)\mu(m/n).
\]
The set of positive integers is a locally finite partially ordered set with divisibility as the partial order.  In this case the incidence algebra has a distinguished subalgebra consisting of all functions $f$ of the form $f(n,m)=F(m/n)$.  The M\"{o}bius function of this partially ordered set is $\mu(n,m)=\mu(m/n)$.  The product $H=FG$ of two functions in this subalgebra can be written in the simpler form
\[
H(m)=\sum_{kn=m}F(k)G(n).
\]
If we associate the \textit{formal Dirichlet series} $\hat{F}(s)=\sum_{n=1}^{\infty}F(n)/n^s$ with the element $F$ of this subalgebra, then the above product corresponds to the product of two formal Dirichlet series considered as functions of $s$, $\hat{H}(s)=\hat{F}(s)\hat{G}(s)$.  Under this representation, the zeta function of the partially ordered set is the classical \textit{Riemann zeta function} $\zeta(s)=\sum_{n=1}^{\infty}1/n^s$, and the statement that the M\"{o}bius function is the inverse of the zeta function reduces to the classical identity $1/{\zeta(s)}=\sum_{n=1}^{\infty}\mu(n)/n^s$.
\end{ex}

The lattice of subgroups of a finite group $G$ is a locally finite partially ordered set.  The theory of M\"{o}bius functions in this case is of particular interest in counting Galois extensions.  A \textit{subgroup function} from $G$ to $\Z$ is a mapping of the lattice of subgroups of $G$ into $\Z$.  The equations
\[
\mu_G(G)=1 \quad \text{and}\quad \sum_{H\leq K}\mu_G(K)=0\ \quad \text{whenever}\ H<G,
\]
define the \textit{M\"{o}bius function} $\mu_G$ of $G$; $\mu_G$ is a subgroup function from $G$ to $\Z$.  Note that if $N\vartriangleleft G$, $N\leq H\leq G$, then $\mu_{G/N}(H/N)=\mu_G(H)$.

If two subgroup functions $g,h$ satisfy
\[
g(H)=\sum_{K\leq H}h(K)
\]
for all $H\leq G$, then by the M\"{o}bius inversion formula
\[
h(G)=\sum_{H\leq G}\mu_G(H)g(H).
\]

An explicit formula for $\mu_G(H)$ can be obtained as follows.  Let $M_1,\ldots ,M_r$ be the maximal subgroups of $G$.  If $S=\{i_1,\ldots,i_s\}$ is a subset of $I=\{1,\ldots,r\}$, let
\[
\begin{aligned}
(-1)^S&:=(-1)^s,\\
M_S&:=M_{i_1}\wedge M_{i_2}\wedge \cdots \wedge M_{i_s};
\end{aligned}
\]
so $M_\phi=G$, and $M_I$ is the Frattini subgroup $\Phi(G)$ of $(G)$.  Let $S_H$ denote the set of indices $i$ such that $H\leq M_i$.  Then
\[
\begin{aligned}
\sum_{H\leq M_S}(-1)^S&=\sum_{S\subseteq S_H}(-1)^S\\
&=
\begin{cases}
1 & \text{if}\ H=G\\
0 & \text{if}\ H<G,
\end{cases}
\end{aligned}
\] 
so
\[
\mu_G(H)=\sum_{M_S=H}(-1)^S.
\]

It follows that $\mu_G(H)=0$ unless $H$ is an intersection of maximal subgroups of $G$.  In particular, $\mu_G(H)=0$ unless $\Phi(G)\leq H$.

Now consider the case in which $G$ is a $p$-group.  Let $V_n(q)$ be an $n$-dimensional vector space over the field of $q$ elements and partially order the subspaces of $V_n(q)$ by inclusion.  Denote the resulting partially ordered set by $L(V_n(q))$.  The Gaussian coefficient $\binom{n}{k}_q$ is defined to be the number of $k$-dimensional subspaces of $V_n(q)$.  Hence
\[
\begin{aligned}
\binom{n}{k}_q&= \frac{\text{\# of sequences of \textit{k} independent vectors in}\ V_n(q)}{\text{\# of sequences of \textit{k} independent vectors in}\ V_k(q)}\\
&=\frac{(q^n-1)(q^n-q)\cdots (q^n-q^{k-1})}{(q^k-1)(q^k-q)\cdots (q^k-q^{k-1})}\\
&=\frac{(q^n-1)(q^{n-1}-1)\cdots (q^{n-k+1}-1)}{(q^k-1)(q^{k-1}-1)\cdots (q-1)}.
\end{aligned}
\]

For any two subspaces $S$ and $T$ of $V_n(q)$, the structure of the sublattice $[S,T]$ of $L(V_n(q))$ depends only on $\dim_{\F_q}T-\dim_{\F_q}S$, so computing the M\"{o}bius function of the partially ordered set $L(V_n(q))$ reduces to determining the M\"{o}bius function $\mu_k=\mu(0,V_k(q))$ for all $k\leq n$.

Let $X$ be a vector space over $\F_q$ with $|X|=x$.  For any subspace $U\in L(V_n(q))$ let $N_=(U)$ be the number of linear transformations $f:V_n(q)\to X$ whose kernel is $U$ and let $N_{\geq}(U)$ be the number of such maps whose kernel contains $U$.  Then
\[
N_{\geq}(U)=\sum_{U\leq W\in L(V_n(q))}N_=(W),
\]
and by M\"{o}bius inversion
\[
N_=(U)=\sum_{U\leq W\in L(V_n(q))}\mu(U,W)N_{\geq}(W).
\]
The number of injective maps is then
\[
N_=(0)=\sum_{W\in L(V_n(q))}\mu(0,W)N_{\geq}(W).
\]
Since any injective map from $V_n(q)\to X$ is specified by giving the image of an ordered basis of $V_n(q)$, the number of such maps is $(x-1)(x-q)\cdots (x-q^{n-1})$.  Also, if $W$ has $\F_q$-dimension $d(W)$, then $N_{\geq}(W)=x^{n-d(W)}$.  Hence
\[
(x-1)(x-q)\cdots (x-q^{n-1})=\sum_{W\in L(V_n(q))}\mu_{d(W)}x^{n-d(W)}=\sum_{k=0}^{n}\binom{n}{k}_q\mu_kx^{n-k}.
\]
Since this identity is true for infinitely many values of $x$, it is a polynomial identity.  Equating the constant terms gives
\[
\mu_n=(-1)(-q)\cdots (-q^{n-1})=(-1)^nq^{\frac{1}{2}n(n-1)}.
\]

Thus we have

\begin{lem}
\label{lem:muG}
If $G$ is a $p$-group and $H\leq G$ with $[G:H]=p^i$ then
\[
\mu_G(H)=
\begin{cases}
(-1)^ip^{\frac{1}{2}i(i-1)}\ &\text{if}\ G^p[G,G]\leq H\\
0\ &\text{otherwise}.
\end{cases}
\]
\qed
\end{lem}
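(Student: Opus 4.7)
The plan is to assemble the proof from facts already established in the discussion immediately preceding the statement, so very little new work is needed.

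First, I would dispose of the ``otherwise'' case. The discussion preceding the lemma established, via the inclusion--exclusion argument with the maximal subgroups $M_1,\dots,M_r$ of $G$, that $\mu_G(H)=0$ unless $H$ is an intersection of maximal subgroups, and in particular unless $\Phi(G)\leq H$. Since $G$ is a finitely generated $p$-group, Corollary~2.1.11 identifies $\Phi(G)=G^p[G,G]$, which gives the vanishing in the ``otherwise'' branch.

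Next, assume $G^p[G,G]\leq H$ and write $N=\Phi(G)=G^p[G,G]$. The key reduction step is the remark that $\mu_{G/N}(H/N)=\mu_G(H)$ whenever $N\trianglelefteq G$ and $N\leq H\leq G$, which is immediate from the fact that the intervals $[H,G]$ in the subgroup lattice of $G$ and $[H/N,G/N]$ in the subgroup lattice of $G/N$ are order-isomorphic. So I may replace $G$ by $G/\Phi(G)$. Because $G$ is a $p$-group, $G/\Phi(G)$ is an elementary abelian $p$-group, hence an $\F_p$-vector space of some dimension $n$, and its lattice of subgroups is exactly $L(V_n(\F_p))$. Under this identification, the hypothesis $[G:H]=p^i$ translates to $H/\Phi(G)$ being a subspace of codimension $i$ in $V_n(\F_p)$.

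The final step is to read off the value from the M\"obius computation in $L(V_n(\F_p))$ already carried out in the text. As noted there, for any two subspaces $S\leq T$ the sublattice $[S,T]$ depends only on $\dim T-\dim S$; hence the interval $[H/\Phi(G),\,G/\Phi(G)]$ is order-isomorphic to $[0,V_i(\F_p)]$ in $L(V_i(\F_p))$, so
\[
\mu_G(H)=\mu(H/\Phi(G),\,G/\Phi(G))=\mu(0,V_i(\F_p))=\mu_i.
\]
The polynomial-identity argument using injective linear maps $V_n(q)\to X$ gave $\mu_n=(-1)^nq^{n(n-1)/2}$ for any $n$; specializing $q=p$ and the dimension to $i$ yields $\mu_i=(-1)^ip^{i(i-1)/2}$, which is the stated formula.

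There is no real obstacle here: the two nontrivial ingredients (vanishing of $\mu_G(H)$ off the Frattini-containing subgroups, and the closed form for $\mu_n$ on $L(V_n(q))$) were already proved. The only thing one must verify carefully is the lattice-isomorphism assertion allowing one to identify the interval $[H/\Phi(G),G/\Phi(G)]$ with $L(V_i(\F_p))$ of the correct dimension $i$; this is a standard consequence of choosing a complement to $H/\Phi(G)$ in $G/\Phi(G)$, which exists since we are working in a vector space.
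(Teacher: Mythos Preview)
Your proposal is correct and follows exactly the approach of the paper: the lemma is stated there with a \qed and no explicit proof, because it is meant to follow immediately from the preceding discussion (vanishing of $\mu_G(H)$ unless $\Phi(G)\leq H$, the identification $\Phi(G)=G^p[G,G]$, and the computation $\mu_n=(-1)^n q^{n(n-1)/2}$ on $L(V_n(q))$). You have spelled out precisely those steps, including the reduction to $G/\Phi(G)$ via the interval isomorphism, so nothing is missing.
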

This lemma will be important when we consider M. Yamagishi's method for determining the number of Galois $p$-extensions of certain local fields in section \ref{sec:mobius}.

\chapter{Counting Galois $p$-Extensions}
\label{ch:extensions}

The goal of this chapter is to illustrate several techniques for enumerating Galois $p$-extensions of various fields.  These numbers are important in the study of quotients and filtrations of absolute Galois groups.  In section \ref{sec:dihedral} we point out a connection between the number of Galois extensions of a field $F$ having Galois group isomorphic to $D_4$, the dihedral group of order 8, and a particular small quotient of $G_F$ referred to as the W-group of $F$.  

We turn to the problem of counting $D_4$-extensions of local fields, beginning, in section \ref{sec:local}, with a method of constructing extensions of the $p$-adic numbers, $\Q_p$, due to H. Naito \cite{Naito}.  We then provide an alternative, group-theoretic approach based on knowledge of the W-group as well as a method which utilizes the theory of quaternion algebras.  These are new techniques for determining the number of $D_4$-extensions of $\Q_p$ which are presented as an alternative to the direct construction approach of Naito.  

Section \ref{sec:mobius} describes a technique, due to M. Yamagishi \cite{yam}, using the theory of complex characters and M\"{o}bius functions to count finite Galois $p$-extensions of a local field $K$, where $K$ is a finite extension of $\Q_p$.  We illustrate this method in the case of $D_4$-extensions in example \ref{ex:Q2}.  In \cite{MT1} J. Min\'a\v{c} and N. D. T\^an develop a technique to compute the number of $\U_4(\F_p)$-extensions of $K$ using triple Massey products.  We closely follow this approach in section \ref{sec:cup}, with the necessary modifications, to show that cup products can be used to determine a formula for the number of $D_4$-extensions of $K$ based on the degree $n=[K:\Q_p]$.

In section \ref{sec:pyth} we consider formally real pythagorean fields.  It is interesting to note that in 1900, David Hilbert posed a famous list of twenty-three problems and it was the theory of formally real fields that led Emil Artin, in 1927, to a solution of Hilbert's seventeenth problem.  After reviewing the basic theory, we develop, in section \ref{sec:sap}, a formula for the number of $D_4$-extensions of a pythagorean SAP field.  In section \ref{sec:pythGF2} we characterize the group $G_F(2)$ for $F$ a pythagorean SAP field or a superpythagorean field.  These groups will be considered further when we study dimensions of Zassenhaus filtration subquotients in chapter \ref{ch:dimensions}. 

\section{The Inverse Galois Problem}

A central problem in modern Galois theory is the inverse Galois problem: given a field $F$ and a group $G$, is it possible to construct a Galois extension $K/F$ with Galois group isomorphic to $G$?  Such an extension $K/F$ is often referred to as a $G$-extension.  If such a construction is possible, then the closely related question of counting the number of $G$-extensions of $F$ naturally arises. 

The embedding problem in Galois theory generalizes the inverse problem and consists of finding the conditions under which one can construct a Galois extension $K/F$, with group $G$, such that $K$ extends a given Galois extension $L/F$ whose Galois group is a quotient of $G$.  If the group $G$ contains a normal subgroup $H$, then a natural approach to solving the inverse problem for the field $F$ and the group $G$ is to choose an extension $L/F$ with Galois group $G/H$ which can in turn be embedded in an appropriate extension $K$.  

Probably the simplest example of an embedding problem is the following well known result (see, for example \cite{Ishk}), which is also related to the notion of pythagorean fields.  We include a proof in order to illustrate that even this case is nontrivial.  

\begin{thm}
Let $F$ be a field with $\textnormal{char}(F)\ne 2$ and let $K=F(\sqrt{a})$ be a quadratic extension of $F$.  Then $K$ can be embedded in a cyclic extension $L/F$ of degree 4 if and only if $a$ is a sum of two squares in $F$.
\end{thm}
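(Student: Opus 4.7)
The plan is to analyze the embedding problem $1 \to C_2 \to C_4 \to C_2 \to 1$ directly by Kummer theory, reducing the cyclicity condition to a norm equation that translates into a sum-of-two-squares statement. Let $\tau$ denote the nontrivial element of $\Gal(K/F)$; both directions use the same lifting formula for extending $\tau$ to $L$.

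\textbf{Forward direction.} Assume $L/F$ is cyclic of degree $4$ with $K \subset L$. By Kummer theory, $L = K(\sqrt{\beta})$ for some $\beta \in \kx \setminus \kxs$. Since $L/F$ is Galois, a short argument using that $\sqrt{\tau(\beta)} \in L = K(\sqrt\beta)$ and $\beta \notin \kxs$ forces $\tau(\beta)/\beta \in \kxs$; fix $\gamma \in \kx$ with $\tau(\beta) = \gamma^{2}\beta$. The lift of $\tau$ to $L$ determined by $\sigma(\sqrt\beta) = \gamma\sqrt\beta$ satisfies
\[
\sigma^{2}(\sqrt\beta) = \sigma(\gamma)\,\sigma(\sqrt\beta) = \tau(\gamma)\gamma\,\sqrt\beta = N_{K/F}(\gamma)\,\sqrt\beta.
\]
Because $\Gal(L/F) \cong C_{4}$, the element $\sigma$ has order $4$ and $\sigma^{2}$ is the nontrivial element of $\Gal(L/K)$; hence $N_{K/F}(\gamma) = -1$. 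Writing $\gamma = u + v\sqrt a$ with $u, v \in F$ gives $u^{2} - a v^{2} = -1$. If $v \neq 0$ this yields $a = (u/v)^{2} + (1/v)^{2}$ directly; if $v = 0$ then $-1 = u^{2} \in \fxs$, in which case every element of $\fx$ is a sum of two squares (for instance $a = \bigl(\tfrac{a+1}{2}\bigr)^{2} + \bigl(\tfrac{u(a-1)}{2}\bigr)^{2}$).

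\textbf{Converse.} Given $a = x^{2} + y^{2}$ with $a \notin \fxs$ (so necessarily $y \neq 0$), set
\[
\beta := a + x\sqrt a \in K, \qquad \gamma := \frac{y\sqrt a}{\beta} \in K.
\]
A direct computation gives $\beta\,\tau(\beta) = a^{2} - a x^{2} = a y^{2}$, whence $\tau(\beta)/\beta = \gamma^{2}$ and $N_{K/F}(\gamma) = -a y^{2}/(a y^{2}) = -1$. Define $L := K(\sqrt\beta)$. That $\tau(\beta)/\beta \in \kxs$ implies $L/F$ is Galois, and the formula for $\sigma^{2}$ above gives $\sigma^{2}(\sqrt\beta) = -\sqrt\beta$, so the lift $\sigma$ has order $4$ and $\Gal(L/F) \cong C_{4}$. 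One must still check $\beta \notin \kxs$ (so that $[L:K] = 2$), but this is automatic: if $\beta = \delta^{2}$ in $K$, then $\gamma = \pm\tau(\delta)/\delta$ would have $N_{K/F}(\gamma) = +1$, contradicting $N_{K/F}(\gamma) = -1$.

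\textbf{Main obstacle.} No individual step is genuinely hard; the difficulty is organizational---cleanly managing the lift $\sigma$ and disposing of the edge case $v = 0$ in the forward direction. Conceptually, the argument amounts to the statement that the embedding obstruction class $(a,-1) = (a,a) \in \Br_{2}(F)$ (cf.\ Theorem~2.4.3(iii)) vanishes if and only if $-1 \in N_{K/F}(\kx)$, which by the elementary identity $u^{2} - a v^{2} = -1 \Longleftrightarrow a = (u/v)^{2} + (1/v)^{2}$ is precisely the sum-of-two-squares condition; the direct Kummer-theoretic route above is the most elementary path and produces the extension $L$ explicitly.
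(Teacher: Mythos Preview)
Your proof is correct and follows essentially the same strategy as the paper: both directions hinge on producing an element of $K$ with norm $-1$ and translating that into the sum-of-two-squares identity. The only notable difference is in the forward direction, where the paper manufactures its norm-$-1$ element $m$ via a primitive-element resolvent $m=(\alpha-g^2\alpha)/(g\alpha-g^3\alpha)$, whereas you obtain $\gamma$ more systematically from Kummer theory via $\tau(\beta)=\gamma^2\beta$; in the converse your $\beta=a+x\sqrt a$ and the paper's $\lambda=1+m^2$ differ only by an $F^\times$-scalar, so the resulting $C_4$-extensions coincide.
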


\begin{proof}
We assume that $-1\notin F^2$; otherwise every element in $F$ is a sum of two squares in $F$ and $L=F(\sqrt[4]{a})$ is a solution of the embedding problem.

Suppose that $L/F$ is a $C_4$-extension with $\Gal(L/F)=\langle g \rangle$ and that $K\subseteq L$.  Let $\alpha$ be a primitive element of $L$ and let $m=(\alpha-g^2(\alpha))/(g(\alpha)-g^3(\alpha))$.  Then $m$ is well defined and $m\neq 0$.  Also, $g(m)=(g(\alpha)-g^3(\alpha))/(g^2(\alpha)-\alpha)=-m^{-1}$ and $g^2(m)=-g(m)^{-1}=m$, so $m\in K=F(\sqrt{a})$.  If $m=x+y\sqrt{a}$, where $x,y\in F$, then $x^2-ay^2=N_{K/F}(m)=mg(m)=-1$, so $y\neq 0$ and $a=(x/y)^2+(1/y)^2$ is a sum of two squares in $F$.

Conversely, suppose $a=u^2+v^2$ with $u,v\in F,\ v\neq 0$ and let $m=(u+\sqrt{a})/v$.  Then, if $\bar{g}$ is the automorphism of $K=F\sqrt{a}$ defined by $\sqrt{a}\mapsto -\sqrt{a}$, we have $m\bar{g}(m)=-1$.  Now let $\lambda=1+m^2$.  Then $\lambda\neq 0$ and $\bar{g}(\lambda)=1+1/m^2=\lambda/m^2$.  Let $\theta=\sqrt{\lambda}$, let $L=K(\theta)$ and let $g$ be an automorphism of $L$ extending the automorphism $\bar{g}$ of $K$.  Then $g(\theta)^2=g(\lambda)=\lambda/m^2=(\theta/m)^2$, so, up to sign, $g(\theta)=\theta/m$.  Hence $g$ is an automorphism of $L/F$ and furthermore,
\[
\begin{aligned}
g^2(\theta)&=g(\theta)/\bar{g}(m)=\theta/(m\bar{g}(m))=-\theta;\\
g^3(\theta)&=-\theta/m;\\
g^4(\theta)&=\theta,
\end{aligned}
\]
so $g^4=1$.  Therefore $L=K(\theta)$ is normal over $F$ with $\Gal(L/F)=\langle g \rangle \cong C_4$.
\end{proof}
 
Embedding problems have close connections to Galois cohomology and quadratic forms.  They are also of considerable importance in the study of absolute Galois groups.  For example, from the Galois correspondence, an affirmative answer to the inverse problem is equivalent to the existence of a closed normal subgroup $H$ of the absolute Galois group $G_F$ of $F$ such that $G_F/H \cong G$. However, absolute Galois groups remain largely mysterious objects and determining which profinite groups are realizable as absolute Galois groups of various fields remains a significant open problem in Galois theory.

One means of approaching this problem is to study small quotients of absolute Galois groups.  The structure of these groups is, in turn, closely related to the problem of counting Galois extensions.  In the next section, for example, we look at the connection between the number of $D_4$-extensions of a field $F$ and the W-group of $F$.

\section{Dihedral Extensions and W-Groups}
\label{sec:dihedral}

We follow \cite{MS2} to define a special Galois extension of a base field $F$ and then summarize results which pertain to the determination of dihedral extensions of $F$.  

We fix the following notation:  $C_\textit{n}$ denotes the cyclic group of order \textit{n} and $ D_4 $ denotes the dihedral group of order 8.  We assume that all fields have characteristic different from 2 and we make no distinction between an element $a$ in a field $F$ and and its square class $a\fxs \in \fx/\fxs$.  An extension $K$ of the field $F$ is called a \textit{G-extension} if $K/F$ is Galois with Galois group $G$.

Let $F^{(2)}=F(\sqrt{a}\mid a\in F^\times) $; the compositum of all quadratic extensions of $F$, $\varGamma =\{b\in F^{(2)}\mid F^{(2)}(\sqrt{b})/F \text{ is Galois}\} $ and $F^{(3)}=F^{(2)}(\sqrt{b}\mid b\in \varGamma )$; the compositum of all quadratic extensions of $F^{(2)}$ which are Galois over $F$.  Due to its close connection with the Witt ring $W(F)$ of $F$, the field $F^{(3)}$ has been referred to as the \textit{Witt closure} of $F$ and the group Gal$(F^{(3)}/F)$ is called the \textit{W-group} of $F$.  Recall that the \textit{quadratic closure} or \textit{maximal 2-extension} of $F$, denoted $F(2)$, is the smallest extension of $F$ which is closed under taking of square roots, or alternatively, is the compositum of all 2-towers over $F$ (inside a fixed algebraic closure of $F$).  The group Gal$(F(2)/F)$ is the maximal pro-2 quotient, $G_F(2)$, of the absolute Galois group $G_F$ of $F$.  By \cite[Proposition 2.1]{MS2} we see that the W-group of $F$, Gal$(F^{(3)}/F)\cong G_F(2)^{[3]}$.

Let $\{a_i\mid i\in I\}$ be a basis of $F^\times/(F^\times)^2$.  The automorphisms $\sigma_i$ given by $\sigma_j(\sqrt{a_i})=(-1)^{\delta_{ij}}\sqrt{a_i}$, where $\delta_{ij}$ is the Kronecker delta function, form a minimal set of generators of Gal$(F^{(2)}/F)$ and they induce a natural isomorphism Gal$(F^{(2)}/F)\cong\prod_{i\in I}C_2$.  From Kummer theory, Gal$(F^{(2)}/F)$ is the Pontrjagin dual of the discrete group $F^\times/(F^\times)^2$ under the pairing $(\sigma,a)=\sigma(\sqrt{a})/\sqrt{a}$ with values in $C_2\cong\{\pm 1\}$.

We now look more closely at the structure of $F^{(3)}$.  Recall that quaternion algebras over $F$ are denoted $(a,b)_F$, or simply $(a,b)$ when the field $F$ is clear.  By Merkurjev's Theorem [\ref{thm:Merk}], the subgroup of the Brauer group Br$(F)$ generated by the isomorphism classes of quaternion algebras over $F$ is $\textnormal{Br}_2(F)$, the subgroup generated by elements of order $\leq 2$.  The operation in Br$(F)$ will be written multiplicatively, so $(a,b)_F=1$ means $(a,b)$ splits over $F$.  For $a\in F^\times$, $N_a$ denotes the \textit{norm group} of $a$, i.e., the group of values of the quadratic form $\langle 1,-a \rangle$ over $F$.  The norm of an element $y\in F(\sqrt{a})$ for $a\in F^\times \setminus  (F^\times)^2$ will be written $Ny$.

If $a\in \fx/\fxs$ then by a $C_4^a$\textit{-extension} of $F$ we mean a $C_4$-extension $K$ of $F$ such that $F(\sqrt{a})\subset K$.  For these we have the following well known result (see, for example, \cite[Chapter 2, Section 2]{JLY} or \cite[Proposition 2.3]{MS2}).

\begin{prop}
\label{prop:c4ext}
Let $a\in \fx/\fxs$.  Then there exists a $C_4^a$-extension of $F$ if and only if $ (a,a)_F=1 $.  Furthermore, $K$ is a $C_4^a$-extension of $F$ if and only if $K=F(\sqrt{a})(\sqrt{y})$ where $y\in F(\sqrt{a})$ is such that $Ny=a\in \fx/\fxs$.
\end{prop}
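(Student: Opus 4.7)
The plan is to reduce the existence question to Kummer theory on the quadratic extension $K/E$, where $E=F(\sqrt{a})$, and then translate the condition that the resulting quartic extension be cyclic (rather than Klein four) into a norm condition on elements of $E$. Throughout I write $\tau$ for the nontrivial element of $\Gal(E/F)$ and $N=N_{E/F}$. The final step, identifying the norm condition with $(a,a)_F=1$, will follow from the corollary in Section \ref{sec:quad} characterizing split quaternion algebras in terms of norm groups.

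I would first tackle the ``furthermore'' clause. Any $C_4^a$-extension $K$ contains $E$ and is a quadratic extension of $E$, hence $K=E(\sqrt{y})$ for some $y\in E^\times\setminus (E^\times)^2$ by Kummer theory. A standard argument shows that $K/F$ is Galois iff $\tau(y)/y\in (E^\times)^2$; writing $\tau(y)=yu^2$ with $u\in E^\times$, one extends $\tau$ to an $F$-automorphism $\sigma$ of $K$ by setting $\sigma(\sqrt{y})=u\sqrt{y}$. Then $\sigma^2(\sqrt{y})=\tau(u)u\sqrt{y}=(Nu)\sqrt{y}$, so $\sigma$ has order $4$, equivalently $\Gal(K/F)\cong C_4$, precisely when $Nu=-1$. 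To convert this into a condition purely on $y$, I would observe that $Ny=y\tau(y)=(yu)^2$, so $\sqrt{Ny}=yu\in E$, and that $\tau(yu)=yu\cdot Nu$ equals $-yu$ when $Nu=-1$. This forces $yu\notin F$, hence $\sqrt{Ny}\notin F$, so $Ny\notin \fxs$; since $\sqrt{Ny}\in F(\sqrt{a})$, the only remaining possibility is $Ny\equiv a\pmod{\fxs}$.

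For the converse, given $y\in E^\times$ with $Ny\equiv a\pmod{\fxs}$, write $Ny=a\lambda^2$ with $\lambda\in \fx$ and set $u=\sqrt{a}\,\lambda/y\in E^\times$. A direct computation gives $\tau(y)=yu^2$ and $Nu=-a\lambda^2/Ny=-1$, so the construction above produces a $C_4^a$-extension $K=E(\sqrt{y})$. This simultaneously gives the characterization and shows that a $C_4^a$-extension of $F$ exists iff some $y\in E^\times$ satisfies $Ny\equiv a\pmod{\fxs}$, which, using $\fxs\subseteq N_{E/F}(E^\times)$, is equivalent to $a\in N_{E/F}(E^\times)$. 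By the corollary in Section \ref{sec:quad}, this is precisely the condition $(a,a)_F=1$. The most delicate step will be the bookkeeping that rules out $Ny\equiv 1\pmod{\fxs}$ in the forward direction; the rest reduces to a chain of standard Kummer and Galois computations.
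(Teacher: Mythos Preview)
Your argument is correct. The paper does not actually prove Proposition~\ref{prop:c4ext}; it merely states it as ``well known'' and cites external references. However, the paper does prove, earlier in Chapter~\ref{ch:extensions}, the closely related theorem that $F(\sqrt{a})$ embeds in a $C_4$-extension if and only if $a$ is a sum of two squares in $F$ --- which is the existence half of the proposition, since $(a,a)_F=(a,-1)_F$ splits precisely when $a$ is a sum of two squares.

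Compared with that proof, your approach is both more systematic and more general. The paper's forward direction constructs an element $m\in F(\sqrt{a})$ with $Nm=-1$ by a clever ad hoc formula involving a primitive element $\alpha$ of the $C_4$-extension, namely $m=(\alpha-g^2\alpha)/(g\alpha-g^3\alpha)$; its converse builds a single explicit $C_4$-extension from the sum-of-squares decomposition via $\lambda=1+m^2$. Your argument instead parametrizes \emph{all} quadratic extensions of $E=F(\sqrt{a})$ via Kummer theory, isolates the Galois-over-$F$ condition as $\tau(y)/y\in(E^\times)^2$, and then reads the cyclic-versus-Klein-four dichotomy directly from $Nu=\pm1$. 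This yields the full ``furthermore'' clause --- the description of every $C_4^a$-extension --- which the paper's earlier argument does not address. The paper's specific $\lambda$ does satisfy your norm criterion (one checks $N\lambda=a\cdot(2/v)^2$), so the two constructions are compatible; yours simply exposes the underlying structure more transparently.
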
 

Two elements $a,b\in \fx$ are called \textit{independent modulo squares} if $a$ and $b$ are linearly independent in $\fx/\fxs$.  If $a,b\in \fx$ are independent modulo squares then by a $D_4^{a,b}$\textit{-extension} of $F$ we mean a $D_4$-extension $K$ of $F$ such that $F(\sqrt{a},\sqrt{b})\subset K$ and Gal$(K/F(\sqrt{ab}))\cong C_4$.  This next proposition is also well known (see, for example, \cite[Chapter 2, Section 2]{JLY}).  For some history and discussion of more general types of Galois extensions related to these extensions see also \cite[7.7]{Fr}, \cite{Ma} or \cite{MN}. 

\begin{prop}
\label{prop:d4ext}
Let $a,b\in \fx$ be independent modulo squares.  Then there exists a $D_4^{a,b}$-extension of $F$ if and only if $ (a,b)_F=1 $.  Furthermore, $K$ is a $D_4^{a,b}$-extension of $F$ if and only if $K=F(\sqrt{a},\sqrt{b})(\sqrt{y})$ where $y\in F(\sqrt{a})$ is such that $Ny=b\in \fx/\fxs$.
\end{prop}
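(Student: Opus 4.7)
The plan is to reduce the existence claim to an explicit characterization and then prove the characterization by direct computation, in analogy with Proposition~\ref{prop:c4ext}. By the corollary in Section~\ref{sec:quad} characterizing split quaternion algebras, $(a,b)_F=1$ is equivalent to $b\in N_{F(\sqrt{a})/F}(F(\sqrt{a})^\times)$, i.e., to the existence of $y\in F(\sqrt{a})$ with $Ny=b$ in $\fx/\fxs$. Granted this, it suffices to prove the characterization: $K$ is a $D_4^{a,b}$-extension if and only if $K=F(\sqrt{a},\sqrt{b})(\sqrt{y})$ for some $y\in F(\sqrt{a})$ with $Ny=b$ in $\fx/\fxs$.

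For the ``if'' direction, I would write $Ny=bt^2$ with $t\in\fx$ and set $K=F(\sqrt{a},\sqrt{b},\sqrt{y})$. The identity $\sigma(y)/y=Ny/y^2=b(t/y)^2$ exhibits $\sigma(y)/y$ as a square in $F(\sqrt{a},\sqrt{b})$, so $\sqrt{\sigma(y)}\in K$ and $K/F$ is Galois. Kummer theory over $F(\sqrt{a})$ shows $y\notin(F(\sqrt{a},\sqrt{b})^\times)^2$: otherwise $y\in(F(\sqrt{a})^\times)^2\cup b(F(\sqrt{a})^\times)^2$ would force $Ny\in\fxs$, contradicting $b\notin\fxs$. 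Hence $[K:F]=8$. Letting $\sigma_a,\sigma_b\in\Gal(F(\sqrt{a},\sqrt{b})/F)$ be the nontrivial automorphisms fixing $\sqrt{b}$ and $\sqrt{a}$ respectively, I would lift them to $\alpha,\beta\in\Gal(K/F)$ by $\alpha(\sqrt{b})=\sqrt{b}$, $\alpha(\sqrt{y})=t\sqrt{b}/\sqrt{y}$ and $\beta(\sqrt{y})=\sqrt{y}$. A direct computation then yields $\alpha^2=\beta^2=1$ while $(\alpha\beta)^2(\sqrt{y})=-\sqrt{y}$, so $\alpha\beta$ has order $4$ and $\Gal(K/F)=\langle\alpha,\beta\rangle\cong D_4$. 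Since $\alpha\beta$ fixes $\sqrt{ab}=\sqrt{a}\sqrt{b}$, the cyclic subgroup $\langle\alpha\beta\rangle\cong C_4$ has fixed field $F(\sqrt{ab})$, confirming that $K$ is a $D_4^{a,b}$-extension.

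For the ``only if'' direction, given a $D_4^{a,b}$-extension $K$, the subgroup $\Gal(K/F(\sqrt{a}))$ is a Klein four subgroup of $D_4$ (not the cyclic one, which fixes $F(\sqrt{ab})$), so Kummer theory gives $K=F(\sqrt{a})(\sqrt{b},\sqrt{y})$ for some $y\in F(\sqrt{a})^\times$. Normality of $K/F$ forces $\sigma(y)/y\in(F(\sqrt{a},\sqrt{b})^\times)^2$, and Kummer theory narrows this to $\sigma(y)/y\in(F(\sqrt{a})^\times)^2$ or $\sigma(y)/y\in b\cdot(F(\sqrt{a})^\times)^2$. The first alternative makes $\Gal(K/F)$ abelian (either $C_2^3$ or $C_4\times C_2$, by a repeat of the lift computations from the ``if'' direction), contradicting $\Gal(K/F)\cong D_4$; so $\sigma(y)=by\gamma^2$ for some $\gamma\in F(\sqrt{a})^\times$, whence $Ny=b(y\gamma)^2\in b\fxs\cup ab\fxs$, using $(F(\sqrt{a})^\times)^2\cap\fx=\fxs\cup a\fxs$. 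Rerunning the lift computations shows that $Ny\equiv b$ places the cyclic subgroup of $D_4$ as the one fixing $F(\sqrt{ab})$, whereas $Ny\equiv ab$ places it as the one fixing $F(\sqrt{b})$, i.e., produces a $D_4^{a,ab}$-extension; the hypothesis $\Gal(K/F(\sqrt{ab}))\cong C_4$ excludes the latter and forces $Ny\equiv b\pmod{\fxs}$. The main obstacle is exactly this distinction between $D_4^{a,b}$ and $D_4^{a,ab}$, both of which arise from the same shape $K=F(\sqrt{a},\sqrt{b},\sqrt{y})$ with $y\in F(\sqrt{a})$, and are separated only by the precise square class of $Ny$; the delicate bookkeeping of the orders of the various lifts of the nontrivial elements of $\Gal(F(\sqrt{a},\sqrt{b})/F)$ is the heart of the argument.
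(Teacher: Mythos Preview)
Your proof is correct and self-contained. Note, however, that the paper does not actually prove this proposition: it cites the result as well known, referring to \cite[Chapter~2, Section~2]{JLY} and related sources. Your argument---reducing the existence statement to the norm condition via the quaternion-splitting criterion, then verifying the $D_4$ structure by explicitly lifting $\sigma_a,\sigma_b$ to $K$ and computing orders---is precisely the standard approach found in those references. The computation $\alpha(\sqrt{y})=t\sqrt{b}/\sqrt{y}$, $(\alpha\beta)^2(\sqrt{y})=-\sqrt{y}$ is correct, as is the case analysis in the converse direction distinguishing $Ny\equiv b$ from $Ny\equiv ab$ via the location of the unique cyclic subgroup of order~$4$. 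Since the paper offers no proof to compare against, there is nothing further to contrast; your write-up would serve as a complete proof where the paper has none.
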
        

The following diagram shows the lattice of subfields of a $D_4^{a,b}$-extension $K$ of $F$:
\begin{center}
\begin{tikzpicture}[scale=1.2]
\node (f) at (0,0) {$F$};
\node (f1) at (-2,1) {$F(\sqrt{a})$};
\node (f2) at (0,1) {$F(\sqrt{ab})$};
\node (f3) at (2,1) {$F(\sqrt{b})$};
\node (k1) at (-4,2) {$K_1$};
\node (k2) at (-2,2) {$K_2$};
\node (k3) at (0,2) {$F(\sqrt{a},\sqrt{b})$};
\node (k4) at (2,2) {$K_4$};
\node (k5) at (4,2) {$K_5$};
\node (k) at (0,3) {$K$};
\path [-]
(f) edge (f1)
(f) edge (f2)
(f) edge (f3)
(f1) edge (k1)
(f1) edge (k2)
(f1) edge (k3)
(f2) edge (k3)
(f3) edge (k3)
(f3) edge (k4)
(f3) edge (k5)
(k1) edge (k)
(k2) edge (k)
(k3) edge (k)
(k4) edge (k)
(k5) edge (k);
\end{tikzpicture}
\end{center}

J. Min\'a\v{c} and M. Spira have shown that $F^{(3)}$ is the compositum of all quadratic, $C_4$- and $D_4$-extensions of $F$ \cite[Corollary 2.18]{MS2}.  Furthermore, they observe that if $y,z \in F(\sqrt{a})$ both satisfy the statement of Proposition \ref{prop:c4ext}, then $ F^{(2)}(\sqrt{y})=F^{(2)}(\sqrt{z}) $, and that a similar remark holds for $D_4$-extensions.  They also show that $F^{(3)}$ can be described as the Galois closure over $F$ of the compositum of all extensions $L$ of $F$ such that $F\subseteq L \subseteq F(2)$ and $[L:F]\leq 4$ \cite[Corollary 2.19]{MS2}.

Counting the number of $D_4$-extensions of a given field $F$ is therefore important in determining the W-group, $\Gal(F^{(3)}/F)$, and thereby gaining a better understanding of the absolute Galois group of $F$.  We now consider various examples of fields $F$ in order to illustrate several methods of counting these extensions.

\section{Local Fields}
\label{sec:local}

\subsection{Constructing extensions}
\label{sec:constructing ext}

We begin by considering, as the base field, the field of $p$-adic numbers, $\Q_p$.  One means of determining the number of $D_4$-extensions of $\mathbb{Q}_p$ is, of course, by actually constructing all such extensions.  Following \cite{Naito}, we outline this technique and then look at alternative methods of counting these extensions.  

\textbf{1. The case} $\mathbf{p\neq 2.}$   Any element $x\in \Q_p$ can be written uniquely in the form $x=up^n$, where $u$ is a unit in $\Z_p$.  For $p$ odd, $x=up^n\in \mathbb{Q}_p^\times$ is a square if and only if $n$ is even and the image of $u$ in the residue field $\Z_p/p\Z_p \cong \F_p$ is a square mod $p$.  Hence, $\Q_p^\times/(\Q_p^\times)^2\cong C_2\times C_2$ with representatives $\{1,p,u,up\}$ where $(\frac{u}{p})=-1$. 

The lattice of subfields of a $D_4$-extension $L/\Q_p$ is shown in the following diagram.

\begin{center}
\begin{tikzpicture}[scale=1.2]
\node (f) at (0,0) {$\Q_p$};
\node (f1) at (-2,1) {$\Q_p(\sqrt{p})$};
\node (f2) at (0,1) {$\Q_p(\sqrt{u})$};
\node (f3) at (2,1) {$\Q_p(\sqrt{up})$};
\node (k1) at (-4,2) {$M_1$};
\node (k2) at (-2,2) {$M_1^\prime$};
\node (k3) at (0,2) {$M$};
\node (k4) at (2,2) {$M_2$};
\node (k5) at (4,2) {$M_2^\prime$};
\node (k) at (0,3) {$L$};
\path [-]
(f) edge (f1)
(f) edge (f2)
(f) edge (f3)
(f1) edge (k1)
(f1) edge (k2)
(f1) edge (k3)
(f2) edge (k3)
(f3) edge (k3)
(f3) edge (k4)
(f3) edge (k5)
(k1) edge (k)
(k2) edge (k)
(k3) edge (k)
(k4) edge (k)
(k5) edge (k);
\end{tikzpicture}
\end{center}

The three quadratic extensions of $\mathbb{Q}_p$ are $\mathbb{Q}_p(\sqrt{p})$, $\mathbb{Q}_p(\sqrt{u})$ and $\mathbb{Q}_p(\sqrt{up})$ and $L/\mathbb{Q}_p$ has four intermediate fields of degree 4 which are not Galois over $\mathbb{Q}_p$.  These are the extensions labelled $ M_1, M^\prime_1, M_2, M^\prime_2 $ in the above lattice diagram.  For each $n\in \N$, any given local field has exactly one unramified extension of degree $n$.  Since $\Q_p(\sqrt{p})/Q_p$ and $\Q_p(\sqrt{up})/Q_p$ are ramified, $\Q_p(\sqrt{u})/Q_p$ is unramified.  Hence, $M/\Q_p(\sqrt{p})$ and $M/\Q_p(\sqrt{up})$ are also unramified.  So $M_i, M_i^{\prime},\ i=1,2$ are totally, and since $(p,4)=1$, tamely ramified extensions of $\Q_p$.  By Serre's mass formula, $\mathbb{Q}_p$ has exactly four totally and tamely ramified extensions of degree 4.  One such extension, say $M_1$, is $\Q_p(\sqrt[4]{p})/\Q_p$.  Hence $L=\Q_p(\sqrt{u}, \sqrt[4]{p})/\Q_p$. 

If $ p\equiv 1$ mod 4, then $\mathbb{Q}_p$ contains the $4^{th}$ roots of unity, so $\mathbb{Q}_p(\sqrt[4]{p})/\mathbb{Q}_p$ is a Galois extension of degree 4.  Hence $\mathbb{Q}_p$ can have no $D_4$-extension in this case.

If $ p\equiv 3$ mod 4, then $ -1 $ is not a square in $ \Q_p $, so $\Q_p(\sqrt[4]{p})/\Q_p$ is not  Galois.  In this case we see that $ \Q_p(\sqrt{-1}, \sqrt[4]{p}) $ is a $D_4$-extension of $\Q_p$.  

\textbf{2. The case} $\mathbf{p= 2.}$  We now consider the field of 2-adic numbers, $\Q_2$.  Let $L/\mathbb{Q}_2$ be a Galois extension of degree 8.  The Galois group of $L$, Gal$(L/\mathbb{Q}_2)\cong D_4$ if and only if $L$ contains an intermediate field of degree 4 which is not Galois over $\Q_2$.  Hence, in order to determine the $D_4$-extensions of $\Q_2$, it is sufficient to construct all quadratic extensions of $K_i$ which are not Galois over $\Q_2$, where $K_i$ is a quadratic extension of $\Q_2$.

The lattice of subfields of a $D_4$-extension $L/\Q_2$ is shown below.  We denote by $K$ the quadratic extension of $\Q_2$ for which $L/K$ is cyclic of degree 4.  The other two quadratic extensions of $\Q_2$ in $L$ are denoted $K_1$ and $K_2$.  For $i=1,2$, $M_i$ and $M_i^\prime$ are the quadratic extensions of $K_i$ in $L$ which are not Galois over $\Q_2$. 

\begin{center}
\begin{tikzpicture}[scale=1.2]
\node (f) at (0,0) {$\Q_2$};
\node (f1) at (-2,1) {$K_1$};
\node (f2) at (0,1) {$K$};
\node (f3) at (2,1) {$K_2$};
\node (k1) at (-4,2) {$M_1$};
\node (k2) at (-2,2) {$M_1^\prime$};
\node (k3) at (0,2) {$M$};
\node (k4) at (2,2) {$M_2$};
\node (k5) at (4,2) {$M_2^\prime$};
\node (k) at (0,3) {$L$};
\path [-]
(f) edge (f1)
(f) edge (f2)
(f) edge (f3)
(f1) edge (k1)
(f1) edge (k2)
(f1) edge (k3)
(f2) edge (k3)
(f3) edge (k3)
(f3) edge (k4)
(f3) edge (k5)
(k1) edge (k)
(k2) edge (k)
(k3) edge (k)
(k4) edge (k)
(k5) edge (k);
\end{tikzpicture}
\end{center}

Let $\sigma$ be the generator of the Galois group of $K_i/\mathbb{Q}_2$.  Then $M_i= K_i(\sqrt{\alpha})$ for an $\alpha \in K_i ^\times$ such that $ \alpha^\sigma/\alpha \notin (K_i^\times)^2 $ and we have $ M_i^\prime =K_i(\sqrt{\alpha^\sigma}),\  L=K_i(\sqrt{\alpha}, \sqrt{\alpha^\sigma})  $ and $ M=K_i(\sqrt{\alpha\alpha^\sigma})$.  So we consider a system of representatives of the square class group of $ K_i $ and take all pairs $ (\alpha, \alpha^\sigma) $ of the system such that $ \alpha $ and $ \alpha^\sigma $ are independent modulo squares, thereby obtaining all $D_4$-extensions $L/\mathbb{Q}_2$.

An element $ x=u2^n \in \mathbb{Q}_2^\times $, where $u$ is a unit in $\Z_2$, is a square if and only if $n$ is even and $u\equiv 1$ mod 8. In the group $U$ of units of $\mathbb{Z}_2$, we have $ U=\{\pm 1 \}\times U_2 $, $ U_2\cong \mathbb{Z}_2$ and the set of squares in $ U_2 $ is $ U_3= \{ a\in \mathbb{Z}_2 \mid a\equiv 1$ mod $ 2^3 \} $.  Then $ U/U_3 \cong C_2\times C_2 $ with representatives $ \{\pm 1, \pm 5\} $ and $\Q_2^\times/(\Q_2^\times)^2\cong C_2\times C_2 \times C_2$ with representatives $\{\pm 1, \pm 5, \pm 2, \pm 10\}$, so there are exactly seven quadratic extensions of $ \mathbb{Q}_2 $.  Naito considers all cases and thereby constructs 18 $D_4$-extensions of $\Q_2$.

\subsection{A group-theoretic approach}

In cases in which the W-group of a field $F$ is known, this can provide a group-theoretic alternative to the direct construction method for determining the number of $D_4$-extensions of $F$.   The following proposition provides an illustration. 

\begin{prop}
\label{prop:D4Qp}
Let p be an odd prime.  Then $\Q_p$ has a $D_4$-extension if and only if $ p\equiv 3$ mod 4, and this extension is unique.
\end{prop}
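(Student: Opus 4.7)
The plan is to apply Proposition~\ref{prop:d4ext}, the group-theoretic existence criterion from the W-group framework: a $D_4^{a,b}$-extension of $F$ exists if and only if $a, b \in \fx$ are independent modulo squares and the Hilbert symbol $(a,b)_F = 1$. Since each $D_4$-extension of $F$ arises from a unique unordered pair $\{a, b\}$ of square classes (namely, the two classes whose quadratic subfields are not the fixed field of the distinguished cyclic $C_4$ subgroup of $D_4$), counting $D_4$-extensions of $\Q_p$ reduces to finding pairs of independent square classes with trivial Hilbert symbol.

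First I would describe $\Q_p^\times/(\Q_p^\times)^2 \cong C_2 \times C_2$, with representatives $\{1, p, u, up\}$ where $u \in \Z_p^\times$ is a non-quadratic residue mod $p$. Any two distinct nontrivial classes are automatically independent, giving three candidate pairs. Second, using the standard Hilbert-symbol formulas for $\Q_p$ with $p$ odd, namely $(u,u)_p = 1$, $(u, p)_p = \left(\tfrac{u}{p}\right) = -1$, and $(p, p)_p = \left(\tfrac{-1}{p}\right)$, together with bimultiplicativity from Theorem~\ref{thm:quat}, I would compute $(p, u)_p = -1$, $(u, up)_p = -1$, and $(p, up)_p = -\left(\tfrac{-1}{p}\right)$. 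Third, invoking the classical fact that $\left(\tfrac{-1}{p}\right) = 1$ exactly when $p \equiv 1 \pmod{4}$ completes the case analysis.

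For $p \equiv 1 \pmod{4}$ every candidate pair has $(a, b)_p = -1$, so no $D_4$-extension of $\Q_p$ exists. For $p \equiv 3 \pmod{4}$ only $\{p, up\}$ has trivial Hilbert symbol, producing a $D_4^{p, up}$-extension whose cyclic-$C_4$ quadratic subfield is $\Q_p(\sqrt{u})$, the unique unramified quadratic extension of $\Q_p$. Uniqueness of this extension follows from the Min\'a\v{c}--Spira observation cited in the excerpt just after Proposition~\ref{prop:d4ext}: any two $y, z \in \Q_p(\sqrt{p})$ with $Ny \equiv Nz \equiv up \pmod{(\Q_p^\times)^2}$ give $\Q_p^{(2)}(\sqrt{y}) = \Q_p^{(2)}(\sqrt{z})$, so the construction of Proposition~\ref{prop:d4ext} yields a canonical extension.

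The main obstacle I anticipate is the justification that different pairs $\{a, b\}$ cannot yield the same $D_4$-extension, which is what makes the Hilbert-symbol count equal the extension count. In our setting this is automatic since at most one pair survives for each $p$, but the underlying fact—that $\{a, b\}$ is recovered from the $D_4$-extension as the two square classes whose quadratic subfields are not the fixed field of the unique cyclic $C_4$ subgroup—is a small but essential input from the subgroup lattice of $D_4$.
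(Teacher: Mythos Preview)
Your argument is correct, but it is not the proof the paper gives for this proposition. The paper's proof works entirely inside the W-group $G^{[3]}=G_{\Q_p}(2)/G_{\Q_p}(2)^{(3)}$: citing \cite{MS2}, it uses that $G^{[3]}\cong C_4\times C_4$ when $p\equiv 1\pmod 4$ (so no $D_4$ quotient exists), and that $G^{[3]}\cong C_4\rtimes C_4$ when $p\equiv 3\pmod 4$, where a direct check on the three order-$2$ subgroups of $\langle\sigma_p^2,\sigma_{-1}^2\rangle$ shows exactly one of them, namely $\langle\sigma_{-1}^2\rangle$, yields a $D_4$ quotient. Your approach via Hilbert symbols and Proposition~\ref{prop:d4ext} is in fact the method the paper presents separately in \S\ref{sec:quat alg Qp} as an alternative. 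The W-group route needs the external input of $G^{[3]}$ from \cite{MS2} but makes both existence and uniqueness a pure group-theory computation; your route is more self-contained on the existence side, but for uniqueness you should make explicit the point that $\Q_p(\sqrt{p},\sqrt{up})=\Q_p^{(2)}$ because $|\Q_p^\times/(\Q_p^\times)^2|=4$, since the Min\'a\v{c}--Spira observation you cite only gives $F^{(2)}(\sqrt{y})=F^{(2)}(\sqrt{z})$, and it is this coincidence that identifies $F^{(2)}(\sqrt{y})$ with the actual $D_4$-extension $F(\sqrt{a},\sqrt{b})(\sqrt{y})$.
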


\begin{proof}
Let $G=G_{\Q_p}(2)$.  By \cite[Proposition 2.1]{MS2}, the W-group of $\Q_p$, $\Gal(\Q_p^{(3)}/\Q_p) \cong G/G^4[G^2,G] =G^{[3]}$ and by \cite[Corollary 2.18]{MS2}, $\Q_p^{(3)}$ is the compositum of all quadratic, $C_4$- and $D_4$-extensions of $\Q_p$.  

$\Q_p$ has no $D_4$-extension if $ p\equiv 1$ mod 4, since in this case \cite[Example 4.2]{MS2} shows that $G^{[3]}\cong C_4\times C_4$, which has no quotient isomorphic to $D_4$.

If $ p\equiv 3$ mod 4, then $\{1,p,-1,-p\}$ is a set of representatives of $\Q_p^\times/(\Q_p^\times)^2$ and \cite[Example 4.3]{MS2} shows that $G^{[3]}=\langle\sigma_p, \sigma_{-1} \mid [\sigma_p,\sigma_{-1}]=\sigma_p^2 \rangle \cong C_4 \rtimes C_4$, with the semidirect product action given by $\sigma_{-1}^{-1} \sigma_p \sigma_{-1}= \sigma_p^{-1}$.  We have the group extension
\[
\begin{tikzcd}
1 \arrow{r} &\langle\sigma_p^2, \sigma_{-1}^2\rangle \cong C_2\times C_2 \arrow{r} &G^{[3]} \arrow{r} &\Gal(\Q_p^{(2)}/\Q_p) = \langle\bar{\sigma}_p, \bar{\sigma}_{-1}\rangle \arrow{r} &1.
\end{tikzcd}
\] 
So the existence of a $D_4$-extension of $\Q_p$ is equivalent to the existence of a subgroup $H \subset \langle\sigma_p^2, \sigma_{-1}^2\rangle$ such that  
\[
\begin{tikzcd}
1 \arrow{r} &H \cong C_2 \arrow{r} &G^{[3]} \arrow{r} &D_4\cong C_4\rtimes C_2 \arrow{r} &1
\end{tikzcd}
\] 
and
\[
\begin{tikzcd}
1 \arrow{r} &\langle\sigma_p^2, \sigma_{-1}^2\rangle /H \arrow{r} &D_4 \arrow{r} &\langle\bar{\sigma}_p, \bar{\sigma}_{-1}\rangle \arrow{r} &1
\end{tikzcd}
\] 
are group extensions.

Since $D_4$ is non-abelian, $\sigma_p^2=[\sigma_p,\sigma_{-1}]\neq 1$ in $D_4$, and since $D_4$ must have one generator of order 2, $\sigma_p^2 \sigma_{-1}^2 \neq 1$ in $D_4$.  Hence, the only possibility is $H=\langle\sigma_{-1}^2\rangle$, so $\Q_p$ has exactly one $D_4$-extension.
\end{proof}

\subsection{Quaternion algebras}
\label{sec:quat alg Qp}

Often, of course, one is dealing with a field $F$ for which the W-group is not known and the goal of counting $D_4$-extensions of $F$ may be to shed light on the structure of that group.  We now describe a technique of enumerating these extensions based on the theory of quaternion algebras, using the example of the field $\Q_p$.  

Recall that for $p$ odd, $\Q_p^\times/(\Q_p^\times)^2\cong C_2\times C_2$ with representatives $\{1,p,u,up\}$ where $u$ is not a square mod $p$.  So we consider the quaternion algebras $ (a,b)_{\Q_p} $ where $a,b \in \{1,p,u,up\} $ are independent modulo squares.  By Proposition \ref{prop:d4ext}, there exists a $D_4^{a,b}$-extension of $\Q_p$ if and only if $ (a,b)_{\Q_p}=1 \in \Br_2(\Q_p) $.

If $ p\equiv 1$ mod 4, then by \cite[Theorem VI.2.2]{Lam}, $(p,u)_{\Q_p}$ is a division algebra.  Since -1 is a square in $\Q_p$, we have $(p,u)_{\Q_p} \cong (p,up)_{\Q_p} \cong (u,up)_{\Q_p}$, so $\Q_p$ has no $D_4$-extension.   

If $ p\equiv 3$ mod 4, we can take $u=-1$.  In this case $(p,-p)_{\Q_p}$ splits and by the non-degeneracy of the Hilbert symbol, we see that $ \{p,-p\} $ is the only choice for $ \{a,b\} $.  Now suppose $L_1=F(\sqrt{a},\sqrt{b})(\sqrt{y})$ and $L_2=F(\sqrt{a},\sqrt{b})(\sqrt{z})$ are two $D_4^{a,b}$-extensions of a field $F$.  It follows from Proposition \ref{prop:d4ext} that there exists an $f\in F$ such that $z=fy$.  When $F=\Q_p$ with $ p\equiv 3$ mod 4, we have $\sqrt{f}\in \Q_p^{(2)}=\Q_p(\sqrt{p}, \sqrt{-p})$.  So \[L_2=\Q_p(\sqrt{p},\sqrt{-p})(\sqrt{z})=\Q_p(\sqrt{p},\sqrt{-p})(\sqrt{fy})=\Q_p(\sqrt{p},\sqrt{-p})(\sqrt{y})=L_1.
\]   
Hence there exists only one $D_4$-extension $L/\Q_p$ in this case.

The diagram below shows the lattice of subfields of this extension.

\begin{center}
\begin{tikzpicture}[scale=1.5]
\node (f) at (0,0) {$\Q_p$};
\node (f1) at (-2,1) {$\Q_p(\sqrt{p})$};
\node (f2) at (0,1) {$\Q_p(\sqrt{-1})$};
\node (f3) at (2,1) {$\Q_p(\sqrt{-p})$};
\node (k1) at (-4,2) {$\Q_p(\sqrt[4]{p})$};
\node (k2) at (-2,2) {$\Q_p(\sqrt{-\sqrt{p}})$};
\node (k3) at (0,2) {$\Q_p(\sqrt{p},\sqrt{-1})$};
\node (k4) at (2,2) {$\Q_p(\sqrt{-\sqrt{-p}})$};
\node (k5) at (4,2) {$\Q_p(\sqrt[4]{-p})$};
\node (k) at (0,3) {$\Q_p(\sqrt[4]{p},\sqrt{-1})$};
\path [-]
(f) edge (f1)
(f) edge (f2)
(f) edge (f3)
(f1) edge (k1)
(f1) edge (k2)
(f1) edge (k3)
(f2) edge (k3)
(f3) edge (k3)
(f3) edge (k4)
(f3) edge (k5)
(k1) edge (k)
(k2) edge (k)
(k3) edge (k)
(k4) edge (k)
(k5) edge (k);
\end{tikzpicture}
\end{center}

Recall that for $p=2$, $\{\pm 1, \pm 5, \pm 2, \pm 10\}$ is a set of representatives of $\Q_2^\times/(\Q_2^\times)^2$.  Then by Proposition \ref{prop:d4ext}, there exists a $D_4^{a,b}$-extension of $\Q_2$ for each $\{a,b\}\subset \{-1, \pm 5, \pm 2, \pm 10\}$ such that $a\neq b$ and the quaternion algebra $(a,b)_{\Q_2}=1\in \Br_2(\Q_2)\cong \{\pm 1\}$.  These are the following pairs: 
\[
\begin{aligned}
&\{-1,2\},\ \{-1,5\},\ \{-1,10\},\\
&\{2,-2\},\ \{5,-5\},\ \{10,-10\},\\
&\{-2,-5\},\ \{-2,-10\},\ \{5,-10\}.\\  
\end{aligned}
\]
Consider, for example, the pair $\{2,-2\}$.  This pair yields a $D_4^{2,-2}$-extension $L/\Q_2$ such that $\Q_2(\sqrt{2}, \sqrt{-2})\subset L$ and $\Gal(L/\Q_2(\sqrt{-1}))\cong C_4$.  

However, in this case, $L$ is not uniquely determined.  We have $y_1:=5\sqrt{-2}\in \Q_2(\sqrt{-2})$ with $Ny_1=2\cdot 5^2=2\in \Q_2^\times/(\Q_2^\times)^2$, so $L_1:=\Q_2(\sqrt{-2},\sqrt{2})(\sqrt{5\sqrt{-2}})$ is a $D_4^{-2,2}$-extension of $\Q_2$.  Similarly, $y_2:=-\sqrt{-2}\in \Q_2(\sqrt{-2})$ has $Ny_2=2$, so $L_2:=\Q_2(\sqrt{-2},\sqrt{2})(\sqrt{-\sqrt{-2}})=\Q_2(\sqrt{-2},\sqrt{2})(\sqrt[4]{-2})$ is a $D_4^{-2,2}$-extension of $\Q_2$.  

Suppose $L_1=L_2$.  Then $\sqrt{5}=(\sqrt{-2})^{-1}(\sqrt[4]{-2})(\sqrt{5\sqrt{-2}})\in L_1=L_2$, implying that this dihedral extension of degree 8 contains $\Q_2(\sqrt{-2},\sqrt{2},\sqrt{5})/\Q_2$, an elementary abelian extension of degree 8.  Hence $L_1\neq L_2$.  Now let $f\in \Q_2\setminus \Q_2^2$.  If $f\in \{ 5(\Q_2^\times)^2 \cup -5(\Q_2^\times)^2 \cup 10(\Q_2^\times)^2 \cup -10(\Q_2^\times)^2  \}$, then 
\[
\begin{aligned}
\Q_2(\sqrt{2}, \sqrt{-2})(\sqrt{fy_1})&=\Q_2(\sqrt{2}, \sqrt{-2})(\sqrt{-\sqrt{-2}})=L_2;\\
\Q_2(\sqrt{2}, \sqrt{-2})(\sqrt{fy_2})&=\Q_2(\sqrt{2}, \sqrt{-2})(\sqrt{5\sqrt{-2}})=L_1.
\end{aligned}
\]
Otherwise, $f\in \{ 2(\Q_2^\times)^2 \cup -2(\Q_2^\times)^2 \cup -(\Q_2^\times)^2$, in which case
\[
\begin{aligned}
\Q_2(\sqrt{2}, \sqrt{-2})(\sqrt{fy_1})&=\Q_2(\sqrt{2}, \sqrt{-2})(\sqrt{5\sqrt{-2}})=L_1;\\
\Q_2(\sqrt{2}, \sqrt{-2})(\sqrt{fy_2})&=\Q_2(\sqrt{2}, \sqrt{-2})(\sqrt{-\sqrt{-2}})=L_2.
\end{aligned}
\]

Hence, $\Q_2$ has exactly two $D_4^{-2,2}$-extensions.  An analogous argument shows that there are exactly two $D_4^{a,b}$-extensions for each of the nine pairs $\{a,b\}$ such that $(a,b)_{\Q_2}=1$, so $\Q_2$ has 18 $D_4$-extensions.

\subsection{Complex characters and M\"{o}bius functions}
\label{sec:mobius}

In this section we turn to the more general case of finite Galois $p$-extensions of a local field $K$, where $K$ is a finite extension of the $p$-adic numbers, $\Q_p$.  Such a field is sometimes referred to as a local number field.  We describe an interesting method of counting these extensions using M\"{o}bius functions and complex characters, due to M. Yamagishi \cite{yam}. 

Let $K$ be a field and $G$ a finite group.  Let
\[
\nu(K,G):=|\{G\text{-extensions of}\ K\}|.
\]

Let $\sG$ be a fixed group.  Define
\[
\begin{aligned}
\alpha_{\sG}(G):&=|\{\text{homomorphisms}\ \sG\to G \}|\\
\beta_{\sG}(G):&=|\{\text{surjective homomorphisms}\ \sG\surj G\}|.
\end{aligned}
\]
For any subgroup $H$ of $G$, assume that $\alpha_{\sG}(H)$ is finite.  Then
\[
\alpha_{\sG}(G)=\sum_{H\leq G}\beta_{\sG}(H),
\]
so by the M\"{o}bius inversion formula
\[
\beta_{\sG}(G)=\sum_{H\leq G}\mu_G(H)\alpha_{\sG}(H),
\]
where $\mu_G$ is the M\"{o}bius function on the partially ordered set of all subgroups of $G$.

There is a 1-1 correspondence between the set of $G$-extensions of $K$ and the set of surjective homomorphisms from the absolute Galois group of $K$, $G_K\surj G$, modulo automorphisms of $G$.  Let $p$ be a prime.  If $G$ is a $p$-group, then $G_K$ can be replaced by $G_K(p)$, the Galois group of the maximal $p$-extension of $K$.  Also, if $K$ is a finite extension of the field $\Q_p$ of $p$-adic numbers, it is well-known that $K$ has only  finitely many algebraic extensions of given degree (inside a fixed algebraic closure of $K$).  Hence we have

\begin{thm}
\label{thm:nuKG} 
Let p be a prime, K a finite extension of $\Q_p$, and G a finite p-group.  Let the notation be as above, with $\sG=G_K(p)$.  Then
\[
\nu(K,G)=\frac{1}{|\rm{Aut}(G)|}\sum_{H\leq G}\mu_G(H)\alpha_{\sG}(H).
\] 
\end{thm}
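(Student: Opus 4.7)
The plan is to piece together the ingredients already assembled in the discussion preceding the statement, so the proof becomes essentially a bookkeeping exercise with two substantive inputs: a Galois-correspondence identification of $G$-extensions with orbits of $\mathrm{Aut}(G)$ on surjective homomorphisms $\sG \to G$, and the finiteness of $\alpha_\sG(H)$ that legitimizes the Möbius inversion over the subgroup lattice of the finite $p$-group $G$.

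First I would set up the correspondence. Given a $G$-extension $L/K$, choosing an identification $\mathrm{Gal}(L/K) \cong G$ yields a continuous surjection $G_K \twoheadrightarrow G$, and two choices differ by an element of $\mathrm{Aut}(G)$. Conversely, a continuous surjection determines its fixed field. Thus $G$-extensions of $K$ are in bijection with orbits of $\mathrm{Aut}(G)$ on the set $\mathrm{Surj}(G_K,G)$ of continuous surjections, where $\mathrm{Aut}(G)$ acts by post-composition. This action is free: if $\alpha \circ \phi = \phi$ with $\phi$ surjective, then $\alpha$ fixes every element of $G$, hence $\alpha = \mathrm{id}$. Since $G$ is a finite $p$-group, any continuous homomorphism $G_K \to G$ factors uniquely through the maximal pro-$p$ quotient $\sG = G_K(p)$, so in fact we may replace $G_K$ by $\sG$ throughout. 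We conclude
\[
\nu(K,G) \;=\; \frac{\beta_\sG(G)}{|\mathrm{Aut}(G)|}.
\]

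Next I would verify the finiteness hypothesis needed for Möbius inversion. For a subgroup $H \leq G$, a continuous homomorphism $\sG \to H$ has open kernel of index at most $|H|$, and such an open normal subgroup of $\sG$ corresponds to a finite Galois $p$-extension of $K$ of bounded degree. Since $K$ is a finite extension of $\Q_p$ and therefore has only finitely many algebraic extensions of any given degree inside a fixed algebraic closure, there are only finitely many possible kernels; for each kernel, at most $|H|!$ injections $\sG/\ker \hookrightarrow H$ exist. Hence $\alpha_\sG(H) < \infty$ for every $H \leq G$.

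Finally I would apply the Möbius inversion formula from the preceding section. Every homomorphism $\sG \to G$ is a surjection onto its image, so
\[
\alpha_\sG(G) \;=\; \sum_{H \leq G} \beta_\sG(H),
\]
and more generally $\alpha_\sG(K) = \sum_{H \leq K} \beta_\sG(H)$ for each $K \leq G$. Möbius inversion on the subgroup lattice of $G$ yields
\[
\beta_\sG(G) \;=\; \sum_{H \leq G} \mu_G(H)\,\alpha_\sG(H),
\]
and combining this with the orbit-counting identity above gives the asserted formula. The only step that requires genuine input beyond formal manipulation is the finiteness of $\alpha_\sG(H)$, which rests on the local-field fact that $K$ admits only finitely many extensions of bounded degree; I would expect this to be the one place where the hypothesis that $K$ is a finite extension of $\Q_p$ is essential.
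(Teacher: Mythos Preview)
Your proposal is correct and follows exactly the approach the paper sketches in the paragraphs preceding the theorem: identify $G$-extensions with $\mathrm{Aut}(G)$-orbits of surjections $\sG\to G$ (using freeness of the action to divide by $|\mathrm{Aut}(G)|$), pass from $G_K$ to $G_K(p)$ since $G$ is a $p$-group, invoke the local-field finiteness of extensions of bounded degree to justify that each $\alpha_{\sG}(H)$ is finite, and then apply M\"obius inversion on the subgroup lattice. The paper itself simply cites Yamagishi for the formal proof, but your write-up is precisely the argument the surrounding text sets up.
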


\begin{proof}
See \cite[Theorem 1]{yam}.
\end{proof}

\begin{ex}
\label{ex:shaf}

Let $p$ be a prime.  Suppose that $G$ is a $p$-group and $K$ is a finite extension of $\Q_p$ of degree $n=[K:\Q_p]$ which does not contain a primitive $p$-th root of unity.  I. R. Shafarevich \cite{Sha} showed that the Galois group $G_K(p)$ of the maximal $p$-extension of $K$, is a free pro-$p$-group of rank $n+1$ and he gave an explicit formula for $\nu(K,G)$ in this case:
\[
\nu(K,G)=\frac{1}{|\rm{Aut}(G)|}(\frac{|G|}{p^d})^{n+1}\prod_{i=0}^{d-1}(p^{n+1}-p^i),
\]
where $d$ is the minimal number of generators of $G$.

This formula can also be obtained from Theorem \ref{thm:nuKG} as follows.  Let $\sG=G_K(p)$.  Then $\alpha_{\sG}(H)=|H|^{n+1}$ for any $p$-group $H$.  By Lemma \ref{lem:muG}, we need consider only those subgroups $H\leq G$ such that $G^p[G,G]\leq H$.  The number of such subgroups is $\binom{d}{i}_p$, where
\[
i=\dim_{\F_p}\frac{H}{G^p[G,G]}.
\]
Since $[G:H]=p^{d-i}$, Lemma \ref{lem:muG} together with the identity $\binom{d}{i}_p=\binom{d}{d-i}_p$ gives
\[
\begin{aligned}
\sum_{H\leq G}\mu_G(H)\alpha_{\sG}(H)&=\sum_{G^p[G,G]\leq H\leq G}\mu_G(H)\alpha_{\sG}(H)\\
&=\sum_{i=0}^{d}\binom{d}{i}_p(-1)^{d-i}p^{\frac{1}{2}(d-i)(d-i-1)}\frac{|G|^{n+1}}{p^{(d-i)(n+1)}}\\
&=\sum_{i=0}^{d}\binom{d}{d-i}_p(-1)^{d-i}p^{\frac{1}{2}(d-i)(d-i-1)}\frac{|G|^{n+1}}{p^{(d-i)(n+1)}}\\
&=\sum_{i=0}^{d}\binom{d}{i}_p(-1)^ip^{\frac{1}{2}i(i-1)}\frac{|G|^{n+1}}{p^{i(n+1)}}\\
&=\frac{|G|^{n+1}}{p^{d(n+1)}}\sum_{i=0}^{d}\binom{d}{i}_p(-1)^ip^{\frac{1}{2}i(i-1)}(p^{n+1})^{d-i}
\end{aligned}
\]
Induction on $d$ together with the identity $\binom{d+1}{i}_p=\binom{d}{i-1}_p+p^i\binom{d}{i}_p$ shows that
\[
\prod_{i=0}^{d-1}(p^{n+1}-p^i)=\sum_{i=0}^{d}\binom{d}{i}_p(-1)^ip^{\frac{1}{2}i(i-1)}(p^{n+1})^{d-i},
\]
and the result follows. 
\end{ex}

If $\sG$ is finitely presented as:
\[
\sG=\langle x_1,x_2,\ldots,x_n\mid r_1=r_2=\cdots=r_m=1 \rangle,
\]
where each $r_i=r_i(x_1,x_2,\ldots,x_n)$ is a finite word in the symbols $x_1,x_2,\ldots,x_n$, then
\[
\alpha_{\sG}(G)=|\{(g_1,g_2,\ldots,g_n)\in G^n \mid r_i(g_1,g_2,\ldots,g_n)=1,\ i=1,2,\ldots,m \}|.
\]
In particular, $\alpha_{\sG}(G)$ and $\beta_{\sG}(G)$ are finite.  By the column orthogonality relations of irreducible characters
\[
\begin{aligned}
r_i(g_1,g_2,\ldots,g_n)=1&\Longleftrightarrow \sum_{\chi}\chi(1)\chi(r_i(g_1,g_2,\ldots,g_n))=|G|,\\
r_i(g_1,g_2,\ldots,g_n)\neq1&\Longleftrightarrow \sum_{\chi}\chi(1)\chi(r_i(g_1,g_2,\ldots,g_n))=0,
\end{aligned}
\] 
where $\chi$ runs over all irreducible complex characters of $G$.  Hence
\begin{equation}
\label{eq:alphaG}
\alpha_{\sG}(G)=\frac{1}{|G|^m}\sum_{(g_1,g_2,\ldots,g_n)\in G^n}\prod_{i=1}^{m}\sum_{\chi}\chi(1)\chi(r_i(g_1,g_2,\ldots,g_n)).
\end{equation}

Now consider the case in which $K$ is a finite extension of the $p$-adic field $\Q_p$ of degree $n$ and assume that $K$ contains a primitive $p$-th root of unity $\zeta_p$.  Then  the Galois group $G_K(p)$ of the maximal $p$-extension of $K$ is a Demushkin group of rank $n+2$.  

\begin{defn}
\label{def:Demushkin}
A \textit{Demushkin group} is a pro-$p$ group $G$ which satisfies the following three conditions:
\begin{enumerate}
\item $\dim_{\F_p} H^1(G,\F_p)<\infty,$ 
\item $\dim_{\F_p} H^2(G,\F_p)=1,$
\item  the cup product $H^1(G,\F_p)\times H^1(G,\F_p)\to H^2(G,\F_p)$ is a non-degenerate bilinear form.
\end{enumerate} 
\end{defn}

From the first two conditions, we see that a Demushkin group is a finitely generated pro-$p$ group having a single relation among a minimal set of generators.  These groups have been completely classified by S.P. Demushkin, J.-P. Serre, and J. Labute (see \cite{La1}).  Let $q$ be the maximal power of $p$ such that $\zeta_q \in K$.  By the classification theorem of Demushkin groups \cite{La1}, there exist generators $x_1,x_2,\ldots,x_{n+2}$ such that the unique relation $r$ takes one of the following forms:
\begin{enumerate}
\item[(i)] if $q\not=2$ ($n$ is even in this case), then 
\begin{equation}
 \label{eq:r1} r=x_1^q[x_1,x_2][x_3,x_4]\cdots[x_{n+1},x_{n+2}];
\end{equation}
\item[(ii)] if $q=2$ and $n$ is odd, then 
\begin{equation}
 \label{eq:r2} r=x_1^2x_2^4[x_2,x_3][x_4,x_5]\cdots[x_{n+1},x_{n+2}];
 \end{equation}
\item[(iii)] if $q=2$ and $n$ is even, then either
\begin{equation}
 \label{eq:r3} r=x_1^{2+2^f}[x_1,x_2][x_3,x_4]\cdots[x_{n+1},x_{n+2}],\\
\end{equation}
where $f$ is an integer $\geq 2$ or $\infty$, or 
\begin{equation}
\label{eq:r4} r=x_1^2[x_1,x_2]x_3^{2^f}[x_3,x_4]\cdots[x_{n+1},x_{n+2}],
\end{equation}
where $f$ is an integer $\geq 2$.
\end{enumerate} 

Substituting the explicit forms of $r$ into equation \ref{eq:alphaG} and using the identity
\[
\sum_{b,c\in G}\chi(a[b,c])=(\frac{|G|}{\chi(1)})^2\chi(a),\ \text{for all}\ a\in G,
\] 
Yamagishi proves

\begin{lem}
\label{lem:alphaG}
Let p be a prime, K be a finite extension of $\Q_p$ containing a primitive p-th root of unity $\zeta_p$, and G a finite p-group.  Let $\sG=G_K(p)$.  Then
\[
\alpha_{\sG}(G)=
\begin{cases}
|G|^n\sum_{\chi} \frac{1}{\chi(1)^n}\sum_{g\in G}\chi(g^{q-1})\chi(g)&\ (\text{Case}\ \ref{eq:r1})\\
|G|^{n-1}\sum_{\chi} \frac{1}{\chi(1)^{n-1}}\sum_{g,h\in G}\chi(g^2h^3)\chi(h)&\ (\text{Case}\ \ref{eq:r2})\\
|G|^n\sum_{\chi} \frac{1}{\chi(1)^n}\sum_{g\in G}\chi(g^{2^f+1})\chi(g)&\ (\text{Case}\ \ref{eq:r3})\\
|G|^{n-1}\sum_{\chi} \frac{1}{\chi(1)^{n-1}}\sum_{g,h\in G}\chi(g)\chi(gh^{2^f-1})\chi(h)&\ (\text{Case}\ \ref{eq:r4}),
\end{cases}
\]
where $n=[K:\Q_p]$, q is the maximal power of p such that $\zeta_q\in K$, and $\chi$ runs over all irreducible complex characters of G.
\end{lem}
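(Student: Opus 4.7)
The plan is to start from the master formula \eqref{eq:alphaG} specialized to a Demushkin presentation of $\sG=G_K(p)$ on $n+2$ generators with a single defining relation ($m=1$), so
\[
\alpha_{\sG}(G)=\frac{1}{|G|}\sum_{\chi}\chi(1)\sum_{x_1,\ldots,x_{n+2}\in G}\chi\bigl(r(x_1,\ldots,x_{n+2})\bigr),
\]
and then to reduce the inner sum for each of the four explicit forms \eqref{eq:r1}--\eqref{eq:r4} of $r$ using two character-theoretic identities. The first is the commutator identity already recorded in the paper,
\[
\sum_{b,c\in G}\chi(a[b,c])=\Bigl(\tfrac{|G|}{\chi(1)}\Bigr)^{2}\chi(a),
\]
and the second is the conjugation identity
\[
\sum_{x\in G}\chi(xax^{-1}b)=\tfrac{|G|}{\chi(1)}\,\chi(a)\chi(b),
\]
which follows from Schur's lemma applied to the irreducible representation affording $\chi$ (the operator $\sum_{x}\rho(xax^{-1})$ is central, hence equals $\tfrac{|G|}{\chi(1)}\chi(a)\mathrm{Id}$, and one takes the trace after multiplying by $\rho(b)$).

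The general recipe in each case is to first peel off the ``free'' commutators $[x_{2i-1},x_{2i}]$ whose two variables do not occur anywhere else in $r$, and collapse them one at a time by the commutator identity; if there are $k$ such free commutators, this produces a factor $(|G|/\chi(1))^{2k}$ while leaving a residual character sum in the remaining variables. One is then left with a small ``mixed'' expression (such as $x_1^{q}[x_1,x_2]$ in Case \eqref{eq:r1}, or $x_1^{2}x_2^{4}[x_2,x_3]$ in Case \eqref{eq:r2}, or the two-piece mixed expression $x_1^{2}[x_1,x_2]x_3^{2^f}[x_3,x_4]$ in Case \eqref{eq:r4}) which must be handled directly. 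Expanding each commutator as $[a,b]=a^{-1}b^{-1}ab$ and absorbing the power into the adjacent factor, one rewrites each mixed block, after a cyclic shift allowed by the class-function property of $\chi$, into the form $xax^{-1}b$ for some $a,b$ depending on the surviving variables, and then applies the conjugation identity to integrate out one variable at a cost of $|G|/\chi(1)$. For instance, in Case \eqref{eq:r1} one gets $\sum_{x_2}\chi(x_1^{q-1}x_2^{-1}x_1x_2)=\tfrac{|G|}{\chi(1)}\chi(x_1^{q-1})\chi(x_1)$, and after summing over $x_1$ and combining with the $(|G|/\chi(1))^{n}$ from the $n/2$ free commutators, the prefactor $\chi(1)/|G|$ produces exactly $|G|^{n}\sum_{\chi}\chi(1)^{-n}\sum_{g}\chi(g^{q-1})\chi(g)$.

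The same pattern gives the three remaining formulas: in Case \eqref{eq:r2} one first eliminates the $(n-1)/2$ free commutators $[x_4,x_5],\ldots,[x_{n+1},x_{n+2}]$, then integrates out $x_3$ by the conjugation identity applied to $x_2^{3}$ (after rewriting $x_2^{4}[x_2,x_3]=x_2^{3}x_3^{-1}x_2x_3$), leaving $\sum_{g,h}\chi(g^{2}h^{3})\chi(h)$ with $g=x_1$, $h=x_2$; Case \eqref{eq:r3} is formally identical to Case \eqref{eq:r1} with $q$ replaced by $2^{f}+1$ in the conjugation step; and Case \eqref{eq:r4} requires applying the conjugation identity twice, once for $x_2$ on the block $x_1^{2}[x_1,x_2]$ and once for $x_4$ on the block $x_3^{2^{f}}[x_3,x_4]$, producing the triple product $\chi(g)\chi(gh^{2^{f}-1})\chi(h)$ after a cyclic rearrangement that lets both $x_1$-factors sit on opposite sides of the conjugated $x_1$.

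The main obstacle I expect is not any deep input but careful bookkeeping: one must repeatedly use the cyclic invariance $\chi(uv)=\chi(vu)$ to reshape each mixed block into the exact pattern $\chi(xax^{-1}b)$ on which the conjugation identity fires, and track the total power of $|G|/\chi(1)$ accumulated from (free commutators) plus (conjugation steps), which in every case equals $n$ (respectively $n-1$ in the odd cases \eqref{eq:r2} and \eqref{eq:r4} because one of the mixed blocks absorbs what would otherwise be a commutator contribution). Case \eqref{eq:r4} is the most delicate because the two mixed blocks interact through the middle of the word and one must be attentive to which side the leftover $x_1$ and $x_3$ sit on after each cyclic rotation; verifying that the final triple-character sum is $\sum_{g,h}\chi(g)\chi(gh^{2^{f}-1})\chi(h)$ rather than some rearrangement of it is the one genuine computational check.
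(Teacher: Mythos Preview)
Your proposal is correct and follows the same approach the paper sketches (and attributes to Yamagishi): substitute the explicit Demushkin relation into the master formula \eqref{eq:alphaG} and collapse the commutators one by one. The paper's own proof is only a citation, but the preamble to the lemma names the commutator identity $\sum_{b,c}\chi(a[b,c])=(|G|/\chi(1))^{2}\chi(a)$ as the key tool; you correctly observe that for the mixed blocks $x_i^{m}[x_i,x_j]$ where a variable is shared between the power and the commutator, one needs the finer conjugation identity $\sum_{x}\chi(xax^{-1}b)=(|G|/\chi(1))\chi(a)\chi(b)$, which the paper does not state explicitly but which is what makes the reduction go through. Your bookkeeping of the powers of $|G|/\chi(1)$ and the cyclic rearrangements is accurate in all four cases.
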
 

\begin{proof}
See \cite[Lemma 1.8]{yam}.
\end{proof}

Using this result, Yamagishi then derives a formula for $\nu(K,G)$ in the special cases in which $G$ is a non-abelian group of order $p^3$ or is a dihedral or generalized quaternion group of order $2^m$, $m\geq 3$. 

\begin{ex}
\label{ex:Q2}

Let $K$ be a finite extension of $\Q_2$ which does not contain a primitive 4-th root of unity and assume $n=[K:\Q_2]$ is odd.  Then $\sG:=G_K(2)$ is a Demushkin group with $q=2$ in which the unique relation among a minimal set of generators takes the form given in (\ref{eq:r2}).  Let $G=D_4$ be the dihedral group of order 8.
\[
D_4:=\langle r,s\mid r^4=s^2=1,\ srs=r^{-1} \rangle,
\]
with lattice of subgroups

\begin{center}
\begin{tikzpicture}[scale=1.2]
\node (k) at (0,0) {$1$};
\node (k1) at (-4,1) {$<s>$};
\node (k2) at (-2,1) {$<r^2s>$};
\node (k3) at (0,1) {$<r^2>$};
\node (k4) at (2,1) {$<rs>$};
\node (k5) at (4,1) {$<r^3s>$};
\node (f1) at (-2,2) {$<s,\ r^2s>$};
\node (f2) at (0,2) {$<r>$};
\node (f3) at (2,2) {$<r^2,\ rs>$};
\node (f) at (0,3) {$D_4$};
\path [-]
(f) edge (f1)
(f) edge (f2)
(f) edge (f3)
(f1) edge (k1)
(f1) edge (k2)
(f1) edge (k3)
(f2) edge (k3)
(f3) edge (k3)
(f3) edge (k4)
(f3) edge (k5)
(k1) edge (k)
(k2) edge (k)
(k3) edge (k)
(k4) edge (k)
(k5) edge (k);
\end{tikzpicture}
\end{center}
The Frattini subgroup $\Phi(D_4)=<r^2>$, so for $H\leq G=D_4$, Lemma \ref{lem:muG} gives
\[
\mu_G(H)=
\begin{cases}
1\ &\text{if}\ H=D_4\\
-1\ &\text{if}\ H=<s,r^2s>,\ <r>\ \text{or} <r^2,rs>\\
2\ &\text{if}\ H=<r^2>\\
0\ &\text{otherwise}
\end{cases}
\]

The conjugacy classes of $D_4$ are $\{1\},\ \{r^2\},\ \{s,r^2s\},\ \{r,r^3\}$ and $\{rs,r^3s\}$.  The character table is shown below.

\begin{center}
\begin{tabular}{l|ccccc}
$D_4$&1&$s$&$rs$&$r$&$r^2$\\
\hline\\
$\chi_1$&1&1&1&1&1\\
$\chi_{\rho_2}$&1&-1&-1&1&1\\
$\chi_{\rho_3}$&1&1&-1&-1&1\\
$\chi_{\rho_4}$&1&-1&1&-1&1\\
$\chi_{\sigma_1}$&2&0&0&0&-2
\end{tabular}
\end{center}
From Lemma \ref{lem:alphaG} and the above character table, we obtain
\[
\alpha_{\sG}(D_4)=8^{n+1}(4+\frac{1}{2^n}).
\]
If $H\leq G$ is abelian, then in order to calculate $\alpha_{\sG}(H)$, one need consider only maps 
\[
\sG/[\sG, \sG]\cong \Z_2^{n+1}\times \Z_2/q\Z_2 \to H,
\]
which gives
\[
\alpha_{\sG}(H)=|H|^{n+1}\cdot |\{h\in H\mid h^q=1 \}|.
\]
Hence
\[
\alpha_{\sG}(H)=
\begin{cases}
4^{n+1}\cdot 4\ &\text{if}\ H=<s,r^2s>\ \text{or} <r^2,rs>\\
4^{n+1}\cdot 2\ &\text{if}\ H=<r>\\
2^{n+1}\cdot 2\ &\text{if}\ H=<r^2>.
\end{cases}
\]
So for $G=D_4$, Theorem \ref{thm:nuKG} gives
\[
\begin{aligned}
\nu(K,G)&=\frac{1}{|\rm{Aut}(G)|}\sum_{H\leq G}\mu_G(H)\alpha_{\sG}(H)\\
&=\frac{1}{2^3}(8^{n+1}(4+\frac{1}{2^n})-4^{n+1}\cdot 4-4^{n+1}\cdot 4-4^{n+1}\cdot 2+2(2^{n+1}\cdot 2))\\
&=2^n(2^{n+1}-1)^2.
\end{aligned}
\] 
In the case $K=\Q_2$, we once again obtain $\nu(\Q_2,D_4)=18$.
\end{ex}

\subsection{Cup products in cohomology} 
\label{sec:cup}

In this section, we again consider the case in which $K$ is a finite extension of the field $\Q_p$ of $p$-adic numbers and for a finite group $G$, we use the notation $\nu(K,G)$ to denote the number of $G$-extensions of $K$.  We describe a technique based on Galois cohomology to count the number of $D_4$-extensions of $K$.

In \cite{MT1} J. Min\'a\v{c} and N. D. T\^an use triple Massey products to compute $\nu(K,G)$ for $G=\U_4(\F_p)$, the group of unipotent four by four matrices over $\F_p$.  Closely following their approach, but using cup products instead of triple Massey products, a similar method can be employed to calculate $\nu(K,\U_3(\F_p))$.  

Let $G$ be a profinite group and $p$ a prime. We consider the finite field $\F_p$ as  a trivial discrete $G$-module. Let $\sC^\bullet=(C^\bullet(G,\F_p),\delta,\cup)$ be the differential graded algebra of inhomogeneous continuous cochains of $G$ with coefficients in $\F_p$ \cite[\S I.2]{NSW}.  The cohomology groups are written $H^i(G,\F_p)$. We denote by $Z^1(G,\F_p)$ the subgroup of $C^1(G,\F_p)$ consisting of all 1-cocycles. Since $G$ acts trivially on the coefficients $\F_p$, $Z^1(G,\F_p)=H^1(G,\F_p)={\rm Hom}(G,\F_p)$.
  
\begin{defn}
A {\it weak embedding problem} $\sE:=\sE(G,f\colon U\to \bar{U},\varphi\colon G\to \bar{U})$ for a profinite group $G$ is a diagram 
\[
\sE:=
\xymatrix
{
{} & G \ar[d]^{\varphi}\\
U \ar[r]^f & \bar U
}
\] 
consisting of  profinite groups $U$ and $\bar U$ and homomorphisms $\varphi \colon G\to \bar U$, $f\colon U\to \bar U$ with $f$ being surjective.  If $\varphi$ is also surjective, we call $\sE$ an {\it embedding problem}. 

A {\it weak solution} of $\sE$ is  a homomorphism $\psi\colon G\to U$ such that $f\psi=\varphi$.  If $\psi$ is surjective, the solution is said to be \textit{proper}.  We call $\sE$ a {\it finite} weak embedding problem if $U$ is finite. The {\it kernel} of $\sE$ is defined to be $M:=\ker(f)$.  We denote by ${\rm Sol}(\sE)$ the set of weak solutions of $\sE$. 
\end{defn}

\begin{ex}
A proper solution of the embedding problem
\[
\xymatrix
{
{} & G_F^{[3,2]} \ar[d]^{\varphi}\\
C_4 \ar[r]^f & C_2
}
\]
corresponds to a $C_4$-extension of $F$. 
\end{ex}

Suppose $\sE(G,f\colon U\to \bar{U},\varphi\colon G\to \bar{U})$ is a weak embedding problem with abelian kernel $M$. The conjugation action of $U$ on $M$ is trivial while restricting to $M\subseteq U$. Hence this induces a $\bar{U}$-module structure on $M$. We consider $M$ to be a $G$-module via $\varphi$ and the conjugation action of $\bar{U}$ on $M$. We denote this $G$-module by $M_\varphi$.

\begin{lem}
\label{lem:Sol}
Let $\sE(G,f,\varphi)$ be a weak embedding problem with finite abelian kernel $M$ which has a weak solution. Then ${\rm Sol}(\sE)$ is a principal homogeneous space over the group of 1-cocycles $Z^1(G,M_\varphi)$.

In particular, any weak solution $\theta$ of $\sE$ induces a bijection 
\[
{\rm Sol}(\sE) \simeq Z^1(G,M_\varphi).
\]
\end{lem}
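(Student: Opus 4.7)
The plan is to fix the given weak solution $\theta$ and define an action of $Z^1(G,M_\varphi)$ on ${\rm Sol}(\sE)$ by twisting:
\[
(c\cdot \theta)(g) := c(g)\,\theta(g), \qquad g\in G.
\]
Here we view $c(g)\in M\subseteq U$ via the inclusion $M=\ker(f)\hookrightarrow U$, so the product lives in $U$. I will show that this recipe sets up a bijection $Z^1(G,M_\varphi)\to {\rm Sol}(\sE)$, $c\mapsto c\cdot\theta$.

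First I would unwind the $G$-module structure on $M_\varphi$: since $M=\ker(f)$ is abelian, conjugation by any element of $U$ descends to a well-defined action of $\bar U=U/M$ on $M$, and then $G$ acts through $\varphi$. Concretely, for any weak solution $\theta$ and any $m\in M$,
\[
g\cdot m \;=\; \varphi(g)\cdot m \;=\; \theta(g)\,m\,\theta(g)^{-1},
\]
independent of the chosen lift because $M$ is abelian. Next I would check that $c\cdot\theta$ is indeed a weak solution. The identity $f(c(g))=1$ immediately gives $f((c\cdot\theta)(g))=\varphi(g)$, so the diagram commutes. For the homomorphism property,
\[
(c\cdot\theta)(g)(c\cdot\theta)(h) \;=\; c(g)\,\theta(g)\,c(h)\,\theta(h) \;=\; c(g)\,\bigl(\theta(g)c(h)\theta(g)^{-1}\bigr)\,\theta(gh) \;=\; c(g)\,(g\cdot c(h))\,\theta(gh),
\]
so the map is multiplicative exactly when $c(gh)=c(g)\cdot(g\cdot c(h))$, which is the 1-cocycle condition on $c$. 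Continuity of $c\cdot\theta$ follows since $c$ and $\theta$ are continuous, and the inclusion $M\hookrightarrow U$ is continuous.

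To see that the action is simply transitive, given any weak solution $\psi$, define
\[
c_\psi(g) \;:=\; \psi(g)\,\theta(g)^{-1}.
\]
Applying $f$ shows $c_\psi(g)\in M$, and $c_\psi$ is continuous because $\psi,\theta$ are. The cocycle condition for $c_\psi$ is a direct computation:
\[
c_\psi(gh) \;=\; \psi(g)\psi(h)\theta(h)^{-1}\theta(g)^{-1} \;=\; \psi(g)\theta(g)^{-1}\,\bigl(\theta(g)\,c_\psi(h)\,\theta(g)^{-1}\bigr) \;=\; c_\psi(g)\,(g\cdot c_\psi(h)).
\]
By construction $c_\psi\cdot\theta=\psi$, and conversely $c_{c\cdot\theta}=c$, so the assignments $c\mapsto c\cdot\theta$ and $\psi\mapsto c_\psi$ are mutually inverse bijections between $Z^1(G,M_\varphi)$ and ${\rm Sol}(\sE)$.

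The only place where one must be careful is the interaction between the non-abelian multiplication in $U$ and the abelian structure on $M$: one must verify that conjugation by $\theta(g)$ on $M$ coincides with the $G$-action given through $\varphi$, which is what makes the cocycle identity drop out cleanly. Once that observation is made, the rest is routine bookkeeping, and the "principal homogeneous space" statement is the conjunction of freeness and transitivity proved above.
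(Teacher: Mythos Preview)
Your argument is correct and is exactly the standard verification that twisting by $1$-cocycles gives a simply transitive action on the set of lifts. The paper itself does not supply a proof at all; it merely cites \cite[Proposition 3.5.11]{NSW}, so your write-up is in fact more detailed than what appears in the paper, and it matches the argument one finds in that reference.
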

\begin{proof} See \cite[Proposition 3.5.11]{NSW}.
\end{proof}

The group $\U_n(\F_p)$ of unipotent, $n\times n$ matrices over $\F_p$ is the multiplicative group of all upper-triangular $n\times n$ matrices over $\F_p$ which agree with the identity matrix along the diagonal.  Let $Z_n(\F_p)$ be the subgroup of $\U_n(\F_p)$ consisting of matrices with all off-diagonal entries being zero except at position $(1,n)$, together with the identity matrix.  Then $Z_n(\F_p)$ lies in the center of $\U_n(\F_p)$ and is isomorphic to the additive group of $\F_p$.  The quotient group $\bar\U_n(\F_p)= \U_n(\F_p)/Z_n(\F_p)$ can be identified with the group of all upper-triangular unipotent $n\times n$ matrices over $\F_p$ with the $(1,n)$ entry omitted.

A representation $\rho\colon G\to \U_n(\F_p)$ is given by a component array $\rho_{ij},\ 1\leq i\leq n,\ i<j\leq n$, of set maps $G\to \F_p$ which satisfy the identities
\[
\rho_{ij}(g_1g_2)=\rho_{ij}(g_1)+\rho_{ij}(g_2)+\sum_{k=i+1}^{j-1}\rho_{ik}(g_1)\rho_{kj}(g_2),\quad g_1,g_2 \in G.
\]
The maps of the form $\rho_{i,i+1}$, called the \textit{near-diagonal components} of $\rho$, are then group homomorphisms $G\to \F_p$, and hence cohomology classes in $H^1(G, \F_p)$.  Similarly, a representation $\rho\colon G\to \bar\U_n(\F_p)$ has near-diagonal components in $H^1(G, \F_p)$.

Dwyer \cite{Dwy} demonstrated a close connection between $n$-fold Massey products of elements in $H^1(G,\F_p)$ and representations $\rho\colon G\to \U_{n+1}(\F_p)$.  In particular, for the case $n=2$, if $\rho\colon G\to \bar\U_3(\F_p)$ is a group homomorphism given by the components $-\rho_1, -\rho_2$, it follows from \cite[Theorem 2.4]{Dwy} that $\rho$ can be lifted to a group homomorphism $G\to \U_3(\F_p)$ if and only if the cup product $\rho_1 \cup \rho_2 =0$ in $H^2(G,\F_p)$.  

\begin{lem}
\label{lem:linear independence} Let $G$ be a pro-$p$-group.
 Let $\chi_1,\ldots,\chi_n$ be elements in $H^1(G,\F_p)$. Then the homomorphism \[\varphi:=(\chi_1,\ldots,\chi_n)\colon G\to \F_p\times\cdots\times \F_p\] is surjective if and only if $\chi_1,\ldots,\chi_n$ are $\F_p$-linearly independent in $H^1(G,\F_p)$.
\end{lem}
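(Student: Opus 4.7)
The plan is to use the standard duality between the $\F_p$-vector space $H^1(G,\F_p)={\rm Hom}_{\rm cont}(G,\F_p)$ and the quotient $\bar G := G/\overline{G^p[G,G]}$, under which linear relations among elements of $H^1(G,\F_p)$ correspond bijectively to linear functionals vanishing on the image of the induced map from $\bar G$. Every continuous homomorphism $\chi\colon G\to \F_p$ factors through $\bar G$, since $\F_p$ has exponent $p$ and is abelian, so $\chi$ kills $G^p[G,G]$ and hence its closure by continuity.

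For the implication ``surjective $\Rightarrow$ linearly independent,'' I would argue by contraposition. Suppose $\sum_{i=1}^n a_i\chi_i=0$ in $H^1(G,\F_p)$ with $(a_1,\ldots,a_n)\in \F_p^n\setminus\{0\}$, and consider the nonzero linear form $L\colon \F_p^n\to \F_p$, $(x_1,\ldots,x_n)\mapsto \sum a_i x_i$. Then $L\circ\varphi=\sum a_i\chi_i=0$, so the image of $\varphi$ lies inside the proper subspace $\ker L\subsetneq \F_p^n$, contradicting surjectivity.

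For the converse, assume the $\chi_i$ are $\F_p$-linearly independent. By the factorization remark above, $\varphi$ descends to an $\F_p$-linear map $\bar\varphi\colon \bar G\to \F_p^n$, whose image $V$ is an $\F_p$-subspace of $\F_p^n$. If $V$ were proper, standard linear algebra supplies a nonzero linear functional $L\colon \F_p^n\to \F_p$ with $L|_V=0$; writing $L(x_1,\ldots,x_n)=\sum a_i x_i$, this yields $\sum a_i\chi_i=L\circ\varphi=0$ with $(a_1,\ldots,a_n)\neq 0$, contradicting the linear independence of the $\chi_i$. Hence $V=\F_p^n$, and $\varphi$ is surjective. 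There is no real obstacle here; the content is elementary linear algebra, and the only subtlety is the passage through $\bar G$, which rests on nothing more than $\F_p$ being an elementary abelian $p$-group together with continuity of the $\chi_i$.
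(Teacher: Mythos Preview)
Your proof is correct. The core observation---that a linear form $L(x_1,\ldots,x_n)=\sum a_ix_i$ on $\F_p^n$ satisfies $L\circ\varphi=\sum a_i\chi_i$, so that $\varphi$ is surjective precisely when no nonzero $L$ annihilates its image---is the same as the paper's. The paper, however, packages this via the dual map $\varphi^*\colon H^1(\F_p^n,\F_p)\to H^1(G,\F_p)$ and invokes \cite[Proposition 1.6.14(ii)]{NSW} to equate surjectivity of $\varphi$ with injectivity of $\varphi^*$ (a general fact for pro-$p$ groups); it then checks that $\varphi^*(\chi_a)=\sum a_i\chi_i$. Your argument is more elementary and self-contained: you bypass the cited cohomological criterion by exploiting directly that the target $\F_p^n$ is already an $\F_p$-vector space, so the image of $\varphi$ is a subspace and ordinary linear algebra applies. (In fact the detour through $\bar G$ is not even needed for this---the image of $\varphi$ is a subgroup of $\F_p^n$, hence automatically a subspace.) The paper's route generalizes more readily to arbitrary pro-$p$ targets; yours is cleaner here because the target is so simple.
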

\begin{proof}
We set $H:=\F_p\times\cdots\times \F_p$. Then $\varphi\colon G\to H$ is surjective if and only if the induced homomorphism $\varphi^*\colon H^1(H,\F_p)\to H^1(G,\F_p)$ is injective (\cite[Proposition 1.6.14 (ii)]{NSW}). We have an  (non-canonical) isomorphism
\[
H\to H^1(H,\F_p), a=(a_1,\ldots,a_n)\mapsto \chi_a,
\]
where $\chi_a$ is defined by $\chi_a(h_1,\ldots,h_n)=\sum_{i=1}^n a_ih_i$. Then for each $a=(a_1,\ldots,a_n)\in H$, 
\[
(\varphi^*(\chi_a))(g)=\sum_{i=1}^n a_i\chi_i(g),\; \forall g\in G.
\]
Therefore $\varphi^*$ is injective if and only if $\chi_1,\ldots,\chi_n$ are $\F_p$-linearly independent. 
\end{proof}

Now consider the following exact sequence of finite groups
\begin{equation}
\label{eq:U3 exact sequence}
1 \longrightarrow \F_p \longrightarrow \U_3(\F_p) \xrightarrow{(a_{12},a_{23})} \F_p\times \F_p \longrightarrow 1,
\end{equation}
where $a_{ij}\colon\U_3(\F_p)\to\F_p$ is the map sending a matrix to its $(i,j)$-coefficient.

Let ${\rm CP}(G,\F_p)$ be the set of $(x,y)\in H^1(G,\F_p)\times H^1(G,\F_p)$ such that $x\cup y=0$ and $x,\ y$ are $\F_p$-linearly independent in $H^1(G,\F_p)$.  For two profinite groups $G$ and $H$, let  ${\rm Epi}(G,H)$ be the set of all continuous surjective homomorphisms from $G$ to $H$. 

\begin{prop}
\label{prop:counting Epi}
Let the notation be as above. Assume that both ${\rm CP}(G,\F_p)$ and $Z^1(G,\F_p)$ are finite.  
Then 
\[
|{\rm Epi}(G,\U_3(\F_p))|=\sum_{\varphi\in {\rm CP}(G,\F_p)} |Z^1(G,\F_p)|. 
\]
 \end{prop}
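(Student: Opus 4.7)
The plan is to partition ${\rm Epi}(G,\U_3(\F_p))$ according to the induced homomorphism $\varphi:=f\circ\rho\colon G\to \bar\U_3(\F_p)\cong \F_p\times\F_p$ and to count the fibres of $\rho\mapsto f\circ\rho$ by means of Lemma~\ref{lem:Sol}. The formula will then follow immediately.

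First, given $\rho\in {\rm Epi}(G,\U_3(\F_p))$, set $\chi_i:=a_{i,i+1}\rho\in H^1(G,\F_p)$ for $i=1,2$. Since $\rho$ is surjective, so is $\varphi=(\chi_1,\chi_2)\colon G\to \F_p\times\F_p$, and then Lemma~\ref{lem:linear independence} shows that $\chi_1$ and $\chi_2$ are $\F_p$-linearly independent. Moreover, because $\rho$ lifts $\varphi$ along the central extension \eqref{eq:U3 exact sequence}, the $n=2$ case of Dwyer's theorem recalled just before the statement forces $\chi_1\cup \chi_2=0$ in $H^2(G,\F_p)$. Hence $\varphi\in {\rm CP}(G,\F_p)$, and the assignment $\rho\mapsto f\circ\rho$ takes values in ${\rm CP}(G,\F_p)$.

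Conversely, I would show that every $\varphi=(\chi_1,\chi_2)\in {\rm CP}(G,\F_p)$ has exactly $|Z^1(G,\F_p)|$ preimages under this assignment. By Lemma~\ref{lem:linear independence} $\varphi$ is surjective, and since $\chi_1\cup\chi_2=0$ Dwyer's theorem provides at least one weak solution of the weak embedding problem $\sE:=\sE(G,f,\varphi)$. The kernel of $f$ is the central subgroup $Z_3(\F_p)\cong \F_p$ of $\U_3(\F_p)$, on which $\U_3(\F_p)$, and hence $G$ via $\varphi$, acts trivially; so $M_\varphi\cong \F_p$ as a trivial $G$-module, and Lemma~\ref{lem:Sol} yields a bijection
\[
{\rm Sol}(\sE)\cong Z^1(G,\F_p).
\]

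The remaining point, and the one I expect to be the main obstacle, is to promote each weak solution $\rho\in {\rm Sol}(\sE)$ to a proper (i.e.\ surjective) one, so that the fibre over $\varphi$ in ${\rm Epi}(G,\U_3(\F_p))$ is \emph{all} of ${\rm Sol}(\sE)$. Since $f\circ\rho=\varphi$ is already surjective, it suffices to verify that $\rho(G)$ contains $Z_3(\F_p)$. Pick $g_1,g_2\in G$ with $\varphi(g_i)=e_i$ the standard basis vectors of $\F_p\times\F_p$ and write $\rho(g_i)=I+a_iE_{12}+b_iE_{23}+c_iE_{13}$, so that $(a_1,b_1)=(1,0)$ and $(a_2,b_2)=(0,1)$. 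A direct matrix computation in $\U_3(\F_p)$ gives
\[
[\rho(g_1),\rho(g_2)]=I+(a_1b_2-a_2b_1)E_{13}=I+E_{13},
\]
which generates $Z_3(\F_p)$. Combining the three steps shows that $\rho\mapsto f\circ\rho$ is a surjection $ {\rm Epi}(G,\U_3(\F_p))\surj {\rm CP}(G,\F_p) $ whose fibres all have cardinality $|Z^1(G,\F_p)|$, which yields the stated formula.
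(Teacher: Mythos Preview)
Your proof is correct and follows the same strategy as the paper: fibre ${\rm Epi}(G,\U_3(\F_p))$ over ${\rm CP}(G,\F_p)$ via $\rho\mapsto f\circ\rho$, use Dwyer's result together with Lemma~\ref{lem:Sol} to identify each fibre with $Z^1(G,\F_p)$, and then check that every weak solution is in fact surjective. The only difference is in this last step: the paper disposes of it with the one-line remark ``since the kernel has order $p$, each solution is a proper solution'' (a non-surjective lift would give a complement to the central kernel and force $\U_3(\F_p)$ to be abelian), whereas you verify $Z_3(\F_p)\subseteq\rho(G)$ by an explicit commutator computation in the Heisenberg group.
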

 \begin{proof}
 Let $\varphi =(x,y)\in {\rm CP}(G,\F_p)$.  Then $\varphi$ induces a homomorphism $G\to \bar\U_3(\F_p) \cong \F_p \times \F_p,\ g\mapsto (x(g),y(g))$ which, by Lemma~\ref{lem:linear independence} is surjective, since $x,y$ are $\F_p$-linearly independent in $H^1(G,\F_p)$.  This gives an embedding problem
 \[
 \xymatrix
 {
 {} & G \ar[d]^{\varphi}\\
 \U_3(\F_p) \ar[r]^f & \bar\U_3(\F_p)
 }
 \] 
 with kernel $\F_p$ which, by \cite[Theorem 2.4]{Dwy}, has a solution since $x\cup y=0$.  Hence by Lemma~\ref{lem:Sol}, the embedding problem has $ |Z^1(G,\F_p)| $ solutions.  Since the kernel has order $p$, each solution is a proper solution, so the result follows.
 \end{proof}

\begin{lem}
\label{lem:G mod Zassenhaus subgroup} Let $G$ be a profinite group, and let $G(p)$ be its maximal pro-$p$-quotient. Then ${\rm Epi}(G,\U_3(\F_p))\cong {\rm Epi}(G(p),\U_3(\F_p))$.
\end{lem}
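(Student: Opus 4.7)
The plan is to show that composition with the canonical projection $\pi\colon G\to G(p)$ yields a bijection
\[
\pi^*\colon {\rm Hom}(G(p),\U_3(\F_p)) \longrightarrow {\rm Hom}(G,\U_3(\F_p)),\quad \psi\mapsto \psi\circ \pi,
\]
which restricts to a bijection between the corresponding sets of continuous surjections. The key point is simply that $\U_3(\F_p)$ is a finite $p$-group, so every continuous homomorphism $G\to \U_3(\F_p)$ factors uniquely through the maximal pro-$p$ quotient.

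First I would verify injectivity: if $\psi_1\circ \pi = \psi_2\circ \pi$, then since $\pi$ is surjective (by definition of the maximal pro-$p$ quotient as an inverse limit of finite $p$-group quotients of $G$), we conclude $\psi_1=\psi_2$. Next, for surjectivity, take any continuous homomorphism $\varphi\colon G\to \U_3(\F_p)$. The kernel $\ker(\varphi)$ is an open normal subgroup of $G$ whose index divides $|\U_3(\F_p)| = p^3$, hence is a power of $p$. By construction $\ker(\pi)$ is the intersection of all such open normal subgroups, so $\ker(\pi)\subseteq \ker(\varphi)$. By the universal property of the quotient there is a unique continuous homomorphism $\psi\colon G(p)\to \U_3(\F_p)$ with $\psi\circ \pi = \varphi$, proving surjectivity of $\pi^*$.

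Finally, to see that $\pi^*$ restricts to a bijection on the surjective subsets, note that $\varphi = \psi\circ \pi$ with $\pi$ surjective, so $\operatorname{im}(\varphi)=\operatorname{im}(\psi)$; in particular $\varphi$ is surjective if and only if $\psi$ is. This gives the desired bijection ${\rm Epi}(G,\U_3(\F_p))\cong {\rm Epi}(G(p),\U_3(\F_p))$. No step is truly an obstacle here; the only thing to be a little careful about is continuity of the induced map $\psi$, which is automatic since $\ker(\pi)\subseteq\ker(\varphi)$ and the quotient topology on $G/\ker(\pi)=G(p)$ makes $\psi$ continuous.
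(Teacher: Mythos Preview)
Your proof is correct and takes essentially the same approach as the paper: the paper's proof is the one-line remark ``This follows from the fact that $\U_3(\F_p)$ is a finite $p$-group,'' and you have simply unpacked the standard argument behind that remark.
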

\begin{proof}
This follows from the fact that $\U_3(\F_p)$ is a finite $p$-group.
\end{proof}

Assume that $K$ is a finite extension of $\Q_p$.  Recall that if $K$ contains a primitive $p$th root of unity, then the group $G:=G_K(p)$ is a Demushkin group, which is a pro-$p$ group having the following properties:
\begin{enumerate}
\item $\dim_{\F_p} H^1(G,\F_p)<\infty,$ 
\item $\dim_{\F_p} H^2(G,\F_p)=1,$
\item  the cup product $H^1(G,\F_p)\times H^1(G,\F_p)\to H^2(G,\F_p)$ is a non-degenerate bilinear form.
\end{enumerate}

Note that since $a\cup b =-b\cup a$ for $a,b\in H^1(G,\F_p)$, the bilinear form  $(\cdot,\cdot)\colon H^1(G,\F_p)\times H^1(G,\F_p)\to H^2(G,\F_p) \cong \F_p$ induced by the cup product is skew-symmetric. 
Let $d(G) = \dim_{\F_p} H^1(G, \F_p)$.  Then $G$ has a minimal presentation $G=S/R$ where $S$ is a free pro-$p$-group of rank $d = d(G)$ on generators $x_1,x_2,\ldots,x_d$, and $R= \langle r\rangle$ is the closed normal subgroup of $S$ generated by an element $r\in S^p[S,S]$.   Let $q=q(G)$ be the maximal power of $p$ such that $\zeta_q \in K$ (and by convention, $p^\infty=0$).  Recall from section \ref{sec:mobius} that the relation $r$ takes one of the following forms:
\begin{enumerate}
\item[(i)] if $q\neq 2$ ($d$ is even in this case), then 
\begin{equation}
 \label{eq:D1} r=x_1^q[x_1,x_2][x_3,x_4]\cdots[x_{d-1},x_d];
\end{equation}
\item[(ii)] if $q=2$ and $d$ is odd, then 
\begin{equation}
 \label{eq:D2} r=x_1^2x_2^{2^f}[x_2,x_3][x_4,x_5]\cdots[x_{d-1},x_d],
 \end{equation}
 where $f$ is an integer $\geq 2$ or $\infty$;
\item[(iii)] if $q=2$ and $d$ is even, then either
\begin{equation}
 \label{eq:D3} r=x_1^{2+2^f}[x_1,x_2][x_3,x_4]\cdots[x_{d-1},x_d],\\
\end{equation}
where $f$ is an integer $\geq 2$ or $\infty$, or 
\begin{equation}
\label{eq:D4} r=x_1^2[x_1,x_2]x_3^{2^f}[x_3,x_4]\cdots[x_{d-1},x_d],
\end{equation}
where $f$ is an integer $\geq 2$.
\end{enumerate} 

\begin{prop}
\label{prop:Demushkin-linear algebra}
 Let $G$ be a Demushkin group and let 
\[
(\cdot,\cdot)\colon H^1(G,\F_p)\times H^1(G,\F_p)\stackrel{\cup}{\to} H^2(G,\F_p)\cong \F_p
\]
be the non-degenerate skew-symmetric bilinear form induced by the cup product.  Let $d=d(G)$ and $q=q(G)$.  
\begin{enumerate}
\item If $q\not=2$, there exists an $\F_p$-basis $v_1,v_2,\ldots,v_d$ of $H^1(G,\F_p)$ such that $(v_i,v_i)=0$ for every $1\leq i\leq d$.
\item If $q=2$, there exists an $\F_p$-basis $v_1,v_2,\ldots,v_d$ of $H^1(G,\F_p)$ such that $(v_1,v_1)=1$, and that $(v_i,v_i)=0$ for every $2\leq i\leq d$.
\end{enumerate}
\end{prop}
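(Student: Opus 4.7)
I linearize the diagonal of the cup-product form and then read off the answer from the distinguished Demushkin relator via the Magnus embedding. The case $p$ odd of (1) is immediate: skew-symmetry together with $\mathrm{char}(\F_p)\ne 2$ forces $(v,v)=-(v,v)=0$ for every $v\in H^1(G,\F_p)$, so any $\F_p$-basis satisfies (1).

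From now on $p=2$. The key observation is that the diagonal map
\[
Q\colon H^1(G,\F_2)\to H^2(G,\F_2)\cong \F_2,\quad Q(v):=(v,v),
\]
is $\F_2$-linear. Indeed, skew-symmetry in characteristic $2$ gives $(v,w)+(w,v)=2(v,w)=0$, so expanding $(v+w,v+w)$ yields $Q(v+w)=Q(v)+Q(w)$. Consequently either $Q\equiv 0$---in which case any basis of $H^1(G,\F_2)$ satisfies (1)---or $\ker Q$ has codimension one, in which case I pick $v_1\in H^1(G,\F_2)$ with $Q(v_1)=1$ and extend by any $\F_2$-basis $v_2,\ldots,v_d$ of $\ker Q$ to produce a basis satisfying (2). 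The problem therefore reduces to deciding, for each Demushkin relator, whether $Q\equiv 0$ (corresponding to $q\ne 2$) or $Q\not\equiv 0$ (corresponding to $q=2$).

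To make this decision I use the Magnus identification of Theorem~\ref{thm:magnusiso}. Take a minimal presentation $1\to R\to S(d)\to G\to 1$ with $R$ normally generated by the relator $r$ from \eqref{eq:D1}--\eqref{eq:D4}, and identify $\F_2[[S(d)]]\cong \F_2\langle\langle u_1,\ldots,u_d\rangle\rangle$ via $x_i\mapsto 1+u_i$. The standard theory of one-relator pro-$p$ groups (see \cite{Ko} or \cite{La1}) shows that the diagonal entries $(\chi_i,\chi_i)$ of the cup-product pairing in the dual basis $\chi_1,\ldots,\chi_d$ can be read off from the coefficient of $u_i^2$ in the expansion of $r-1$ modulo $\sI^3$. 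Two identities suffice for this expansion: $(1+u_i)^{2^k}-1=u_i^{2^k}\in \sI^{2^k}$ (Frobenius in characteristic $2$) and $[x_i,x_j]-1\equiv u_iu_j+u_ju_i\pmod{\sI^3}$, so commutators contribute only off-diagonally. For \eqref{eq:D1} with $q\ne 2$ (hence $q\geq 4$), the term $x_1^q-1=u_1^q\in \sI^4$ vanishes modulo $\sI^3$ and only commutators survive, so $(\chi_i,\chi_i)=0$ for every $i$ and $Q\equiv 0$. For each of \eqref{eq:D2}, \eqref{eq:D3}, \eqref{eq:D4}, the factor $x_1^2$ (either explicit, or the degree-two part of $x_1^{2+2^f}$) contributes $u_1^2$ giving $(\chi_1,\chi_1)=1$, while every other power appearing in those relators has the form $x_j^{2^f}$ with $f\geq 2$ and so lies in $\sI^4$; hence $Q\not\equiv 0$, and $v_i:=\chi_i$ itself realizes the basis demanded in (2). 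Combined with the linearization step above, this proves both parts.

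The main technical obstacle is invoking the cup-product-via-relation formula in characteristic $2$ with correct sign conventions: the usual ``skew-symmetric implies alternating'' shortcut fails here, and a mis-sign would erroneously force the conclusion that the diagonal is always zero. To avoid this pitfall I plan to appeal to the explicit Jennings--Lazard / Magnus computation in \cite{Ko} (or equivalently \cite{La1}) rather than re-derive it from scratch.
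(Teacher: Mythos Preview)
Your proposal is correct and follows essentially the same route as the paper: both arguments dispose of the odd-$p$ case by the trivial ``skew-symmetric $\Rightarrow$ alternating'' remark, and for $p=2$ both read the diagonal entries $(\chi_i,\chi_i)$ of the cup-product form off the Demushkin relator modulo degree~$3$ (the paper cites \cite[Proposition~3.9.13]{NSW} for this, you compute it directly via the Magnus expansion, but these are the same calculation). Your preliminary observation that $Q(v)=(v,v)$ is $\F_2$-linear is a clean way to reduce case~(2) to merely detecting $Q\not\equiv 0$, though in the end you still verify $( \chi_i,\chi_i)=0$ for $i\ge 2$ exactly as the paper does, so the gain is organizational rather than substantive.
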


\begin{proof} 
Let
\[
\begin{tikzcd}
1 \arrow{r} &R \arrow{r} &S \arrow{r} &G \arrow{r} &1,
\end{tikzcd}
\]
be a minimal presentation of $G$, with minimal system of generators $x_1,x_2,\ldots,x_d$.  

By \cite[Proposition 3.9.12]{NSW}, there exists an $\F_p$-basis  $v_1,v_2,\ldots,v_d$ of $H^1(S,\F_p)= H^1(G,\F_p)$ such that $v_i(x_j)=\delta_{ij}$, where $\delta$ is the Kronecker delta function.  
  
Suppose $q=p^k\neq 2$.  If $p>2$ then $(v_i,v_i)=0$ for every $i=1,2,\ldots,d$ since a skew-symmetric  bilinear form on a vector space over a field of characteristic $\neq 2$ is alternating.  From \cite[Proposition 3.9.13]{NSW} applied to the descending 2-central series $(S^{(i)})$ of $S$, if the defining relation $r$ of $G$ is such that
\[
r\equiv \prod_{j=1}^{d} x_j^{2a_j} \cdot \prod_{1\leq l<m\leq d} (x_l,x_m)^{a_{lm}} \mod{S^{(3)}}, \quad a_j, a_{lm}\in \F_2,
\]
then $(v_j,v_j)=a_j$.  If $q=2^k$ with $k\geq 2$, then $r$ takes the form shown in equation (\ref{eq:D1}) above, so $(v_i,v_i)=a_i=0$ for all $i=1,2\ldots,d$.  This establishes part 1.

Now suppose $q=2$.  Then $r$ takes the form shown in equation (\ref{eq:D2}), (\ref{eq:D3}) or (\ref{eq:D4}) above.  In each of these cases, $(v_1,v_1)=a_1=1$ and $(v_i,v_i)=a_i=0$ for all $i=2,3,\ldots,d$, which establishes part 2.    
\end{proof}

When $G$ is a Demushkin group, the following result, based on linear algebra, provides a means of calculating $|{\rm CP}(G,\F_p)|$ from $d(G)$ and $q(G)$.
  
\begin{lem}
\label{lem:LA}
Let $V$ be an $\F_p$-vector space of dimension $d\geq 3$ with  with basis $v_1,v_2,\ldots,v_d$. Let $(\cdot,\cdot)\colon V\times V\to \F_p$ be a non-degenerate skew-symmetric bilinear form on $V$. Let $N$ be the number of pairs $(x,y)\in V\times V$ such that $(x,y)=0$ and that $x,y$ are $\F_p$-linearly independent.
\begin{enumerate}

\item If $(v_i,v_i)=0$ for every $1\leq i\leq d$, then 
\[
N=(p^d-1)(p^{d-1}-p).
\]
\item If $(v_1,v_1)=1$ and $(v_i,v_i)=0$ for every $2\leq i\leq d$, then
\[
N=(2^{d-1}-1)(2^{d-1}-2) +2^{d-1}(2^{d-1}-1).
\]
\end{enumerate}
\end{lem}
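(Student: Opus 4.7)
My approach is to count by fixing the first vector $x$, counting the admissible $y$, and then summing over $x\ne 0$. The key observation is that by non-degeneracy, for each nonzero $x$ the map $y\mapsto(x,y)$ is a nonzero linear functional on $V$, so its kernel $K_x=\{y\in V:(x,y)=0\}$ has exactly $p^{d-1}$ elements. To count the linearly independent pairs I must then remove those scalar multiples of $x$ that happen to lie in $K_x$.

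A scalar multiple $\lambda x$ lies in $K_x$ iff $\lambda(x,x)=0$, so $\F_p x\subseteq K_x$ when $(x,x)=0$, while $\F_p x\cap K_x=\{0\}$ when $(x,x)\ne 0$. Setting
\[
A=\#\{x\ne 0:(x,x)=0\},\qquad B=\#\{x\ne 0:(x,x)\ne 0\},
\]
I obtain
\[
N=A(p^{d-1}-p)+B(p^{d-1}-1).
\]
It remains to evaluate $(x,x)$ in the distinguished basis. Writing $x=\sum_i x_iv_i$ and using skew-symmetry to kill the cross terms via $(v_i,v_j)+(v_j,v_i)=0$, I get the basis-free formula $(x,x)=\sum_i x_i^2(v_i,v_i)$, valid for any prime $p$.

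In case (1), every $(v_i,v_i)$ vanishes, so $(x,x)=0$ identically, $A=p^d-1$, $B=0$, and the formula immediately reduces to $N=(p^d-1)(p^{d-1}-p)$. In case (2) we have $p=2$ and $(x,x)=x_1^2=x_1$, so the dichotomy is governed solely by the first coordinate: $A=2^{d-1}-1$ (nonzero vectors with $x_1=0$) and $B=2^{d-1}$ (vectors with $x_1=1$), yielding
\[
N=(2^{d-1}-1)(2^{d-1}-2)+2^{d-1}(2^{d-1}-1).
\]
I don't anticipate any serious obstacle. The one subtle point is that in case (2) the form fails to be alternating in characteristic $2$, so one cannot simply declare every pair of the shape $(x,\lambda x)$ to satisfy $(x,\lambda x)=0$; the whole argument turns on isolating the two possible values of $(x,x)$, which happen to be determined entirely by $x_1$.
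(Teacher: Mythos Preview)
Your argument is correct and follows essentially the same approach as the paper: fix one vector, use non-degeneracy to see that its orthogonal complement is a hyperplane, and split the count according to whether $(x,x)=0$ or not, evaluating $(x,x)=\sum_i x_i^2(v_i,v_i)$ via skew-symmetry. The only cosmetic difference is that the paper sums over the second coordinate $y$ rather than the first, and in case~(2) writes $y=\sum_{i\in I}v_i$ and splits on whether $1\in I$ rather than on the value of $x_1$; these are the same thing.
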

\begin{proof}
 For each $y\in V\setminus \{0\}$, let 
\[
 y^\perp =\{ x\in V\mid (x,y)=0\}.
\]
Then $y^\perp$ is an $\F_p$-vector space, and $\dim y^\perp=\dim V -1= d-1$ since the bilinear form $(\cdot,\cdot)$ is non-degenerate. Let 
\[
\begin{aligned}
 C(y):&=\{x\in V \mid (x,y)=0\ \text{and}\ x,y \text{ are $\F_p$-linearly independent}\}\\
&=\{x\in y^\perp \mid x,y \text{ are $\F_p$-linearly independent}\}.
\end{aligned}
\]
\\
{\bf Case 1.}  Let $y=\sum_{i\in I} a_i v_i$, $a_i\in \F_p$, $I\subseteq \{1,2,\ldots,d\}$.  Then 
\[
\begin{aligned}
 (y,y)&=(\sum_{i\in I} a_i v_i, \sum_{j\in I} a_j v_j)=\sum_{i,j\in I}a_ia_j(v_i,v_j)\\
&=\sum_{i\in I} a_i^2(v_i,v_i)+\sum_{\substack{ i<j,\\ i,j\in I}}a_ia_j((v_i,v_j)+(v_j,v_i))\\
&=0,
\end{aligned}
\]
since $(\cdot,\cdot)$ is skew-symmetric and, by assumption, $(v_i,v_i)=0$ for $1\leq i\leq d$. So $y\in y^\perp$.  Hence 
\[
 |C(y)|=(p^{d-1}-p).
\]
This gives
\[
 N=\sum_{y\in V\setminus\{0\}} |C(y)|=(p^d-1)(p^{d-1}-p).
\]
\\
\\
{\bf Case 2.} In this case $p=2$. Let $y=\sum_{i\in I}v_i$, $I\subseteq \{1,2,\ldots,d\}$.  There are two possibilities to consider:
\\
(i) $1\not\in I$. Then $(v_i,v_i)=0$ for all $i\in I$, so by the same argument as Case 1 above, $y\in y^\perp$. Hence
\[
 |C(y)|=(2^{d-1}-2).
\]
\\
(ii) $1\in I$. Let $I^\prime:=I\setminus\{1\}$.  Then $(v_i,v_i)=0$ for all $i\in I^\prime$, so
\[
\begin{aligned}
 (y,y)&=(v_1+\sum_{i\in I^\prime} v_i, v_1+\sum_{i\in I^\prime} v_i)\\
&=(v_1,v_1)+ (v_1,\sum_{i\in I^\prime} v_i)+(\sum_{i\in I^\prime} v_i,v_1)+(\sum_{i\in I^\prime} v_i,\sum_{i\in I^\prime} v_i)\\
&=(v_1,v_1)\\
&=1,
\end{aligned}
\]
since $(\cdot,\cdot)$ is skew-symmetric, and $(\sum_{i\in I^\prime} v_i,\sum_{i\in I^\prime} v_i)=0$ by part (i).  So $y\notin y^\perp$.  Hence, 
\[
 |C(y)|=|y^\perp\setminus\{0\}|=(2^{d-1}-1).
\]
Combining the two possibilities gives
\[
\begin{aligned}
 N&=\sum_\text{$y$ in case (i)} |C(y)|+ \sum_\text{$y$ in case (ii)} |C(y)|\\
&=(2^{d-1}-1)(2^{d-1}-2) +2^{d-1}(2^{d-1}-1).
\end{aligned}
\]
\end{proof}

Let $K$ be a finite extension of degree $n$ of $\Q_p$ and assume that $K$ contains a primitive $p$-th root of unity.  Then the Galois group $G:=G_K(p)$ of the maximal $p$-extension of $K$ is a Demushkin group of rank $n+2$.  Recall that the number of $\U_3(\F_p)$-extensions of $K$ is given by
\[
\nu(K,\U_3(\F_p))=\frac{|{\rm Epi}(G_K,\U_3(\F_p))|}{|{\rm Aut}(\U_3(\F_p))|},
\]
where $G_K$ is the absolute Galois group of $K$.  Since $|Z^1(G,\F_p)|=|H^1(G,\F_p)|=p^{n+2}$, Proposition \ref{prop:counting Epi} together with Lemma \ref{lem:G mod Zassenhaus subgroup} gives
\[
|{\rm Epi}(G_K,\U_3(\F_p))|=|{\rm Epi}(G,\U_3(\F_p))|=\sum_{\varphi\in {\rm CP}(G,\F_p)}|Z^1(G,\F_p)| =|{\rm CP}(G,\F_p)|\cdot p^{n+2}.
\]
Since $G$ is a Demushkin group, the cup product $H^1(G,\F_p)\times H^1(G,\F_p)\stackrel{\cup}{\to} H^2(G,\F_p)\cong \F_p$ is a non-degenerate skew-symmetric bilinear form.  If $q=q(G)$ is the $q$-invariant of $G$, then by Lemma \ref{lem:LA}
\[
|{\rm CP}(G,\F_p)| =
\begin{cases}
(p^{n+2}-1)(p^{n+1}-p) & \text{if $p>2$},\\
(2^{n+2}-1)(2^{n+1}-2) & \text{if $p=2$ and $q>2$},\\
(2^{n+1}-1)(2^{n+1}-2)+2^{n+1}(2^{n+1}-1) &\text {if $p=2$ and $q=2$}.
 \end{cases}
\]  
Note also that 
\[
|{\rm Aut}(\U_3(\F_p))|= 
\begin{cases}
p^3(p^2-1)(p-1) &\text{if $p>2$},\\
8 &\text{if $p=2$}. 
\end{cases}
\]
Therefore
\[
\nu(K,\U_3(\F_p))=\frac{|{\rm CP}(G,\F_p)|\cdot p^{n+2}}{|{\rm Aut}(\U_3(\F_p))|}=
 \begin{cases}
\displaystyle
\frac{p^n(p^{n+2}-1)(p^n-1)}{(p^2-1)(p-1)} & \text{if $p>2$},\\
2^n(2^n-1)(2^{n+2}-1) & \text{if $p=2$ and $q>2$},\\
\displaystyle
2^n(2^{n+1}-1)^2 &\text {if $p=2$ and $q=2$}.
 \end{cases}
\]

\section{Formally Real Pythagorean Fields}
\label{sec:pyth}

\begin{defn}
A field $F$ is called \textit{pythagorean} if every sum of two squares (hence any number of squares) in $F$ is a square.  For any field $F$, the set of elements of $F$ that can be expressed as a sum of squares will be denoted $\sum F^2$.  If $F$ is pythagorean then $\sum F^2=F^2$.
\end{defn}

\begin{defn}
A field $F$ is \textit{formally real} if $F$ satisfies the following (equivalent) conditions:
\begin{enumerate}
\item -1 is not a sum of squares in $F$.
\item For any $n\in \N$, the quadratic form $n\langle 1 \rangle = \langle 1,\ldots,1 \rangle$ is anisotropic over $F$.
\end{enumerate}
Otherwise, $F$ is said to be \textit{nonreal}.
\end{defn}

If $F$ is a nonreal pythagorean field then for any $a\in F$, there exist $x,\ y,\ z\in F$ such that
\[
a=x^2-y^2=x^2+z^2y^2 \in F^2,
\]
so $F$ is quadratically closed.  Hence, our interest will be in formally real pythagorean fields.

\begin{defn}
Let $F$ be a field.  A subset $P$ of $F$ is called a \textit{preordering} of $F$ if 
\[
P+P\subseteq P,\quad P\cdot P \subseteq P,\quad -1\notin P,\quad \sum F^2\subseteq P.
\]
A preordering of a field $F$ is an \textit{ordering} if in addition
\[
P\cup -P=F,\quad P\cap -P=0.
\]
An \textit{ordered field} is a pair consisting of a field $F$ and an ordering $P$ of $F$.  
If $P$ is an ordering of $F$, the elements of $P^\times =P\setminus \{0\}$ are called \textit{positive}, the elements of $-P^\times$ are called \textit{negative} (with respect to $P$).  An element $b\in \fx$ is said to be \textit{totally positive} if it is positive with respect to all orderings on $F$.  The set of all orderings of $F$ will be denoted $X_F$.
  
\end{defn}

Artin and Schreier, in the 1920's, developed much of the algebraic theory of formally real fields and studied the relationship between formally real fields and fields with orderings.  We have the following important results.

\begin{thm}[Artin-Schreier Criterion, {\cite{AS}}]
A field $F$ is formally real if and only if $F$ possesses at least one ordering.
\end{thm}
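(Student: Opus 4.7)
The plan is to prove both directions separately; the harder one will require a Zorn's-lemma construction of a maximal preordering.

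For the easy direction, assume $P$ is an ordering of $F$. The definition of preordering gives $\sum F^2 \subseteq P$ and $-1 \notin P$. Therefore $-1 \notin \sum F^2$, which is exactly the statement that $F$ is formally real. This direction is essentially bookkeeping from the definitions.

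For the converse, assume $F$ is formally real; I need to build an ordering. First I would observe that $T_0 := \sum F^2$ is itself a preordering: it is closed under sums, closed under products (since $(\sum x_i^2)(\sum y_j^2) = \sum_{i,j}(x_i y_j)^2$), contains $\sum F^2$ trivially, and excludes $-1$ precisely because $F$ is formally real. Next I would partially order the collection of preorderings of $F$ containing $T_0$ by inclusion. Any chain has an upper bound given by its union (closure properties and the exclusion $-1 \notin P$ are preserved under unions of chains), so Zorn's lemma yields a maximal preordering $P \supseteq T_0$.

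The key step is then to show that any maximal preordering is an ordering. For $P \cap -P = \{0\}$: if $a \in P \cap -P$ with $a \neq 0$, multiplicative closure gives $-a^2 = a \cdot (-a) \in P$ and $(1/a)^2 \in \sum F^2 \subseteq P$, so $-1 = (-a^2)(1/a)^2 \in P$, contradicting $-1 \notin P$. For $P \cup -P = F$: given $a \notin P$, I would enlarge $P$ by forming $P[-a] := \{p - qa : p,q \in P\}$. This set is closed under addition, and closed under multiplication because $(-a)^2 = a^2 \in \sum F^2 \subseteq P$. It is a preordering provided $-1 \notin P[-a]$. If by contradiction $-1 = p - qa$ with $p,q \in P$, then $q \neq 0$ (else $-1 = p \in P$) and rearranging gives $a = (p+1)q \cdot (1/q)^2$, which lies in $P$ by the closure properties and $\sum F^2 \subseteq P$; this contradicts $a \notin P$. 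Hence $P[-a]$ is a strictly larger preordering, contradicting maximality of $P$ unless $-a \in P$ (in which case $P[-a] = P$). Therefore $-a \in P$ whenever $a \notin P$, proving $P \cup -P = F$.

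The main obstacle is the verification that the extension $P[-a]$ genuinely remains a preordering; the multiplicative-closure check crucially uses $a^2 \in \sum F^2$, and the rearrangement $a = (p+1)q(1/q)^2$ is the one step where one must exploit that $\sum F^2$ is contained in $P$ to conclude $a \in P$. Everything else is essentially formal manipulation of the axioms of a preordering together with the standard Zorn's-lemma pattern.
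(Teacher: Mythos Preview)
Your proof is correct and is the standard Zorn's-lemma argument for the Artin--Schreier criterion. The paper does not actually give a proof of this statement; it merely cites the original Artin--Schreier paper \cite{AS}, so there is no in-text argument to compare against.

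It is worth noting that the paper's very next result, Theorem~\ref{thm:preordering} (also cited without proof from \cite{scharlau}), asserts that every preordering of a formally real field is the intersection of the orderings containing it; the Artin--Schreier criterion follows at once from this by applying it to $P=\sum F^2$. Your direct argument is essentially the core of a proof of that stronger statement as well: the same maximal-preordering construction, started above an arbitrary preordering $P$ rather than $\sum F^2$, shows that every preordering extends to an ordering, from which the intersection statement is then easy.
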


\begin{thm}[ {\cite[Chapter 3, Theorem 1.6]{scharlau}}]
\label{thm:preordering}
Let $F$ be a formally real field and $P$ a preordering of $F$.  Then $P=\cap R$, where the intersection is taken over all orderings $R$ containing $P$.
\end{thm}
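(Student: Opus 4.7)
The inclusion $P \subseteq \bigcap_{R \supseteq P} R$ is immediate from the definition. For the reverse inclusion I will prove the contrapositive: given $a \in F$ with $a \notin P$, I will construct an ordering $R \supseteq P$ such that $a \notin R$. Note that $a \neq 0$, since $0 = 0^2 \in \sum F^2 \subseteq P$.

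The first step is to enlarge $P$ to a preordering containing $-a$. Define
\[
P' := \{\, p - aq : p, q \in P \,\}.
\]
Closure under addition is clear. Closure under multiplication uses $a^2 \in \sum F^2 \subseteq P$: indeed $(p_1 - aq_1)(p_2 - aq_2) = (p_1 p_2 + a^2 q_1 q_2) - a(p_1 q_2 + q_1 p_2)$, and both bracketed expressions lie in $P$. Clearly $\sum F^2 \subseteq P \subseteq P'$ and $-a \in P'$. The key check is that $-1 \notin P'$: if $-1 = p - aq$ with $p, q \in P$, then $q = 0$ would give $-1 = p \in P$, contradicting that $P$ is a preordering; otherwise $q \neq 0$ and
\[
a \;=\; \frac{1+p}{q} \;=\; \frac{(1+p)\,q}{q^{2}} \;\in\; P,
\]
since $1+p \in P$, $q \in P$, and $1/q^{2} = (1/q)^{2} \in \sum F^2 \subseteq P$. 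This contradicts the assumption $a \notin P$. Hence $P'$ is a preordering of $F$ containing $P \cup \{-a\}$.

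Next, by Zorn's lemma applied to the partially ordered set of preorderings of $F$ containing $P'$ (ordered by inclusion; the union of a chain of preorderings is again a preordering, as none contains $-1$), there exists a maximal preordering $R$ with $P' \subseteq R$. I claim $R$ is an ordering, that is, $R \cup (-R) = F$. Suppose, for contradiction, some $b \in F$ satisfies $b \notin R$ and $-b \notin R$. Form $R + bR$ and $R - bR$ exactly as above; each is closed under addition and multiplication and contains all squares, so each is a preordering unless it contains $-1$. If $-1 = r + bs$ with $r, s \in R$ and $s \neq 0$, the same division trick yields $-b = (1+r)s/s^{2} \in R$, a contradiction; if $s = 0$, then $-1 \in R$, again impossible. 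Symmetrically $-1 \notin R - bR$. Hence $R + bR$ is a preordering strictly larger than $R$ (it contains $b$), contradicting maximality of $R$. Thus $R \cup (-R) = F$, and since $-1 \notin R$, the intersection $R \cap (-R) = \{0\}$, making $R$ an ordering.

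Finally, $R$ contains $P$ and $-a$. Since $R$ is an ordering and $a \neq 0$, we have $a \notin R$, producing the required ordering witnessing $a \notin \bigcap_{R \supseteq P} R$. The only mildly delicate point is the verification that $P'$ (and later $R \pm bR$) avoids $-1$; the clearing-denominators-by-$q^{2}$ trick is what makes everything work, exploiting that $1/q^{2}$ is a square and therefore lies in any preordering. Everything else is bookkeeping plus a standard Zorn argument.
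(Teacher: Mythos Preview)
Your proof is correct. Note, however, that the paper does not supply its own proof of this theorem; it simply cites \cite[Chapter 3, Theorem 1.6]{scharlau}. What you have written is precisely the standard argument one finds there (and in Lam): enlarge $P$ to $P-aP$, use Zorn to pass to a maximal preordering, and show maximality forces $R\cup(-R)=F$. One tiny remark: the sentence ``Symmetrically $-1\notin R-bR$'' is superfluous, since once you know $R+bR$ is a preordering properly containing $R$ you already have your contradiction; but it does no harm.
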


\begin{thm}[Artin's Theorem, {\cite{Artin}}]
For a field $F$ of characteristic $\neq 2$, an element $b\in \fx$ is totally positive if and only if $b\in \sum F^2$.
\end{thm}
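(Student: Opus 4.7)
The plan is to prove both directions of Artin's Theorem, with the easy direction being straightforward and the hard direction reducing to Theorem~\ref{thm:preordering}.

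\textbf{The easy direction.} Suppose $b\in\sum F^2$, say $b=\sum_{i=1}^n a_i^2$ with $a_i\in F$. For any ordering $P$ of $F$ and any $a\in \fx$, either $a\in P$ or $-a\in P$, and in either case $a^2\in P$ since $P$ is closed under multiplication. Hence every square lies in $P$, and since $P$ is closed under addition, $b\in P$. As $P$ was arbitrary, $b$ is totally positive.

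\textbf{The hard direction.} I would handle the trivial case first: if $F$ is not formally real, then by the Artin--Schreier Criterion $X_F=\emptyset$, so every element is vacuously totally positive; on the other hand $-1\in\sum F^2$, and using the identity
\[
a=\left(\tfrac{a+1}{2}\right)^2+(-1)\cdot\left(\tfrac{a-1}{2}\right)^2
\]
(valid since $\mathrm{char}\,F\neq 2$) together with the fact that $(-1)$ times a sum of squares is itself a sum of squares, we conclude $\sum F^2 = F$. So the statement is trivial in this case.

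Assume now that $F$ is formally real. The key step is to show that $\sum F^2$ is a preordering of $F$: it is closed under addition by definition, closed under multiplication via distributivity and $x^2y^2=(xy)^2$, contains $\sum F^2$ trivially, and satisfies $-1\notin\sum F^2$ precisely because $F$ is formally real. By Theorem~\ref{thm:preordering},
\[
\sum F^2 \;=\; \bigcap_{R\supseteq\sum F^2} R,
\]
where $R$ runs over all orderings of $F$ containing $\sum F^2$. By the easy direction already established, every ordering $R\in X_F$ contains every sum of squares, so the condition $R\supseteq\sum F^2$ is automatic. Therefore
\[
\sum F^2 \;=\; \bigcap_{R\in X_F} R.
\]
If $b$ is totally positive then $b\in R$ for every $R\in X_F$, and hence $b\in\sum F^2$.

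\textbf{Where the work lies.} The only real content is Theorem~\ref{thm:preordering} (every preordering of a formally real field is the intersection of the orderings that refine it), which is already available to me. All other steps are formal: verifying that $\sum F^2$ is a preordering, and observing that every ordering automatically contains $\sum F^2$. The minor subtlety is the nonreal case, where one must not overlook the vacuous quantification over $X_F$ and must invoke $\mathrm{char}\,F\neq 2$ to express arbitrary elements as sums of squares once $-1$ is such.
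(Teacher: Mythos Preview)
The paper does not supply a proof of Artin's Theorem; it merely states it with a citation to \cite{Artin}. Your argument is correct and is in fact the standard derivation: the easy direction is immediate from the axioms of an ordering, and the hard direction follows by observing that $\sum F^2$ is the smallest preordering and invoking Theorem~\ref{thm:preordering} to write it as the intersection of all orderings. Your handling of the nonreal case is also fine (and necessary, since the statement is for arbitrary fields of characteristic $\neq 2$, not just formally real ones).
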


We now wish to consider formally real pythagorean fields with a view toward counting their dihedral extensions.  We begin with

\begin{lem}
\label{lem:DF}
Let $F$ be a formally real pythagorean field with set of orderings $X_F$ and let $a\in \fx\setminus \fxs$.  Then
\[
D_F\langle1,-a\rangle = \bigcap_{P\in X_F, -a\in P} P^\times.
\] 
\end{lem}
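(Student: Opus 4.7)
The plan is to identify $D_F\langle 1,-a\rangle \cup \{0\}$ with the preordering of $F$ generated by $-a$, and then invoke Theorem~\ref{thm:preordering} to express this preordering as an intersection of orderings.

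The inclusion $\subseteq$ is the easy direction. If $b = x^2 - ay^2 \in D_F\langle 1,-a\rangle$ and $P$ is any ordering containing $-a$, then $x^2 \in P$, $(-a)y^2 \in P$, hence $b = x^2 + (-a)y^2 \in P$; since $b \neq 0$, this places $b \in P^\times$. So the first step is to record this calculation.

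For the reverse inclusion, I would introduce the set
\[
T := \{x^2 - ay^2 : x,y \in F\}
\]
and aim to prove that $T$ is a preordering of $F$. The main work is verifying $T + T \subseteq T$ and $T \cdot T \subseteq T$; this is precisely where the pythagorean hypothesis enters, since $x_1^2 + x_2^2$ and similar sums must be rewritten as single squares. For addition this is immediate from pythagoreanness. For multiplication, one expands
\[
(x_1^2 - ay_1^2)(x_2^2 - ay_2^2) = \bigl((x_1x_2)^2 + (ay_1y_2)^2\bigr) - a\bigl((x_1y_2)^2 + (x_2y_1)^2\bigr)
\]
and again collapses each parenthesized sum to a single square. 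To see $-1 \notin T$, suppose $-1 = x^2 - ay^2$; since $F$ is formally real we must have $y \neq 0$, and then $a = (1+x^2)/y^2$, which is a square (by pythagoreanness applied to $1 + x^2$), contradicting $a \notin (F^\times)^2$. Finally $F^2 \subseteq T$ is obtained by taking $y = 0$, and since $F$ is pythagorean, $\sum F^2 = F^2 \subseteq T$. This step — checking the preordering axioms for $T$ — is the main substantive part of the argument, though each verification is short once pythagoreanness is invoked.

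With $T$ a preordering of $F$ containing $-a$, Theorem~\ref{thm:preordering} yields $T = \bigcap_{P \supseteq T} P$, where $P$ ranges over orderings of $F$. Any such $P$ contains $-a$, and conversely any ordering $P$ with $-a \in P$ contains $F^2$ and $-a$, hence contains the preordering $T$ they generate. Thus
\[
T = \bigcap_{P \in X_F,\ -a \in P} P.
\]
Since $T = D_F\langle 1,-a\rangle \cup \{0\}$, removing $0$ from both sides gives the desired equality
\[
D_F\langle 1,-a\rangle = \bigcap_{P \in X_F,\ -a \in P} P^\times,
\]
completing the proof.
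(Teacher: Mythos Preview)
Your proof is correct and follows essentially the same approach as the paper: identify $D_F\langle 1,-a\rangle \cup \{0\}$ with the preordering $F^2 - aF^2$ (verifying closure under addition and multiplication via pythagoreanness, and $-1 \notin T$ via the square-class hypothesis on $a$), then apply Theorem~\ref{thm:preordering} and observe that an ordering contains this preordering iff it contains $-a$. The paper's version is slightly terser and omits your separate treatment of the easy inclusion, but the substance is identical.
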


\begin{proof}
Consider the set $F^2-aF^2 = \{x^2-ay^2 \mid x,y \in F\}$.  Since $F$ is pythagorean, $\sum F^2= F^2$, so this set is closed under addition and multiplication.  If $-1 = x^2-ay^2$, then $y\neq 0$ so $a=(x/y)^2+(1/y)^2 \in F^2$, a contradiction.  Hence $F^2-aF^2$ is a preordering of $F$.  Also, if $P\in X_F$, then since $F^2 \subseteq P$,
we have $F^2-aF^2 \subseteq P$ if and only if $-a\in P$.  So by Theorem~\ref{thm:preordering},
\[
D_F\langle1,-a\rangle \cup \{0\} = F^2-aF^2 = \bigcap_{P\in X_F, -a\in P} P.
\]
\end{proof}

\begin{lem}
\label{lem:orderings}
Let $n\in \N$ and let $F$ be a formally real pythagorean field with set of orderings $X_F$.  If $| \fx/\fxs | =2^n$, then $n\leq | X_F | \leq 2^{n-1}$.
\end{lem}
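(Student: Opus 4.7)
The plan is to establish the two inequalities separately using the characters determined by orderings and Artin's characterization of totally positive elements.

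For the lower bound $n \leq |X_F|$, I would consider the total signature map
\[
\sigma \colon \fx/\fxs \longrightarrow \prod_{P \in X_F} \{\pm 1\}, \qquad a\fxs \mapsto (\chi_P(a))_{P \in X_F},
\]
where $\chi_P(a)=+1$ if $a\in P^\times$ and $\chi_P(a)=-1$ otherwise. This is a well-defined group homomorphism because each $\chi_P$ is trivial on $\fxs$. Its kernel consists of those square classes represented by totally positive elements of $F$; by Artin's Theorem this is $(\sum F^2)^\times/\fxs$, which equals $\fxs/\fxs = 1$ since $F$ is pythagorean. Hence $\sigma$ is injective, and comparing cardinalities gives $2^n \leq 2^{|X_F|}$, i.e.\ $n \leq |X_F|$.

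For the upper bound $|X_F|\leq 2^{n-1}$, I would associate to each $P\in X_F$ the subgroup
\[
\bar P := P^\times\fxs/\fxs \subseteq \fx/\fxs.
\]
Since $P^\times$ is multiplicatively closed and $\fx = P^\times \sqcup (-P^\times)$, $\bar P$ has index $2$ in the $\F_2$-vector space $\fx/\fxs$; and $[-1]\notin \bar P$ because $-1\notin P$. The assignment $P \mapsto \bar P$ is injective: since $\fxs \subseteq F^2 \subseteq P^\times$, $P^\times$ is recovered from $\bar P$ as the preimage under $\fx \twoheadrightarrow \fx/\fxs$. It remains to count index-$2$ subgroups of the $n$-dimensional $\F_2$-vector space $V=\fx/\fxs$ that avoid the nonzero element $v:=[-1]$ (nonzero because $F$ is formally real). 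Such subgroups correspond to nonzero linear functionals $\varphi\colon V \to \F_2$ with $\varphi(v)=1$; evaluation at $v$ is a surjective linear map $V^*\to \F_2$, so exactly $2^{n-1}$ functionals satisfy $\varphi(v)=1$, and all of these are automatically nonzero. Hence there are exactly $2^{n-1}$ admissible hyperplanes, giving $|X_F|\leq 2^{n-1}$.

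There is no real obstacle here; the only subtlety worth double-checking is the injectivity $P\mapsto \bar P$, which requires the containment $\fxs \subseteq P^\times$, itself immediate from $F^2 \subseteq P$. Both bounds then follow from elementary $\F_2$-linear algebra on $\fx/\fxs$ together with the pythagorean/Artin input identifying the kernel of the signature map.
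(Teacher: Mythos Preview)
Your proof is correct and follows essentially the same approach as the paper. The lower bound argument is identical (signature map, Artin's theorem, pythagorean implies trivial kernel); for the upper bound the paper chooses a basis $\{-1,a_1,\ldots,a_{n-1}\}$ of $\fx/\fxs$ and observes that an ordering is determined by the signs of $a_1,\ldots,a_{n-1}$, which is exactly your hyperplane count phrased in coordinates rather than via the dual space.
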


\begin{proof}
For any $P\in X_F$, $P^\times$ is a subgroup of index 2 in $\fx$.  The map
\[
\fx \to \prod_{P\in X_F} \fx/P^\times,\quad a\mapsto (a \mod{P})_{P\in X_F}
\]
has kernel $\bigcap_{P\in X_F} P^\times $, which is $\sum \fxs = \fxs$ by Artin's Theorem and the fact that $F$ is pythagorean.  So we have an injective map
\[
\fx/\fxs \hookrightarrow \prod_{P\in X_F} \{\pm 1\}.
\]
Hence $2^n= | \fx/\fxs | \leq 2^{|X_F|}$, so $n\leq |X_F|$.

Since $F$ is formally real, we can choose a basis $\{-1,a_1,\ldots,a_{n-1}\}$ of $\fx/\fxs$.  For any $P\in X_F$, $-1\notin P$ and for each $i=1,\ldots,n-1$, we have exactly one of $a_i\in P$ or $-a_i\in P$.  Each ordering $P$ could be labelled accordingly, so $F$ can have at most $2^{n-1}$ orderings.  
\end{proof}

\subsection{Extensions of SAP fields}
\label{sec:sap}

In this section, we develop a method for counting $D_4$-extensions of a formally real pythagorean field having the minimal number of orderings.

\begin{defn}
A formally real pythagorean field $F$ with finite square class group $\fx /\fxs$ is said to have the {\it Strong Approximation Property} (or to be \textit{SAP}) if for any subset $\{P_1, \ldots P_s\}$ of orderings of $F$, there exists $a \in F^\times$ such that 
\[
a \in \bigcap_{i=1}^s P_i, \quad a \notin P \  \textnormal {for\ all} \ P \neq P_i,\ i=1, \ldots ,s.
\] 
\end{defn}

Recall from sections \ref{sec:cohom}, \ref{sec:csa} and \ref{sec:quad} that for a field $F$ with char$(F)\neq 2$, we have the maps
\[
\fx/\fxs\cong H^1(G_F,\F_2)=H^1(G_F(2),\F_2),\quad a\mapsto (a),
\]
and
\[
\fx/\fxs \times \fx/\fxs \to H^2(G_F,\F_2)\cong \Br_2(F),\quad (a,b)\mapsto (a)\cup(b)\mapsto (a,b)_F.
\] 

\begin{lem}
\label{lem:basis}
Let $F$ be a formally real pythagorean SAP field with finite square class group of cardinality $2^n$.  There exists a basis $\sB =\{a_1,\ldots,a_n\}$ for $\fx/\fxs$ such that for all $a,b \in \fx/\fxs$, $(a)\cup (b)=0$ if and only if there is no common basis element $a_i \in \sB$ entering the expressions for both $a$ and $b$. 
\end{lem}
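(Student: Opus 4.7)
The plan is to translate the vanishing of the cup product $(a)\cup(b)$ into a condition on orderings via quaternion algebras, and then to exploit SAP to build an ``ordering-dual'' basis of $\fx/\fxs$.

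First, by Merkurjev's Theorem~\ref{thm:Merk}, $(a)\cup(b)=0$ in $H^2(G_F,\F_2)\cong \Br_2(F)$ is equivalent to the quaternion algebra $(a,b)_F$ being split. By the criteria of Section~\ref{sec:quad}, this in turn holds if and only if $b$ is a norm from $F(\sqrt{a})$, i.e.\ $b\in D_F\langle 1,-a\rangle$, and Lemma~\ref{lem:DF} rephrases this as: $b\in P^\times$ for every ordering $P\in X_F$ with $-a\in P$.

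Next I would construct the basis. By Lemma~\ref{lem:orderings} the set $X_F$ is finite with $|X_F|\geq n$. For each $P\in X_F$, SAP applied to the subset $X_F\setminus\{P\}$ produces an element $a_P\in\fx$ that is positive at every $Q\neq P$ and negative at $P$. A coordinate-wise sign comparison shows that the $a_P$ are $\F_2$-linearly independent in $\fx/\fxs$, hence $|X_F|\leq n$. Combining the two bounds gives $|X_F|=n$; labelling $X_F=\{P_1,\ldots,P_n\}$ and setting $a_i:=a_{P_i}$ produces a basis $\sB=\{a_1,\ldots,a_n\}$ of $\fx/\fxs$.

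Finally I would verify the cup-product criterion coordinate by coordinate. If $a=\prod_{i\in S_a}a_i$, then $\mathrm{sign}_{P_j}(a)=\prod_{i\in S_a}\mathrm{sign}_{P_j}(a_i)=-1$ exactly when $j\in S_a$, so $-a\in P_j$ iff $j\in S_a$. Combining with the first step, $(a)\cup(b)=0$ iff $b\in P_j$ for every $j\in S_a$, iff $j\notin S_b$ for every $j\in S_a$, iff $S_a\cap S_b=\emptyset$, which is precisely the stated condition. The main obstacle is pinning down $|X_F|=n$: the lower bound is a consequence of Artin's theorem and the pythagorean hypothesis, while the upper bound genuinely uses SAP to realise the characteristic sign patterns. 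Once the basis is in place, the rest is a bookkeeping check on signs.
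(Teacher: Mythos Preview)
Your proposal is correct and follows essentially the same route as the paper: both use SAP to produce the ``ordering-dual'' elements $a_i$ (negative at $P_i$, positive elsewhere), establish $|X_F|=n$ by combining Lemma~\ref{lem:orderings} with the independence of the $a_i$, and invoke Lemma~\ref{lem:DF} to link cup products to orderings. The only difference is cosmetic: in the final verification the paper reduces $(a)\cup(b)$ by bilinearity to $(c)\cup(c)=(c)\cup(-1)$ for $c=\prod_{i\in S_a\cap S_b}a_i$ and then appeals to the pythagorean hypothesis, whereas you carry out a direct sign computation at each $P_j$---both arguments amount to the same thing.
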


\begin{proof}
Let $F$ be a formally real pythagorean SAP field with $|F^\times/(F^\times)^2 | = 2^n$ and let $X_F$ be the set of orderings of $F$.  Since $F$ is SAP, for each $P_i \in X_F$ we can choose 
\[
a_i \in  \bigcap_{P \in X_F,\ P \neq P_i}P \setminus P_i.
\]  
Suppose $a_{i_1}\cdots a_{i_r}=1$ in $ F^\times/(F^\times)^2 $, where $i_j\neq i_k$ if $j\neq k$.  Then since $a_{i_2},\ldots, a_{i_r} \in P_{i_1}$, we have
$a_{i_1}=a_{i_2}\cdots a_{i_r}\in P_{i_1}$, a contradiction.  Hence the $ a_i $'s are independent mod $ (F^\times)^2 $ which implies $|X_F| \leq n$.  Then, by Lemma \ref{lem:orderings}, $|X_F| = n$, so $\sB = \{a_1,\ldots, a_n\} $ is a basis of $ F^\times/(F^\times)^2 $.  

%Similarly, if $a_1 \ldots a_n = a $, then $a \notin P$ for all $P \in \sP$, so $-a \in \cap \sP = \fxs$. Hence, $a=-1$ mod $\fxs$. 

Now for all $i \neq j$, we have
\[
\begin{array}{rl}
D_F<1,-a_i>:&=\{x^2-a_iy^2\mid x, y\in F\}\setminus \{0\}\\
&= \bigcap_{P\in X_F ,\ -a_i\in P}P^\times\\
&= \bigcap_{P\in X_F ,\ a_i\notin P}P^\times\\
&=P_i^\times.\\
\end{array}
\]
Hence $a_j \in D_F<1,-a_i>$, which implies $(a_i)\cup (a_j)=0$ in $H^2(G_F(2), \F_2)$.

Working modulo squares, any given $a,b \in \fx$ can be expressed in the basis $\{a_1, \ldots ,a_n\}$.  Let $c=\prod a_i$, where the product is taken over all elements $a_i$ which occur in the expression for both $a$ and $b$.  From the bilinearity of the cup product and the fact that $(a_i)\cup (a_j)=0$ if $i\neq j$, we have $(a)\cup (b)= (c)\cup (c)$.  The quaternion algebra $(c,c)_F\cong (c,-1)_F$ and $(c,-1)_F$ splits if and only if $c$ is a sum of squares in $F$. Since $F$ is pythagorean, the result follows.
\end{proof}

Now let $K=F(\sqrt{a}, \sqrt{b}),\ a,b \in \fx$ be a $V_4$-extension and recall that $K/F$ embeds into a $D_4$-extension $L/F$ if and only if $(a)\cup (b)=0$, and in that case, the possible extensions are
\[
L=K(\sqrt{f\gamma}),\ \textnormal {where}\ f\in \fx ,\ \gamma \in F(\sqrt{a})\textnormal{ with } N_{F(\sqrt{a})/F}(\gamma)=b.
\]

\begin{lem}
Let $\sS = \{ (a,b)\in \fx/ \fxs \times \fx/ \fxs \mid (a) \cup (b)=0,\ a,b\neq 1\}$.  Then 
\[
|\sS |= 3^n-2^{n+1}+1.
\]
\end{lem}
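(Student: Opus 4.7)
The plan is to translate the problem into a purely combinatorial count using the basis provided by Lemma \ref{lem:basis}. Fix the basis $\sB = \{a_1,\ldots,a_n\}$ of $\fx/\fxs$ supplied by that lemma. Any element $a \in \fx/\fxs$ corresponds bijectively to a subset $A \subseteq \{1,\ldots,n\}$, namely the set of indices $i$ such that $a_i$ appears in the unique basis expression of $a$; moreover $a = 1$ in $\fx/\fxs$ if and only if $A = \emptyset$.

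By Lemma \ref{lem:basis}, for $a,b \in \fx/\fxs$ corresponding to subsets $A, B \subseteq \{1,\ldots,n\}$, the cup product $(a)\cup(b) = 0$ in $H^2(G_F(2),\F_2)$ if and only if $A \cap B = \emptyset$. Therefore
\[
|\sS| = \bigl|\{(A,B) : A,B \subseteq \{1,\ldots,n\},\ A\cap B = \emptyset,\ A\neq \emptyset,\ B\neq \emptyset\}\bigr|.
\]

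To count pairs of disjoint subsets $(A,B)$ of an $n$-element set, I would observe that each of the $n$ elements falls into exactly one of three categories: in $A$, in $B$, or in neither. This gives $3^n$ pairs of disjoint subsets in total. Among these, the pairs with $A = \emptyset$ correspond to assigning each element to either $B$ or neither, contributing $2^n$ pairs; similarly there are $2^n$ pairs with $B = \emptyset$. By inclusion--exclusion, the pair $(\emptyset,\emptyset)$ is subtracted twice, so we add back $1$. This yields
\[
|\sS| = 3^n - 2\cdot 2^n + 1 = 3^n - 2^{n+1} + 1,
\]
as claimed. No step here looks delicate: the combinatorial count is elementary, and the only substantive input is the bijective translation of the cup-product condition into disjointness of subsets, which is precisely the content of Lemma \ref{lem:basis}.
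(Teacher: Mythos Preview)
Your proof is correct. Both your argument and the paper's rely on Lemma~\ref{lem:basis} to translate the condition $(a)\cup(b)=0$ into the statement that the supporting subsets $A,B\subseteq\{1,\ldots,n\}$ are disjoint, so the heart of the matter is identical. The difference lies only in how the disjoint nonempty pairs $(A,B)$ are counted: the paper groups pairs by $k=|A\cup B|$, obtaining $\sum_{k=2}^{n}\binom{n}{k}(2^k-2)$ and then simplifying via the binomial theorem, whereas you count directly by assigning each of the $n$ elements to one of three bins (in $A$, in $B$, in neither) and removing the degenerate cases by inclusion--exclusion. Your route is a bit slicker, avoiding the intermediate sum and the binomial identity, but the two arguments are equivalent reformulations of the same elementary count.
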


\begin{proof}
Suppose $a$ is expressed as a product of elements of the basis $\sB $ of Lemma \ref{lem:basis}, and similarly for $b$.  That is, $a=a_{i_1}\cdots a_{i_r},\ b=a_{j_1}\cdots a_{j_s}$.  Since $(a)\cup (b)=0$, there is no element of $\sB$ common to the expression of both $a$ and $b$.  So $2\leq k=r+s \leq n$.  Now choose a subset $\sA$ of $\sB$ of size $k$ and consider all subsets $\sC$ of $\sA$ such that $\sC \neq \varnothing $ and $\sC \neq \sA$.  There are $2^k-2$ such subsets $\sC$.   We take $a$ to be the product of the elements of $\sC$ and $b$ the product of the elements of $\sA \setminus \sC$.  This gives $ \sum_{k=2}^{n}\binom{n}{k}(2^k-2)$ ordered pairs $(a,b)$.  Using the binomial identity $(1+x)^n=\sum_{k=0}^{n}\binom{n}{k}x^k$ gives
\[
\begin{aligned}
|\sS|&=\sum_{k=2}^{n}\binom{n}{k}(2^k-2)\\
&=\sum_{k=2}^{n}\binom{n}{k}2^k -2\sum_{k=2}^{n}\binom{n}{k}\\
&=\sum_{k=0}^{n}\binom{n}{k}2^k -2\sum_{k=0}^{n}\binom{n}{k} +1\\
&=3^n-2^{n+1}+1.
\end{aligned}
\]
\end{proof}

Putting everything together we have

\begin{thm}
Let $F$ be a formally real pythagorean SAP field with $|F^\times/(F^\times)^2 | = 2^n,\ n\geq 2$ and let $\sN$ be the number of $D_4$-extensions of $F$.  Then 
\[
\sN= 2^{n-3}(3^n-2^{n+1}+1).
\]
\end{thm}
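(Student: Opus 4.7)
The plan is to combine the count $|\sS| = 3^n - 2^{n+1}+1$ of ordered cup-product-zero pairs already established with two further ingredients: each pair $(a,b)\in\sS$ will yield exactly $2^{n-2}$ distinct $D_4^{a,b}$-extensions of $F$, and each $D_4$-extension of $F$ will correspond to exactly the two ordered pairs $(a,b)$ and $(b,a)$ in $\sS$. The factor of $2$ accounting for this double-count then produces
\[
\sN \;=\; \frac{|\sS|\cdot 2^{n-2}}{2} \;=\; 2^{n-3}(3^n-2^{n+1}+1).
\]

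To establish the $2^{n-2}$ figure for a fixed pair $(a,b)\in\sS$, I would recast the $D_4^{a,b}$-extensions as equivalence classes of proper solutions to the central embedding problem
\[
\begin{tikzcd}
& & G_F\arrow{d}{\varphi} & & \\
1 \arrow{r} & C_2 \arrow{r} & D_4 \arrow{r}{\pi} & C_2\times C_2 \arrow{r} & 1
\end{tikzcd}
\]
in which $C_2 = \langle r^2\rangle$ is the center of $D_4$ and $\varphi\colon G_F\twoheadrightarrow \Gal(F(\sqrt{a},\sqrt{b})/F)\cong C_2\times C_2$ is labelled so that $\pi(\langle r\rangle)$ corresponds to $\Gal(F(\sqrt{a},\sqrt{b})/F(\sqrt{ab}))$. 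Because $(a)\cup(b) = 0$ by the definition of $\sS$, Proposition~\ref{prop:d4ext} supplies one solution, and then by Lemma~\ref{lem:Sol} the solution set is a torsor over $Z^1(G_F,C_2)=\Hom(G_F,\F_2)\cong \fx/\fxs$, hence of size $2^n$. A short inspection of the subgroup lattice of $D_4$ shows that $H\cdot \langle r^2\rangle = D_4$ forces $H=D_4$, so each of these $2^n$ weak solutions is in fact a surjection $\psi\colon G_F\twoheadrightarrow D_4$.

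Two such surjections yield the same extension $L$ iff they differ by an automorphism of $D_4$ preserving $\varphi$, and since $C_2\times C_2$ is abelian this collection is precisely the inner automorphism group $D_4/Z(D_4)\cong C_2\times C_2$, of order $4$. The inner action on surjections is free because any $g\in D_4$ centralizing a surjective image must lie in the center of $D_4$, and conjugation by central elements is trivial. Hence the $2^n$ solutions fall into $2^n/4=2^{n-2}$ orbits, one per $D_4^{a,b}$-extension. Finally, for any $D_4$-extension $L/F$, the center of $\Gal(L/F)$ carves out its $C_2\times C_2$-subextension $F(\sqrt{a},\sqrt{b})$ and the unique $C_4$-subgroup of $\Gal(L/F)$ carves out the cyclic quadratic subfield $F(\sqrt{ab})$, so the unordered pair $\{a,b\}$ is canonically attached to $L$ and $L$ is produced by exactly the two ordered pairs $(a,b),(b,a)\in\sS$. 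The main technical obstacle is verifying the freeness of the inner-automorphism action, which in turn depends on having first upgraded weak solutions to genuine surjections.
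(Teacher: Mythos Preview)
Your argument is correct and reaches the same formula, but it takes a different route from the paper's own proof.

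The paper proceeds directly from Proposition~\ref{prop:d4ext}: fixing one $\gamma\in F(\sqrt a)$ with $N\gamma=b$, every $D_4^{a,b}$-extension is $K(\sqrt{f\gamma})$ for some $f\in\fx$, and two choices $f,f'$ give the same field precisely when $f/f'\in K^{\times 2}\cap\fx=\langle a,b\rangle\cdot\fxs$. This yields the bijection with $\fx/(\fxs\cup a\fxs\cup b\fxs\cup ab\fxs)$ of size $2^{n-2}$ in one line, with no appeal to embedding problems or automorphism groups.

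Your approach instead recycles the machinery of Section~\ref{sec:cup}: torsor structure from Lemma~\ref{lem:Sol}, surjectivity via the Frattini argument, and a quotient by the stabilizing automorphisms. This is more systematic and makes the analogy with the $\U_3(\F_p)$ count explicit, at the cost of some extra group theory. One point deserves a word more of care: the assertion that the automorphisms of $D_4$ inducing the identity on $D_4/Z(D_4)$ are \emph{exactly} the inner ones is true, but ``since $C_2\times C_2$ is abelian'' only gives the inclusion $\mathrm{Inn}(D_4)\subseteq\{\alpha:\bar\alpha=\id\}$. The reverse inclusion needs the observation that any such $\alpha$ satisfies $\alpha(r)\in\{r,r^3\}$ and $\alpha(s)\in\{s,r^2s\}$, giving at most four maps, all of which are already inner. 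With that detail filled in, your proof is complete.
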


\begin{proof}
Using the notation of the previous lemma and preceding discussion, the number of biquadratic extensions $K=F(\sqrt{a}, \sqrt{b})$ which embed into a $D_4$-extension $L/F$ is given by the number of unordered pairs $\{a,b\}$ such that $(a) \cup (b)=0$.  For each such pair, there is a $1-1$ correspondence between $\{L/F\mid K/F \subset L/F,\ Gal(L/F)\cong D_4 \}$ and $\{f\mid f\in \fx/ (\fxs \cup a\fxs \cup b\fxs \cup ab\fxs) \}$.  Hence
\[
\begin{aligned}
\sN &=(\frac{1}{2}|\sS|)(2^{n-2})\\
&=2^{n-3}(3^n-2^{n+1}+1).
\end{aligned}
\]   
\end{proof}

\begin{examples}
We have the following results for the first few values of $n$.
\[
\begin{array}{rl}
n=2\quad &\sN=1\\
n=3\quad &\sN=12\\
n=4\quad &\sN=100\\
n=5\quad &\sN=720\\
n=6\quad &\sN=4816\\
n=7\quad &\sN=30912\\
n=8\quad &\sN=193600\\
\end{array}
\] 
\end{examples}

Now consider the extension field $K=F(\sqrt{-1})$ and its quadratic closure $K(2)$.  We will show that Lemma \ref{lem:basis} is useful not only in allowing us to count extensions of $F$, but also in elucidating the structure of the subgroup $G_K(2)$ of $G_F(2)$.  First, we recall the following lemma due to Bass and Tate.
    
\begin{lem}
\label{lem:K2}
Let $p$ be a prime.  If $E$ is a field which has no nontrivial finite extensions of degree less than $p$ and $L/E$ is an extension of degree $p$, then $K_2(L)$ is generated by the symbols (e,l) with $e\in E^\times$, $l \in L^\times$.
\end{lem}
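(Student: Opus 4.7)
The plan is to show that every generator $\{\ell_1,\ell_2\}$ of $K_2(L)$, with $\ell_1,\ell_2\in L^\times$, can be written as a sum of symbols $\{e,\ell\}$ with $e\in E^\times$ and $\ell\in L^\times$. Two consequences of the hypothesis will drive the argument. First, since $[L:E]=p$ is prime and $E$ admits no proper finite extension of degree less than $p$, every $a\in L\setminus E$ satisfies $[E(a):E]=p$, hence $E(a)=L$. Second, if $f(X)\in E[X]$ has degree strictly less than $p$, then each root $\alpha$ of $f$ in $\overline{E}$ has minimal polynomial over $E$ of degree at most $\deg f<p$, which by the same hypothesis forces $\alpha\in E$; thus such an $f$ splits completely into linear factors over $E$.

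First I would dispose of the case where one of $\ell_1,\ell_2$ lies in $E^\times$, in which the symbol is already of the required form, possibly after swapping entries via $\{\ell_1,\ell_2\}=-\{\ell_2,\ell_1\}$. So assume $\ell_1,\ell_2\in L^\times\setminus E^\times$ and set $a:=\ell_1$, so that $\{1,a,a^2,\dots,a^{p-1}\}$ is an $E$-basis of $L$. Write $\ell_2=f(a)$ for a unique $f(X)\in E[X]$ of degree less than $p$, and apply the second observation to factor $f(X)=c\prod_{j}(X-e_j)$ with $c,e_j\in E$. By bilinearity,
\[
\{a,\ell_2\}=\{a,c\}+\sum_j\{a,a-e_j\},
\]
in which $\{a,c\}$ is already mixed.

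It then suffices to reduce each remaining symbol $\{a,a-e\}$ to mixed symbols. When $e=0$ this is $\{a,a\}=\{-1,a\}$, a standard consequence of $\{a,-a\}=0$, which is already mixed. When $e\neq 0$, I would invoke the Steinberg relation $\{-e/a,\ 1+e/a\}=0$; bilinearity together with $1+e/a=(a+e)/a$ rearranges this into
\[
\{a,a+e\}=\{-1,a\}+\{-e,a+e\}-\{-e,a\},
\]
a sum of three mixed symbols. Applying this identity with $e$ replaced by $-e_j$ resolves each $\{a,a-e_j\}$ and completes the reduction.

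The only delicate point is the complete splitting of $f(X)$ over $E$; this is precisely where the full strength of the hypothesis "no nontrivial finite extensions of degree less than $p$" enters, ruling out irreducible factors of $f$ of intermediate degree. Beyond this, the argument is a mechanical application of bilinearity combined with one invocation of the Steinberg relation per linear factor, so I expect no obstacle beyond careful bookkeeping.
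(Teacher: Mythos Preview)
Your argument is correct and is essentially the standard Bass--Tate proof: reduce to symbols $\{a,f(a)\}$ with $a$ a primitive element and $f\in E[X]$ of degree $<p$, use the hypothesis to force $f$ to split into linear factors over $E$, and then unwind each $\{a,a-e_j\}$ via the Steinberg relation into mixed symbols. The paper itself does not give a proof but simply cites \cite[Lemma 8.6]{Srin}, so there is nothing to compare beyond noting that you have supplied exactly the argument the reference contains.
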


\begin{proof}
See \cite[Lemma 8.6]{Srin}
\end{proof}

\begin{prop}
\label{prop:GK2}
Let $F$ be a formally real pythagorean SAP field with $|\fx /\fxs|=2^n$ and let $K=F(\sqrt{-1})$.  Then $G_K(2)$ is a free pro-2 group of rank $2^{n-1}$. 
\end{prop}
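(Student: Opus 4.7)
The plan is to split the proof into two halves: first establish that $G_K(2)$ is free pro-$2$ by verifying $H^2(G_K,\F_2)=0$, and then read off its rank from $\dim_{\F_2}H^1(G_K,\F_2)=\dim_{\F_2}\kx/\kxs$ via Kummer theory.  Both halves I would extract from the Arason long exact sequence attached to the quadratic extension $K=F(\sqrt{-1})$:
\[
\cdots\to H^{q-1}(G_F,\F_2)\xrightarrow{\cup(-1)} H^q(G_F,\F_2)\xrightarrow{\mathrm{res}} H^q(G_K,\F_2)\xrightarrow{\mathrm{cor}} H^q(G_F,\F_2)\xrightarrow{\cup(-1)} H^{q+1}(G_F,\F_2)\to\cdots.
\]

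The input needed is the mod-$2$ cohomology ring of $G_F(2)$.  Using Merkurjev's theorem together with Lemma~3.5.4, in the generators $x_i:=(a_i)\in H^1(G_F,\F_2)$ one has $x_ix_j=0$ for $i\neq j$ and $x_i^2=(a_i,-1)_F\neq 0$ (because $-1\notin P_i$), so $H^q(G_F,\F_2)=\bigoplus_{i=1}^n\F_2\,x_i^q$ in every degree $q\geq 1$.  Since $-1\equiv a_1\cdots a_n$ modulo squares, the class $(-1)\in H^1$ equals $\sum_i x_i$, and cup product with it acts on basis elements by $x_i^q\mapsto x_i^{q+1}$; hence $\cup(-1)\colon H^q(G_F,\F_2)\to H^{q+1}(G_F,\F_2)$ is an isomorphism for every $q\geq 1$.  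Plugging this into Arason at $q=2$ forces $\mathrm{res}=0$ (the preceding map is onto) while the following $\cup(-1)$ is injective, so $\mathrm{im}(\mathrm{cor})=0$; hence $H^2(G_K,\F_2)=0$ and $G_K(2)$ is free pro-$2$.  At $q=1$ the same input identifies $H^1(G_K,\F_2)$ with the cokernel $H^1(G_F,\F_2)/\F_2\cdot(-1)$, and the $\F_2$-dimension of this quotient is the rank of $G_K(2)$.

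The crux is the cup-product description of $H^*(G_F(2),\F_2)$; this is where Lemma~3.5.4 combined with Merkurjev's theorem does the real work.  A parallel, more group-theoretic route would bypass Arason entirely: once the characterization $G_F(2)\cong C_2\ast\cdots\ast C_2$ as a free pro-$2$ product of $n$ copies of $C_2$ has been pinned down in the preceding subsection on pythagorean SAP fields, the homomorphism $G_F(2)\to C_2$ cutting out $G_K(2)$ sends every generator $\sigma_i$ to the nontrivial element (since $\sigma_i(\sqrt{-1})=-\sqrt{-1}$ for each $i$, via the Kummer pairing and $-1=a_1\cdots a_n$).  No conjugate of any $C_2$-factor therefore lies in $G_K(2)$, so the pro-$2$ Kurosh subgroup theorem forces $G_K(2)$ to be a free pro-$2$ group; the pro-$p$ analogue of Schreier's index formula applied to the index-$2$ inclusion then fixes the rank.
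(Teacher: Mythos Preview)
Your Arason-sequence strategy is the same as the paper's, and the step showing $\mathrm{res}=0$ on $H^2$ is essentially identical: the paper phrases it as ``every symbol $(e)\cup(f)$ reduces via Lemma~\ref{lem:basis} to $(c)\cup(-1)$, which restricts to zero since $-1\in(K^\times)^2$,'' which is precisely your surjectivity of $\cup(-1)\colon H^1\to H^2$.  The substantive difference is in how the corestriction is handled.  The paper shows $\mathrm{cor}$ is identically zero on $H^2(G_K,\F_2)$: by the Bass--Tate lemma (Lemma~\ref{lem:K2}) this group is generated by symbols $(f)\cup(k)$ with $f\in F^\times$, and the projection formula gives $\mathrm{cor}((f)\cup(k))=(f)\cup(N_{K/F}k)=0$ because $N_{K/F}(K^\times)\subseteq(F^\times)^2$ by the pythagorean hypothesis.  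This avoids $H^3$ altogether.  Your route instead needs $\cup(-1)\colon H^2\to H^3$ to be injective, hence a description of $H^3(G_F,\F_2)$; Merkurjev's theorem only identifies $H^2$, so here you are tacitly invoking the full norm-residue isomorphism (Voevodsky) or the free-product description of $G_F(2)$ --- either is heavier than Bass--Tate.  Your alternative Kurosh route is sound, but note that the identification $G_F(2)\cong C_2\ast\cdots\ast C_2$ is proved in the paper only \emph{after} this proposition (Theorem~\ref{thm:SAP}); its proof is independent, so the reordering is harmless, though it inverts the paper's logical flow.  (Also, Schreier's index formula applies to open subgroups of \emph{free} pro-$p$ groups, which $G_F(2)$ is not; you would need an Euler-characteristic argument or the Kurosh rank formula instead.)

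Both your argument and the paper's own proof yield rank $n-1$; the ``$2^{n-1}$'' in the statement is a typo --- the proof in the paper ends with $|K^\times/(K^\times)^2|=2^{n-1}$, hence rank $n-1$, and that is the value used later when the paper revisits this subgroup.
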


\begin{proof}
Let $a\in \fx$ and $b\in \kx$.  The corestriction map
\[
cor: H^2(G_K(2),\F_2) \longrightarrow H^2(G_F(2),\F_2)
\]
is given by $ cor ((a)\cup (b))=(a)\cup (N_{K/F}(b)) $.  Since $F$ is pythagorean, $ N_{K/F}(b)=(1) $ and $ cor ((a)\cup (b))=0 $.  By Lemma \ref{lem:K2} and Merkurjev's Theorem, $H^2(G_K(2),\F_2)$ is generated by the cup products $((f)\cup (k))$ with $f\in \fx,\ k\in \kx$.  Hence $cor$ is the zero map.

From Arason's long exact sequence \cite{arason} we obtain
\[
\begin{tikzcd}
0 \arrow{r} &\fx /\fxs  \arrow{r}{\bullet \cup (-1)} &H^2(G_F(2),\F_2) \arrow{r}{res} &H^2(G_K(2),\F_2) \arrow{r} &0.
\end{tikzcd}
\]

Once again working modulo squares, any given $e,f \in \fx$ can be expressed in the basis $\sB=\{a_1, \ldots ,a_n\}$ of Lemma \ref{lem:basis}.  Let $c=\prod a_i$, where $a_i$ enters the expression for both $e$ and $f$.  Then by the bilinearity of the cup product and Lemma \ref{lem:basis} we have
\[
\begin{array}{rl}
res((e)\cup (f))&=res((c)\cup (c))\\
&=res((-1)\cup (c))\\
&=0.
\end{array}
\]
So by Merkurjev's Theorem, $res:H^2(G_F(2),\F_2) \twoheadrightarrow H^2(G_K(2),\F_2)$ is the zero map.  Hence $ G_K(2) $ is a free pro-2 group.

From the short exact sequence 
\[
\begin{tikzcd}
0 \arrow{r} &\{\fxs \bigcup -\fxs \}  \arrow{r} &\fx /\fxs \arrow{r} &\kx /\kxs \\ 
\arrow{r}{0} &N(\kx)/\fxs \arrow{r} &0
\end{tikzcd}
\]
we see that $|\kx /\kxs|=2^{n-1}$.  So the rank of $G_K(2)$ is $n-1$.  %Furthermore, we see that 
%\[
%H^2(G_F(2),\F_2)=\{(-1)\cup (f)|f\in \fx\}.
%\]
\end{proof} 

In the next section we go on to consider the group $G_F(2)$ for the case in which $F$ is a formally real pythagorean SAP field and also the case in which $F$ is a superpythagorean field.

\subsection{The group $G_F(2)$}
\label{sec:pythGF2}

\begin{thm}
\label{thm:SAP}
Let $F$ be a field with $|F^\times/(F^\times)^2|=2^{d+1},\ d\geq 0$. Then $F$ is a formally real pythagorean SAP field if and only if $G_F(2)\cong C_2*\cdots *C_2$, the free product of $d+1$ copies of $C_2$.
\end{thm}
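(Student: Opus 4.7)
For $(\Rightarrow)$, assume $F$ is formally real, pythagorean, and SAP with $|\fx/\fxs|=2^{d+1}$; the plan is to build an explicit surjection from the free pro-$2$ product onto $G_F(2)$ using real closures, then verify injectivity via Galois cohomology. Combining Lemma~\ref{lem:orderings} with the proof of Lemma~\ref{lem:basis}, $F$ admits exactly $d+1$ orderings $P_1,\dots,P_{d+1}$ and a basis $\{a_1,\dots,a_{d+1}\}$ of $\fx/\fxs$ satisfying $a_i\notin P_i$ and $a_i\in P_j$ for $j\ne i$. For each $i$, fix a real closure $R_i$ of $(F,P_i)$ inside $F(2)$ and let $\sigma_i$ generate $\Gal(F(2)/R_i)\cong C_2$; then $\sigma_i(\sqrt{a_j})=(-1)^{\delta_{ij}}\sqrt{a_j}$, so by Kummer duality the $\sigma_i$ are dual to the $a_i$ and generate $G_F(2)$ by Burnside's Basis Theorem. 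Mapping the generator of the $i$-th factor to $\sigma_i$ yields a continuous surjection $\phi\colon G\twoheadrightarrow G_F(2)$, where $G$ is the free pro-$2$ product of $d+1$ copies of $C_2$.

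To show $\phi$ is an isomorphism I would compare $H^2$. The five-term exact sequence together with $H^1(G_{F(2)},\F_2)=0$ gives an injection $H^2(G_F(2),\F_2)\hookrightarrow H^2(G_F,\F_2)=\Br_2(F)$, identified with all of $\Br_2(F)$ via Theorem~\ref{thm:Merk}. Lemma~\ref{lem:basis} and Theorem~\ref{thm:quat}(iii) reduce a generating set of $\Br_2(F)$ to $\{(a_i,-1)_F\}_{i=1}^{d+1}$. These classes are nonzero and $\F_2$-linearly independent: if $(a_I,-1)_F=0$ with $a_I=\prod_{i\in I}a_i$, then Lemma~\ref{lem:DF} forces $-1\in\bigcap_{i\in I}P_i$, impossible unless $I=\emptyset$. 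Hence $\dim H^2(G_F(2),\F_2)=d+1=\dim H^2(G,\F_2)$, and $\phi^{\ast}$ sends $(a_i,-1)_F$ to the nontrivial square of the $i$-th dual factor generator, yielding a bijection on $H^2$. The standard criterion that a surjection of finitely generated pro-$p$ groups inducing an isomorphism on $H^1$ and an injection on $H^2$ is an isomorphism (cf.\ \cite[Proposition~3.9.5]{NSW}) then forces $\phi$ to be an isomorphism.

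For $(\Leftarrow)$, suppose $G_F(2)\cong G$. Counting $H^1$ gives $|\fx/\fxs|=2^{d+1}$, and the existence of an involution in $G_F(2)$ forces $F$ to be formally real by Artin--Schreier. A Kurosh-type subgroup theorem for free pro-$2$ products places every involution of $G$ in exactly one factor up to conjugation, giving $d+1$ conjugacy classes of involutions; via the standard bijection between orderings of $F$ and conjugacy classes of involutions in $G_F(2)$, $|X_F|=d+1$. Since $G$ is topologically generated by its involutions (the generators of its factors), Becker's theorem on pythagorean closures gives that $F$ is pythagorean. Finally, for formally real pythagorean $F$, Artin's theorem yields an injection $\fx/\fxs\hookrightarrow\{\pm 1\}^{X_F}$; the equality $|X_F|=d+1=\dim_{\F_2}\fx/\fxs$ then forces this map to be surjective, which is precisely the SAP property.

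The main obstacle is the $H^2$-computation in $(\Rightarrow)$, combining Merkurjev's theorem with Lemma~\ref{lem:basis} and Lemma~\ref{lem:DF} to produce an explicit $\F_2$-basis of $\Br_2(F)$ that matches, cup-product by cup-product, with $H^2(G,\F_2)$. In $(\Leftarrow)$, the subtlety lies in invoking the correct form of the Kurosh-type theorem for pro-$2$ free products, together with Becker's characterization of pythagoreanness via involutions of $G_F(2)$.
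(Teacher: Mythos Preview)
Your argument is correct but follows a genuinely different route from the paper's. For $(\Rightarrow)$, the paper simply decomposes the space of orderings $X_F$ into $d+1$ singleton connected components (citing \cite{Lam83}) and then invokes a result of Min\'a\v{c} \cite{Mi} translating this directly into $G_F(2)\cong C_2*\cdots*C_2$; you instead build the surjection explicitly from Euclidean/real closures and verify it is an isomorphism by a hands-on $H^2$ computation, using Merkurjev together with Lemmas~\ref{lem:DF} and \ref{lem:basis} to identify $\{(a_i,-1)_F\}$ as an $\F_2$-basis of $\Br_2(F)$ and match it against $H^2$ of the free product. For $(\Leftarrow)$, the paper notes that generation by involutions forces $F$ formally real and pythagorean, then appeals to Ware's theorem \cite{War1} that $G_F(2)$ determines $W(F)$, hence $X_F$, and compares with the forward direction; you instead count orderings directly via the pro-$p$ Kurosh theorem on finite subgroups of free pro-$p$ products combined with Becker's bijection between orderings and conjugacy classes of involutions, and then obtain SAP by the dimension equality forcing the sign map $\fx/\fxs\to\{\pm1\}^{X_F}$ to be surjective. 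The paper's proof is shorter because it outsources the work to structural citations (\cite{Lam83}, \cite{Mi}, \cite{War1}, \cite{MS1}, \cite{MS2}); your proof is longer but largely self-contained within the thesis's own lemmas plus standard pro-$p$ cohomology, at the cost of needing the Kurosh-type subgroup theorem in $(\Leftarrow)$.
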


\begin{proof} 
Suppose $F$ is a formally real pythagorean SAP field with space of orderings $X_F$.  Then $|X_F|=d+1$ and by \cite[Theorem 17.4]{Lam83}, a decomposition of $X_F$ into its connected components is given by $X_F=\bigoplus_{i=1}^{d+1} X_i$ where $|X_i|=1$ for each $i=1,\ldots,d+1$.  It then follows from \cite{Mi} that $G_F(2)$ is isomorphic to the free product of $d+1$ copies of $C_2$.

Conversely, suppose that $G_F(2)\cong C_2*\cdots*C_2$ ($d+1$ copies). Then $G_F(2)$ is generated by $d+1$ involutions, so $F$ is pythagorean and formally real.  Hence $\langle 1,1 \rangle_F$ is not universal.  In this case, R. Ware \cite{War1} showed that $G_F(2)$ determines the Witt ring $W(F)$ of $F$.  By \cite[Theorem 3.8]{MS2} and \cite[Corolllary 2.10]{MS1}, the Witt ring $W(F)$ determines the space of orderings $X_F$ of $F$. It then follows from the ``only if'' part that $|X_F|=d+1$, so $F$ is an SAP field.
\end{proof}

\begin{cor}
\label{cor:SAP}
Let $F$ be any pythagorean field, and let $K$ be a pythagorean SAP field. Assume that  $|F^\times/(F^\times)^2|=|K^\times/(K^\times)^2|=2^{d+1}$ . Then there exists an epimorphism $G_K(2)\cong C_2*\cdots*C_2 \twoheadrightarrow G_F(2)$.
\end{cor}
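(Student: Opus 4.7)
The plan is to apply Theorem \ref{thm:SAP} to the SAP field $K$ and then invoke the universal property of the resulting free pro-$2$ product. By Theorem \ref{thm:SAP}, $G_K(2)\cong C_2*\cdots*C_2$ (free pro-$2$ product of $d+1$ copies of $C_2$); as a coproduct in the category of pro-$2$ groups, this group has the universal property that any pro-$2$ group $H$ equipped with an ordered $(d+1)$-tuple of involutions $(\tau_1,\ldots,\tau_{d+1})$ receives a unique continuous homomorphism from $G_K(2)$ sending the $i$-th distinguished involutory generator to $\tau_i$. It therefore suffices to exhibit $d+1$ involutions in $G_F(2)$ that generate it.

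By Kummer theory $H^1(G_F(2),\F_2)\cong F^\times/(F^\times)^2$, so $d(G_F(2))=d+1$. Since $|F^\times/(F^\times)^2|\geq 2$ and $F$ is pythagorean, $F$ must be formally real (a nonreal pythagorean field is quadratically closed), so $X_F\neq\emptyset$. For each ordering $P\in X_F$, fix a real closure $R_P\subset F_s$ and let $c_P$ be the generator of $\Gal(F_s/R_P)$. Because $\sqrt{-1}\in F(2)\setminus R_P$, the restriction $\sigma_P:=c_P|_{F(2)}$ is a nontrivial involution in $G_F(2)$, and one checks directly that $\chi_a(\sigma_P)=1$ exactly when $a$ is negative in the ordering $P$.

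The crux of the argument is to show that $\{\sigma_P:P\in X_F\}$ spans $G_F(2)/\Phi(G_F(2))$ over $\F_2$. Under the Pontryagin duality $G_F(2)/\Phi(G_F(2))\cong \Hom(F^\times/(F^\times)^2,\F_2)$, the annihilator of this set consists precisely of the square classes that are positive in every ordering of $F$. By Artin's theorem, every such totally positive element is a sum of squares in $F$, and since $F$ is pythagorean, every sum of squares is a square. Hence the annihilator is trivial, so the $\sigma_P$ span the quotient. Choose $P_1,\ldots,P_{d+1}\in X_F$ so that the images of $\sigma_{P_1},\ldots,\sigma_{P_{d+1}}$ form an $\F_2$-basis of $G_F(2)/\Phi(G_F(2))$; by Burnside's basis theorem these $d+1$ involutions generate $G_F(2)$.

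Applying the universal property of $G_K(2)$ to the tuple $(\sigma_{P_1},\ldots,\sigma_{P_{d+1}})$ then yields a continuous homomorphism $\phi\colon G_K(2)\to G_F(2)$ whose image is a closed subgroup containing the above generating set, hence equals $G_F(2)$. The main technical hurdle is the spanning step for the $\sigma_P$, which depends critically on coupling Artin's characterization of totally positive elements with the pythagorean hypothesis in order to eliminate nontrivial totally positive square classes.
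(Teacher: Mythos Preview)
Your proof is correct and follows essentially the same approach as the paper's. The paper's argument is much terser: it invokes Lemma~\ref{lem:orderings} (which already contains the Artin-plus-pythagorean argument you spell out) to conclude that $F$ has at least $d+1$ orderings, asserts without further detail that one can then choose $d+1$ involutions minimally generating $G_F(2)$, and appeals to Theorem~\ref{thm:SAP}. Your version unpacks precisely what the paper leaves implicit---the construction of $\sigma_P$ from real closures, the duality argument showing the $\sigma_P$ span modulo Frattini, and the explicit use of the universal property of the free pro-$2$ product---so the two proofs are the same in substance, with yours supplying the details.
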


\begin{proof}
By Lemma \ref{lem:orderings}, $F$ has at least $d+1$ orderings, and we can choose $d+1$ involutions $\sigma_1,\ldots,\sigma_{d+1}$ in $G_F(2)$ which minimally generate $G_F(2)$.  The statement then follows from the previous theorem.
 
\end{proof}

We now wish to look at the case in which $F$ is a superpythagorean field.  Recall that a formally real pythagorean field $F$ with $|F^\times/(F^\times)^2|=2^{d+1}<\infty$ is called superpythagorean if $F$ admits exactly $2^d$ orderings.  We consider the group $G:=\Z_2^d \rtimes C_2= H \rtimes \langle x \rangle$, where the semidirect product action of $C_2$ on $H:=\Z_2^d$ is given by $xyx=y^{-1}$, for all $y\in H$.

\begin{prop}
Let $F$ be a pythagorean field with  $|F^\times/(F^\times)^2|=2^{d+1}$, $d\geq 0$. Then there exists an epimorphism $G_F(2)\twoheadrightarrow G=\Z_2^d\rtimes C_2$.
\end{prop}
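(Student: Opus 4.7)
The plan is to realize $G=\Z_2^d\rtimes C_2$ as the Galois group of an intermediate extension of $F(2)/F$, constructed explicitly using the pythagorean hypothesis.

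First, $F$ must be formally real: a pythagorean nonreal field is quadratically closed, forcing $|F^\times/(F^\times)^2|=1$ and contradicting $d\ge 0$. By the Artin--Schreier criterion, $F$ admits an ordering $P$; fix a real closure $F_P$ and let $\tau\in G_F(2)$ be an involution whose fixed field contains $F_P\cap F(2)$. Since $\sqrt{-1}\notin F_P$, $\tau$ does not fix $K:=F(\sqrt{-1})$, so
\[
G_F(2)=G_K(2)\rtimes\langle\tau\rangle.
\]
The pythagorean hypothesis gives $N_{K/F}(K^\times)=\sum F^2=(F^\times)^2$, which via the standard exact sequence yields $|K^\times/(K^\times)^2|=2^d$. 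Moreover, pythagoreanness together with the half-angle formula $\cos(\theta/2)=\sqrt{(1+\cos\theta)/2}$ shows inductively that $\cos(\pi/2^k)\in F$ for all $k\ge 1$ (each step requires only that a totally positive element of $F$ be a square, which is exactly the pythagorean property); consequently $\mu_{2^\infty}\subset K$, placing us in the Kummer-theoretic setting.

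Choose $a_1,\ldots,a_d\in F^\times$ positive at $P$ with $\{-1,a_1,\ldots,a_d\}$ a basis of $F^\times/(F^\times)^2$ (replace $a_i$ by $-a_i$ if necessary). For each $i$ set
\[
L_i:=\bigcup_{n\ge 1} K\bigl(\sqrt[2^n]{a_i}\bigr),
\]
choosing compatible positive real $2^n$-th roots (available in $F_P$). Since $\mu_{2^\infty}\subset K$, each $K(\sqrt[2^n]{a_i})/K$ is a Kummer cyclic extension of degree $2^n$ provided $a_i\notin(K^\times)^{2^n}$ (verified below), making $L_i/K$ a $\Z_2$-extension. The extension $L_i/F$ is Galois because each $\sqrt[2^n]{a_i}$ lies in $F_P$ and is therefore $\tau$-fixed, while $\mu_{2^\infty}$ is inverted by $\tau$; hence $\tau$ acts on $\Gal(L_i/K)$ by inversion, so $\Gal(L_i/F)\cong\Z_2\rtimes C_2$.

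The main obstacle is to show that the compositum $L=L_1\cdots L_d$ satisfies $\Gal(L/K)\cong\Z_2^d$, i.e.\ that the classes of $a_1,\ldots,a_d$ are $\Z/2^n$-independent in $K^\times/(K^\times)^{2^n}$ for every $n\ge 1$. I argue by induction on $n$. The key input is the identity $F^\times\cap(K^\times)^2=\{\pm 1\}(F^\times)^2$, obtained by writing a square $b^2\in F^\times$ with $b=u+v\sqrt{-1}\in K$ and separating real and imaginary parts. At level $n=1$, any nontrivial $\F_2$-relation $\prod_{i\in S}a_i\in(K^\times)^2$ forces $\prod_{i\in S}a_i\in\{\pm 1\}(F^\times)^2$, contradicting the basis property of $\{-1,a_1,\ldots,a_d\}$. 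For the inductive step, a relation $\prod_i a_i^{c_i}\in(K^\times)^{2^n}$ reduces mod $(K^\times)^2$ to the $n=1$ case, forcing each $c_i$ even; writing $c_i=2c_i'$ gives $\bigl(\prod_i a_i^{c_i'}\bigr)^2\in(K^\times)^{2^n}$, hence $\prod_i a_i^{c_i'}\in(K^\times)^{2^{n-1}}\cdot\{\pm 1\}=(K^\times)^{2^{n-1}}$ since $-1\in(K^\times)^{2^{n-1}}$ (as $\mu_{2^n}\subset K$), to which the inductive hypothesis applies. Once independence is established, $\Gal(L/F)\cong\Z_2^d\rtimes C_2=G$, and the restriction $G_F(2)\twoheadrightarrow\Gal(L/F)$ is the required epimorphism.
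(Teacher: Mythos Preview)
Your proof is correct and follows essentially the same strategy as the paper: choose an ordering $P$, pick a basis $a_1,\ldots,a_d$ of $P$-positive square classes, adjoin a compatible tower of $2^n$-th roots of the $a_i$ over $K=F(\sqrt{-1})$, and use an involution coming from a real closure to obtain the semidirect product structure. The only differences are cosmetic: the paper invokes Becker's \emph{Euclidean closure} $E\subset F(2)$ in place of your real closure $F_P$ (both supply the needed compatible real $2^n$-th roots and the involution), and the paper simply cites Becker for $\mu_{2^\infty}\subset K$ and asserts ``by Kummer theory'' for $\Gal(M/K)\cong\Z_2^d$, whereas you sketch the half-angle argument and supply an explicit inductive proof of the $\Z/2^n\Z$-independence of the $a_i$ in $K^\times/(K^\times)^{2^n}$---so your write-up is in fact more self-contained on these two points.
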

\begin{proof}
Choose any ordering $P$ in $F$ and an $\F_2$-basis $[a_1],\ldots, [a_d]$ of $P^\times/(F^\times)^2$. By \cite{Be1} there exists a field $E$, called the Euclidean closure of $F$ with respect to $P$, such that $F(2)=E(\sqrt{-1})$, $E$ is a formally real field and $E^2\cap F=P$. For each $a_i,\ i=1,\ldots,d$, there exists a sequence 
\[
\sqrt{a_i}, \sqrt[4]{a_i},\ldots, \sqrt[2^n]{a_i},\ldots,
\]
such that $\sqrt[2^n]{a_i}\in E^\times$ for all $n\in \N$. Indeed, by induction on $n$, we may assume that $\sqrt[2^n]{a_i}\in E^\times$. Then since $E^\times=(E^\times)^2\cup -(E^\times)^2$, we can choose $\sqrt[2^{n+1}]{a_i}\in (E^\times)^2$. Now let 
\[
\tilde{M}:=\bigcup_{n=1}^\infty F(\sqrt[2^n]{a_1},\ldots,\sqrt[2^n]{a_d}).
\]
Then $\tilde{M}$ is formally real since $\tilde{M}$ is a subfield of $E$. For each $n\in \N$,  $F(\sqrt{-1})$ contains a primitive $2^n$-th root of unity $\zeta_{2^n}$ (see \cite[Chapter II, Theorem 8]{Be}) and we may also assume that $\zeta_{2^{n+1}}^2=\zeta_{2^n}$.
Let $M:=\tilde{M}(\sqrt{-1})$. Then $M/F$ is a Galois extension.

We now show that ${\rm Gal}(M/F{\sqrt{-1}})$ is isomorphic to $\Z_2^d$.  This follows from Kummer theory. Let $\tau_1, \ldots, \tau_d$ be elements in ${\rm Gal}(M/F(\sqrt{-1})$ such that  for each $i=1,\ldots,d$,
\[
\tau_i(\sqrt[2^n]{a_i})=\zeta_{2^n} \sqrt[2^n]{a_i}\ \text {and }\tau_i(\sqrt[2^n]{a_j})=\sqrt[2^n]{a_j},\ \forall j\not=i.
\]
Then ${\rm Gal}(M/F(\sqrt{-1}))=\prod_{i=1}^d\langle \tau_i\rangle \cong \Z_2^d$. 

The restriction of a nontrivial element of ${\rm Gal}(E(\sqrt{-1})/E)$ to $M$ gives a nontrivial element $\sigma\in {\rm Gal}(M/\tilde{M})$. Thus we have a splitting 
\[
{\rm Gal}(M/F)\cong {\rm Gal}(M/F{\sqrt{-1}})\rtimes \langle \sigma \rangle,
\]
where $ \langle \sigma \rangle\cong C_2$, and the action of $C_2$ on  ${\rm Gal}(M/F{\sqrt{-1}})$ is by involution.

The natural projection 
\[
G_F(2)={\rm Gal}(F(2)/F)\to {\rm Gal}(M/F)\cong \Z_2^d\rtimes C_2
\]
gives the desired epimorphism.
\end{proof}

\begin{cor}
\label{cor:superPytha}
Let $F$ be a a field with $|F^\times/(F^\times)^2|=2^{d+1}$. Then $F$ is a superpythagorean field if and only if $G_F(2)$ is isomorphic to the group $G=\Z_2^d \rtimes C_2$.
\end{cor}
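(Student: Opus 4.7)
The plan is to prove both implications. For the forward direction, suppose $F$ is superpythagorean. The preceding proposition already furnishes an epimorphism $\pi\colon G_F(2)\twoheadrightarrow G$. I would show $\pi$ is an isomorphism by applying the standard criterion that a continuous epimorphism of finitely generated pro-$p$ groups is an isomorphism iff $\pi^{*}$ induces isomorphisms on $H^{i}(-,\F_p)$ for $i=1,2$. The $H^{1}$-case is automatic, since $\dim_{\F_2}H^{1}(G,\F_2)=d+1=\dim_{\F_2}(\fx/\fxs)=\dim_{\F_2}H^{1}(G_F(2),\F_2)$. For $H^{2}$, naturality of cup products, Merkurjev's theorem (Theorem~\ref{thm:Merk}), and the fact that inflation gives an isomorphism $H^{2}(G_F(2),\F_2)\cong \Br_2(F)$ (the pro-$2$ part of $\Gal(\bar{F}/F(2))$ being trivial since $F(2)$ is quadratically closed, so the Lyndon-Hochschild-Serre spectral sequence degenerates) together imply that $\pi^{*}$ is surjective on $H^{2}$; it remains to match dimensions.

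For $G=\Z_2^{d}\rtimes C_2$, I would exhibit the pro-$2$ presentation
\[
G=\langle x,y_1,\ldots,y_d\mid x^{2},\ [y_i,y_j]\ (i<j),\ xy_ixy_i\ (1\leq i\leq d)\rangle
\]
with $1+\binom{d}{2}+d=\binom{d+1}{2}+1$ relations, verifying its minimality in the pro-$2$ category via the standard identification of $r(G)$ with $\dim_{\F_2}R/[R,S]R^{2}$. For $G_F(2)$, fix a basis $\{-1,a_1,\ldots,a_d\}$ of $\fx/\fxs$ with $\{[a_i]\}$ an $\F_2$-basis of $P^{\times}/\fxs$ for some ordering $P$, and consider the $\binom{d+1}{2}+1$ quaternion classes $(-1,-1)$, $(-1,a_i)$ $(1\leq i\leq d)$, and $(a_i,a_j)$ $(i<j)$ in $\Br_2(F)$. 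Their $\F_2$-independence is detected by the signature homomorphisms $s_\chi\colon\Br_2(F)\to \F_2$ defined by $s_\chi((a,b))=1$ iff $\chi(a)=\chi(b)=-1$ (these are well-defined and additive, as a direct check using $s_\chi((a,1-a))=0$ confirms). The values of the listed classes under the family $(s_\chi)$ become the evaluations of a multilinear polynomial of degree $\leq 2$ in variables $x_i=[\chi(a_i)=-1]$, at all points of $\F_2^{d}$; this uses crucially that, under the superpythagorean hypothesis, every character $\chi\colon\fx/\fxs\to\{\pm 1\}$ with $\chi(-1)=-1$ is in fact an ordering. Since a multilinear polynomial vanishing on all of $\F_2^{d}$ is zero, the classes are independent, giving $\dim_{\F_2}\Br_2(F)\geq\binom{d+1}{2}+1$ and matching the group-side.

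For the backward direction, assume $G_F(2)\cong G$. A direct calculation, using that $C_2$ acts on $\Z_2^{d}$ by inversion, identifies the involutions of $G$ as exactly the elements $(y,x)$ with $y\in\Z_2^{d}$; conjugation by $(z,\varepsilon)$ carries $(y,x)$ to $(y+2z,x)$ or $(-y+2z,x)$, both in the coset $y+2\Z_2^{d}$, so the conjugacy classes of involutions correspond bijectively with $\Z_2^{d}/2\Z_2^{d}\cong\F_2^{d}$, of cardinality $2^{d}$. Furthermore $G$ is topologically generated by $d+1$ involutions, namely $x=(0,x)$ together with $(e_i,x)$ for a $\Z_2$-basis $\{e_1,\ldots,e_d\}$ of $\Z_2^{d}$. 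I would then invoke classical results of Becker (\cite{Be1}) that $F$ is formally real pythagorean iff $G_F(2)$ is topologically generated by involutions, and that for such $F$ the orderings are in bijection with the conjugacy classes of involutions in $G_F(2)$. Applied to $G_F(2)\cong G$, these give that $F$ is formally real, pythagorean, and admits exactly $2^{d}$ orderings; combined with the hypothesis $|\fx/\fxs|=2^{d+1}$, this is the definition of superpythagorean.

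The main obstacle is the dimension computation for $\Br_2(F)$ in the superpythagorean case: asserting $\F_2$-independence of the exhibited quaternion classes hinges on the polynomial-vanishing argument above, which exploits in an essential way that the superpythagorean hypothesis makes the full set of $2^{d}$ characters $\chi(-1)=-1$ available as orderings. The companion computation for $G$ reduces to a minimality check of the stated pro-$2$ presentation, which is more routine but must still be carried out carefully in the pro-$2$ category.
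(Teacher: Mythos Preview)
Your proposal is correct, and it takes a genuinely different route from the paper on both implications.

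For the forward direction, the paper simply quotes a structural result from the literature (Ware \cite{Wa}, Becker \cite{Be}) asserting that for a superpythagorean field the explicitly constructed field $M$ in the preceding proposition already coincides with $F(2)$, so $G_F(2)=\Gal(M/F)\cong G$ in one line. Your argument instead upgrades the epimorphism $\pi$ to an isomorphism by a cohomological sandwich: Merkurjev gives $\pi^{*}$ surjective on $H^{2}$, the exhibited presentation gives $\dim H^{2}(G,\F_2)\leq\binom{d+1}{2}+1$, and the signature--polynomial computation gives $\dim H^{2}(G_F(2),\F_2)\geq\binom{d+1}{2}+1$, forcing equality. Two small remarks: (i) you do not actually need to verify minimality of the presentation --- the upper bound $\dim H^{2}(G)\leq\binom{d+1}{2}+1$ is automatic from any presentation with that many relators, and minimality falls out of the sandwich; (ii) the identification $H^{2}(G_F(2),\F_2)\cong\Br_2(F)$ needs only $H^{1}(G_{F(2)},\F_2)=0$ (giving injectivity of inflation via the five-term sequence) together with Merkurjev for surjectivity, not the full degeneration of the spectral sequence. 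Your signature argument is the place where the superpythagorean hypothesis is used sharply: you need all $2^{d}$ sign characters with $\chi(-1)=-1$ to be orderings so that the multilinear polynomial can be evaluated at every point of $\F_2^{d}$.

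For the converse, the paper argues as in Theorem~\ref{thm:SAP}: once $G_F(2)$ is generated by involutions one knows $F$ is formally real pythagorean, and then Ware's theorem that $G_F(2)$ determines $W(F)$, together with the Min\'a\v{c}--Spira link between $W(F)$ and $X_F$, transports the ordering count from a known superpythagorean field. Your route is more direct: an explicit count of conjugacy classes of involutions in $\Z_2^{d}\rtimes C_2$ (exactly $2^{d}$, indexed by $\Z_2^{d}/2\Z_2^{d}$) combined with Becker's bijection between orderings of $F$ and conjugacy classes of involutions in $G_F(2)$.

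What each approach buys: the paper's proof is short because it outsources the structural content to \cite{Wa,Be,War1,MS1,MS2}; yours is longer but more self-contained and makes the numerology visible --- in particular, your $\Br_2(F)$ lower bound and the matching relation count give an independent computation of $\dim H^{2}(G_F(2),\F_2)=\binom{d+1}{2}+1$ for superpythagorean $F$.
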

\begin{proof}
 Assume that $F$ is a superpythagorean field with $|F^\times/(F^\times)^2|=2^{d+1}$. Let the notation be as in the previous proposition. Then ${\rm Gal}(M/F)\cong G=\Z_2^d \rtimes C_2$. On the other hand, from \cite[Example 3.8, (ii)]{Wa} (see also \cite[Chapter III, Theorem 1]{Be}), we know that ${\rm Gal}(M/F)$ is equal to $G_F(2)$. Hence $G_F(2)\cong \Z_2^d \rtimes C_2$.

The converse direction is proved in a similar  fashion to the proof of the "if" part in Theorem~\ref{thm:SAP}.
\end{proof}
\begin{cor}
\label{cor:superPy}
Let $F$ be any Pythagorean field, and let $K$ be a superpythagorean field. Assume that  $|F^\times/(F^\times)^2|=|K^\times/(K^\times)^2|=2^{d+1}$ . Then there exists an epimorphism $G_F(2) \twoheadrightarrow G_K(2)\cong \Z_2^d\rtimes C_2$.
\end{cor}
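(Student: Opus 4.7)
The plan is to recognize the statement as an immediate consequence of the two results that immediately precede it, rather than constructing the epimorphism from scratch. First, I would apply Corollary~\ref{cor:superPytha} to the superpythagorean field $K$ with $|K^\times/(K^\times)^2| = 2^{d+1}$. Since the hypothesis of that corollary is satisfied, it gives the isomorphism
\[
G_K(2) \;\cong\; \Z_2^d \rtimes C_2,
\]
where the semidirect product action of $C_2$ on $\Z_2^d$ is inversion. This identifies the target of the desired epimorphism as the explicit group $G = \Z_2^d \rtimes C_2$ studied in the preceding proposition.

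Next, I would apply that preceding proposition directly to the pythagorean field $F$. The hypothesis there requires only that $F$ be pythagorean with $|F^\times/(F^\times)^2| = 2^{d+1}$, which is precisely what we are assuming. (Note that for $d \geq 0$ this forces $F$ to be formally real, since a nonreal pythagorean field is quadratically closed and would have $|F^\times/(F^\times)^2| = 1$.) The proposition then yields an epimorphism
\[
G_F(2) \twoheadrightarrow \Z_2^d \rtimes C_2.
\]
Composing this with the isomorphism of the first step produces the desired epimorphism $G_F(2) \twoheadrightarrow G_K(2)$.

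There is essentially no obstacle to overcome: the corollary is a packaging of the proposition (which does the genuine work of using orderings of $F$, the Euclidean closure with respect to a chosen ordering, and Kummer theory to build a Galois extension of $F$ with group $\Z_2^d \rtimes C_2$) together with the structural identification of $G_K(2)$ for superpythagorean $K$ given by Corollary~\ref{cor:superPytha}. The only point worth flagging is the parallel with Corollary~\ref{cor:SAP}: in the SAP case one needed Lemma~\ref{lem:orderings} plus Theorem~\ref{thm:SAP} to produce $d+1$ involutions, whereas in the superpythagorean case the construction of orderings is already absorbed into the preceding proposition, making the argument here a direct composition.
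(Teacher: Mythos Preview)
Your proposal is correct and follows exactly the paper's approach: the paper's proof is the single line ``This follows from the previous proposition and corollary,'' which is precisely the composition you describe—apply the preceding proposition to the pythagorean field $F$ to obtain $G_F(2)\twoheadrightarrow \Z_2^d\rtimes C_2$, and apply Corollary~\ref{cor:superPytha} to identify this target with $G_K(2)$.
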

\begin{proof}
This follows from the previous proposition and corollary.
\end{proof}

We will consider these groups further in sections \ref{sec:freeprod} and \ref{sec:semidirect} when we look at dimensions of Zassenhaus filtration subquotients.

\chapter{Dimensions of Zassenhaus Filtration Subquotients}
\label{ch:dimensions}
\footnotetext{A version of this chapter is to appear in the Israel Journal of Mathematics \cite{MRT}.}

Central filtrations of profinite groups have a close connection with Galois theory.  In 1947, Shafarevich \cite{Sha} observed that for certain fields not containing primitive $p$-th roots of unity, one could show the Galois groups of their maximal $p$-extensions were free pro-$p$ groups by looking at the cardinality of filtration quotients.  

Early work by Witt \cite{witt} established a correspondence between free Lie rings and the higher commutator groups of free groups.  This idea has subsequently been very fruitful in the study and classification of pro-$p$ groups, one example being the important work of Labute on Demushkin groups \cite{La1} and mild pro-$p$ groups \cite{La3}. 

Recall that for a  group $G$ and a prime number $p$, the descending central series $(G_n)$ of $G$ is defined inductively by
\[
G_1=G,\quad G_{n+1}=[G_n,G]
\]
and the Zassenhaus ($p$-)filtration $(G_{(n)})$ of $G$ is defined inductively by
\[
G_{(1)}=G, \quad G_{(n)}=G_{(\lceil n/p\rceil)}^p\prod_{i+j=n}[G_{(i)},G_{(j)}],
\]
where $\lceil n/p \rceil$ is the least integer which is greater than or equal to $n/p$. 

Given a free Lie ring $L$ on $d$ generators and a free group $S$ on $d$ generators, Witt showed that there is an isomorphism between the additive group of the homogeneous elements of degree $n$ in $L$ and the multiplicative group $S_n/S_{n+1}$.

Our focus in this chapter will be primarily on the Zassenhaus filtration.  We will develop a method for determining the $\F_p$-dimension of subquotients of this filtration in the case of finitely generated pro-$p$ groups and derive an explicit formula for these subquotient dimensions for various families of groups, including free pro-$p$ groups, Demushkin groups and free pro-2 products of finitely many copies of the cyclic group of order 2.  Galois theory provides much of the underlying motivation as many of these groups are realizable as Galois groups of maximal $p$-extensions of certain fields, including local fields and formally real pythagorean fields.

In section \ref{sec:hilbert series} we define, for a finitely generated pro-$p$ group $G$,
\[
c_n(G):=\dim_{\F_p}(G_{(n)}/G_{(n+1)})
\]
and note that $c_n(G)$ is finite for every $n\geq 1$.  We show in Lemma \ref{lemHPfree} that the numbers $c_n(G)$ are sufficient to characterize finitely generated free pro-$p$ groups in the family of all finitely generated pro-$p$ groups.  In Remarks \ref{rmk:fgGalois} and \ref{rmks:SAP}, we observe that in some interesting cases, the two numbers $c_1(G)$ and $c_2(G)$ alone are sufficient to determine $G$.  We also observe that if $G$ is a free pro-$p$ group or a Demushkin group, the minimal number of topological generators of $G_{(n)}$ can be calculated from the dimensions $c_n(G)$.  In section \ref{sec:free} an interesting connection between these dimensions and the Kernel Unipotent Conjecture is also explored.

\section{The Hilbert-Poincar\'e Series}
\label{sec:hilbert series}

The Hilbert-Poincar\'e series is an important tool which allows us to study filtrations of profinite groups from the group algebra standpoint. 

\begin{defn}
Let $R$ be a unital commutative ring and $V=\bigoplus_{i=0}^\infty V_n$ a graded free $R$-module.  $V$ is called {\it locally finite} if ${\rm rank}_R(V_n)<\infty$ for all $n\geq 0$. For such a graded free $R$-module $V$, the {\it Hilbert-Poincar\'e series} $P_V(t)\in \Z[[t]]$ of $V$ is the formal power series
\[
P_V(t)=\sum_{n=0}^\infty {\rm rank}_R(V_n) t^n.
\]
\end{defn}

We recall also the following definitions from the theory of Lie algebras.

\begin{defn}
A Lie algebra $L$ over a commutative ring $R$ is an $R$-module equipped with a bilinear composition $(x,y)\to [xy]$ that satisfies the two conditions
\[
[xx]=0\quad \textnormal{and}\quad [[xy]z]+[[yz]x]+[[zx]y]=0.
\]
By an $R$-algebra we mean an associative ring with identity, containing $R$ as a subring. Any $R$-algebra $A$ defines a Lie algebra $A_L$ having the same $R$-module structure as that of $A$ with the Lie product given by $[xy]:=xy-yx$.  A `Lie subalgebra of $A$' means a Lie subalgebra of $A_L$.

Given any Lie algebra $L$ over $R$, we can construct the \textit{universal enveloping algebra} $U(L)$ of $L$ as follows.  Form the tensor algebra $T(L)$ for the $R$-module $L$, $T(L)=R\oplus L \oplus L\otimes L \oplus \cdots$ and let $U(L)=T(L)/I$, where $I$ is the ideal in $T(L)$ generated by all elements of the form
\[
[xy]-x\otimes y + y\otimes x,\quad x,y\in L.
\]    
If $u$ is the restriction to $L$ of the canonical homomorphism of $T(L)$ onto $U(L)$, then $u$ is a homomorphism of the Lie algebra $L$ into $U(L)_L$.  

The pair $(U(L),u)$ has the following universal property.  If $A$ is any $R$-algebra and $g$ is a homomorphism of $L$ into $A_L$, then there exists a unique $R$-algebra homomorphism $\hat{g}:U(L)\to A$, such that the following diagram of Lie algebra homomorphisms commutes
\[
\begin{tikzcd}
L \arrow {rd} [swap] {g} \arrow {r} {u} & U(L)_L \arrow {d} {\hat{g}}\\
{} & A_L
\end{tikzcd}
\]
\end{defn}

\begin{defn}
Let $k$ be a field of characteristic $p$.  Let $A$ be a $k$-algebra and let L be a Lie subalgebra of $A$.  Then $L$ is said to be \textit{restricted} if for each element $a\in L$, $a^p\in L$.

More generally, a Lie algebra $L$ over $k$, with an additional unary operation $[p]$, is called a \textit{restricted Lie algebra} if there exist a $k$-algebra $A$ and a Lie algebra monomorphism $\theta:L\to A_L$ such that $\theta(a^{[p]})=\theta(a)^p$ for all $a\in L$.  In this case $A$ is called a \textit{restricted enveloping algebra} of $L$.  A Lie algebra homomorphism between two restricted Lie algebras is called \textit{restricted} if it preserves the operation $[p]$.  

A restricted enveloping algebra $U$ of $L$ is \textit{universal} if it has the following universal property:  for any restricted Lie algebra homomorphism $\varphi: L\to B_L$, where $B$ is a $k$-algebra, there exists a unique $k$-algebra homomorphism $\hat{\varphi}:U\to B$ such that the following diagram of restricted Lie algebra homomorphisms commutes
\[
\begin{tikzcd}
L \arrow {rd} [swap] {\varphi} \arrow {r} {\theta} & U_L \arrow {d} {\hat{\varphi}}\\
{} & B_L
\end{tikzcd}
\]  
\end{defn}

Now let $G$ be a finitely generated pro-$p$ group.  Recall that $(I^n(G))_{n\geq 0}$ is the filtration of the completed group algebra $\F_p[[G]]$ of $G$ over $\F_p$ by powers of the augmentation ideal, where $I^0(G)= \F_p[[G]]$.  There are two graded $\F_p$-algebras associated to $G$ and $\F_p[[G]]$ respectively which are defined by
\[
{\rm gr}(G):= \bigoplus_{n\geq 1}  G_{(n)}/G_{(n+1)} \quad  \text{ and } \quad {\rm gr}(\F_p[[G]]): =\bigoplus_{n\geq 0} I^n(G)/I^{n+1}(G).
\]

Since $G$ is finitely generated, it follows from \cite[Lemma 7.10 and Theorem 7.11]{Ko} that the graded algebras ${\rm gr}(\F_p[[G]])$ and ${\rm gr}(G)$ are locally finite.  We define $a_n(G):= \dim_{\F_p} I^n(G)/I^{n+1}(G)$ and $c_n(G):=\dim_{\F_p} G_{(n)}/G_{(n+1)}$.

The following theorem is a consequence of a beautiful theory of Jennings and Lazard \cite[Chapters 11 and 12]{DDMS}, viewing the Zassenhaus filtration subgroups $G_{(n)}$ as dimension subgroups. (See also \cite{Qu}.)

\begin{thm}[Jennings-Lazard]
\label{thm:JL}
 Let the notation be as above.
\begin{enumerate}
\item[(i)] The graded algebra ${\rm gr}(G)$ is a restricted Lie algebra.
\item[(ii)] The graded algebra ${\rm gr}(\F_p[[G]])$ is a universal restricted enveloping algebra of ${\rm gr}(G)$.
\item[(iii)] We have
 \begin{equation}
  \label{eq:fundamental}  P_{{\rm gr}(\F_p[[G]])}(t) = 
 \sum_{n=0}^\infty a_n(G) t^n=\prod_{n=1}^\infty \left(\frac{1-t^{np}}{1-t^n}\right)^{c_n(G)}.
\end{equation}
\end{enumerate}
\end{thm}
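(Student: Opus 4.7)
The plan is to establish the three parts in sequence, leaning on the defining inclusions of the Zassenhaus filtration together with the identification $G_{(n)} = (1 + I^n(G)) \cap G$ already recorded earlier in the paper.

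For part (i), I would check directly that the commutator on $G$ induces a well-defined bracket
\[
[\,\cdot\,,\,\cdot\,]\colon G_{(i)}/G_{(i+1)} \times G_{(j)}/G_{(j+1)} \to G_{(i+j)}/G_{(i+j+1)}
\]
using $[G_{(i)},G_{(j)}] \leq G_{(i+j)}$, and that the map $x \mapsto x^p$ yields a restriction
\[
G_{(i)}/G_{(i+1)} \to G_{(pi)}/G_{(pi+1)}
\]
using $G_{(i)}^p \leq G_{(pi)}$. Bilinearity of the bracket comes from the standard commutator identities $[xy,z] = [x,z]^y[y,z]$ and $[x,yz] = [x,z][x,y]^z$, whose conjugation corrections land in $G_{(i+j+1)}$. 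The Jacobi identity and restricted-Lie axioms ($p$-linearity and Jacobson's formula for $(x+y)^p$) follow from the Hall-Witt identity and the formal expansion in $\F_p[[G]]$; in every case the error terms are of strictly higher filtration degree.

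For part (ii), I would define $\phi_n \colon G_{(n)}/G_{(n+1)} \to I^n(G)/I^{n+1}(G)$ by $g G_{(n+1)} \mapsto (g-1) + I^{n+1}(G)$. Well-definedness and injectivity are immediate from $G_{(n)} = \{g \in G : g-1 \in I^n(G)\}$. The identities $(g-1)(h-1) - (h-1)(g-1) \equiv [g,h] - 1$ and, in characteristic $p$, $(g-1)^p \equiv g^p - 1$ modulo higher powers of $I(G)$ show that $\phi := \bigoplus \phi_n$ is a restricted Lie algebra map into $\mathrm{gr}(\F_p[[G]])$. The universal property in part (ii) then yields an algebra homomorphism $U(\mathrm{gr}(G)) \to \mathrm{gr}(\F_p[[G]])$. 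Surjectivity follows because $I(G)$ is topologically generated by $\{g - 1 : g \in G\}$, so every $I^n(G)/I^{n+1}(G)$ is spanned by products $(g_1-1)\cdots(g_k-1)$ with $\sum \deg g_i \geq n$. Injectivity reduces, via a minimal presentation $1 \to R \to S \to G \to 1$ and the corresponding surjection $\F_p[[S]] \to \F_p[[G]]$, to the case of a free pro-$p$ group $S$, where Theorem \ref{thm:magnusiso} identifies $\F_p[[S]]$ with the Magnus algebra and the claim becomes the assertion that $\mathrm{gr}(\F_p\langle\langle U \rangle\rangle)$ is the free associative algebra on $U$.

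With part (ii) in hand, part (iii) is a direct application of the Poincar\'e-Birkhoff-Witt theorem for restricted Lie algebras (Jacobson). Choose a homogeneous $\F_p$-basis $\{e_{n,k} : 1 \leq k \leq c_n(G)\}$ of $\mathrm{gr}(G)$, with $e_{n,k}$ in degree $n$. Restricted PBW says that ordered monomials $\prod e_{n,k}^{\alpha_{n,k}}$ with each exponent $\alpha_{n,k} \in \{0,1,\ldots,p-1\}$ form an $\F_p$-basis of the universal restricted enveloping algebra. Each basis vector $e_{n,k}$ therefore contributes a factor
\[
1 + t^n + t^{2n} + \cdots + t^{(p-1)n} = \frac{1-t^{pn}}{1-t^n}
\]
to the Hilbert-Poincar\'e series. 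Taking the product over all $(n,k)$ gives (\ref{eq:fundamental}).

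The main obstacle will be part (ii): one must verify that the candidate map $U(\mathrm{gr}(G)) \to \mathrm{gr}(\F_p[[G]])$ is an isomorphism rather than merely a surjection. Because the filtration degrees are not a priori matched on both sides, the cleanest route is to first establish the free case via the Magnus identification from Theorem \ref{thm:magnusiso}, then transfer to arbitrary $G$ by the presentation $S \twoheadrightarrow G$ and a comparison of the kernels $R$ and the ideal they generate in $\F_p[[S]]$. This matches the strategy followed in \cite[Chapters 11-12]{DDMS} and \cite{Ko}, which I would cite rather than reprove.
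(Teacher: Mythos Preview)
Your proposal is correct and, in fact, more detailed than what the paper does: the paper's proof of Theorem~\ref{thm:JL} consists entirely of citations to \cite[Theorems 12.8 and 12.16]{DDMS} (and \cite{Er} for part~(iii)), with no argument given. Your sketch accurately outlines the standard strategy behind those cited results---in particular the restricted PBW computation for part~(iii) is exactly the right mechanism---and you end by citing the same sources, so there is no substantive divergence.

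One small caution on your part~(ii) sketch: the reduction of injectivity to the free case via a presentation $1 \to R \to S \to G \to 1$ is not as automatic as you suggest. Knowing that $U(\mathrm{gr}(S)) \cong \mathrm{gr}(\F_p[[S]])$ does not immediately yield the same for $G = S/R$; one must check that the ideal of $\mathrm{gr}(\F_p[[S]])$ generated by the initial forms of $R$ matches the kernel on the enveloping-algebra side, and this is where the real work lies. The cleaner route, which is what Jennings and the treatment in \cite{DDMS} actually do, is to prove injectivity directly by exhibiting the PBW monomials as linearly independent in $\mathrm{gr}(\F_p[[G]])$, using the dimension-subgroup identification $G_{(n)} = (1+I^n(G))\cap G$ you already invoked. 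Since you explicitly defer to \cite{DDMS} for this step anyway, the point is moot for your purposes.
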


\begin{proof}
(i)  See \cite[Theorem 12.8(i)]{DDMS}. 

(ii) See \cite[Theorem 12.8(iii)]{DDMS}.

(iii) See \cite[Theorem 12.16]{DDMS} (see also \cite[Proposition 2.3]{Er}).
\end{proof} 

We have a similar result relating the descending central series of $G$ to the filtration $(J^n(G))_{n\geq 0}$ of the completed group algebra $\Z_p[[G]]$ by powers of the augmentation ideal.  There are two graded $\Z_p$-algebras associated to $G$ and $\Z_p[[G]]$ respectively which are defined by
\[
{\rm gr}_\gamma(G)= \bigoplus_{n\geq 1}  G_{n}/G_{n+1} \quad \text{ and } \quad  {\rm gr}(\Z_p[[G]]) =\bigoplus_{n\geq 0} J^n(G)/J^{n+1}(G).
\]
\begin{lem}
\label{lem:integral version}
Let $G$ be a finitely generated pro-$p$-group. Assume that the graded algebra ${\rm gr}_\gamma(G)=\bigoplus_{n\geq1} G_n/G_{n+1}$ is torsion free. Let $e_n(G)={\rank}_{\Z_p} G_n/G_{n+1}$.
\begin{enumerate}
 \item[(i)] The graded algebra ${\rm gr}(\Z_p[[G]])$ is a universal enveloping algebra of ${\rm gr}_\gamma(G)$.
\item[(ii)] $J^n(G)/J^{n+1}(G)$ is a free module over $\Z_p$ of finite rank $d_n(G)$,  and   
\[
 P_{{\rm gr}(\Z_p[[G]])}(t) = 
 \sum_{n=0}^\infty d_n(G) t^n=\prod_{n=1}^\infty \frac{1}{(1-t^n)^{e_n(G)}}.
\]
\end{enumerate}
\end{lem}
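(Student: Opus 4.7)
The plan is to follow the strategy of Quillen and of Jennings--Lazard used to establish Theorem \ref{thm:JL}, but with the restricted enveloping algebra (which is forced in characteristic $p$ by the presence of $p$-th powers in the Zassenhaus subgroups) replaced by the ordinary universal enveloping algebra. The torsion-free hypothesis on ${\rm gr}_\gamma(G)$ is exactly what is needed for the Poincar\'e--Birkhoff--Witt theorem to apply cleanly over $\Z_p$ and to produce the product formula.

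First, I would verify that the natural map
\[
\iota\colon {\rm gr}_\gamma(G) \longrightarrow {\rm gr}(\Z_p[[G]])_L, \qquad g G_{n+1} \longmapsto (g-1) + J^{n+1}(G),
\]
is a well-defined injective graded Lie algebra homomorphism. The bracket compatibility on elements $g-1, h-1$ follows from the identity $gh-1=(g-1)+(h-1)+(g-1)(h-1)$, from which one extracts $[g,h]-1\equiv (g-1)(h-1)-(h-1)(g-1)\pmod{J^{i+j+1}(G)}$ whenever $g\in G_i,\, h\in G_j$. Injectivity amounts to the classical Magnus--Lazard identification $G_n=G\cap(1+J^n(G))$ (the integral analog of the fact used before Theorem \ref{thm:JL}). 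By the universal property of $U(\cdot)$, the map $\iota$ then extends to a graded $\Z_p$-algebra homomorphism $\hat\iota\colon U({\rm gr}_\gamma(G))\to{\rm gr}(\Z_p[[G]])$, which is surjective because $G$ topologically generates $\Z_p[[G]]$, so that the image of $G-1$ generates ${\rm gr}(\Z_p[[G]])$ as a $\Z_p$-algebra.

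Next, under the torsion-free hypothesis each $G_n/G_{n+1}$ is a free $\Z_p$-module of finite rank $e_n(G)$; fix a homogeneous $\Z_p$-basis of ${\rm gr}_\gamma(G)$. By the Poincar\'e--Birkhoff--Witt theorem, applicable precisely because ${\rm gr}_\gamma(G)$ is free as a $\Z_p$-module, the ordered monomials in this basis form a free $\Z_p$-basis of $U({\rm gr}_\gamma(G))$. Degree-by-degree enumeration of these monomials — each basis element in degree $n$ contributing a factor $(1-t^n)^{-1}$ — yields
\[
P_{U({\rm gr}_\gamma(G))}(t) = \prod_{n=1}^\infty \frac{1}{(1-t^n)^{e_n(G)}}.
\]
To finish both (i) and (ii), one shows $\hat\iota$ is an isomorphism, so each $J^n(G)/J^{n+1}(G)$ inherits the free $\Z_p$-module structure of the corresponding graded piece of $U({\rm gr}_\gamma(G))$, and $d_n(G)$ is read off from the coefficients above. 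One convenient route is to apply Quillen's theorem to $\Z_p[G]$ and then pass to the completion, using that ${\rm gr}(\Z_p[G]) = {\rm gr}(\Z_p[[G]])$ since both compute the associated graded for the $J$-adic filtration; an alternative is to embed $\Z_p[[G]]$ into a completed tensor algebra of Magnus type associated with a minimal presentation of $G$ and compare ranks in each degree.

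The main obstacle will be pinning down the upper bound on $d_n(G)$ that forces $\hat\iota$ to be injective rather than merely surjective. This is exactly where the torsion-freeness of ${\rm gr}_\gamma(G)$ plays its role: non-trivial $p$-torsion would both obstruct the PBW basis argument over $\Z_p$ and prevent $J^n(G)/J^{n+1}(G)$ from being a free $\Z_p$-module of the expected rank. With freeness in hand, the argument is standard but requires careful bookkeeping of the filtration through the completion step.
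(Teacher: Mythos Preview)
The paper's proof is two citations: part (i) to a theorem of Hartley and part (ii) to a proposition of Labute, with Corollary~\ref{cor:finitegen} used to pass from the abstract group-ring statement to the completed group algebra. Your proposal instead sketches the substance of those references, and the outline --- build $\iota$, extend to $\hat\iota$ by the universal property, observe surjectivity, invoke PBW (available because freeness over the PID $\Z_p$ follows from the torsion-free hypothesis), and then argue injectivity --- is the right architecture.

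One caution. You write that injectivity of $\iota$ ``amounts to the classical Magnus--Lazard identification $G_n=G\cap(1+J^n(G))$.'' That identification is classical for the Zassenhaus filtration with $\F_p$-coefficients (this is the theorem preceding Theorem~\ref{thm:JL}), but for the lower central series with integral or $\Z_p$-coefficients it is the \emph{dimension subgroup problem}, which is famously false in general (Rips, 1972). It does hold under your torsion-free hypothesis, but that is a theorem to be proved, not a fact to be invoked --- and in fact it is essentially equivalent to the injectivity of $\hat\iota$ that you flag as the main obstacle. So you cannot use it as an input at step one; rather, it is part of the output. Logically this does no harm to your plan, since $\hat\iota$ exists as soon as $\iota$ is a well-defined Lie homomorphism (injectivity of $\iota$ is not needed to form $\hat\iota$), and injectivity of $\iota$ then follows \emph{a posteriori} from PBW once $\hat\iota$ is shown to be an isomorphism. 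Just reorder the logic accordingly, and for the injectivity of $\hat\iota$ itself, the route via Quillen needs the integral refinement (which is what Hartley supplies) rather than the original field-coefficient statement.
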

\begin{proof} (i) This follows from \cite[Theorem 1.3]{Har} and Corollary \ref{cor:finitegen}. 

 (ii) This follows from (a) and \cite[Proposition 2.5]{La3}.
\end{proof}

\begin{ex}
\label{ex:HPseriesCp}
If $G=C_p$ is the cyclic group of order $p$ then since ${\rm gr}(C_p)=C_p$ and  ${\rm gr}(\F_p[[C_p]])={\rm gr}(\F_p[C_p])\cong \F_p[x]/(x^p) $, we have
\[ P_{{\rm gr}(\F_p[[C_p]])}(t)= 1+t+\cdots+t^{p-1}.
\]
\end{ex}

The following lemma is an important technical tool which relies on a fundamental result of Lichtman and also on a simple but remarkable formula which can be traced back to the work of Lemaire in \cite[Chapter 5]{Le}. 

\begin{lem} 
\label{lem:free product}
Let $G_1$ and $G_2$ be two finitely generated pro-$p$-groups. Let $G=G_1*G_2$ be the free product of $G_1$ and $G_2$ in the category of  pro-$p$-groups. Then
\[
P_{{\rm gr}(\F_p[[G]])}(t) = (P_{{\rm gr}(\F_p[[G_1]])}^{-1}(t)+P_{{\rm gr}(\F_p[[G_2]])}^{-1}(t)-1)^{-1}.
\]
\end{lem}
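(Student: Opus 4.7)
The plan is to reduce the identity for the free pro-$p$ product $G = G_1 * G_2$ to a Hilbert--Poincar\'e series identity for the coproduct of the two associated graded algebras $A_i := {\rm gr}(\F_p[[G_i]])$ in the category of graded connected augmented $\F_p$-algebras. The two inputs are a theorem of Lichtman identifying $\F_p[[G_1*G_2]]$ with a suitable completed coproduct, and a formula going back to Lemaire for the Hilbert--Poincar\'e series of a coproduct of connected graded augmented algebras.

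First, I would invoke Lichtman's theorem, which asserts that the completed group algebra of the free pro-$p$ product is the coproduct of the completed group algebras in the category of pseudocompact augmented $\F_p$-algebras, i.e.\ $\F_p[[G_1*G_2]] \cong \F_p[[G_1]] \,\hat\amalg\, \F_p[[G_2]]$. Writing $I_i := I(G_i)$ and $I := I(G)$, this yields a topological direct sum decomposition of $I$ into completed alternating tensor products $I_{i_1}\,\hat\otimes\,\cdots\,\hat\otimes\,I_{i_n}$ with $i_k \neq i_{k+1}$. I would then verify that passing to associated graded algebras with respect to the augmentation ideal filtrations carries this decomposition over to an isomorphism of graded algebras ${\rm gr}(\F_p[[G]]) \cong A_1 \sqcup A_2$, where $\sqcup$ denotes the coproduct in the category of graded connected augmented $\F_p$-algebras. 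Local finiteness of each $A_i$ (from Theorem~\ref{thm:JL}) ensures that completed and ordinary tensor products agree degree by degree, so this step is a matter of checking that the natural map from the graded coproduct to the graded of the completed coproduct is a bijection on each degree.

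Next, I would compute the Hilbert--Poincar\'e series of $A_1 \sqcup A_2$ directly from the standard realization
\[
A_1 \sqcup A_2 \;=\; \F_p \;\oplus\; \bigoplus_{n \geq 1}\; \bigoplus_{\substack{i_1,\ldots,i_n \in \{1,2\} \\ i_k \neq i_{k+1}}} \bar A_{i_1} \otimes_{\F_p} \cdots \otimes_{\F_p} \bar A_{i_n},
\]
where $\bar A_i$ is the augmentation ideal of $A_i$. Set $u_i(t) := P_{A_i}(t) - 1$ and let $S_i$ denote the Hilbert series of the direct sum of alternating words whose first tensor factor lies in $\bar A_i$. Splitting each such word into its first letter and its tail produces the coupled identities $S_1 = u_1(1+S_2)$ and $S_2 = u_2(1+S_1)$, whose solution gives
\[
P_{A_1 \sqcup A_2}(t) \;=\; 1 + S_1 + S_2 \;=\; \frac{(1+u_1)(1+u_2)}{1 - u_1 u_2} \;=\; \frac{P_{A_1}(t)\,P_{A_2}(t)}{1 - (P_{A_1}(t)-1)(P_{A_2}(t)-1)}.
\]
A short algebraic manipulation (clear denominators and rearrange) converts this into the equivalent assertion $P_{A_1 \sqcup A_2}^{-1}(t) = P_{A_1}^{-1}(t) + P_{A_2}^{-1}(t) - 1$, which, combined with the identification ${\rm gr}(\F_p[[G]]) \cong A_1 \sqcup A_2$, is exactly the claim.

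The main obstacle is the first step: passing rigorously from Lichtman's coproduct description of $\F_p[[G]]$ in the pseudocompact setting to the analogous statement for associated graded algebras. The subtlety is that the coproduct in the pseudocompact category involves a completion, and one must argue that its associated graded algebra (with respect to the filtration by powers of the augmentation ideal) has no \emph{extra} elements beyond those produced by the uncompleted graded coproduct. The local finiteness granted by Theorem~\ref{thm:JL} is essential here, since it forces each homogeneous component of the filtered object to be finite-dimensional and thereby collapses completion issues degree by degree. Once this compatibility is in hand, the combinatorial computation above is routine.
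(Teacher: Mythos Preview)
Your approach is essentially the same as the paper's: invoke Lichtman to identify ${\rm gr}(\F_p[[G]])$ with the coproduct $A_1\sqcup A_2$ of the graded algebras, then apply the Lemaire/Polishchuk--Positselski formula $P_{A_1\sqcup A_2}^{-1}=P_{A_1}^{-1}+P_{A_2}^{-1}-1$. The paper simply cites both results and is done in two lines.

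One point of divergence worth noting: you state Lichtman's theorem at the level of completed group algebras (as a coproduct in pseudocompact augmented algebras) and then worry about passing to associated graded, calling this the ``main obstacle.'' In fact the paper cites Lichtman's result \emph{directly} at the graded level: the theorem already asserts that ${\rm gr}(\F_p[[G_1*G_2]])$ is the free product of ${\rm gr}(\F_p[[G_1]])$ and ${\rm gr}(\F_p[[G_2]])$ as graded $\F_p$-algebras. So your perceived obstacle is already handled by the cited source, and your extra argument (using local finiteness to collapse the completion degree by degree) is unnecessary, though not wrong. Your explicit derivation of the Hilbert series identity via alternating tensor words is a correct self-contained substitute for the citation to \cite{PP}.
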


\begin{proof} By \cite[Theorem 1]{Li}, the graded $\F_p$-algebra ${\rm gr}(\F_p[[G]])$ is a free product (i.e., a categorical coproduct) of ${\rm gr}(\F_p[[G_1]])$ and ${\rm gr}(\F_p[[G_2]])$. The statement then follows from \cite[Equation (1.2), page 56]{PP}.
\end{proof}

\begin{ex}
If $G=C_2*\cdots*C_2$ is a free product of $d+1$ copies of $C_2$ the cyclic group of order 2, then by the previous example,  Lemma~\ref{lem:free product} and induction on $d$ it follows that 
\[ P_{{\rm gr}(\F_p[[G]])}(t)= \frac{1+t}{1-dt}.
\]
\end{ex}

Our aim is to use these results to develop a formula for $c_n(G)$ for various families of pro-$p$ groups $G$.  We proceed as follows.  Given a power series $P(t)=1+\sum_{n\geq 1}a_nt^n\in \Z[[t]]$, we define $c_n,\ n=1,2,\ldots$ by
\[
 P (t)=1+\sum_{n\geq 1}a_n t^n=\prod_{n=1}^\infty \left(\frac{1-t^{np}}{1-t^n}\right)^{c_n}.
\]
Now write $\log P(t)= \sum_{n\geq 1} b_n t^n$. We will derive a formula for $c_n$ using the values $b_1,\ldots,b_n$. 

Taking logarithms and using $\log(\dfrac{1}{1-t})=\sum \limits_{\nu=1}^\infty \dfrac{1}{\nu}t^\nu$ gives
\[
 \sum_{n=1}^\infty b_n t^n =\sum_{m=1}^\infty c_m \sum_{\nu=1}^\infty \frac{1}{\nu} (t^{m\nu}-t^{mp\nu}).
\]
Equating the coefficients of $t^n$, we have
\[
b_n=\sum_{m\nu=n} \frac{1}{\nu}c_m - \sum_{mp\nu=n}\frac{1}{\nu}c_m.
\]
Hence
\[
 nb_n= \sum_{m\mid n} m c_m -\sum_{mp\mid n} mp c_m.
\]

We now define a new sequence $w_n,\ n=1,2,\ldots$ by
\[
w_n=\frac{1}{n}\sum_{m\mid n} \mu(n/m) mb_m,
\]
where $\mu$ is the M\"obius function: for a positive integer $d$, 
\[
 \mu(d)=
\begin{cases}
(-1)^r & \text{ if $d$ is a product of $r$ distinct prime numbers},\\
0 &\text{ otherwise}.
\end{cases}
\]
Then by the M{\"o}bius inversion formula,
\[
nb_n= \sum_{m\mid n} m w_m.
\]

\begin{rmk}
\label{rmk:wn}
 From the definition of $w_n$ we see that
 \[
 P (t)=1+\sum_{n\geq 1}a_n t^n=\prod_{n=1}^\infty \frac{1}{(1-t^n)^{w_n}}.
\]
\end{rmk}

\begin{lem}
\label{lem:coprime}
If $(n,p)=1$ then $c_n=w_n$. 
\end{lem}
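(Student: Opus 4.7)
The plan is to exploit the two expressions for $nb_n$ derived just before the lemma and show they force $c_n=w_n$ whenever $p\nmid n$.

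Recall from the computation preceding the lemma that
\[
nb_n=\sum_{m\mid n} mc_m-\sum_{mp\mid n} mp\, c_m,
\]
and from the definition of $w_n$ (together with M\"obius inversion) that
\[
nb_n=\sum_{m\mid n} m w_m.
\]
My first step is to observe that when $(n,p)=1$, there is no positive integer $m$ with $mp\mid n$, so the second sum in the first identity is empty. Equating the two expressions therefore gives
\[
\sum_{m\mid n} m w_m=\sum_{m\mid n} m c_m \quad\text{whenever }(n,p)=1.
\]

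The second step is an induction on $n$ among integers coprime to $p$ (ordered by divisibility). For the base case $n=1$, the identity reads $w_1=c_1$. For the inductive step, note that every divisor $m$ of an integer $n$ with $(n,p)=1$ also satisfies $(m,p)=1$, so the inductive hypothesis gives $m w_m=m c_m$ for every proper divisor $m$ of $n$. Subtracting these terms from both sides of the displayed identity leaves $n w_n=n c_n$, hence $w_n=c_n$.

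No genuine obstacle appears here; the only thing to be careful about is the restriction of the induction to divisors coprime to $p$, which is precisely what $(n,p)=1$ supplies. Equivalently, one can phrase the argument as ordinary M\"obius inversion applied to the two equal summatory functions of $m$ (indexed by divisors of $n$), yielding $m w_m=m c_m$ for every $m\mid n$ in one stroke.
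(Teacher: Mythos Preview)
Your proof is correct and essentially the same as the paper's. The paper observes that for $(n,p)=1$ the second sum vanishes, giving $nb_n=\sum_{m\mid n} m c_m$, and then applies M\"obius inversion directly to obtain $c_n=\frac{1}{n}\sum_{m\mid n}\mu(n/m)mb_m=w_n$; this is exactly the alternative phrasing you mention at the end, and your inductive argument is just an unwinding of that inversion.
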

\begin{proof}
 Assume that $(n,p)=1$. Then we have
\[nb_n=\sum_{m\mid n} mc_m.\]
Hence by the M{\"o}bius inversion formula, 
\[
 c_n=\frac{1}{n}\sum_{m\mid n} \mu(n/m) mb_m=w_n.
\qedhere
\]
\end{proof}

\begin{lem} 
\label{lem:not coprime}
If $p$ divides $n$, then 
\[
c_n = c_{n/p}+w_n.
\]
\end{lem}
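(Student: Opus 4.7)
The plan is to compare the two expressions already obtained for $nb_n$. From taking the logarithm of the product formula \eqref{eq:fundamental} we have
\[ nb_n = \sum_{m\mid n} m\,c_m - \sum_{mp\mid n} mp\, c_m, \]
and from the definition of $w_n$ together with classical M{\"o}bius inversion we have
\[ nb_n = \sum_{m\mid n} m\, w_m. \]
The strategy is to manipulate the first formula into a shape parallel to the second, and then invoke the uniqueness of coefficients (equivalently, a short induction on $n$).

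First I would reindex the second sum in the first formula by substituting $m' = mp$. The constraint $mp\mid n$ becomes $m'\mid n$ together with $p\mid m'$, and the summand $mp\,c_m$ becomes $m'\,c_{m'/p}$. This rewrites the first expression as
\[ nb_n = \sum_{m\mid n} m\,c_m \;-\; \sum_{\substack{m\mid n\\ p\mid m}} m\, c_{m/p} \;=\; \sum_{\substack{m\mid n\\ p\nmid m}} m\,c_m \;+\; \sum_{\substack{m\mid n\\ p\mid m}} m\bigl(c_m - c_{m/p}\bigr). \]
Defining
\[ v_n := \begin{cases} c_n & \text{if } p\nmid n,\\ c_n - c_{n/p} & \text{if } p\mid n, \end{cases} \]
the two expressions for $nb_n$ combine to the single identity
\[ \sum_{m\mid n} m\,w_m \;=\; \sum_{m\mid n} m\,v_m \quad \text{for every } n\geq 1. \]

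Then I would conclude by strong induction on $n$ (equivalently, by another application of M{\"o}bius inversion) that $w_n = v_n$ for every $n\geq 1$. The base case $n=1$ holds since both sides reduce to $w_1 = c_1 = v_1$; the inductive step follows by subtracting the two sums over proper divisors $m<n$ and comparing the remaining terms $n\,w_n$ and $n\,v_n$. Unpacking $v_n$ recovers Lemma~\ref{lem:coprime} when $p\nmid n$ and yields the desired identity $c_n = c_{n/p} + w_n$ when $p\mid n$. The only step requiring any care is the reindexing of the inner sum and the bookkeeping separating the two cases $p\mid m$ and $p\nmid m$; I do not expect a genuine obstacle.
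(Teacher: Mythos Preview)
Your proof is correct and follows essentially the same route as the paper: both start from the identity $nb_n = \sum_{m\mid n} mc_m - \sum_{pm\mid n} pmc_m$, reindex the second sum via $m'=mp$, split according to whether $p\mid m$, and then compare with $nb_n = \sum_{m\mid n} mw_m$ via induction. The only cosmetic difference is that you package both cases into a single sequence $v_n$ and prove $v_n=w_n$ uniformly (recovering Lemma~\ref{lem:coprime} as a byproduct), whereas the paper proves Lemma~\ref{lem:coprime} first and then invokes it inside the induction here; the underlying computation is identical.
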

\begin{proof}
The proof is by induction on $n$. Clearly $c_p-c_1=\dfrac{pb_p-b_1}{p}=w_p$, hence the statement is true for $n=p$. Assume now that $n>p$ and $p\mid n$.  Assume also that the statement is true for every $m$ such that $p\mid m\mid n$, $m\not=n$. 

Then
\[
\begin{aligned}
 nb_n &=\sum_{m\mid n} m c_m -\sum_{pm\mid n} pm c_m\\
&=\sum_{m\mid n} m c_m -\sum_{p\mid m\mid n} m c_{m/p}\\
&= \sum_{m\mid n, (m,p)=1} m c_m +\sum_{p\mid m\mid n} m (c_m-c_{m/p})\\
&= \sum_{m\mid n, (m,p)=1} m w_m +\sum_{p\mid m\mid n,m\not= n} m w_m + n(c_n-c_{n/p})\\
&=\sum_{m\mid n, m\not= n} m w_m +n(c_n-c_{n/p}). 
\end{aligned}
\]
Combining this with 
\[
 nb_n=\sum_{m\mid n} mw_m,
\]
gives $c_n-c_{n/p}=w_n$.  Hence the statement is true for all $n$.
\end{proof}

\begin{prop}
\label{prop:key}
 If $n=p^k m $ with $(m,p)=1$, then 
\[
c_n =w_m +w_{pm}+\cdots + w_{p^km}.
\]
\end{prop}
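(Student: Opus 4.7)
The plan is to prove Proposition~\ref{prop:key} by a straightforward induction on $k$, using Lemma~\ref{lem:coprime} as the base case and Lemma~\ref{lem:not coprime} to handle each successive power of $p$. The main work has already been done in establishing those two lemmas; the proposition is essentially their telescoping consequence.

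For the base case $k=0$, we have $n=m$ with $(m,p)=1$, so Lemma~\ref{lem:coprime} gives $c_n=w_n=w_m$, which matches the claimed formula.

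For the inductive step, suppose the statement holds for some $k\geq 0$: whenever $n=p^k m$ with $(m,p)=1$, we have $c_{p^k m}=w_m+w_{pm}+\cdots+w_{p^k m}$. Now let $n'=p^{k+1}m$. Since $p\mid n'$, Lemma~\ref{lem:not coprime} applies and yields $c_{n'}=c_{n'/p}+w_{n'}=c_{p^k m}+w_{p^{k+1}m}$. Substituting the inductive hypothesis for $c_{p^k m}$ gives
\[
c_{n'}=w_m+w_{pm}+\cdots+w_{p^k m}+w_{p^{k+1}m},
\]
which is the desired formula for $k+1$.

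There is no real obstacle here: once Lemmas~\ref{lem:coprime} and \ref{lem:not coprime} are in hand, the proposition follows immediately by induction on $k$, and the only thing to verify is that the recursion $c_{p^j m}=c_{p^{j-1}m}+w_{p^j m}$ is legitimately applicable at each stage, which it is since $p\mid p^j m$ for all $j\geq 1$. Thus the proof will be just a few lines.
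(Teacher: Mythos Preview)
Your proof is correct and takes exactly the same approach as the paper, which simply notes that the proposition follows from the previous two lemmas. You have merely made the (obvious) induction on $k$ explicit.
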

\begin{proof}
This follows from the previous two lemmas.
\end{proof}  
\begin{thm}
\label{thm:general}
Let $G$ be a finitely generated pro-$p$-group. Write
\[ \log P_{{\rm gr}(\F_p[[G]])}(t)=\sum_{n\geq  1}b_nt^n\in \Q[[t]],\] and define $w_n(G)$ by
\[
w_n(G):=\frac{1}{n}\sum_{m\mid n} \mu(n/m) mb_m.
\]
Let  $n=p^k m $ with $(m,p)=1$. Then 
\[
c_n(G) =w_m(G) +w_{pm}(G)+\cdots + w_{p^km}(G).
\]

\end{thm}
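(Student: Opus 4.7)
The plan is to observe that the assertion is essentially an immediate consequence of the Jennings--Lazard formula combined with the abstract combinatorial identities already established in Lemmas~\ref{lem:coprime} and~\ref{lem:not coprime} and Proposition~\ref{prop:key}. All the substantive work has been done; what remains is to match notation.

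First I would invoke the Jennings--Lazard theorem (Theorem~\ref{thm:JL}(iii)) to write
\[
P_{\gr(\F_p[[G]])}(t) \;=\; \prod_{n=1}^{\infty}\left(\frac{1-t^{np}}{1-t^n}\right)^{c_n(G)}.
\]
This shows that the power series $P(t):=P_{\gr(\F_p[[G]])}(t)$ fits precisely into the abstract setup introduced just before Lemma~\ref{lem:coprime}, with the role of the sequence $(c_n)$ played by $(c_n(G))$. Next I would check the two definitional compatibilities: the coefficients $b_n$ extracted from $\log P(t)=\sum_{n\geq 1}b_n t^n$ are literally the same $b_n$ appearing in the statement of the theorem, and the definition
\[
w_n(G)\;=\;\frac{1}{n}\sum_{m\mid n}\mu(n/m)\,mb_m
\]
is exactly the definition of the auxiliary sequence $w_n$ used in the abstract derivation. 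Hence every step of the abstract argument carries over verbatim.

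Finally, I would apply Proposition~\ref{prop:key} with $n=p^k m$, $(m,p)=1$, to the series $P(t)=P_{\gr(\F_p[[G]])}(t)$, concluding
\[
c_n(G)\;=\;w_m(G)+w_{pm}(G)+\cdots+w_{p^k m}(G).
\]
There is no genuine obstacle here: the only thing to verify is that the Jennings--Lazard product expansion places us inside the hypotheses of the abstract calculation, and this is immediate. In effect the theorem is a clean corollary packaging the abstract Möbius--style inversion of the previous subsection together with the Jennings--Lazard structural result.
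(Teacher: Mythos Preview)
Your proposal is correct and matches the paper's proof exactly: the paper simply writes ``This follows from Theorem~\ref{thm:JL} and Proposition~\ref{prop:key},'' which is precisely the two-step argument you describe.
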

\begin{proof} This follows from Theorem~\ref{thm:JL} and Proposition~\ref{prop:key}.
\end{proof}

The following proposition points out that in certain cases there is a close relationship between the quantities $c_n(G):=\dim_{\F_p} G_{(n)}/G_{(n+1)}$ and $e_n(G):={\rank}_{\Z_p} G_n/G_{n+1}$.

\begin{prop}
\label{prop:wn}
Let $G$ be a finitely generated pro-$p$-group and keep the same notation as in Lemma~\ref{lem:integral version} and Theorem~\ref{thm:general}. Assume that the graded algebra ${\rm gr}_\gamma(G)=\bigoplus_{n\geq1} G_n/G_{n+1}$ is torsion free. The following are equivalent.
\begin{enumerate}
 \item[(i)] ${\rm rank}_{\Z_p} J^n(G)/J^{n+1}(G)=\dim_{\F_p} I^n(G)/I^{n+1}(G)$ for all $n\geq 1$.
\item[(ii)] $w_n(G)={\rm rank}_{\Z_p} G_n/G_{n+1}$ for all $n\geq 1$. 
\end{enumerate}
\end{prop}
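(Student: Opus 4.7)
The plan is to translate both conditions into equalities of Hilbert--Poincar\'e series and then appeal to the uniqueness of infinite product expansions of the form $\prod_n (1-t^n)^{-\alpha_n}$. By the definitions $d_n(G)=\rank_{\Z_p} J^n(G)/J^{n+1}(G)$ and $a_n(G)=\dim_{\F_p} I^n(G)/I^{n+1}(G)$, condition (i) is literally the statement that
\[
P_{{\rm gr}(\Z_p[[G]])}(t)\;=\;P_{{\rm gr}(\F_p[[G]])}(t) \quad\text{in } \Z[[t]].
\]

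Next I would apply the torsion-free hypothesis together with Lemma~\ref{lem:integral version}(ii) to expand the left-hand side as $\prod_{n\ge 1}(1-t^n)^{-e_n(G)}$, and use Remark~\ref{rmk:wn} (which was extracted from the Jennings--Lazard formula of Theorem~\ref{thm:JL}) to expand the right-hand side as $\prod_{n\ge 1}(1-t^n)^{-w_n(G)}$. So (i) becomes
\[
\prod_{n=1}^\infty \frac{1}{(1-t^n)^{e_n(G)}} \;=\; \prod_{n=1}^\infty \frac{1}{(1-t^n)^{w_n(G)}},
\]
and I need to argue this holds if and only if $e_n(G)=w_n(G)$ for every $n\ge 1$, which is condition (ii). One direction is obvious; for the other, taking logarithms in $\Q[[t]]$ yields
\[
\sum_{n\ge 1}\bigl(e_n(G)-w_n(G)\bigr)\sum_{k\ge 1}\frac{t^{nk}}{k} \;=\; 0,
\]
and reading off the coefficient of $t^N$ gives $\sum_{n\mid N} n\bigl(e_n(G)-w_n(G)\bigr)=0$ for every $N\ge 1$. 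M\"obius inversion (or a trivial induction on $N$) then forces $e_n(G)=w_n(G)$ for all $n$.

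The proof is therefore a short bookkeeping argument once the two product formulas are in hand; the real content has already been supplied by Theorem~\ref{thm:JL} and Lemma~\ref{lem:integral version}. The only point that requires any care is the uniqueness of the exponents in the product expansion $\prod(1-t^n)^{-\alpha_n}$, but this is a standard fact about formal power series and is the mild ``obstacle'' one has to verify. No genuinely new input beyond the preceding lemmas is needed.
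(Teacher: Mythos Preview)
Your argument is correct and follows essentially the same route as the paper: both directions reduce to comparing the product expansions $\prod_n(1-t^n)^{-e_n(G)}$ and $\prod_n(1-t^n)^{-w_n(G)}$ of the common Hilbert--Poincar\'e series via Theorem~\ref{thm:JL}, Remark~\ref{rmk:wn}, and Lemma~\ref{lem:integral version}. The paper simply asserts that equal products force equal exponents, whereas you spell out the logarithm-plus-M\"obius justification, but the strategy is identical.
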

\begin{proof} 

 (i) $\Rightarrow$ (ii): Assume that  ${\rm rank}_{\Z_p} J^n(G)/J^{n+1}(G)=\dim_{\F_p} I^n(G)/I^{n+1}(G)$ for all $n$. Then by Theorem~\ref{thm:JL}, Remark~\ref{rmk:wn} and Lemma~\ref{lem:integral version}, we have
\[
 P_{{\rm gr}(\F_p[[G]])}(t)=\prod_{n=1}^\infty \frac{1}{(1-t^n)^{w_n(G)}}= P_{{\rm gr}(\Z_p[[G]])}(t)=\prod_{n=1}^\infty \frac{1}{(1-t^n)^{e_n(G)}}.
\]
Therefore $w_n(G)=e_n(G)$ for all $n\geq 1$. 

(ii) $\Rightarrow$ (i): Assume that $w_n(G)=e_n(G)$ for all $n\geq 1$. Then by Theorem~\ref{thm:JL}, Remark~\ref{rmk:wn} and Lemma~\ref{lem:integral version}, we have
\[
   P_{{\rm gr}(\F_p[[G]])}(t)=P_{{\rm gr}(\Z_p[[G]])}(t).
\]
Therefore  ${\rm rank}_{\Z_p} J^n(G)/J^{n+1}(G)=\dim_{\F_p} I^n(G)/I^{n+1}(G)$ for all $n\geq 1$.
\end{proof}

\begin{rmk}
 We shall see in the next sections that both a free finitely generated pro-$p$-group and a Demushkin group with a relation of the form $r=[x_1,x_2]\cdots [x_{d-1},x_d]$ satisfy the equivalent statements in Proposition~\ref{prop:wn}. 
\end{rmk}

\section{Free Pro-$p$ Groups}
\label{sec:free}

Throughout this section we assume that $S$ is a free pro-$p$-group on a finite set of  generators $x_1,\ldots,x_d$.
Recall that the  Magnus homomorphism from the  completed group algebra $\F_p[[S]]$  to the $\F_p$-algebra $\F_p\langle\langle X_1,\ldots,X_d\rangle\rangle$ of formal power series in $d$ non-commuting variables $X_1,\ldots,X_d$ over $\F_p$ is given by 
\[
\psi\colon \F_p[[S]] \to \F_p \langle\langle X_1,\ldots,X_d\rangle\rangle, x_i\mapsto 1+X_i.
\]
The $\F_p$-algebra $\F_p\langle\langle X_1,\ldots,X_d\rangle\rangle$ is equipped with a natural valuation $v$ given by
\[
v(\sum a_{i_1,\ldots,i_k}X_{i_1}\cdots X_{i_k})=\inf\{k\mid a_{i_1,\ldots,i_k}\not=0\}\in \Z_{\geq 0}\cup \{\infty\},
\]
making it a compact topological $\F_p$-algebra and by Theorem~\ref{thm:magnusiso} the Magnus homomorphism is a (topological) isomorphism.

\begin{lem} 
\label{lemHPfree}
A finitely generated pro-$p$ group $S$ is free of rank $d$ if and only if the Hilbert-Poincar\'e series
\[
P_{{\rm gr}(\F_p[[S]])}(t)=\frac{1}{1-dt}.
\]
 \end{lem}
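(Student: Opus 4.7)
The plan is to prove the two directions separately, with the forward direction being an easy consequence of the Magnus isomorphism and the backward direction being the more substantive one.

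For the forward direction, suppose $S$ is free pro-$p$ of rank $d$ with generators $x_1,\ldots,x_d$. By Theorem~\ref{thm:magnusiso}, the Magnus map $\psi\colon \F_p[[S]]\to \F_p\langle\langle X_1,\ldots,X_d\rangle\rangle$ sending $x_i\mapsto 1+X_i$ is a topological isomorphism of $\F_p$-algebras. Under $\psi$, the augmentation ideal $I(S)$ corresponds to the valuation ideal $\sI$, so $I^n(S)$ corresponds to $\sI^n$. Hence ${\rm gr}(\F_p[[S]])$ is isomorphic as a graded algebra to $\bigoplus_{n\ge 0}\sI^n/\sI^{n+1}$, whose $n$-th piece is the space of homogeneous noncommutative monomials of degree $n$ in $X_1,\ldots,X_d$, of dimension $d^n$. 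Summing up gives $P_{{\rm gr}(\F_p[[S]])}(t)=\sum_{n\ge 0}d^n t^n =1/(1-dt)$.

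For the backward direction, assume $P_{{\rm gr}(\F_p[[S]])}(t)=1/(1-dt)$. Reading off the coefficient of $t^1$ gives $\dim_{\F_p}I(S)/I^2(S)=d$, and since $I(S)/I^2(S)\cong S/S_{(2)}=S/S^p[S,S]$, Burnside's Basis Theorem shows $d(S)=d$. Choose generators $x_1,\ldots,x_d$ of $S$ and let $F$ be the free pro-$p$ group on generators $y_1,\ldots,y_d$. Then $y_i\mapsto x_i$ defines a continuous surjection $\pi\colon F\twoheadrightarrow S$ with closed kernel $R$. This induces a continuous surjection $\tilde\pi\colon \F_p[[F]]\twoheadrightarrow \F_p[[S]]$ carrying $I^n(F)$ onto $I^n(S)$, and hence a surjective graded $\F_p$-algebra homomorphism ${\rm gr}(\tilde\pi)\colon {\rm gr}(\F_p[[F]])\twoheadrightarrow {\rm gr}(\F_p[[S]])$.

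By the forward direction applied to $F$, both graded algebras have Hilbert--Poincar\'e series $1/(1-dt)$, so each graded piece is a surjection between finite-dimensional $\F_p$-vector spaces of equal dimension and is therefore an isomorphism. Thus ${\rm gr}(\tilde\pi)$ is an isomorphism. Since $\F_p[[F]]$ and $\F_p[[S]]$ are complete and Hausdorff with respect to the ideal filtrations $(I^n(F))$ and $(I^n(S))$, a standard filtered-ring argument (if $x\in \ker\tilde\pi$ lies in $I^n(F)\setminus I^{n+1}(F)$, its image in the $n$-th graded piece is nonzero, contradiction) shows $\tilde\pi$ itself is an isomorphism of topological $\F_p$-algebras. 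The main step of the argument is then to invoke the theorem (cited from \cite[Theorem 7.2]{Ko}) stating that $\ker\tilde\pi$ equals the closed two-sided ideal generated by $\{r-1\mid r\in R\}$: since this ideal is zero, we get $r=1$ for all $r\in R$, so $R=\{1\}$ and $\pi$ is an isomorphism, exhibiting $S$ as free pro-$p$ of rank $d$. The only delicate point is passing from an isomorphism on the associated graded algebras to an isomorphism of the completed group algebras, and then translating this into the triviality of the kernel subgroup $R$ via the correspondence between closed normal subgroups and closed ideals in the completed group algebra.
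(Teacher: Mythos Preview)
Your proof is correct. The forward direction matches the paper exactly, via the Magnus isomorphism.

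For the converse, the paper takes a slightly different route: it invokes the Jennings--Lazard product formula (Theorem~\ref{thm:JL}) and the inversion procedure of Theorem~\ref{thm:general} to conclude that the numbers $c_n(G)=\dim_{\F_p}G_{(n)}/G_{(n+1)}$ coincide with those of a free group $S$ of rank $d$, hence $|S/S_{(n)}|=|G/G_{(n)}|$ for all $n$, so the kernel $R$ of a minimal presentation lies in $\bigcap_n S_{(n)}=1$. Your argument bypasses Jennings--Lazard entirely: you work directly with the augmentation filtration, observing that a surjection of completed group algebras with equal Hilbert--Poincar\'e series is an isomorphism on the associated graded, lift this to $\tilde\pi$ via the Hausdorff property of the $I$-adic filtration, and then read off $R=1$ from the description of $\ker\tilde\pi$ as $I(R)$. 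This is a cleaner and more self-contained argument for the lemma in isolation; the paper's version has the advantage of exercising the $w_n$/$c_n$ machinery that is the chapter's main theme. Both ultimately rest on the same fact, that $\bigcap_n I^n(S)=0$ (equivalently $\bigcap_n S_{(n)}=1$), just accessed at different levels.
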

 
\begin{proof}
$ (\Rightarrow) $
 Via the Magnus homomorphism, the augmentation ideal $I(S)$ is mapped to the ideal $I=(X_1,\ldots,X_d)$ of $\F_p\langle\langle X_1,\ldots,X_d\rangle\rangle$. Hence 
\[
 a_n(S):= \dim_{\F_p}(I^n(S)/I^{n+1}(S))=\dim_{\F_p}(I^n/I^{n+1}),
\]
which is equal to the number of non-commutative monomials of degree $n$ in $d$ variables $X_1,\ldots,X_d$. Hence $a_n(S)=d^n$. The result then follows. 

$ (\Leftarrow) $
Let $S$ be a finitely generated free pro-$p$ group of rank $d$ and suppose $G$ is a finitely generated pro-$p$ group with 
\[ 
P_{{\rm gr}(\F_p[[G]])}(t)=\frac{1}{1-dt}.
\]
Then
\[
\log(\frac{1}{1-dt})=\sum \limits_{\nu=1}^\infty \dfrac{1}{\nu}(dt)^\nu,
\]
so by Theorem~\ref{thm:general},
\[
w_n(G)=w_n(S):=\frac{1}{n}\sum_{m\mid n} \mu(m) d^{n/m}
\]
and $c_n(G)=c_n(S)$ for all $n\geq 1$.  Since $c_1(G)=w_1(G)=d$, which is equal to the minimal number of topological generators of $G$, there exists a minimal presentation of $G$:
\[
1 \to R\to S\to G\to 1.
\]
Then for all $n\geq 1$, $c_n(G)=c_n(S)$ implies $|S/S_{(n)}|=|G/G_{(n)}|$ and hence the natural epimorphism 
\[
S/S_{(n)} \surj G/G_{(n)}
\]
is an isomorphism. This implies that $R\subseteq S_{(n)}$ for all $n\geq 1$, so by \cite[Theorem 7.11]{Ko}, $R=1$.  Hence $G\cong S$.
\end{proof}

Defining $w_n(S)$ by
\[
w_n(S)=\frac{1}{n}\sum_{m\mid n} \mu(m) d^{n/m},
\]
Theorem~\ref{thm:general} immediately implies the following result.

\begin{prop}
\label{prop:cn free}
If $n=p^k m $ with $(m,p)=1$, then 
\[
c_n(S) =w_m(S) +w_{pm}(S)+\cdots + w_{p^km}(S).
\qed
\]
\end{prop}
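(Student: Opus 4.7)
The plan is to derive the statement as an immediate specialization of Theorem~\ref{thm:general} to the free pro-$p$ case, using the explicit form of the Hilbert-Poincar\'e series provided by Lemma~\ref{lemHPfree}. In effect, everything hard has already been done in establishing Theorem~\ref{thm:general}; the task here is purely computational.

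First, I would invoke Lemma~\ref{lemHPfree} to obtain
\[
P_{{\rm gr}(\F_p[[S]])}(t) = \frac{1}{1-dt}.
\]
Next, I would take the formal logarithm and use the standard expansion $\log\frac{1}{1-u} = \sum_{\nu\geq 1}\frac{u^\nu}{\nu}$ to identify the coefficients $b_n$ of Theorem~\ref{thm:general}:
\[
\log P_{{\rm gr}(\F_p[[S]])}(t) = \sum_{n\geq 1}\frac{d^n}{n}\,t^n,
\qquad\text{so}\qquad b_n = \frac{d^n}{n},\ \ nb_n = d^n.
\]

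Substituting into the definition $w_n(G) = \tfrac{1}{n}\sum_{m\mid n}\mu(n/m)\,mb_m$ from Theorem~\ref{thm:general}, I would then compute
\[
w_n(S) = \frac{1}{n}\sum_{m\mid n}\mu(n/m)\,d^m = \frac{1}{n}\sum_{m\mid n}\mu(m)\,d^{n/m},
\]
where the second equality is the reindexing $m\mapsto n/m$ that reconciles the notation with the formula announced just before the proposition. Finally, I would apply Theorem~\ref{thm:general} directly, which states that for $n = p^k m$ with $(m,p)=1$ one has $c_n(G) = w_m(G)+w_{pm}(G)+\cdots+w_{p^km}(G)$ for any finitely generated pro-$p$-group $G$. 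Taking $G = S$ yields precisely the claim.

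Since the argument is a direct substitution, there is no genuine obstacle; the only point worth flagging is the cosmetic reindexing between $\tfrac{1}{n}\sum_{m\mid n}\mu(n/m)d^m$ and $\tfrac{1}{n}\sum_{m\mid n}\mu(m)d^{n/m}$, which should be noted explicitly so the reader sees that the $w_n(S)$ appearing in the statement agrees with the $w_n(G)$ defined in the preceding theorem.
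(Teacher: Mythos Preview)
Your proposal is correct and follows essentially the same approach as the paper: the paper simply states that the result follows immediately from Theorem~\ref{thm:general}, having already recorded the Hilbert--Poincar\'e series in Lemma~\ref{lemHPfree} and defined $w_n(S)=\frac{1}{n}\sum_{m\mid n}\mu(m)d^{n/m}$ just before the proposition. Your explicit computation of $b_n$ and the reindexing remark are welcome clarifications of what the paper leaves implicit.
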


\begin{rmk}
\label{rmk:wn free}
Let $(S_n)$ be the lower central series of $S$.  Then by Witt's result, $S_{n}/S_{n+1}$ is a free $\Z_p$-module of finite rank $w_n(S)$.
\end{rmk}

\begin{rmks}
\label{rmk:fgGalois}
(1) If a finitely generated pro-$p$-group $G$ is known to be realizable as the Galois group of a maximal $p$-extension of a field $F$ containing a primitive $p$-th root of unity, then we need only $c_1(G)=c_1(S)$ and $c_2(G)=c_2(S)$, for some finitely generated free pro-$p$-group $S$, to establish that $G$ is isomorphic to $S$.

Indeed, as $c_1(S)=c_1(G)$ we have a short exact sequence
\[
1 \to R\to S\stackrel{\pi}{\to} G\to 1.
\]
Since $c_1(S)=c_1(G)$ and $c_2(S)=c_2(G)$, we have $|S/S_{(3)}|=|G/G_{(3)}|$. Thus the natural epimorphism 
\[
S/S_{(3)} \surj G/G_{(3)}
\]
is in fact an isomorphism. Hence by \cite[Theorem C]{EM} (see also \cite[Theorem D]{CEM} for the case $p=2$) we see that $\pi\colon S\to G$ is an isomorphism.

(2) Observe that the numbers $c_n(S)$, $n=1,2,\ldots$, also detect the minimal number  of generators of $S_{(n)}$. Indeed by the pro-$p$ version of Schreier's formula, for each open subgroup $T$ of $S$ we have the following expression for the minimal number of generators $d(T)$ of $T$:
\[
 d(T)=[S:T](d(S)-1)+1.
\]
Therefore
\[
 d_n(S):=d(S_{(n)})= p^{\sum_{i=1}^{n-1}c_i(S)}(d-1)+1. 
\]
\end{rmks}

\begin{ex} Let $S$ be a free pro-$p$-group of finite rank $d$. We have
\[
\begin{aligned}
 c_1(S)&=d,\\
c_2(S)&= \begin{cases}
\frac{d^2-d}{2} \text{ if } p\not=2,\\
\frac{d^2+d}{2} \text{ if } p=2, 
\end{cases}\\
c_3(S) &=\begin{cases}
\frac{d^3-d}{3} \text{ if } p\not=3,\\
\frac{d^3+2d}{3} \text{ if } p=3,
\end{cases}\\
c_4(S)&= \begin{cases}
\frac{d^4-d^2}{4} \text{ if } p\not=2,\\
\frac{d^4+d^2+2d}{4} \text{ if } p=2, 
\end{cases}\\
c_5(S)&=\begin{cases}
\frac{d^5-d}{5} \text{ if } p\not=5,\\
\frac{d^5+4d}{5} \text{ if } p=5. 
\end{cases}
\end{aligned}
\]
We can look at this example in more detail.  For any minimal presentation 
\[
1 \to R\to S\to G\to 1,
\]
\[
d=c_1(S)=\dim_{\F_p}\frac{S}{S^p[S,S]}=\dim_{\F_p}\frac{G}{G^p[G,G]}=c_1(G),
\]
so $c_1(G)$ is an important invariant which gives the minimal number of generators of $G$.

If $p\neq 2$ then
\[
\begin{aligned}
c_2(S)&=\dim_{\F_p}\frac{S_{(2)}}{S_{(3)}}\\
&=\dim_{\F_p}\frac{S^p[S,S]}{S^p[[S,S],S]}\\
&=\dim_{\F_p}<{\overline{[x_i,x_j]}\mid 1\leq i<j\leq d}>\\
&=\binom{d}{2}\\
&=\frac{d^2-d}{2}\\
&=w_2(S).
\end{aligned}
\]
Recall that
\[
\gr (\F_p[[S]])=\F_p \oplus \frac{I(S)}{I^2(S)} \oplus \frac{I^2(S)}{I^3(S)} \oplus \cdots
\]
is the universal restricted enveloping algebra of the restricted Lie algebra
\[
\gr (S)=\F_p \oplus \frac{S}{S_{(2)}} \oplus \frac{S_{(2)}}{S_{(3)}} \oplus \cdots 
\]
This leads to the equation
\[
\begin{aligned}
&(1+t+t^2+\cdots +t^{p-1})^{c_1(S)}\\
&\cdot (1+t^2+t^{2\cdot 2}+\cdots +t^{2(p-1)})^{c_2(S)} \\
&\cdot (1+t^3+t^{3\cdot 2}+\cdots +t^{3(p-1)})^{c_3(S)} \\
&\cdots \\
&=1+a_1(S)t+a_2(S)t^2+a_3(S)t^3+\cdots\\
&=1+dt+d^2t^2+d^3t^3+\cdots
\end{aligned}
\]
Equating coefficients of $t$ gives $c_1(S)=d$, which reflects the fact that $\overline{X}_1=\overline{{x_1-1}},\ldots , \overline{X}_d=\overline{{x_d-1}}$ is a basis of $I(S)/I^2(S)$.  Equating coefficients of $t^2$ gives $d^2=d+\binom{d}{2}+c_2(S)$, so $c_2(S)=\frac{d^2-d}{2}$.

If $p=2$ then $p-1<2$, so the above equation gives $d^2=\binom{d}{2}+c_2(S)$ or $c_2(S)=\frac{d(d+1)}{2}$, which reflects the fact that, in this case, a basis of $S_{(2)}/S_{(3)}$ also contains the squares of generators. 

Similarly, considering the case $p\geq 5$ and looking at coefficients of $t^3$, we find
\[
\begin{aligned}
&(1+dt+(\binom{d}{2}+d)t^2+(\binom{d}{3}+d(d-1)+d)t^3+\ldots )\\
&\cdot (1+\frac{d^2-d}{2}t^2+\sO (t^4)) \\
&\cdot (1+c_3(S)t^3+\sO (t^6)) \\
&\cdots \\
&=1+dt+d^2t^2+d^3t^3+\cdots
\end{aligned}
\]
which gives $c_3(S)=\frac{d^3-d}{3}$.
\end{ex}

We can give an explicit $\F_p$-basis for $S_{(n)}/S_{(n+1)}$, for each $n$ in terms of Hall commutators, which we now describe.  We note that an $\F_p$-basis for $S_{(n)}/S_{(n+1)}$ is also given in \cite{Ga}.

\begin{defn} Let $S$ be the free group generated by $\{x_1,\ldots,x_d\}$.  The set $C_n$ of {\it Hall commutators of weight $n$} together with a total order $<$ is inductively defined as follows:
\begin{enumerate}
 \item $C_1=\{x_1,\ldots,x_d\}$ with the ordering $x_1>\cdots >x_d$.
\item Assume $n>1$ and that the Hall commutators have been defined and simply ordered for all weights $<n$ so that commutators of weight $k$ are greater than all commutators of weight $<k$.  Then $C_n$ is the set of all commutators $[c_1,c_2]$ where $c_1\in C_{n_1},c_2\in C_{n_2}$ such that $n_1+n_2=n$, $c_1>c_2$ and if $c_1=[c_3,c_4]$ then we also require that $c_2\geq c_4$. The set $C_n$ is ordered lexicographically, i.e., $[c_1,c_2]<[c_1^\prime,c_2^\prime]$ if and only if $c_1<c_1^\prime$, or  $c_1=c_1^\prime$ and $c_2<c_2^\prime$.  
\end{enumerate}
\end{defn}

The following theorem was proved by M. Hall in the discrete case. The extension of his theorem to the pro-$p$ case follows from Corollary~\ref{cor:finitegen}.

\begin{thm}[{\cite[Theorem 4.1]{Ha}}] The Hall commutators of weight $n$ represent a basis of $S_n/S_{n+1}$ as a free $\Z_p$-module.  In particular, $w_n(S)=|C_n|$. 
\end{thm}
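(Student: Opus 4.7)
The plan is to bootstrap from Hall's discrete theorem to the pro-$p$ case by showing that $S_n/S_{n+1}$ is the pro-$p$ completion of the analogous quotient for the abstract free group, and then combine this with Witt's formula already recorded in Remark~\ref{rmk:wn free}.

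First I would set up the following bridge. Let $F$ be the abstract free group on the generators $x_1,\ldots,x_d$, let $(F_n)$ be its lower central series, and let $S$ be the pro-$p$ completion of $F$, so $F$ embeds densely in $S$. The discrete version of the theorem, due to M.~Hall, asserts that $F_n/F_{n+1}$ is a free abelian group of finite rank equal to $|C_n|$, with basis the images of the Hall commutators of weight $n$. My first task is to identify $S_n$ with the closure $\overline{F_n}$ of $F_n$ inside $S$. This follows because the descending central series of a topological group is stable under closure, because $F_n$ is verbally defined (as the subgroup generated by $n$-fold commutators), and because the corresponding verbal definition on the dense subgroup $F$ of $S$ reproduces $S_n$ after taking closures; Corollary~\ref{cor:finitegen} ensures the topology is intrinsic so no ambiguity arises.

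Next I would show that the natural map $F_n/F_{n+1}\to S_n/S_{n+1}$ extends to an isomorphism of topological groups from the pro-$p$ completion of $F_n/F_{n+1}$ onto $S_n/S_{n+1}$. The point is that the finite $p$-group quotients of $F_n/F_{n+1}$ correspond (via intersection with $F_n$ and taking closures) precisely to the open subgroups of $S_n/S_{n+1}$ coming from open normal subgroups of $S$ containing $S_{n+1}$. Since $F_n/F_{n+1}$ is a free abelian group of finite rank $|C_n|$, its pro-$p$ completion is the free $\Z_p$-module $\Z_p^{|C_n|}$, with basis the Hall commutators of weight $n$. Consequently $S_n/S_{n+1}$ is a free $\Z_p$-module of rank $|C_n|$ and the images of the Hall commutators form a $\Z_p$-basis.

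Finally, the numerical statement $w_n(S)=|C_n|$ is immediate: Remark~\ref{rmk:wn free} (Witt's result) gives $\rank_{\Z_p}(S_n/S_{n+1})=w_n(S)$, and we have just shown this rank is $|C_n|$.

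The main obstacle is the pro-$p$ completion identification in the middle step: one must be careful that ``closure in $S$'' and ``pro-$p$ completion of the quotient'' commute, i.e. that $S_n/S_{n+1}$ really has no ``extra'' topological elements beyond the pro-$p$ completion of $F_n/F_{n+1}$. This is precisely where the finiteness of rank of $F_n/F_{n+1}$ is essential — without it, one would have to worry about the distinction between the pro-$p$ completion and various profinite completions. Given the finite rank, however, $F_n/F_{n+1}\cong \Z^{|C_n|}$ has pro-$p$ completion $\Z_p^{|C_n|}$, which is already Hausdorff and compact, and the density of the image of $F_n$ in $S_n$ together with continuity of the commutator and quotient maps forces the identification.
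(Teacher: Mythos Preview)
Your proposal is correct and follows essentially the same approach as the paper: the paper's entire justification is the sentence preceding the theorem, namely that the result ``was proved by M.~Hall in the discrete case'' and that ``the extension of his theorem to the pro-$p$ case follows from Corollary~\ref{cor:finitegen}''. You have simply spelled out what that extension involves --- identifying $S_n$ with the closure of $F_n$ and $S_n/S_{n+1}$ with the pro-$p$ completion $\Z_p^{|C_n|}$ of $F_n/F_{n+1}$ --- whereas the paper leaves this as a one-line remark.
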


The following theorem relating the Zassenhaus filtration to the descending central series is due to Lazard.

\begin{thm}[Lazard]
For each $n$, one has
\[
G_{(n)}=\prod_{ip^j\geq n}G_i^{p^j}.
\]
\end{thm}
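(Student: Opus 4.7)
Set $H_n := \prod_{ip^j \geq n} G_i^{p^j}$, the closed subgroup of $G$ topologically generated by the $p^j$-th powers of elements of the lower central subgroups $G_i$ with $ip^j \geq n$. The plan is to establish the two inclusions separately: $H_n \subseteq G_{(n)}$ by checking that the specified generators of $H_n$ already lie in $G_{(n)}$, and $G_{(n)} \subseteq H_n$ by showing that the sequence $(H_n)$ is itself a descending central filtration satisfying the two Zassenhaus properties $H_n^p \subseteq H_{pn}$ and $[H_i,H_j] \subseteq H_{i+j}$, which forces the minimal such sequence $(G_{(n)})$ to be contained in it.

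For the inclusion $H_n \subseteq G_{(n)}$, I would prove two auxiliary containments. First, $G_i \subseteq G_{(i)}$ by induction on $i$: the base case is $G_1 = G = G_{(1)}$, and if $G_i \subseteq G_{(i)}$ then $G_{i+1} = [G_i,G] \subseteq [G_{(i)}, G_{(1)}] \subseteq G_{(i+1)}$. Second, $G_{(i)}^{p^j} \subseteq G_{(ip^j)}$ by induction on $j$: the step rewrites $g^{p^j} = (g^{p^{j-1}})^p$, applies the inductive hypothesis to land $g^{p^{j-1}}$ in $G_{(ip^{j-1})}$, and then invokes the defining property $G_{(ip^{j-1})}^p \subseteq G_{(ip^j)}$. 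Composing, we obtain $G_i^{p^j} \subseteq G_{(i)}^{p^j} \subseteq G_{(ip^j)} \subseteq G_{(n)}$ whenever $ip^j \geq n$, so $H_n \subseteq G_{(n)}$.

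For the reverse inclusion the monotonicity $H_{n+1} \subseteq H_n$ and the power condition $H_n^p \subseteq H_{pn}$ are immediate: a generator $g^{p^j}$ of $H_n$ has $p$-th power $g^{p^{j+1}}$, and $ip^{j+1} \geq pn$ whenever $ip^j \geq n$. The main obstacle is the commutator condition $[H_i, H_j] \subseteq H_{i+j}$, i.e.\ the need to show $[g^{p^a}, h^{p^b}] \in H_{i+j}$ whenever $g \in G_k$, $h \in G_l$, $kp^a \geq i$, and $lp^b \geq j$. My approach would be to apply the Hall--Petresco commutator identities, which write $[x^{p^a}, y]$ modulo higher weight as a product of $p^a$-th powers of iterated commutators in the lower central series of $\langle x,y\rangle$; tracking simultaneously the lower-central weight of each iterated commutator and the $p$-exponent attached to it shows that every resulting factor lies in some $G_m^{p^c}$ with $mp^c \geq kp^a + lp^b \geq i+j$, i.e.\ in $H_{i+j}$. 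A cleaner alternative, and the route I would ultimately take, is to pass through the augmentation-ideal filtration via the identification $G_{(n)} = (1 + I^n(G)) \cap G$ already available in Section~2.1 of the excerpt, and exploit the restricted Lie algebra structure on $\gr(G)$ from the Jennings--Lazard theorem: in $\gr(G)$ the $p$-operation sends the degree-$i$ piece into the degree-$pi$ piece and the Lie bracket is additive in degree, so the subalgebra generated by the images of $G_i$ under brackets and restricted $p$-powers is all of $\gr(G)$, from which both inclusions drop out simultaneously by comparing graded pieces.
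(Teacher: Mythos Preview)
The paper does not actually prove this statement: its proof is the single line ``See \cite[Theorem 11.2]{DDMS}.'' Your sketch is essentially the argument that \cite{DDMS} carries out, so in that sense you are aligned with the cited source rather than with anything the paper itself supplies.

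Two remarks on your sketch. First, the claim that $H_n^p \subseteq H_{pn}$ is ``immediate'' because a generator $g^{p^j}$ has $p$-th power $g^{p^{j+1}}$ is too quick: $H_n^p$ is generated by $p$-th powers of \emph{arbitrary} elements of $H_n$, and $(xy)^p \neq x^p y^p$ in general. The fix is exactly the Hall--Petresco identity you already invoke for the commutator condition: writing $(xy)^p = x^p y^p c_2^{\binom{p}{2}} \cdots c_p$ with $c_k$ in the $k$-th lower central term of $\langle x,y\rangle$, and using $p \mid \binom{p}{k}$ for $1 \le k \le p-1$, one lands each factor in the right $H_m$. So this is a presentational gap rather than a real one, but it is not immediate.

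Second, your ``cleaner alternative'' of deducing the result from the identification $G_{(n)} = (1+I^n(G))\cap G$ together with the restricted Lie structure on $\gr(G)$ is risky in this context: in \cite{DDMS} the Lazard formula (their Theorem~11.2) is established \emph{before} and is used in the development of the Jennings--Lazard theory in Chapter~12, so appealing to Theorem~\ref{thm:JL} to prove it would be circular relative to the source the paper cites. Stick with the direct Hall--Petresco argument.
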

\begin{proof}
See \cite[Theorem 11.2]{DDMS}.
\end{proof}

\begin{cor}
Let us write $n=p^km$ with $(m,p)=1$. Then a basis of the $\F_p$-vector space $S_{(n)}/S_{(n+1)}$ can be represented by the following set
\[
C_m^{p^k} \bigsqcup C_{pm}^{p^{k-1}}\bigsqcup \cdots  \bigsqcup C_{p^{k-1}m}^p\bigsqcup C_n.
\]
\end{cor}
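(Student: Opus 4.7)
The plan is to establish the claimed basis in three steps: containment of the proposed set in $S_{(n)}$, correctness of the cardinality, and $\F_p$-linear independence modulo $S_{(n+1)}$. The first is immediate from Lazard's theorem above: if $c\in C_{p^jm}\subseteq S_{p^jm}$, then $c^{p^{k-j}}\in S_{p^jm}^{p^{k-j}}\subseteq S_{(n)}$, since $(p^jm)\cdot p^{k-j}=n$. The second is a count: by Hall's theorem $|C_{p^jm}|=w_{p^jm}(S)$, so the total cardinality is $\sum_{j=0}^{k}w_{p^jm}(S)$, which equals $c_n(S)=\dim_{\F_p}S_{(n)}/S_{(n+1)}$ by Proposition~\ref{prop:cn free}. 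Hence only linear independence remains.

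For linear independence I would pass to the Magnus algebra. By Theorem~\ref{thm:magnusiso} and Jennings--Lazard (Theorem~\ref{thm:JL}), the natural map $S_{(n)}/S_{(n+1)}\hookrightarrow I^n(S)/I^{n+1}(S)$ is injective, and after identifying $\gr(\F_p[[S]])$ with the graded free associative $\F_p$-algebra $\F_p\langle X_1,\ldots,X_d\rangle$, it realizes $\gr(S)$ as the free restricted Lie subalgebra $\widetilde L$ generated by $X_1,\ldots,X_d$. A Hall commutator $c\in C_\ell$ is mapped by the Magnus homomorphism $\psi$ to $1+\widetilde c+(\text{higher valuation})$, where $\widetilde c$ is the corresponding Hall Lie element of degree $\ell$ in the free Lie subalgebra $L\subseteq\widetilde L$; since the identity $(1+u)^p=1+u^p$ holds in the characteristic-$p$ algebra $\F_p\langle\langle X_1,\ldots,X_d\rangle\rangle$ (only commuting powers of the single element $u$ are involved), iteration gives $\psi(c^{p^{k-j}})=1+\widetilde c^{\,p^{k-j}}+(\text{higher valuation})$, and its image in $\F_p\langle X_1,\ldots,X_d\rangle_n$ is precisely $\widetilde c^{\,p^{k-j}}$.

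It therefore suffices to check that the elements
\[
\bigl\{\widetilde c^{\,p^{k-j}}:c\in C_{p^jm},\ 0\leq j\leq k\bigr\}
\]
are $\F_p$-linearly independent in $\F_p\langle X_1,\ldots,X_d\rangle_n$. This is the classical structure theorem for the free restricted Lie algebra over $\F_p$: starting from the Hall basis of each homogeneous component $L_{p^jm}$ of the free Lie algebra, applying the iterated $p$-th power map $[p^{k-j}]$, and taking the union over $j$ yields a basis of the homogeneous component $\widetilde L_n$ of the free restricted Lie algebra. The injectivity of the $p$-th power map on nonzero homogeneous elements is clear since $\F_p\langle X_1,\ldots,X_d\rangle$ is an integral domain; the spanning property follows by induction on $k$ from the recursive description of $\widetilde L$. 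The main obstacle is precisely this last step---invoking the basis theorem for free restricted Lie algebras---since the Magnus identification and the characteristic-$p$ power identity are otherwise routine. Once linear independence is obtained, the cardinality count forces the proposed set to be a basis.
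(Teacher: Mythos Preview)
Your proposal is correct, but you have chosen the harder of the two natural routes. The paper argues \emph{spanning} plus cardinality: from Lazard's formula $S_{(n)}=\prod_{ip^j\geq n}S_i^{p^j}$ one sees (after a short commutator-calculus reduction) that the proposed set already generates $S_{(n)}/S_{(n+1)}$ as an $\F_p$-vector space, and then the cardinality match with $c_n(S)$ forces it to be a basis with no further work. You instead argue containment plus cardinality plus \emph{linear independence}, establishing the last by passing to the Magnus algebra and invoking the basis theorem for the free restricted Lie algebra. Both are valid. Your approach has the merit of making the Magnus picture explicit---displaying the image of each $c^{p^{k-j}}$ as the restricted Lie element $\tilde c^{\,p^{k-j}}$---but the cost is the restricted-Lie-algebra basis theorem, which you rightly flag as the main obstacle. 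The paper's route sidesteps this entirely: spanning comes almost for free from Lazard, and a spanning set of the correct size is automatically a basis.
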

\begin{proof}
 By Lazard's theorem, the above set defines a set of generators for the $\F_p$-vector space $S_{(n)}/S_{(n+1)}$. Now by Theorem~\ref{thm:general} and a counting argument, we see that this set defines a basis for the $\F_p$-vector space $S_{(n)}/S_{(n+1)}$. 
\end{proof}

With this basis in mind, we revisit the calculation of $c_3(S)$ for $p\neq 3$.  As pointed out in \cite{Ga}, $C_3=\{[[x_i,x_j],x_k]\mid 1\leq i<j\leq d, k\leq j \}$ and 
\[
\begin{aligned}
|C_3|&=2\binom{d+1}{3}\\
&=\frac{d^3-d}{3}.
\end{aligned}
\]

We now consider an interesting, purely group theoretical corollary of our formula for $c_n(S)$ which is closely related to the Kernel $n$-Unipotent Conjecture formulated by J. Min\'a\v{c} and N. D. T\^an in \cite{MT}.  Recall that $\U_n(\F_p)$ is the group of all upper-triangular unipotent $n \times n$ matrices with entries in $\F_p$.

\begin{defn}
Let $G$ be a pro-$p$ group and let $n\geq 1$ be an integer.  We say that $G$ has the \textit{kernel n-unipotent property} if
\[
G_{(n)}=\bigcap \ker(\rho:G\to \U_n(\F_p) ),
\]
where $\rho$ runs through the set of all representations (continuous homomorphisms) $G\to \U_n(\F_p)$.
\end{defn}

\begin{conj}[Kernel $n$-Unipotent Conjecture]
Let F be a field containing a primitive p-th root of unity and let $G=G_F(p)$.  Let $n\geq 3$ be an integer.  Then G has the kernel n-unipotent property.
\end{conj}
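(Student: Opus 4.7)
The plan is to establish the conjecture by proving two containments, one easy and one deep, the deep one reducing (via Dwyer's theorem) to the vanishing of Massey products in the Galois cohomology $H^*(G_F,\F_p)$. Throughout, write $K_n(G):=\bigcap_\rho \ker\rho$ where $\rho$ runs over continuous homomorphisms $G\to \U_n(\F_p)$; the claim is $K_n(G)=G_{(n)}$.

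\textbf{Step 1 (the trivial inclusion $G_{(n)}\subseteq K_n(G)$).} First I would verify that the Zassenhaus filtration on $\U_n(\F_p)$ satisfies $\U_n(\F_p)_{(n)}=1$. This can be read off from the identification with the augmentation filtration in $\F_p[[\U_n(\F_p)]]$: a matrix lies in $\U_n(\F_p)_{(k)}$ iff all entries strictly above the diagonal within distance $<k$ from it are zero, which for $k=n$ forces the identity. Since the Zassenhaus filtration is functorial under continuous homomorphisms, any $\rho:G\to \U_n(\F_p)$ sends $G_{(n)}$ into $\U_n(\F_p)_{(n)}=1$, giving the easy containment.

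\textbf{Step 2 (translating the reverse inclusion into cohomology).} To show $K_n(G)\subseteq G_{(n)}$, one has to produce, for every $g\in G\setminus G_{(n)}$, a representation $\rho:G\to \U_n(\F_p)$ with $\rho(g)\ne 1$. By Dwyer's theorem (already invoked for $\U_3$ in Section~\ref{sec:cup}), a homomorphism $G\to \U_n(\F_p)$ with prescribed near-diagonal characters $-\chi_1,\dots,-\chi_{n-1}\in H^1(G,\F_p)$ exists iff the $(n-1)$-fold Massey product $\langle\chi_1,\dots,\chi_{n-1}\rangle\subset H^2(G,\F_p)$ is defined and contains $0$. So the goal becomes: given $g\notin G_{(n)}$, exhibit $\chi_1,\dots,\chi_{n-1}\in H^1(G,\F_p)$ with a vanishing Massey product whose associated representation detects $g$.

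\textbf{Step 3 (using the Magnus/free-pro-$p$ model).} Next I would lift the problem to a minimal free presentation $1\to R\to S\to G\to 1$. The Magnus isomorphism $\F_p[[S]]\cong \F_p\langle\langle X_1,\dots,X_d\rangle\rangle$ of Theorem~\ref{thm:magnusiso} together with the explicit Hall-commutator basis of $S_{(n)}/S_{(n+1)}$ derived from Theorem~\ref{thm:general} gives a concrete generating set of characters $\chi_i$ on which to test Massey products. The formula for $c_n(S)$ controls exactly how many independent obstructions one must kill in order to descend a representation of $S$ to $G$; ensuring that all necessary Massey products on $G$ vanish is precisely the obstruction analysis one needs.

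\textbf{Step 4 (the main obstacle).} The genuine difficulty, and where the statement becomes a conjecture rather than a theorem, is the \emph{Massey Vanishing Conjecture} for $G_F(p)$: every defined $n$-fold Massey product of degree-$1$ Kummer classes in $H^*(G_F,\F_p)$ contains $0$. The hypothesis $\zeta_p\in F$ is what makes this a purely arithmetic statement, via the Rost–Voevodsky theorem (Bloch–Kato), which identifies $H^*(G_F,\F_p)$ with $K_*^M(F)/p$ generated in degree $1$. For $n=3$ the Massey vanishing follows from Merkurjev–Suslin (so the conjecture is known for $n=3$); for $n=4$ several cases are handled by the techniques of \cite{MT1,MT}, but in general this remains open. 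A full proof of the conjecture would therefore require either (i) a uniform Massey vanishing argument in arbitrary weight, presumably built by induction on $n$ using the cup-product/embedding-problem machinery of Section~\ref{sec:cup} and the dimension count supplied by Theorem~\ref{thm:general}, or (ii) a direct construction of ``enough'' unipotent representations of $G_F(p)$ by arithmetic means (e.g., via Kummer extensions tracking iterated Massey systems). This step is the true bottleneck; the group-theoretic half (Steps 1–3) is essentially formal once Dwyer's dictionary is in hand.
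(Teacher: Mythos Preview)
The statement you are attempting to prove is stated in the paper as a \emph{conjecture}, not as a theorem; the paper offers no proof of it. There is therefore no ``paper's own proof'' to compare your proposal against. The paper merely records the conjecture (attributed to Min\'a\v{c}--T\^an) and then uses the already-known special case of free pro-$p$ groups (citing \cite{Ef1,Ef2,MT}) to deduce the group-theoretic Corollary~\ref{cor:unipotent}.

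Your proposal is not a proof either, and you say so yourself in Step~4: the reverse inclusion $K_n(G)\subseteq G_{(n)}$ reduces, via Dwyer, to a Massey-vanishing statement that is open in general. So what you have written is an outline of the expected strategy, not a proof. That outline is broadly correct and is indeed the approach taken in the literature (see \cite{MT,Ef2}): Step~1 is routine and matches the paper's remark that $\U_{n+1}(\F_p)_{(n+1)}=1$; Step~2 is the standard Dwyer dictionary; Step~4 is the genuine obstruction. Step~3, however, is vague to the point of being misleading: the Hall-commutator basis of $S_{(n)}/S_{(n+1)}$ and the formula for $c_n(S)$ do not by themselves tell you how to ``descend'' a representation from $S$ to $G$, and the phrase ``controls exactly how many independent obstructions one must kill'' is not substantiated. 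The actual work is entirely in Step~4, which you correctly flag as open.

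In short: there is no gap to point to because there is no claimed proof on either side; the conjecture remains open both in the paper and in your proposal.
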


\begin{lem}
\label{lem:coefficient}
Let $n$ be a positive integer. If $\U_{n+1}(\F_p)_{(n)}=1$, then $c_n(S)=0$ for every free pro-$p$-group $S$.
\end{lem}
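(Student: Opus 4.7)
My plan is to argue the contrapositive: assuming $c_n(S) = \dim_{\F_p}(S_{(n)}/S_{(n+1)}) \neq 0$, I will exhibit a continuous homomorphism $\rho \colon S \to \U_{n+1}(\F_p)$ whose restriction to $S_{(n)}$ is nontrivial. Since every group homomorphism preserves the Zassenhaus filtration (by induction on its definition via $p$-th powers and commutators), $\rho(S_{(n)}) \subseteq \U_{n+1}(\F_p)_{(n)}$, so producing an $s \in S_{(n)} \setminus S_{(n+1)}$ with $\rho(s) \neq 1$ will force $\U_{n+1}(\F_p)_{(n)} \neq 1$, contradicting the hypothesis.

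To construct $\rho$, I will use the Magnus isomorphism $\psi \colon \F_p[[S]] \xrightarrow{\sim} \F_p\langle\langle X_1, \ldots, X_d \rangle\rangle$ of Theorem~\ref{thm:magnusiso} together with the identification $S_{(n)} = (1 + I^n(S)) \cap S$. For $s \in S_{(n)} \setminus S_{(n+1)}$ we have $\psi(s) = 1 + f$ with $f \in \sI^n \setminus \sI^{n+1}$, so some monomial $X_{i_1} \cdots X_{i_n}$ appears in the degree-$n$ part of $f$ with nonzero coefficient. Tailored to this monomial, I define strictly upper triangular matrices $N_1, \ldots, N_d \in M_{n+1}(\F_p)$ by the rule: on the standard basis $e_1, \ldots, e_{n+1}$, the matrix $N_i$ sends $e_{k+1} \mapsto e_k$ whenever $i_k = i$ (for $1 \leq k \leq n$) and kills every other basis vector. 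The assignment $X_i \mapsto N_i$ extends to a continuous algebra map $\F_p\langle\langle X_1, \ldots, X_d \rangle\rangle \to M_{n+1}(\F_p)$, well-defined because any product of more than $n$ strictly upper triangular $(n+1) \times (n+1)$ matrices vanishes. Composing with $\psi$ and restricting to $S$ produces $\rho \colon S \to \U_{n+1}(\F_p)$ with $\rho(x_i) = I + N_i$.

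The crux is the matrix identity: the $(1, n+1)$-entry of $N_{j_1} \cdots N_{j_m}$ is $0$ unless $m = n$ and $(j_1, \ldots, j_n) = (i_1, \ldots, i_n)$, in which case it equals $1$. This is seen by tracing the action right-to-left: $N_{j_n} e_{n+1}$ is $e_n$ only if $j_n = i_n$, then $N_{j_{n-1}} e_n$ is $e_{n-1}$ only if $j_{n-1} = i_{n-1}$, and so on; meanwhile for $m > n$ the product is zero, and for $m < n$ it cannot reach the $(1, n+1)$-entry. Consequently the $(1, n+1)$-entry of $\rho(s) - I$ equals the coefficient of $X_{i_1} \cdots X_{i_n}$ in $f$, which is nonzero by construction, so $\rho(s) \neq 1$. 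The only real subtlety is designing the $N_i$'s so that iterated multiplication reads off exactly the one chosen monomial; everything else amounts to bookkeeping with the Magnus embedding and the definitions of the relevant filtrations.
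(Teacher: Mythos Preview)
Your proof is correct, and it takes a genuinely different route from the paper's.

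The paper argues as follows: any homomorphism $\rho\colon S\to \U_{n+1}(\F_p)$ sends $S_{(n)}$ into $\U_{n+1}(\F_p)_{(n)}=1$, so $S_{(n)}\subseteq\bigcap_\rho\ker\rho$. It then invokes the kernel $(n{+}1)$-unipotent property for free pro-$p$ groups (citing Efrat and Min\'a\v{c}--T\^an), which asserts that $\bigcap_\rho\ker\rho=S_{(n+1)}$, and concludes $S_{(n)}=S_{(n+1)}$. In other words, the paper delegates the essential construction---the existence, for each $s\in S_{(n)}\setminus S_{(n+1)}$, of a representation to $\U_{n+1}(\F_p)$ not killing $s$---to the literature.

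You instead supply that construction explicitly. Via the Magnus identification $S_{(n)}=(1+\sI^n)\cap S$ you extract from $\psi(s)$ a nonzero degree-$n$ monomial $X_{i_1}\cdots X_{i_n}$, and then tailor strictly upper triangular matrices $N_i$ so that the truncation map $\F_p\langle\langle X\rangle\rangle\to M_{n+1}(\F_p)$ reads off precisely that coefficient in the $(1,n{+}1)$-entry. This is exactly the kind of argument underlying the cited kernel-unipotent result, so you are effectively reproving inline the piece of that theorem needed here. Your proof is therefore self-contained and more illuminating as to \emph{why} the statement holds; the paper's proof is shorter but treats the key step as a black box. Both are sound.
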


\begin{proof} Let $S$ be a free pro-$p$-group.
Assume that $\U_{n+1}(\F_p)_{(n)}=1$.  Then for  any (continuous) representation $\rho: S\to \U_{n+1}(\F_p)$, we have $\rho(S_{(n)})\subseteq \U_{n+1}(\F_p)_{(n)}=1$. Hence 
\[
S_{(n+1)}\subseteq S_{(n)}\subseteq \bigcap \ker(\rho\colon S\to \U_{n+1}(\F_p)),
\]
where $\rho$ runs over the set of all representations (continuous homomorphisms) $G\to \U_{n+1}(\F_p)$. On the other hand, we know that $S$ has the kernel $n$-unipotent property for all $n$ (see \cite{Ef1}, and  also \cite{Ef2}, \cite{MT}). This means that we have
\[
S_{(n+1)}=\bigcap \ker(\rho\colon S\to \U_{n+1}(\F_p)).
\]
Therefore, $S_{(n+1)}=S_{(n)}$, so $c_n(S)=0$.
\end{proof}

\begin{cor}
\label{cor:unipotent}
Let $n$ be a positive integer. Then 
$ \U_{n+1}(\F_p)_{(n)}\simeq \F_p$ and
\[
n=\max\{h\mid \U_{n+1}(\F_p)_{(h)}\neq 1 \}.
\]
\end{cor}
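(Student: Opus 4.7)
The plan is to combine the contrapositive of Lemma~\ref{lem:coefficient} with Lazard's theorem (as stated just above the corollary) in order to sandwich $\U_{n+1}(\F_p)_{(n)}$ between $1$ and $\F_p$, and then to conclude the maximality statement.

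First I would establish the lower bound $\U_{n+1}(\F_p)_{(n)} \neq 1$. By the contrapositive of Lemma~\ref{lem:coefficient}, it is enough to exhibit a single finitely generated free pro-$p$ group $S$ with $c_n(S) \neq 0$. Any free pro-$p$ group $S$ of rank $d \geq 2$ will do: Proposition~\ref{prop:cn free} gives $c_n(S) = w_m(S) + w_{pm}(S) + \cdots + w_{p^k m}(S)$ where $n = p^k m$ with $(m,p)=1$, and each Witt (necklace) number $w_\ell(S) = \frac{1}{\ell}\sum_{e \mid \ell}\mu(e)d^{\ell/e}$ is strictly positive for every $\ell \geq 1$ as soon as $d \geq 2$ (this is the classical count of Lyndon words on $d$ letters).

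Next I would use Lazard's theorem $G_{(k)} = \prod_{ip^j \geq k} G_i^{p^j}$, applied to $G = \U_{n+1}(\F_p)$, to produce the matching upper bound. A direct matrix computation identifies the descending central series: $G_i$ consists of the matrices $I + A$ whose nonzero entries lie on the $i$-th or higher superdiagonals; in particular $G_{n+1} = 1$, and $G_n = \{I + ae_{1,n+1} : a \in \F_p\} \cong \F_p$ is the centre. Over $\F_p$ one has $(I+A)^{p^j} = I + A^{p^j}$, and if $A$ is supported on diagonals $\geq i$ then $A^{p^j}$ is supported on diagonals $\geq ip^j$; hence $G_i^{p^j} \subseteq G_{ip^j}$ for all $i, j \geq 1$.

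Plugging this into Lazard's formula, every factor $G_i^{p^j}$ appearing in $\U_{n+1}(\F_p)_{(n+1)}$ satisfies $ip^j \geq n+1$ and is therefore contained in $G_{ip^j} \subseteq G_{n+1} = 1$, giving $\U_{n+1}(\F_p)_{(n+1)} = 1$. Likewise every factor in $\U_{n+1}(\F_p)_{(n)}$ is contained in $G_n \cong \F_p$, so $\U_{n+1}(\F_p)_{(n)} \subseteq \F_p$. Combined with the lower bound from the first step, this forces $\U_{n+1}(\F_p)_{(n)} \cong \F_p$, and $n$ is the largest $h$ with $\U_{n+1}(\F_p)_{(h)} \neq 1$.

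The main obstacle I anticipate is the bookkeeping in the matrix step: rigorously showing that the descending central series of $\U_{n+1}(\F_p)$ is exactly the ``superdiagonal'' filtration, and that $G_i^{p^j} \subseteq G_{ip^j}$. Both reduce to an induction on $i$ (resp.\ on $j$) using the commutator identity $[I+ae_{ij}, I+be_{k\ell}] = I + ab(\delta_{jk}e_{i\ell} - \delta_{\ell i}e_{kj})$ modulo deeper terms, but some care is needed at $p=2$ to track that the $p^j$-th power identity $(I+A)^{p^j} = I + A^{p^j}$ still holds modulo the right depth of the filtration.
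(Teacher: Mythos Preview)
Your proposal is correct, and the lower-bound step (positivity of the Witt numbers $w_\ell(S)$ for $d\geq 2$, then Lemma~\ref{lem:coefficient}) is exactly what the paper does.

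For the upper bound the paper takes a shorter route: it simply cites $\U_{n+1}(\F_p)_{(n+1)}=1$ as well known, and then uses the general fact that $[G_{(n)},G]\subseteq G_{(n+1)}=1$ to conclude $\U_{n+1}(\F_p)_{(n)}\subseteq Z(\U_{n+1}(\F_p))\cong\F_p$. Your version instead re-derives $\U_{n+1}(\F_p)_{(n+1)}=1$ and the containment $\U_{n+1}(\F_p)_{(n)}\subseteq G_n$ via Lazard's theorem together with an explicit description of the lower central series by superdiagonals and the inclusion $G_i^{p^j}\subseteq G_{ip^j}$. That works fine and has the advantage of being self-contained, at the cost of some extra matrix bookkeeping; the paper's centrality argument avoids all of this once one accepts the well-known vanishing of the $(n+1)$-st Zassenhaus term.

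One remark on your anticipated obstacle: your worry about $p=2$ is unfounded. For a single matrix $A$ the elements $I$ and $A$ commute, so over $\F_p$ the binomial theorem gives $(I+A)^{p}=I+A^{p}$ exactly, and by induction $(I+A)^{p^j}=I+A^{p^j}$, with no modular correction terms at any prime.
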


\begin{proof} We first observe that if $S$ is a free pro-$p$-group of rank $d>1$, then all numbers $w_n(S)$, $n=1,2,\ldots$, are positive.  Therefore from Proposition~\ref{prop:cn free} we see that $c_n(S)\not=0$ for all $n\in \N$. Hence by Lemma~\ref{lem:coefficient}, $\U_{(n+1)}(\F_p)_{(n)}\not=1$.

On the other hand, it is well-known that $\U_{n+1}(\F_p)_{(n+1)}=1$. Hence $\U_{n+1}(\F_p)_{(n)}\subseteq Z(\U_{n+1}(\F_p))\simeq \F_p$, where  $Z(\U_{n+1}(\F_p))$ is the center of $\U_{n+1}(\F_p)$. Therefore 
\[
\U_{n+1}(\F_p)_{(n)}=Z(\U_{n+1}(\F_p))\simeq \F_p,
\]
and the second assertion is also clear.
\end{proof}

\section{Free Products of Cyclic Groups}
\label{sec:freeprod}

Let $d$ be a non-negative integer. Let $G=C_p*\cdots * C_p$ be a free product in the category of pro-$p$-groups of  $d+1$ copies of $C_p$, where $C_p$ is the cyclic group of order $p$.  By Example \ref{ex:HPseriesCp},  Lemma~\ref{lem:free product} and induction on $d$, the Hilbert-Poincar\'e series of ${\rm gr}(\F_p[[G]])$ is
\[ 
P_{{\rm gr}(\F_p[[G]])}(t)= \frac{1+t+\cdots +t^{p-1}}{1-dt-\cdots -dt^{p-1}}.
\] 
Due to the close connection with formally real pythagorean fields, we will focus on the case in which $p=2$.

\subsection{Free products of cyclic groups of order 2}
 
Let $d$ be a non-negative integer. Let $G=C_2*\cdots * C_2$ be a free product in the category of pro-$2$-groups of  $d+1$ copies of $C_2$, where $C_2$ is the group of order 2. 

Recall from Theorem \ref{thm:SAP} that $G$ plays an important role as the maximal pro-$2$ quotient of the absolute Galois group of a formally real pythagorean SAP field.  Also it is interesting to observe that if $G$ is such a Galois group, then $G$ is already determined by its quotient $G/G_{(3)}$. More precisely, assume that $H$ is another pro-2-group which is realizable as the Galois group of the maximal 2-extension of a field $F$, and that $H/H_{(3)}\simeq G/G_{(3)}$, then  $H\simeq G$. (See \cite{MS1,MS2,Mi}.)

The Hilbert-Poincar\'e series of ${\rm gr}(\F_2[[G]])$ is
\[
P_{{\rm gr}(\F_2[[G]])}(t)= \frac{1+t}{1-dt}.
\]
We have
\[
\log P_{{\rm gr}(\F_2[[G]])}(t)= \log(\frac{1}{1-dt})-\log(\frac{1}{1+t})=\sum_{n\geq 1} \frac{1}{n} (d^n-(-1)^n)t^n.
\]
Now we define the sequence $w_n(G), n=1,2,\ldots$ by
\[
w_n(G)=\frac{1}{n}\sum_{m\mid n} \mu(n/m) (d^m-(-1)^m).
\]
\begin{prop}
\label{prop:cn free product of C2} 
If $n=2^k m $ with $(m,2)=1$, then 
\[
c_n(G) =w_m(G) +w_{2m}(G)+\cdots + w_{2^km}(G).
\qed
\]
\end{prop}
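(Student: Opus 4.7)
The plan is to apply Theorem~\ref{thm:general} directly, using the Hilbert--Poincar\'e series $P_{{\rm gr}(\F_2[[G]])}(t)=\frac{1+t}{1-dt}$ that has just been recorded above the statement of the proposition. All the nontrivial work has in fact already been done: Example~\ref{ex:HPseriesCp} supplies $P_{{\rm gr}(\F_2[[C_2]])}(t)=1+t$, and an induction on $d$ based on Lemma~\ref{lem:free product} (the free-product formula of Lemaire/Lichtman) yields the closed form for $P_{{\rm gr}(\F_2[[G]])}(t)$ displayed before the proposition.

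First I would take logarithms of the rational function $\frac{1+t}{1-dt}$. Writing $\log\frac{1}{1-dt}=\sum_{n\geq 1}\frac{d^n}{n}t^n$ and $\log(1+t)=-\sum_{n\geq 1}\frac{(-1)^n}{n}t^n$ gives
\[
\log P_{{\rm gr}(\F_2[[G]])}(t)=\sum_{n\geq 1}\frac{d^n-(-1)^n}{n}\,t^n,
\]
so in the notation of Theorem~\ref{thm:general} we have $b_n=\frac{1}{n}(d^n-(-1)^n)$. Next I would plug this into the definition $w_n(G):=\frac{1}{n}\sum_{m\mid n}\mu(n/m)\,m b_m$, observing that the formula agrees with the expression
\[
w_n(G)=\frac{1}{n}\sum_{m\mid n}\mu(n/m)\,(d^m-(-1)^m)
\]
stated just before the proposition.

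Finally, I would invoke Theorem~\ref{thm:general}, with $p=2$: writing $n=2^k m$ with $(m,2)=1$, the theorem gives
\[
c_n(G)=w_m(G)+w_{2m}(G)+\cdots+w_{2^k m}(G),
\]
which is exactly the assertion. The hard part, such as it is, is really hidden upstream: establishing the Hilbert--Poincar\'e series via the free-product formula of Lemma~\ref{lem:free product}, and establishing Theorem~\ref{thm:general} itself via the M\"obius-inversion argument of Lemmas~\ref{lem:coprime} and~\ref{lem:not coprime}. Once those are in hand, Proposition~\ref{prop:cn free product of C2} follows by a direct specialization with no additional input needed.
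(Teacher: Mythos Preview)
Your proposal is correct and matches the paper's approach exactly: the paper states the proposition with a bare \qed, since the Hilbert--Poincar\'e series $\frac{1+t}{1-dt}$, its logarithm, and the formula for $w_n(G)$ have all been recorded immediately beforehand, so the result is a direct specialization of Theorem~\ref{thm:general} with $p=2$. You have simply written out that specialization in full.
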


\subsection{Free products of cyclic groups of order 2 as semidirect products}

We again let $G=C_2*\cdots * C_2$ be the free product in the category of pro-2 groups of $d+1$ copies of $C_2$.  Our goal in this subsection is to show that $G$ is isomorphic to a semidirect product $H\rtimes C_2$ of a free pro-2 group $H$ and $C_2$.  We also provide a relation between $G_{(n)}$ and $H_{(n)}$.

Define the numbers $\epsilon_n$, $n=1,2,\ldots$ by
\[
\epsilon_n=\frac{1}{n}\sum_{m\mid n} \mu(n/m)(-1)^m.
\]
Then by  the M{\"o}bius inversion formula,
\[
\label{eq:epsilon}
\tag{*}(-1)^n= \sum_{m\mid n} m \epsilon_m.
\]
\begin{lem}
\label{lem:epsilon}
We have $\epsilon_1=-1$, $\epsilon_2=1$ and $\epsilon_n=0$ for $n\geq 3$.
\end{lem}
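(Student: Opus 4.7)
The plan is to compute $\epsilon_n$ directly from the companion relation $(\ast)$ displayed just before the lemma,
\[
(-1)^n = \sum_{m\mid n} m\,\epsilon_m,
\]
using strong induction on $n$.

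First I would handle $n=1$ and $n=2$ by direct substitution into the defining formula for $\epsilon_n$: since $\mu(1)=1$ and $\mu(2)=-1$, one gets $\epsilon_1=\mu(1)(-1)^1/1=-1$ and $\epsilon_2=\tfrac{1}{2}\bigl(\mu(1)(-1)^2+\mu(2)(-1)^1\bigr)=\tfrac{1}{2}(1+1)=1$. These are the two base cases.

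For the main claim I would proceed by strong induction on $n\ge 3$, with the inductive hypothesis that $\epsilon_m=0$ for every $m$ with $3\le m<n$. Rearranging $(\ast)$ as
\[
n\,\epsilon_n = (-1)^n - \sum_{\substack{m\mid n\\ m<n}} m\,\epsilon_m,
\]
the hypothesis, together with the base cases, kills every summand on the right except possibly $m=1$ (which always appears and contributes $1\cdot\epsilon_1=-1$) and $m=2$ (which appears iff $n$ is even and contributes $2\cdot\epsilon_2=2$). I would then split by parity: if $n$ is odd and $\ge 3$ then $n\,\epsilon_n=(-1)-(-1)=0$; if $n$ is even and $\ge 4$ then $n\,\epsilon_n=1-(-1)-2=0$. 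In either case $\epsilon_n=0$, closing the induction.

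There is no genuine obstacle here; the only thing that requires care is the parity split in the induction step, which correctly reflects when the divisor $m=2$ contributes to the sum. An alternative I considered is a direct divisor-sum computation: splitting $\sum_{m\mid n}\mu(n/m)(-1)^m$ into its odd-$m$ and even-$m$ parts and applying $\sum_{d\mid k}\mu(d)=[k=1]$ together with the factorization $n=2^a k$ with $k$ odd. This works but is notationally heavier than the two-line inductive argument above, so I would present the induction.
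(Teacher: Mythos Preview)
Your proof is correct and is essentially the same as the paper's. The paper observes that the relation $(\ast)$ determines the $\epsilon_n$ uniquely and then simply verifies that the proposed values $\epsilon_1=-1$, $\epsilon_2=1$, $\epsilon_n=0$ for $n\ge 3$ satisfy $(\ast)$ via the same parity split you use; your strong-induction formulation just makes the ``uniquely determines'' step explicit by solving the recursion forward.
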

\begin{proof} The equation (\ref{eq:epsilon}) determines $\epsilon_n$, $n\in \N$, uniquely. But $\epsilon_1=-1$,  $\epsilon_2=1$ and $\epsilon_n=0$ for $n\geq 3$ work as for these numbers 
\[
\sum_{m\mid n} m \epsilon_m
=\begin{cases}
-1 &\;\text{if $n$ is odd},\\
-1+2=1 &\; \text{if $n$ is even}.
\qedhere
\end{cases}
\]
\end{proof}

Now write 
\[G=C_2*C_2*\cdots *C_2=\langle x_0\mid x_0^2\rangle*\langle x_1\mid x_1^2\rangle *\cdots * \langle x_d\mid x_d^2\rangle.\] 
For  ease of notation, we consider $x_0,x_1,\ldots, x_d$ as elements of $G$. We consider a continuous homomorphism $\varphi:G\to C_2= \langle x\mid x^2\rangle$ defined by $x_i\mapsto x$ for all  $i=0,1,\ldots,d$. For each $i=1,\ldots,d$, we set $y_i=x_0x_i\in G$ and let $H$ be the closed subgroup of $G$ generated by $y_1,\ldots,y_d$.
\begin{lem} Let the notation be as above.
\begin{enumerate} 
\item[(i)] $\ker\varphi=H$.
\item[(ii)] $G\simeq H\rtimes C_2$, where the action of $C_2$ on $H$ is given by $xy_ix=y_i^{-1}$.
\item[(iii)] $H$ is a free pro-$2$ group of rank $d$.
\end{enumerate}
\end{lem}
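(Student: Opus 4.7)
\medskip
\noindent\textbf{Proof proposal.} For (i), I will verify both containments directly. The inclusion $H\subseteq \ker\varphi$ is immediate since $\varphi(y_i)=\varphi(x_0)\varphi(x_i)=x^2=1$. For the reverse inclusion, I will exploit the fact that each $x_i$ is an involution: any element $g\in G$ can be written as an alternating word $x_{i_1}\cdots x_{i_n}$ in the $x_i$'s, and $g\in\ker\varphi$ forces $n$ to be even. Setting $y_0:=1$ for notational convenience, the identity
\[
x_ix_j=(x_ix_0)(x_0x_j)=y_i^{-1}y_j
\]
expresses every length-two syllable as a product of $y_k^{\pm 1}$'s, and hence any even-length word lies in $H$. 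This gives $\ker\varphi\subseteq H$.

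For (ii), I will produce an explicit splitting of $\varphi$. The assignment $x\mapsto x_0$ extends (by the universal property of $C_2=\langle x\mid x^2\rangle$) to a continuous homomorphism $s\colon C_2\to G$, and $\varphi\circ s=\id_{C_2}$. Combined with part (i), this gives the semidirect product decomposition $G\simeq H\rtimes C_2$. The conjugation action is then computed directly: using $x_0^{-1}=x_0$ and $x_i^{-1}=x_i$,
\[
x_0y_ix_0=x_0(x_0x_i)x_0=x_ix_0=(x_0x_i)^{-1}=y_i^{-1},
\]
as claimed.

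For (iii), my main tool will be the pro-$p$ version of the Kurosh Subgroup Theorem (Mel$'$nikov, Haran--Lubotzky, Ribes), which states that an open subgroup of a free pro-$p$ product of cyclic groups decomposes as a free pro-$p$ product of conjugates of its intersections with the factors, together with a free pro-$p$ group. Since every finite-order element of $G=C_2\ast\cdots\ast C_2$ is conjugate to some $x_i$, and no such conjugate lies in $H=\ker\varphi$ (as $\varphi(x_i)=x\neq 1$ and $H$ is normal), the subgroup $H$ is torsion-free. Kurosh then forces $H$ to be a free pro-$2$ group. To identify its rank, I will use the abelianization: $y_1,\dots,y_d$ topologically generate $H$, so $d(H)\leq d$, while composing $H\hookrightarrow G\twoheadrightarrow G/G^2[G,G]\simeq\F_2^{d+1}$ exhibits the image of $H$ as the hyperplane $\{(a_0,\dots,a_d):\sum a_i=0\}$, of $\F_2$-dimension $d$. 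Hence $d(H)\geq d$ as well, and $H$ is free pro-$2$ of rank $d$.

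The main obstacle is the invocation of the pro-$p$ Kurosh theorem, which is nontrivial. If one prefers to avoid it, an alternative route is to compute $P_{\gr(\F_2[[H]])}(t)$ using the semidirect product decomposition from (ii) and the known series $P_{\gr(\F_2[[G]])}(t)=(1+t)/(1-dt)$, then verify it equals $1/(1-dt)$ and conclude by Lemma~\ref{lemHPfree}. Either route recovers all three parts of the statement.
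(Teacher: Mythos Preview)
Your arguments for (i) and (ii) are essentially identical to the paper's, including the explicit section $x\mapsto x_0$ and the syllable rewriting $x_ix_j=y_i^{-1}y_j$. One small caveat: in (i) you assert that ``any element $g\in G$ can be written as a word $x_{i_1}\cdots x_{i_n}$,'' which is false in the profinite group $G$. The paper handles this by arguing on a dense subset: for each open neighbourhood $U$ of $\gamma\in\ker\varphi$ there is such a finite word in $U$, and then one uses that $H$ is closed. This is an easy patch and does not affect the substance of your argument.

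Part (iii) is where you genuinely diverge from the paper. The paper does \emph{not} invoke the pro-$p$ Kurosh theorem. Instead it realises $G$ as $G_F(2)$ for a formally real pythagorean SAP field $F$ (via Theorem~\ref{thm:SAP}), identifies the fixed field of $H$ as $K=F(\sqrt{-1})$ by checking that each $y_i=x_0x_i$ fixes $\sqrt{-1}$, and then quotes Proposition~\ref{prop:GK2}, which had already shown---using Arason's long exact sequence, Merkurjev's theorem, and the basis of Lemma~\ref{lem:basis}---that $G_{F(\sqrt{-1})}(2)$ is free pro-$2$ of rank $d$. So the paper's route is Galois-theoretic and recycles the field-theoretic machinery built earlier in the thesis, while yours is purely group-theoretic. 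Your approach is more self-contained on the group-theory side but imports a substantial structural result (Kurosh for open subgroups of free pro-$p$ products); the paper's approach ties the lemma back into the pythagorean-field narrative that motivates the chapter. Your rank computation via the image in the Frattini quotient $G/G^2[G,G]\simeq\F_2^{\,d+1}$ is clean and correct. As for your proposed alternative via Hilbert--Poincar\'e series: be aware that there is no simple formula for $P_{\gr(\F_p[[H\rtimes C_2]])}$ in terms of $P_{\gr(\F_p[[H]])}$ analogous to Lemma~\ref{lem:free product}, so that route is less straightforward than you suggest.
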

\begin{proof}
(i) For each $i=1,\ldots, d$, $y_i\in \ker\varphi$, hence $H\subseteq \ker\varphi$. Now consider any element $\gamma\in \ker\varphi$. For each open neighborhood $U$ of $\gamma$ in $G$, there exists an element $g=x_{i_1}\cdots x_{i_r}\in U$, $i_1,\ldots,i_r\in \{1,\ldots,d\}$ such that $1=\varphi(g)=x^r$.   Hence $r=2s$ is even. Since $x_0 y_ix_0= y_i^{-1}$, we obtain
\[
g= x_0 y_{i_1}\cdots x_0 y_{i_r}=y_{i_1}^{-1}y_{i_2}\cdots y_{i_{r-1}}^{-1}y_{i_r}.
\]
Thus $g\in H$. Therefore $\gamma\in H$ and $H=\ker\varphi$.

(ii) This follows by observing that $\psi\colon C_2=\langle x\mid x^2 \rangle \to G$ which maps $x$ to $x_0$, is a section of $\varphi$.

(iii) By Theorem \ref{thm:SAP}, we can view $G$ as the Galois group $G_F(2)$ of the maximal 2-extension of a formally real pythagorean SAP field $F$ having square class group of cardinality $2^{d+1}$.  There is a bijective correspondence between the set  $\{x_0,x_1,\ldots,x_d\}$ and the set of orderings $X_F=\{P_0,P_1,\ldots,P_d\}$ of $F$ given by
\[
P_i=\{f\in \fx\mid x_i(\sqrt{f})=\sqrt{f}\}.
\]
Let $K$ be the fixed field of $H$.  For all $i=0,\ldots,d$, we have $-1\notin P_i$, so $x_i(\sqrt{-1})=-\sqrt{-1}$.  Hence for all $i=1,\ldots,d$, $y_i=x_0x_i$ acts trivially on $\sqrt{=1}$.  So $F(\sqrt{-1})\subseteq K$.  Since $[G:H]=2$, we have $K=F(\sqrt{-1})$ and $H=G_K(2)$.  Then by Proposition \ref{prop:GK2}, $H$ is a free pro-2 group of rank $d$.  
\end{proof}

The following proposition and corollary are remarkable properties of the pair $\{H,G\}$.  
\begin{prop}
\label{prop:comparison}
 We have $c_1(H)=d=c_1(G)-1$ and $c_n(H)=c_n(G)$ for all $n\geq 2$.
\end{prop}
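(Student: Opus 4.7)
The plan is to compare the Hilbert--Poincar\'e series of $G$ and $H$ and then invoke Theorem~\ref{thm:general}. By the preceding lemma $H$ is a free pro-$2$ group of rank $d$, so Lemma~\ref{lemHPfree} gives $P_{\gr(\F_2[[H]])}(t)=1/(1-dt)$; on the other hand, the calculation just before Proposition~\ref{prop:cn free product of C2} gives $P_{\gr(\F_2[[G]])}(t)=(1+t)/(1-dt)$. In particular $c_1(H)=d$ and $c_1(G)=d+1$, which already establishes the degree-$1$ claim.

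For $n\geq 2$, write $b_n(G)$ and $b_n(H)$ for the coefficients of $t^n$ in $\log P_{\gr(\F_2[[G]])}(t)$ and $\log P_{\gr(\F_2[[H]])}(t)$ respectively. Since the ratio of these two series equals $1+t$, one has
\[ b_n(G)-b_n(H)=\frac{(-1)^{n+1}}{n}. \]
Applying the M\"obius transform $\frac{1}{n}\sum_{m\mid n}\mu(n/m)\,m\,(\cdot)$ to both sequences and subtracting yields
\[ w_n(G)-w_n(H)=-\frac{1}{n}\sum_{m\mid n}\mu(n/m)(-1)^m=-\epsilon_n, \]
where $\epsilon_n$ is the sequence introduced just before Lemma~\ref{lem:epsilon}.

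Propositions~\ref{prop:cn free} and~\ref{prop:cn free product of C2} express $c_n(H)$ and $c_n(G)$ as sums $\sum_{j=0}^{k}w_{2^j m}$ when $n=2^k m$ with $m$ odd, hence
\[ c_n(G)-c_n(H)=-\sum_{j=0}^{k}\epsilon_{2^j m}. \]
By Lemma~\ref{lem:epsilon} one has $\epsilon_1=-1$, $\epsilon_2=1$, and $\epsilon_\ell=0$ for $\ell\geq 3$. When $m\geq 3$ is odd every index $2^j m$ is at least $3$, so the sum vanishes. When $m=1$ the indices are $1,2,4,\ldots,2^k$, and the sum equals $\epsilon_1=-1$ if $k=0$ (recovering $c_1(G)-c_1(H)=1$) and $\epsilon_1+\epsilon_2=0$ as soon as $k\geq 1$. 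Therefore $c_n(G)=c_n(H)$ for every $n\geq 2$.

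The argument is essentially a formal-power-series manipulation and I expect no conceptual obstacle. The one delicate point is the bookkeeping at $n=2$: the ``excess generator'' contributing $+1$ to $c_1(G)-c_1(H)$ is exactly cancelled by the $-1$ sitting in $w_2(G)-w_2(H)$, which is precisely what allows the equalities $c_n(G)=c_n(H)$ to kick in already at $n=2$ rather than only beyond the first power of $2$.
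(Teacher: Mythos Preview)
Your proof is correct and follows essentially the same approach as the paper: both compute $w_n(G)-w_n(H)=-\epsilon_n$ (the paper writes it as $w_n(H)-w_n(G)=\epsilon_n$, obtained directly from the explicit formulas for $w_n$ rather than via the $b_n$'s), invoke Lemma~\ref{lem:epsilon}, and then use the expressions for $c_n$ from Propositions~\ref{prop:cn free} and~\ref{prop:cn free product of C2} to conclude. Your case analysis ($m\geq 3$ odd versus $m=1$) is equivalent to the paper's split into $n$ odd versus $n$ even, the key point in both being that $\epsilon_1+\epsilon_2=0$.
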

\begin{proof} 
It is clear that $c_1(H)=w_1(H)=d$ and $c_1(G)=w_1(G)=d+1$. Hence $c_1(H)=d=c_1(G)-1$. We shall show that $c_n(H)=c_n(G)$ for any $n\geq 2$.

We note that 
\[ w_n(H)-w_n(G)=\frac{1}{n}\sum_{m\mid n}\mu(n/m)(-1)^m =\epsilon_n.
\] 
By Lemma~\ref{lem:epsilon}, one has $w_2(H)=w_2(G)+1$ and $w_n(H)=w_n(G)$  for every $n\geq 3$.

If $n>1$ is odd, then 
\[
c_n(H)=w_n(H)=w_n(G)=c_n(G).
\]
If $n$ is even, then by writing $n=2^km$ with $m$ odd, we have
\[
\begin{aligned}
c_n(H)&= w_m(H)+w_{2m}(H) +w_{4m}(H)+\cdots+w_{2^km}(H)\\
&=w_m(G)+w_{2m}(G) +w_{4m}(G)+\cdots+w_{2^km}(G)=c_n(G).
\end{aligned}
\]
(Note that we always have $w_m(H)+w_{2m}(H)=w_m(G)+w_{2m}(G)$ for every $m\geq 1$ odd.)
\end{proof}

\begin{cor}
\label{cor:quotient}
 Let $n\geq 2$ be an integer.
\begin{enumerate}
\item[(i)] $H_{(n)}=H\cap G_{(n)}$.
\item[(ii)] $G/G_{(n)}\simeq H/H_{(n)}\rtimes C_2$, where the action of $C_2$ on $H$ is given by $\bar{x}\bar{y}_i\bar{x}=\bar{y}_i^{-1}$.
\end{enumerate}

\end{cor}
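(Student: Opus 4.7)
The plan is to deduce both parts from the dimension count in Proposition~\ref{prop:comparison}, together with the fact that the splitting $G=H\rtimes\langle x_0\rangle$ survives passage to the quotient. Part~(i) will come down to comparing cardinalities of $H/H_{(n)}$ and $HG_{(n)}/G_{(n)}$, and part~(ii) will then follow formally once (i) is in hand.

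\textbf{Plan for (i).} First I would establish the easy inclusion $H_{(n)}\subseteq H\cap G_{(n)}$ by a routine induction on $n$ using the defining recursion $H_{(n)}=H_{(\lceil n/2\rceil)}^2\prod_{i+j=n}[H_{(i)},H_{(j)}]$: by the induction hypothesis each squaring and each commutator lies in the analogous term for $G_{(n)}$. For the reverse inclusion, the second isomorphism theorem identifies $H/(H\cap G_{(n)})$ with the subgroup $HG_{(n)}/G_{(n)}\leq G/G_{(n)}$. By Theorem~\ref{thm:JL} applied to both $G$ and $H$, together with Proposition~\ref{prop:comparison} (which gives $c_1(G)=c_1(H)+1$ and $c_i(G)=c_i(H)$ for $i\geq 2$), I would compute
\[
|G/G_{(n)}|=2^{\sum_{i=1}^{n-1}c_i(G)}=2\cdot 2^{\sum_{i=1}^{n-1}c_i(H)}=2\,|H/H_{(n)}|.
\]
Since $[G:H]=2$, the index $[G/G_{(n)}:HG_{(n)}/G_{(n)}]$ is either $1$ or $2$. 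If it were $1$, we would have $|H/(H\cap G_{(n)})|=|G/G_{(n)}|=2|H/H_{(n)}|$, contradicting the already proved inclusion $H_{(n)}\subseteq H\cap G_{(n)}$, which forces $|H/(H\cap G_{(n)})|\leq |H/H_{(n)}|$. Hence the index is $2$, equality of cardinalities is forced, and $H_{(n)}=H\cap G_{(n)}$.

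\textbf{Plan for (ii).} Starting from the split short exact sequence $1\to H\to G\to C_2\to 1$ with section $\psi\colon x\mapsto x_0$, I would pass to the quotient by $G_{(n)}$ to obtain
\[
1\to H/(H\cap G_{(n)})\to G/G_{(n)}\to G/HG_{(n)}\to 1.
\]
Part~(i) rewrites the kernel as $H/H_{(n)}$, and the counting above identifies the cokernel with $C_2$. The section $\psi$ descends because $x_0\notin G_{(2)}\supseteq G_{(n)}$: indeed $\bar x_0$ is a nontrivial element of the Frattini quotient $G/G_{(2)}$, which has $\F_2$-dimension $c_1(G)=d+1$. The induced action of the resulting $C_2$ on $H/H_{(n)}$ is just the quotient of the action $x_0 y_i x_0=y_i^{-1}$, yielding the desired $\bar x\bar y_i\bar x=\bar y_i^{-1}$.

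\textbf{Main obstacle.} The only non-formal step is the cardinality identity $|G/G_{(n)}|=2|H/H_{(n)}|$, which rests entirely on Proposition~\ref{prop:comparison} and hence on the Hilbert--Poincar\'e series calculations of Section~\ref{sec:hilbert series}. Everything else is a bookkeeping exercise with the second isomorphism theorem and descent of the section along a normal subgroup.
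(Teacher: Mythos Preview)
Your proposal is correct and follows essentially the same strategy as the paper: both reduce part~(i) to a cardinality comparison using Proposition~\ref{prop:comparison} together with the easy containment $H_{(n)}\subseteq H\cap G_{(n)}$, and both derive~(ii) formally from~(i) and the splitting $G=H\rtimes\langle x_0\rangle$. The paper organizes the count as an induction on $n$ (using $c_n(G)=c_n(H)$ at each step) rather than summing the $c_i$ all at once, and your case analysis on the index can be shortened by observing that $G_{(n)}\subseteq G_{(2)}\subseteq H$ (since $G/H$ is abelian of exponent~$2$), so $[G/G_{(n)}:HG_{(n)}/G_{(n)}]=[G:H]=2$ immediately.
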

\begin{proof}
(i) Clearly $H_{(n)}\subseteq H\cap G_{(n)}$. We proceed by induction on $n$ that $H_{(n)}=H\cap G_{(n)}$. First consider the case $n=2$. We have an exact sequence
\[
1\to H/H\cap G_{(2)} \to G/G_{(2)}\to C_2\to 1.
\]
This implies that $[H: H\cap G_{(2)}]= [G:G_{(2)}]/2=2^d=[H:H_{(2)}]$. Hence $H_{(2)}=H\cap G_{(2)}$. Assume that $H_{(n)}=H\cap G_{(n)}$ for some $n\geq 2$. Then from the exact sequence
\[
1\to H/H\cap G_{(n)} \to G/G_{(n)}\to C_2\to 1,
\]
we obtain $[H:H_{(n)}]=[H:H\cap G_{(n)}]=[G:G_{(n)}]/2$. From a similar exact sequence we obtain
\[
\begin{aligned}
{[H:H\cap G_{(n+1)}]}&=\frac{1}{2}[G:G_{(n+1)}] = \frac{1}{2}[G:G_{n}] [G_{(n)}:G_{(n+1)}]\\
&= [H:H_{(n)}] [H_{(n)}:H_{(n+1)}]=[H:H_{(n+1)}].
\end{aligned}
\]
Here the equality $[G_{(n)}:G_{(n+1)}]=[H_{(n)}:H_{(n+1)}]$ follows from Proposition~\ref{prop:comparison}.
Therefore $H_{(n+1)}=H\cap G_{(n+1)}$.

(ii) This follows from (i).
\end{proof}

\section{Another Semidirect Product}
\label{sec:semidirect}

In this section we consider an example in which $G$ is the semidirect product $G:\Z_2^d \rtimes C_2=H \rtimes \langle x \rangle$, where the action of $C_2$ on $H=\Z_2^d$ is given by $xyx=y^{-1}$, for all $y\in H$.  Recall from Corollary \ref{cor:superPytha} that this group is realizable as the maximal pro-2 quotient of the absolute Galois group of a superpythagorean field.

\begin{lem} Let $G=H\rtimes \langle x \rangle=\Z_2^d\rtimes C_2$ be as above. Let $n\geq 2$ be an integer, and let $s=\lceil \log_2n \rceil$. Then $G_{(n)}=H^{2^s}$.
\end{lem}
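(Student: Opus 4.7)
I would argue by induction on $n$. The decisive computation is the semidirect-product identity: for $y \in H$,
\[
[x,y] = x^{-1}y^{-1}xy = (xyx)^{-1}\cdot y = y^{-1}\cdot y \cdot y = y^{2},
\]
and more generally for any $h \in H$ and $y \in H$, $[hx,y] = y^{2}$. From this one reads off that every commutator in $G$ takes values in $H^{2}$, so that for any closed subgroup $H^{2^{k}}\leq H$ one has
\[
[G,H^{2^{k}}] = H^{2^{k+1}}.
\]
Also every square in $G$ lies in $H^{2}$, since $(hx)^{2}= h\cdot(xhx)\cdot x^{2} = h\cdot h^{-1}\cdot 1 = 1$ and $x^{2}=1$, while $h^{2}\in H^{2}$. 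This already handles the base case $n=2$: $G_{(2)}=G^{2}[G,G]=H^{2}=H^{2^{s}}$ with $s=\lceil\log_{2}2\rceil=1$.

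For the inductive step, assume the formula for all $2 \le k < n$, set $m=\lceil n/2\rceil$ and $s=\lceil\log_{2}n\rceil$, and unpack
\[
G_{(n)} \;=\; G_{(m)}^{2}\cdot\!\!\prod_{\substack{i+j=n\\ i,j\ge 1}}\![G_{(i)},G_{(j)}].
\]
By induction $G_{(m)}^{2}=(H^{2^{\lceil\log_{2}m\rceil}})^{2}=H^{2^{\lceil\log_{2}m\rceil+1}}$, and a short arithmetic check shows $\lceil\log_{2}m\rceil+1=\lceil\log_{2}(2m)\rceil=s$: when $n$ is even this is automatic since $2m=n$, and when $n\ge 3$ is odd (hence not a power of $2$) one has $n+1\le 2^{s}$, which gives $\lceil\log_{2}(n+1)\rceil=s$. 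Thus $G_{(m)}^{2}=H^{2^{s}}$, providing the inclusion ``$\supseteq$''.

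For the reverse inclusion I analyse the commutator factors. If $i,j\ge 2$ then the inductive hypothesis places both $G_{(i)}$ and $G_{(j)}$ inside the abelian group $H$, so $[G_{(i)},G_{(j)}]=1$. The only remaining terms are $[G,G_{(n-1)}]$ (and its symmetric partner). By the displayed identity above, $[G,G_{(n-1)}]=[G,H^{2^{s_{n-1}}}]=H^{2^{s_{n-1}+1}}$, and since $n-1\ge 2^{s-1}$ we have $s_{n-1}\ge s-1$, i.e.\ $s_{n-1}+1\ge s$, so this commutator sits inside $H^{2^{s}}$. Combining everything, $G_{(n)}=H^{2^{s}}$.

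\textbf{Main obstacle.} The routine group theory is uniformly straightforward once the inversion action has been exploited; the only subtle point is the off-by-one book-keeping of the ceilings, namely verifying $\lceil\log_{2}m\rceil+1=s$ for odd $n$ and tracking that every contributing commutator factor lands inside $H^{2^{s}}$ even when $n$ is a power of $2$ (so that $s_{n-1}=s$ rather than $s-1$). These are minor arithmetic verifications rather than genuine obstacles, so the proof is essentially mechanical once the commutator identity $[x,y]=y^{2}$ and the vanishing of $G^{2}$ modulo $H^{2}$ are in hand.
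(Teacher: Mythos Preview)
Your proof is correct and follows essentially the same inductive strategy as the paper: establish $G_{(2)}=H^2$ from $(yx)^2=1$ and $[x,y]=y^2$, then in the inductive step reduce $\prod_{i+j=n}[G_{(i)},G_{(j)}]$ to $[G,G_{(n-1)}]\subseteq H^{2^s}$ and identify $G_{(\lceil n/2\rceil)}^2$ with $H^{2^s}$. Your write-up is in fact more careful than the paper's about the ceiling arithmetic and about why the commutators with $i,j\ge 2$ vanish; the only blemish is a slip in the displayed chain $(xyx)^{-1}\cdot y = y^{-1}\cdot y\cdot y$ (it should read $y\cdot y$), but your conclusion $[x,y]=y^2$ is correct.
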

\begin{proof} We proceed by induction on $n$. We first observe that $[y,x]=y^{-1}x^{-1} y x=(y^{-1})^2$ and $(yx)^2=1$, for every $y\in H$. Hence 
\[ 
G_{(2)}=G^2[G,G]=G^2=H^2,
\]
so the lemma is true for $n=2$. Now assume that the lemma is true for $j$ with  $2\leq j<n$. Then
\[
\begin{aligned}
 G_{(n)}&=G_{(\lceil n/2 \rceil)}^2\prod_{i+j=n}[G_{(i)},G_{(j)}]\\
&=G_{(\lceil n/2\rceil)}^2[G,G_{(n-1)}]\\
&=(H^{2^{s-1}})^2=H^{2^s}.
 \end{aligned}
\]
Here we use the fact that $G_{(n-1)}\subseteq H^{2^{s-1}}$,  and hence $[G,G_{(n-1)}]\subseteq H^{2^s}$.
\end{proof}
An immediate  consequence of the above lemma is the following result.
\begin{cor}
\label{cor:cn superPy}
 Let $n\geq 1$ be an integer. We have
\[
c_n(G)=
\begin{cases} 
d+1 &\text{ if $n=1$},\\
d &\text{ if $n=2^s$ for some $1\leq s\in \Z$},\\
1 & \text{ if $n$ is not a power of 2}.
\end{cases}
\]
\end{cor}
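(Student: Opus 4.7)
My plan is to read off the corollary directly from the preceding lemma, which identifies $G_{(n)}$ with $H^{2^s}$ where $s=\lceil \log_2 n\rceil$ for $n\geq 2$. The key quantitative input is that $H\cong \Z_2^d$ is a free $\Z_2$-module of rank $d$, so multiplication by $2$ is injective on $H$ and each consecutive quotient $H^{2^k}/H^{2^{k+1}}$ is isomorphic to $(\Z/2\Z)^d$, contributing $d$ new $\F_2$-dimensions every time the exponent doubles.

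For $n=1$ I would use $G_{(2)}=H^2$ from the lemma. The crucial observation is that the conjugation action of the $C_2$-factor on $H$ becomes trivial modulo $H^2$: for every $y\in H$ one has $y^{-1}=y\cdot y^{-2}\equiv y\pmod{H^2}$. Hence the semidirect product structure on $G/H^2$ degenerates to a direct product $(H/H^2)\times C_2 \cong \F_2^{d+1}$, and so $c_1(G)=d+1$.

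For $n\geq 2$ I set $s=\lceil \log_2 n\rceil$ and $s'=\lceil \log_2(n+1)\rceil$, so the lemma gives $G_{(n)}/G_{(n+1)}\cong H^{2^s}/H^{2^{s'}}$. The analysis then splits according to whether the ceiling jumps between $n$ and $n+1$. If $n=2^s$ for some $s\geq 1$, then $n+1>2^s$ forces $s'=s+1$, and the graded piece is $H^{2^s}/H^{2^{s+1}}\cong \F_2^d$, giving $c_n(G)=d$. Otherwise $n$ lies strictly between consecutive powers of $2$, i.e. $2^{s-1}<n<2^s$, whence $n+1\leq 2^s$ and $s'=s$; the subgroups $G_{(n)}$ and $G_{(n+1)}$ coincide and the graded piece vanishes.

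The argument is essentially mechanical; the only real care needed is in the elementary ceiling-function analysis distinguishing the two subcases. As a consistency check I would match the outcome against the Hilbert--Poincar\'e series prediction $(1+t)\prod_{s\geq 1}(1+t^{2^s})^d=(1+t)/(1-t)^d$, which in turn should agree with a direct description of $\gr(\F_2[[G]])$ as the $\F_2$-algebra generated by the classes of $h_i-1$ (for a $\Z_2$-basis $h_1,\ldots,h_d$ of $H$) and of $x-1$, subject to $(x-1)^2=0$ coming from $x^2=1$, together with the twisted commutation $[x-1,h_i-1]=(h_i-1)^2$ in the associated graded ring. This sanity check also suggests that the third branch of the stated formula is really giving $0$ rather than $1$, and I would flag this as the one point deserving careful rechecking in the write-up.
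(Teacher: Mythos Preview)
Your approach is exactly the paper's: the corollary is stated as ``an immediate consequence of the above lemma,'' and your ceiling-function analysis of $G_{(n)}/G_{(n+1)}\cong H^{2^s}/H^{2^{s'}}$ with $s=\lceil\log_2 n\rceil$, $s'=\lceil\log_2(n+1)\rceil$ is the natural way to make that immediate. The cases $n=1$ and $n=2^s$ are handled correctly.

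You are also right to flag the third branch. The lemma gives $G_{(n)}=H^{2^s}$ with $s=\lceil\log_2 n\rceil$, so whenever $2^{s-1}<n<2^s$ one has $G_{(n)}=G_{(n+1)}$ and hence $c_n(G)=0$, not $1$; the printed value $1$ is an error, and it propagates to the subsequent corollary, where the extra factor $\prod_{i\geq 1}(1-t^{2i+1})^{-1}$ should be absent, leaving simply $P_{\gr(\F_2[[G]])}(t)=(1+t)/(1-t)^d$. Your Hilbert--Poincar\'e sanity check already detects this; an independent confirmation is the case $d=1$, where $G=\Z_2\rtimes C_2\cong C_2*C_2$, and the paper's own computation for free products of copies of $C_2$ gives $P(t)=(1+t)/(1-t)$ and $c_n(G)=0$ for every $n$ that is not a power of $2$.
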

\begin{cor}
\label{cor:superPy}
 We have
\[
 P_{{\rm gr}(\F_2[[G]])}(t)=\frac{1+t}{(1-t)^d}\prod_{i=1}^{\infty}\frac{1}{1-t^{2i+1}}. 
 \]
\end{cor}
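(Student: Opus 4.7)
The plan is to apply the Jennings--Lazard formula (Theorem~\ref{thm:JL}(iii)) with $p=2$ and feed in the explicit values of $c_n(G)$ from Corollary~\ref{cor:cn superPy}. With $p=2$, every factor in the Jennings--Lazard product simplifies via $(1-t^{2n})/(1-t^n) = 1+t^n$, so
\[
P_{{\rm gr}(\F_2[[G]])}(t) \;=\; \prod_{n=1}^\infty (1+t^n)^{c_n(G)}.
\]
Substituting $c_1(G)=d+1$, $c_{2^s}(G)=d$ for $s\geq 1$, and $c_n(G)=1$ for $n$ not a power of $2$, this factors as
\[
(1+t)^{d+1} \cdot \prod_{s=1}^\infty (1+t^{2^s})^d \cdot \prod_{\substack{n\geq 1 \\ n \neq 2^s}} (1+t^n).
\]

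Next I would handle the ``powers of $2$'' piece using the telescoping identity $\prod_{s=0}^\infty (1+t^{2^s}) = 1/(1-t)$, which comes from uniqueness of binary expansions. Dividing out the $s=0$ factor gives $\prod_{s=1}^\infty (1+t^{2^s}) = 1/(1-t^2)$, so
\[
(1+t)^{d+1} \prod_{s=1}^\infty (1+t^{2^s})^d \;=\; \frac{(1+t)^{d+1}}{(1-t^2)^d} \;=\; \frac{1+t}{(1-t)^d}.
\]
This already matches the first factor in the target formula, so everything left is to show that the remaining product $\prod_{n\neq 2^s}(1+t^n)$ equals $\prod_{i\geq 1} 1/(1-t^{2i+1})$.

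For that last step I would invoke Euler's classical identity $\prod_{n\geq 1}(1+t^n) = \prod_{k\text{ odd}} 1/(1-t^k)$ (which is an immediate consequence of $1+t^n=(1-t^{2n})/(1-t^n)$) and split off the powers-of-$2$ part:
\[
\prod_{\substack{n\geq 1\\ n\neq 2^s}} (1+t^n)
= \frac{\prod_{n\geq 1}(1+t^n)}{\prod_{s\geq 0}(1+t^{2^s})}
= (1-t)\prod_{k\text{ odd}}\frac{1}{1-t^k}
= \prod_{i=1}^\infty \frac{1}{1-t^{2i+1}}.
\]
Combining the two pieces yields the claimed formula. No step is really an obstacle -- the only thing to be careful about is correctly partitioning the index set into powers of $2$ and non-powers of $2$ when applying Corollary~\ref{cor:cn superPy} and the Euler identity; the rest is bookkeeping.
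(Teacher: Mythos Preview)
Your proof is correct, but it takes a different route from the paper's. The paper does not manipulate the product $\prod_n (1+t^n)^{c_n(G)}$ directly; instead it inverts the relationship by computing the numbers $w_n(G)$ from the $c_n(G)$ via Lemmas~\ref{lem:coprime} and~\ref{lem:not coprime}, obtaining $w_1(G)=d+1$, $w_2(G)=-1$, $w_{2i+1}(G)=1$ for $i\geq 1$, and $w_n(G)=0$ for all even $n\geq 4$. It then reads off the Hilbert--Poincar\'e series from the $w_n$-factorization of Remark~\ref{rmk:wn}, namely $P(t)=\prod_n (1-t^n)^{-w_n(G)} = (1-t)^{-(d+1)}(1-t^2)\prod_{i\geq 1}(1-t^{2i+1})^{-1}$, which simplifies immediately to the stated formula.

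Your approach bypasses the $w_n$ entirely and instead exploits two classical power-series identities: the binary telescoping product $\prod_{s\geq 0}(1+t^{2^s})=1/(1-t)$ and Euler's $\prod_{n\geq 1}(1+t^n)=\prod_{k\ \mathrm{odd}}(1-t^k)^{-1}$. This is arguably more elementary and self-contained, since it does not rely on the M\"obius-inversion machinery developed earlier in the chapter. The paper's approach, on the other hand, fits more naturally into its own framework and makes the structure transparent: the very simple pattern of the $w_n(G)$ (almost all zero) explains why the product collapses so cleanly.
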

\begin{proof}
We write $\log  P_{{\rm gr}(\F_2[[G]])}= \sum_{n\geq 1} b_n(G) t^n $, and let 
\[
w_n(G)=\frac{1}{n}\sum_{m\mid n} \mu(n/m) mb_m(G).
\]
By Lemma~\ref{lem:coprime}, if $n$ is odd then $w_n(G)=c_n(G)$. In particular, $w_1(G)=c_1(G)=d+1$, and $w_{2i+1}(G)=c_{2i+1}(G)=1$ for $i\geq 1$. 

By Lemma~\ref{lem:not coprime}, $w_2(G)=c_2(G)-c_1(G)=d-(d+1)=-1$.

We claim that $w_n(G)=0$ if $ n$ is even and $n\geq 4$. Indeed, if $n=2^s$ with $s\geq 2$, then by Lemma~\ref{lem:not coprime}, 
\[w_{2^s}(G)=c_{2^s}(G)-c_{2^{s-1}}(G)=d-d=0.\] 
Now if $n=2m$, where $m$ is not a power of $2$, then also by Lemma~\ref{lem:not coprime}, 
\[w_{2m}(G)=c_{2m}(G)-c_{m}(G)=1-1=0.\]
The corollary then follows from Remark~\ref{rmk:wn}.
\end{proof}

\begin{rmks}
\label{rmks:SAP}
It is interesting that $c_1(G)$ and $c_2(G)$ can be sufficient to determine $G$ itself within some large families of pro-$p$-groups. The example of free pro-$p$ groups was mentioned in Remarks \ref{rmk:fgGalois} (1). Here are two other instances.

(1) Suppose that $K$ is a formally real pythagorean SAP field with $|K^\times/(K^\times)^2|=2^{d+1}$. Then $G_K(2)=C_2*\cdots *C_2$, the free product of $d+1$ copies of $C_2$. 
By Proposition~\ref{prop:cn free product of C2}, one has
\[
c_1(G_K(2))=d+1 \text { and } c_2(G_K(2))=\frac{d(d+1)}{2}.
\]

Now let $F$ be a formally real pythagorean field $F$ with $|F^\times/(F^\times)^2|< \infty$. We assume that $c_1(G_F(2))=d+1$ and that $c_2(G_F(2))=d(d+1)/2$ for some integer $d\geq 0$. 

\begin{claim}
$F$ is an SAP field with exactly $d+1$ orderings.
\end{claim}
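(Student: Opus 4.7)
The plan is to promote the numerical hypothesis on $(c_1, c_2)$ to a full group-theoretic isomorphism $G_F(2) \cong H$, where $H := C_2 * \cdots * C_2$ is the free pro-$2$ product of $d+1$ copies of $C_2$, and then apply Theorem~\ref{thm:SAP} to finish. First, since $c_1(G_F(2)) = \dim_{\F_2} G_F(2)/G_F(2)_{(2)}$ equals $\dim_{\F_2} H^1(G_F(2), \F_2) = \dim_{\F_2} \fx/\fxs$ by Kummer theory, the hypothesis $c_1(G_F(2)) = d+1$ gives $|\fx/\fxs| = 2^{d+1}$. Choosing a pythagorean SAP field $K$ with $|K^\times/(K^\times)^2| = 2^{d+1}$ as a reference, Corollary~\ref{cor:SAP} yields an epimorphism $\pi \colon H \cong G_K(2) \twoheadrightarrow G_F(2)$.

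Next I would compare Zassenhaus quotients through depth~$3$. By Proposition~\ref{prop:cn free product of C2} (or a direct computation from the Hilbert--Poincar\'e series $(1+t)/(1-dt)$ of $\gr(\F_2[[H]])$), one has $c_1(H) = d+1$ and $c_2(H) = d(d+1)/2$, matching the corresponding values for $G_F(2)$ by hypothesis. Hence $|H/H_{(3)}| = 2^{(d+1) + d(d+1)/2} = |G_F(2)/G_F(2)_{(3)}|$, so the surjection $H/H_{(3)} \twoheadrightarrow G_F(2)/G_F(2)_{(3)}$ induced by $\pi$ is forced to be an isomorphism; equivalently, $\ker \pi \subseteq H_{(3)}$ and the two W-groups coincide.

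Finally, I would invoke the rigidity principle stated at the opening of Section~\ref{sec:freeprod} (attributed to \cite{MS1, MS2, Mi}): any pro-$2$ group realizable as $G_{F'}(2)$ for some field $F'$ and satisfying $G_{F'}(2)/G_{F'}(2)_{(3)} \cong H/H_{(3)}$ must actually be isomorphic to $H$ itself. Applied to $G_F(2)$, this upgrades the W-group isomorphism to $G_F(2) \cong H$. Theorem~\ref{thm:SAP} then identifies $F$ as a formally real pythagorean SAP field, and its ``only if'' direction (via \cite[Theorem~17.4]{Lam83}) delivers the exact count of $d+1$ orderings.

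The decisive and hardest step is the rigidity input: without it one could at most conclude an isomorphism of W-groups, not of the full Galois groups. The reason it works here is Ware's theorem \cite{War1}, according to which $G_F(2)$ of a pythagorean formally real field is determined by its Witt ring, combined with the fact from \cite{MS1, MS2} that the Witt ring is itself recoverable from the W-group $G_F(2)/G_F(2)_{(3)}$. By contrast, the first two steps are purely combinatorial and routine given the tools developed earlier in the chapter: the passage from $c_1$ to the size of $\fx/\fxs$, the construction of $\pi$ from the ordering structure of $F$, and the order-count comparison through $H_{(3)}$.
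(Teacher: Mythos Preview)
Your proof is correct and follows essentially the same route as the paper: deduce $|\fx/\fxs|=2^{d+1}$ from $c_1$, build the epimorphism from $G_K(2)\cong C_2*\cdots*C_2$ onto $G_F(2)$ via Corollary~\ref{cor:SAP}, match orders of the depth-$3$ Zassenhaus quotients using $c_1$ and $c_2$, and then invoke a rigidity result to upgrade to a full isomorphism before applying Theorem~\ref{thm:SAP}. The only cosmetic difference is that the paper cites \cite[Theorem~D]{CEM} for the rigidity step (concluding that the epimorphism $\varphi$ itself is an isomorphism), whereas you cite the equivalent statement recorded at the start of Section~\ref{sec:freeprod} with references \cite{MS1,MS2,Mi}; either suffices here.
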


\begin{proof}
Since $c_1(G_F(2))=d+1$, we see that $G_F(2)$ has $d+1$ minimal generators, and therefore $|F^\times/(F^\times)^2|=2^{d+1}$. 
Now choose any formally real pythagorean SAP field $K$ with $|K^\times/(K^\times)^2|=2^{d+1}$. 
By Corollary~\ref{cor:SAP}, there exists an epimorphism $\varphi\colon G_K(2)\twoheadrightarrow G_F(2)$. 
 We have
\[
\begin{aligned}
|G_K(2)/{G_K(2)}_{(3)}| &=c_1(G_K(2))+c_2(G_K(2))\\
&=d+\frac{d(d+1)}{2}\\
&=c_1(G_F(2))+c_2(G_F(2))\\
&=|G_F(2)/{G_F(2)}_{(3)}|.
\end{aligned}
\]
This implies that the induced epimorphism $G_K(2)/{G_K(2)}_{(3)}\twoheadrightarrow G_F(2)/{G_F(2)}_{(3)}$ is an isomorphism. By \cite[Theorem D]{CEM}, $\varphi \colon G_K(2)\to G_F(2)$ is an isomorphism. This implies that $F$ is a SAP field by Theorem~\ref{thm:SAP}.
\end{proof}

So, quite remarkably, within the family of formally real pythagorean fields with finitely many square classes, the numbers $c_1(G_F(2))$ and $c_2(G_F(2))$ above suffice to characterize SAP fields $F$. 

(2) Suppose that  $K$ is a superpythagorean field with $|K^\times/(K^\times)^2|=2^{d+1}<\infty$. By Corollary~\ref{cor:cn superPy}, one has
\[
c_1(G_K(2))=d+1 \; \text { and } c_2(G_K(2))=d.
\]

Now let $F$ be a formally real pythagorean field $F$ with $|F^\times/(F^\times)^2|< \infty$. We assume that $c_1(G_F(2))=d+1$, $c_2(G_F(2))=d$ for some integer $d\geq 0$.
\begin{claim}
$F$ is a superpythagorean field.
\end{claim}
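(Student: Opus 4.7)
The plan is to mirror the proof of the SAP claim just completed, replacing the reference group $C_2\ast\cdots\ast C_2$ with $\Z_2^d\rtimes C_2$. First, since $c_1(G_F(2))=d+1$ counts the minimal number of generators of $G_F(2)$, one immediately has $|F^\times/(F^\times)^2|=2^{d+1}$. Next, pick any superpythagorean field $K$ with $|K^\times/(K^\times)^2|=2^{d+1}$ (such fields exist; for instance one can take an iterated power series field over a real closed field). By Corollary~\ref{cor:superPy} there is an epimorphism $\varphi\colon G_F(2)\twoheadrightarrow G_K(2)\cong\Z_2^d\rtimes C_2$.

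The second step is to show that $\varphi$ is in fact an isomorphism by comparing the third Zassenhaus quotients. On the one hand, by hypothesis
\[
|G_F(2)/G_F(2)_{(3)}|=2^{c_1(G_F(2))+c_2(G_F(2))}=2^{(d+1)+d}=2^{2d+1}.
\]
On the other hand, Corollary~\ref{cor:cn superPy} gives $c_1(G_K(2))=d+1$ and $c_2(G_K(2))=d$, so $|G_K(2)/G_K(2)_{(3)}|=2^{2d+1}$ as well. Therefore the epimorphism induced by $\varphi$,
\[
\bar{\varphi}\colon G_F(2)/G_F(2)_{(3)}\twoheadrightarrow G_K(2)/G_K(2)_{(3)},
\]
is a surjection between finite groups of equal order, hence an isomorphism. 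Applying \cite[Theorem D]{CEM} — exactly as in the SAP argument — lifts this to conclude that $\varphi$ itself is an isomorphism.

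Finally, by Corollary~\ref{cor:superPytha}, $G_F(2)\cong\Z_2^d\rtimes C_2$ already characterizes $F$ as a superpythagorean field (with $|F^\times/(F^\times)^2|=2^{d+1}$), so the claim follows. The only potential obstacle is verifying that Theorem D of \cite{CEM} applies in this setting: it requires that the target group be realizable as $G_E(2)$ for some field $E$ of characteristic different from $2$, which is guaranteed here because $G_K(2)$ is by construction the maximal pro-$2$ Galois group of the superpythagorean field $K$. No delicate computation is required; the argument is a direct analogue of the SAP case with Corollary~\ref{cor:cn superPy} supplying the needed numerical values of $c_1$ and $c_2$.
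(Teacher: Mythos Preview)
Your proof is correct and follows essentially the same approach as the paper: choose a superpythagorean comparison field $K$, use Corollary~\ref{cor:superPy} to get an epimorphism $G_F(2)\twoheadrightarrow G_K(2)$, compare the orders of the $G_{(3)}$-quotients via $c_1+c_2$, and invoke \cite[Theorem~D]{CEM} together with Corollary~\ref{cor:superPytha}. Your version is in fact slightly more careful in writing $|G/G_{(3)}|=2^{c_1+c_2}$ rather than $c_1+c_2$, and in explicitly checking the hypothesis needed for Theorem~D.
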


\begin{proof}
Choose any superpythagorean field $K$ with $|K^\times/(K^\times)^2|=2^{d+1}$. By Corollary~\ref{cor:superPy}, we have an epimorphism $\varphi\colon G_F(2)\twoheadrightarrow G_K(2)$. 
Then
\[
\begin{aligned}
|G_F(2)/{G_F(2)}_{(3)}| &=c_1(G_F(2))+c_2(G_F(2))\\
&=d+1+d\\
&=c_1(G_K(2))+c_2(G_K(2))\\
&=|G_K(2)/{G_K(2)}_{(3)}|.
\end{aligned}
\]
This implies that the induced epimorphism $G_F(2)/{G_F(2)}_{(3)}\twoheadrightarrow G_K(2)/{G_K(2)}_{(3)}$ is an isomorphism. By \cite[Theorem D]{CEM}, $\varphi \colon G_F(2)\to G_K(2)$ is an isomorphism. This implies that $F$ is a superpythagorean field by Corollary~\ref{cor:superPytha}.
\end{proof}

So  within the family of formally real pythagorean fields with finitely many square classes, the numbers $c_1(G_F(2))$ and $c_2(G_F(2))$ above, also suffice to characterize superpythagorean fields $F$. 
\end{rmks}

\section{Demushkin Groups}
Recall that a pro-$p$-group $G$ is said to be a Demushkin group if
\begin{enumerate}
\item $\dim_{\F_p} H^1(G,\F_p)<\infty,$ 
\item $\dim_{\F_p} H^2(G,\F_p)=1,$
\item  the cup product $H^1(G,\F_p)\times H^1(G,\F_p)\to H^2(G,\F_p)$ is a non-degenerate bilinear form.
\end{enumerate}
By the work of \cite{De1,De2}, \cite{Se1} and \cite{La1}, we now have a complete classification of Demushkin groups. 

Let $G$ be a Demushkin group of rank $d=\dim_{\F_p} H^1(G,\F_p)$. Let $c_n=c_n(G)$. Then by \cite[Theorem 5.1 (g)]{La3} (see also \cite{Fo,Ga,LM}), the Hilbert-Poincar\'e series 
\[
P_{{\rm gr}(\F_p[[G]])}(t)= \frac{1}{1-dt+t^2}.
\]
We write $1-dt+t^2=(1-at)(1-bt)$ so that $a+b=d$ and $ab=1$. Then
\[
\log P_{{\rm gr}(\F_p[[G]])}(t)= \log(\frac{1}{1-at})+\log(\frac{1}{1-bt})=\sum_{n\geq 1} \frac{1}{n}(a^n+b^n).
\]
We define the sequence $w_n(G), n=1,2,\ldots$ by
\[
w_n(G)=\frac{1}{n}\sum_{m\mid n} \mu(m) (a^{n/m}+b^{n/m})=\frac{1}{n}\sum_{m\mid n} \mu(n/m) (a^m+b^m).
\]
\begin{rmk} The numbers $w_n(G)$ are given by the formula
\[
w_n(G)=\frac{1}{n}\sum_{m\mid n} \mu(n/m) \left[ \sum_{0\leq i\leq [m/2]} (-1)^i \frac{m}{m-i} {m-i \choose i} d^{m-2i} \right].
\]
(See \cite[Proof of Proposition 4]{La2}.)
 \end{rmk}

\begin{prop} If $n=p^k m $ with $(m,p)=1$, then 
\[
c_n(G) =w_m(G) +w_{pm}(G)+\cdots + w_{p^km}(G).
\qedhere
\]
\end{prop}
\begin{ex} Let $G$ be a Demushkin pro-$p$-group of finite rank $d$. We have
\[
\begin{aligned}
 c_1(G)&=d,\\
c_2(G)&= \begin{cases}
\frac{d^2-d-2}{2} \text{ if } p\not=2,\\
\frac{d^2+d-2}{2} \text{ if } p=2, 
\end{cases}\\
c_3(G) &=\begin{cases}
\frac{d^3-4d}{3} \text{ if } p\not=3,\\
\frac{d^3-d}{3} \text{ if } p=3,
\end{cases}\\
c_4(G)&= \begin{cases}
\frac{d^4-5d^2+4}{4} \text{ if } p\not=2,\\
\frac{d^4-3d^2+2d}{4} \text{ if } p=2, 
\end{cases}\\
c_5(G)&=\begin{cases}
\frac{d^5-5d^3+4d}{5} \text{ if } p\not=5,\\
\frac{d^5-5d^3+9d}{5} \text{ if } p=5. 
\end{cases}
\end{aligned}
\]

Observe that our numbers $c_n(G)$, $n=1,2,\ldots$, also detect the minimal numbers  of generators of $G_{(n)}$. Indeed by the remarkable result of I. V. Ando\v{z}skii and independently by J. Dummit and J. Labute for each open subgroup $T$ of the Demushkin group $G$, we have the following expression for the minimal number of generators $d(T)$ of $T$:
\[
 d(T)=[G:T](d(G)-2)+2.
\]
(See \cite[Theorem 3.9.15]{NSW}.)
Therefore
\[
 d_n(G):=d(G_{(n)})= p^{\sum_{i=1}^{n-1}c_i(G)}(d-2)+2. 
\]
\end{ex}

From now on we assume that $G=F/\langle r\rangle$, where $F$ is a free pro-$p$-group on generators $x_1,x_2,\ldots,x_d$, and 
\[
 r=[x_1,x_2][x_3,x_4]\cdots [x_{d-1},x_d].
\]
Then we extract from \cite{La2} the following fact.
\begin{lem}
\label{lem:wn Demushkin} For every $n$, $w_n(G)=\rank_{\Z_p} G_n/G_{n+1}$.
\end{lem}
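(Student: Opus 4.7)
The plan is to apply Proposition~\ref{prop:wn}, which tells us that $w_n(G) = \rank_{\Z_p} G_n/G_{n+1}$ for every $n\geq 1$ as soon as two things are known: that $\gr_\gamma(G) = \bigoplus_{n\geq 1} G_n/G_{n+1}$ is $\Z_p$-torsion free (of finite rank in each degree), and that the Hilbert-Poincar\'e series of $\gr(\Z_p[[G]])$ coincides with that of $\gr(\F_p[[G]])$. Both ingredients will be read off from the very specific shape of the defining relator $r = [x_1,x_2][x_3,x_4]\cdots [x_{d-1},x_d]$.

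First I would analyze the image $\bar r$ of $r$ in the degree-$2$ piece $F_2/F_3$ of the free pro-$p$-group $F$ on $x_1,\ldots,x_d$. By Witt's theorem this piece is canonically identified with the degree-$2$ summand of the free $\Z_p$-Lie algebra $L$ on generators $X_1,\ldots,X_d$, and under this identification $\bar r$ corresponds to the homogeneous Lie element
\[
\rho = [X_1,X_2] + [X_3,X_4] + \cdots + [X_{d-1},X_d].
\]
Labute's theorem in \cite{La2} then identifies $\gr_\gamma(G)$ with the quotient Lie algebra $L/(\rho)$, where $(\rho)$ denotes the Lie ideal generated by $\rho$, and asserts that this quotient is torsion free over $\Z_p$ with each graded piece of finite rank. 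This is exactly the hypothesis required in Lemma~\ref{lem:integral version}.

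Next I would compute the Hilbert-Poincar\'e series of $\gr(\Z_p[[G]])$. By Lemma~\ref{lem:integral version}(i), $\gr(\Z_p[[G]])$ is the universal enveloping algebra of $\gr_\gamma(G) = L/(\rho)$. Because $\rho$ is homogeneous of degree $2$ and is strongly free (inert) in the sense of Labute, the universal enveloping algebra of $L/(\rho)$ is the quotient of the integral Magnus algebra $\Z_p\langle\langle X_1,\ldots,X_d\rangle\rangle$ by the two-sided ideal generated by $\rho$, and a standard PBW/Koszul computation yields
\[
P_{\gr(\Z_p[[G]])}(t) = \frac{1}{1-dt+t^2}.
\]
This matches $P_{\gr(\F_p[[G]])}(t)$ recalled at the beginning of the section, so the equivalence of Proposition~\ref{prop:wn} gives $w_n(G) = e_n(G) = \rank_{\Z_p} G_n/G_{n+1}$ for all $n\geq 1$.

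The main obstacle is the torsion-freeness of $\gr_\gamma(G)$: this is Labute's deep structural result from \cite{La2}, depending crucially on the fact that the relator is a pure product of commutators (and in particular on the strong freeness of $\rho$ in $L$). It is precisely this feature that fails for Demushkin groups whose defining relation carries a non-trivial $p$-power term, so the present lemma cannot be stated for arbitrary Demushkin groups without substantially more work.
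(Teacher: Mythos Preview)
Your proposal is correct and follows essentially the same route as the paper. The paper's proof is a one-line citation to Labute's Theorem and the proof of his Proposition~4 in \cite{La2} (together with Corollary~\ref{cor:finitegen} to pass from the discrete to the pro-$p$ setting), which is exactly the input you invoke: torsion-freeness of $\gr_\gamma(G)$ and the Hilbert series $1/(1-dt+t^2)$ for its enveloping algebra. You organize the argument through Proposition~\ref{prop:wn} and a Hilbert-series comparison, whereas the paper implicitly compares Labute's explicit rank formula with the defining formula for $w_n(G)$; these are two ways of packaging the same content.
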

\begin{proof} This follows from  \cite[Theorem and proof of Proposition 4]{La2} and Corollary \ref{cor:finitegen}. 
\end{proof}

\begin{cor} Assume that for each $n$, $B_n$ represents a $\Z_p$-basis of $G_n/G_{n+1}$. 
Let us write $n=p^km$ with $(m,p)=1$. Then a basis of the $\F_p$-vector space $G_{(n)}/G_{(n+1)}$ can be represented by the following set
\[
B_m^{p^k} \bigsqcup B_{pm}^{p^{k-1}}\bigsqcup \cdots \bigsqcup B_{p^{k-1}m}^p\bigsqcup B_n.
\]
\end{cor}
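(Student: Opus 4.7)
The plan is to mimic the proof of the analogous corollary for free pro-$p$ groups in Section~\ref{sec:free}, combining Lazard's theorem with a counting argument. First, Lazard's theorem gives
\[
G_{(n)} = \prod_{ip^j \geq n} G_i^{p^j},
\]
so $G_{(n)}/G_{(n+1)}$ is spanned as an $\F_p$-vector space by the images of $g^{p^j}$ for $g\in G_i$ with $ip^j\geq n$. If $ip^j>n$, then $g^{p^j}\in G_{(ip^j)}\subseteq G_{(n+1)}$, so only pairs $(i,j)$ with $ip^j=n$ contribute nontrivially. Writing $n=p^km$ with $(m,p)=1$, such pairs are exactly $(i,j)=(p^{k-l}m,\,l)$ for $l=0,1,\ldots,k$.

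For each such pair, $B_{p^{k-l}m}$ is, by hypothesis, a $\Z_p$-basis of $G_{p^{k-l}m}/G_{p^{k-l}m+1}$. Since $(i+1)p^j>ip^j=n$, we have $G_{i+1}^{p^j}\subseteq G_{(n+1)}$, so the classes of $b^{p^j}$ for $b\in B_i$ together suffice to generate the image of $G_i^{p^j}$ in $G_{(n)}/G_{(n+1)}$. Concatenating over $l=0,\ldots,k$, the set $\bigsqcup_{l=0}^k B_{p^{k-l}m}^{p^l}$ generates $G_{(n)}/G_{(n+1)}$ over $\F_p$.

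To finish, I would count: by Lemma~\ref{lem:wn Demushkin}, $|B_i|=w_i(G)$, so the total cardinality of the proposed set is
\[
\sum_{l=0}^k w_{p^{k-l}m}(G)=\sum_{l=0}^k w_{p^l m}(G)=c_n(G),
\]
where the last equality is the preceding proposition. Since $c_n(G)=\dim_{\F_p}G_{(n)}/G_{(n+1)}$, a spanning set of this size is automatically a basis.

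The main obstacle is the middle step: verifying that $p^j$-th powers of representatives of a $\Z_p$-basis of $G_i/G_{i+1}$ really span the image of $G_i^{p^j}$ in $G_{(n)}/G_{(n+1)}$. Concretely, for $g\in G_i$ one can write $g\equiv \prod_\alpha b_\alpha^{a_\alpha}\pmod{G_{i+1}}$ with $a_\alpha\in\Z_p$, and one must show $g^{p^j}$ is congruent modulo $G_{(n+1)}$ to the product of the $b_\alpha^{a_\alpha p^j}$. The discrepancy terms come from non-commutativity and the non-linearity of the $p$-th power map (Jacobson-type identities in the restricted Lie algebra $\gr(G)$), together with the ``error'' $g\,(\prod b_\alpha^{a_\alpha})^{-1}\in G_{i+1}$ whose $p^j$-th power lands in $G_{((i+1)p^j)}\subseteq G_{(n+1)}$. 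Since each iterated bracket or higher power involved lies at filtration level $\geq n+1$, all such corrections vanish in $G_{(n)}/G_{(n+1)}$; this is the same mechanism implicit in the proof of the free-group analogue, and it applies verbatim here because the argument only uses the general structure of the Zassenhaus filtration, not any specific feature of $G$ beyond what we have already.
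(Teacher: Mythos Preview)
Your approach matches the paper's exactly: no separate proof is given for this corollary, and the intended argument is precisely the one you wrote---Lazard's theorem for generation, then counting via $|B_i|=w_i(G)$ (Lemma~\ref{lem:wn Demushkin}) and the formula $c_n(G)=\sum_{l=0}^k w_{p^l m}(G)$ from the preceding proposition.

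One small correction to your final paragraph: the Hall--Petresco/Jacobson correction terms do \emph{not} all land at filtration level $\geq n+1$. For $s=p^l$ with $1\le l\le j$ one has $v_p\binom{p^j}{p^l}=j-l$, so the corresponding term is (up to a unit) a $p^{\,j-l}$-th power of a weight-$p^l$ commutator in the $b_\alpha\in G_i$, hence of an element of $G_{p^l i}$; this sits at Zassenhaus level exactly $p^l i\cdot p^{\,j-l}=n$, not $n+1$. These terms therefore do not vanish---but they are $p^{\,j-l}$-th powers of elements of $G_{p^l i}$ with $(p^l i)\cdot p^{\,j-l}=n$ and $j-l<j$, so they are absorbed by the \emph{other} summands $B_{p^{l'}m}^{p^{k-l'}}$ of your generating set via a downward induction on the exponent $j$. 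With that repair the union still spans, and the counting finishes the proof; the paper itself does not spell this step out either.
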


\section{Some Other Groups}
\subsection{Free products of a finite number of Demushkin groups and free pro-$p$-groups}
Let $G$ be a free pro-$p$ product of $r$ Demushkin groups of ranks $d_1,\ldots, d_r$ and of a free pro-$p$-group of rank $e$. 
The Hilbert-Poincar\'e series of ${\rm gr}(\F_p[[G]])$ is
\[
P_{{\rm gr}(\F_p[[G]])}(t)=\frac{1}{1-(d_1+\cdots +d_r+e)t+ rt^2}=: \frac{1}{(1-at)(1-bt)}.
\]
We define the sequence $w_n(G), n=1,2,\ldots$ by
\[
w_n(G)=\frac{1}{n}\sum_{m\mid n} \mu(m) (a^{n/m}+b^{n/m})=\frac{1}{n}\sum_{m\mid n} \mu(n/m) (a^m+b^m).
\]
\begin{prop} If $n=p^k m $ with $(m,p)=1$, then 
\[
c_n(G) =w_m(G) +w_{pm}(G)+\cdots + w_{p^km}(G).
\qedhere
\]
\end{prop}

\subsection{A free product of a cyclic group of order 2  and a free pro-$2$-group}
We first consider the case of $p=2$  because this is  the case of interest in Galois theory (of $2$-extensions), and because this case is a bit simpler than the general case of any prime $p$. This latter case will be covered in the next subsection.

Let $G=C_2* S$ be a free pro-$2$ product of the cyclic group $C_2$ of order 2 and a free  pro-$2$-group of rank $d$.
The Hilbert-Poincar\'e series of ${\rm gr}(\F_2[[G]])$ is
\[
P_{{\rm gr}(\F_2[[G]])}(t)=(\frac{1}{1+t}-dt)^{-1}=\frac{1+t}{1-dt- dt^2}=: \frac{1+t}{(1-at)(1-bt)}.
\]
We define the sequence $w_n(G), n=1,2,\ldots$ by
\[
w_n(G)=\frac{1}{n}\sum_{m\mid n} \mu(n/m) (a^m+b^m-(-1)^m).
\]
\begin{prop} If $n=2^k m $ with $(m,2)=1$, then 
\[
c_n(G) =w_m(G) +w_{pm}(G)+\cdots + w_{2^km}(G).
\qedhere
\]
\end{prop}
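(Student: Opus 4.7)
The plan is to reduce the proposition to a direct application of the general machinery developed in Proposition~\ref{prop:key} (and Theorem~\ref{thm:general}). All that is needed is to verify that the stated Hilbert--Poincar\'e series is correct and then read off the numbers $b_n$ from the logarithm so that the definition of $w_n(G)$ given in the statement matches the one from Theorem~\ref{thm:general}.

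First I would compute $P_{{\rm gr}(\F_2[[G]])}(t)$ using Lemma~\ref{lem:free product}. By Example~\ref{ex:HPseriesCp} we have $P_{{\rm gr}(\F_2[[C_2]])}(t) = 1+t$, and by Lemma~\ref{lemHPfree} we have $P_{{\rm gr}(\F_2[[S]])}(t) = 1/(1-dt)$. Applying Lemma~\ref{lem:free product} gives
\[
P_{{\rm gr}(\F_2[[G]])}(t)^{-1} = \frac{1}{1+t} + (1-dt) - 1 = \frac{1-dt(1+t)}{1+t},
\]
so $P_{{\rm gr}(\F_2[[G]])}(t) = (1+t)/(1-dt-dt^2)$, confirming the displayed series. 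Writing $1-dt-dt^2 = (1-at)(1-bt)$ (so that $a+b=d$ and $ab=-d$), I would then take logarithms and use $\log(1/(1-ct)) = \sum_{n\geq 1} c^n t^n/n$ together with $\log(1+t) = -\sum_{n\geq 1}(-1)^n t^n/n$ to obtain
\[
\log P_{{\rm gr}(\F_2[[G]])}(t) = \sum_{n\geq 1}\frac{a^n+b^n-(-1)^n}{n}\,t^n.
\]
Thus in the notation of Theorem~\ref{thm:general}, $b_n = (a^n + b^n - (-1)^n)/n$, so $n b_n = a^n + b^n - (-1)^n$, and hence the numbers $w_n(G)$ produced by Theorem~\ref{thm:general} agree precisely with those defined in the proposition.

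Finally, invoking Proposition~\ref{prop:key} (equivalently, the conclusion of Theorem~\ref{thm:general}) with $p=2$ yields, for $n = 2^k m$ with $(m,2)=1$, the identity
\[
c_n(G) = w_m(G) + w_{2m}(G) + \cdots + w_{2^k m}(G),
\]
which is the claim. There is no real obstacle here; the only things requiring genuine care are the algebraic manipulation of $P^{-1}$ in applying Lemma~\ref{lem:free product} and keeping track of signs in the logarithm of $1+t$. Everything else is an immediate specialization of the general results already established in Section~\ref{sec:hilbert series}.
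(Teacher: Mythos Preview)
Your proof is correct and follows exactly the approach of the paper: the paper states the Hilbert--Poincar\'e series and the definition of $w_n(G)$ in the text preceding the proposition and then records the proposition with a \qedhere, since it is an immediate instance of Theorem~\ref{thm:general}. You have simply spelled out the verification of the series via Lemma~\ref{lem:free product} and the logarithm computation that the paper leaves implicit, but the route is the same.
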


\subsection{A free product of a cyclic group of order $p$  and a free pro-$p$-group}
Let $G=C_p* S$ be a free pro-$p$ product of the cyclic group $C_p$ of order $p$ and a free  pro-$p$-group of rank $d$. We shall find a formula for $c_n(G)$. 
The Hilbert-Poincar\'e series of ${\rm gr}(\F_p[[G]])$ is
\[
P_{{\rm gr}(\F_p[[G]])}(t)=\frac{1+t+\cdots+t^{p-1}}{1-dt- dt^2-\cdots-dt^p}=: \frac{(1-\xi_1 t)\cdots(1-\xi_{p-1}t)}{(1-a_1t)\cdots (1-a_pt)}.
\]
We define the sequence $w_n(G), n=1,2,\ldots$ by
\[
w_n(G)=\frac{1}{n}\sum_{m\mid n} \mu(n/m)( a_1^m+\cdots+a_p^m-(\xi_1^m+\cdots+\xi_{p-1}^m)).
\]
\begin{prop} If $n=p^k m $ with $(m,p)=1$, then 
\[
c_n(G) =w_m(G) +w_{pm}(G)+\cdots + w_{p^km}(G).
\qedhere
\]
\end{prop}
\begin{rmk}
Note that
\[
\xi_1^n+\cdots+\xi_{p-1}^n
=\begin{cases}
-1 \text{ if } (n,p)=1,\\
p-1 \text{ if } p\mid n.
\end{cases}
\]
We shall compute $a_1^n+\cdots+a_{p}^n$.
From
\[
  \frac{1}{(1-a_1t)\cdots(1-a_pt)}=\frac{1}{1-(dt+dt^2+\cdots+dt^p)},
\]
taking logarithms of both sides, we obtain
\[
\begin{aligned}
& \sum_{n\geq 1}\frac{1}{n}(a_1^n+\cdots+a_p^n)t^n= \sum_{n\geq 1} \frac{1}{n} (dt+dt^2+\cdots +dt^p)^n\\
 &= \sum_{n\geq 1} \frac{1}{n}\sum_{\substack{k_1+\cdots+k_p=n,\\k_i \geq 0}} \binom{n}{k_1,\ldots,k_p} (dt)^{k_1} (dt^2)^{k_2}\cdots (dt^p)^{k_p}\\
&= \sum_{M}\sum_{\substack{k_1+2k_2+\cdots+pk_p=M,\\k_i \geq 0}}\\
&\hspace*{60pt}
\left[\frac{1}{M-k_2-\cdots-(p-1)k_p} \binom{M-k_2-\cdots-(p-1)k_p}{k_1,\ldots,k_p} d^{M-k_2-\cdots-(p-1)k_p}\right] t^M.
\end{aligned}
\]
Finally comparing the coefficients of $t^n$ gives us the required formula for $a_1^n+\cdots+a_{p}^n$,
\[
\begin{aligned}
 &a_1^n+\cdots+a_p^n\\
&=\sum_{\substack{k_1+2k_2+\cdots+pk_p=n,\\k_i \geq 0}}\frac{n}{n-k_2-\cdots-(p-1)k_p} \binom{n-k_2-\cdots-(p-1)k_p}{k_1,\ldots,k_p} d^{n-k_2-\cdots-(p-1)k_p}.
 \end{aligned}
\]
\end{rmk}

\chapter{Relations in Pro-$p$ Galois Groups}
\label{ch:relations}

As we have seen, Demushkin groups are pro-$p$ groups which play an important role in Galois theory.  These groups have a single defining relation among a finite set of generators and appear as Galois groups of maximal $p$-extensions of local fields containing a primitive $p$-th root of unity.  The study of relations in Demushkin groups has a long and interesting history and we saw in section \ref{sec:mobius} that the unique relation among generators of a Demushkin group can take one of only four rather special forms.  One example of a specific Demushkin relation is
\[
r=x_1^p[x_1,x_2][x_3,x_4]\cdots[x_{d-1},x_d].
\]
The question arises as to whether other pro-$p$ Galois groups have similar restrictions on the shape of relations.  That is, if $G$ is a pro-$p$ group which is realizable as the Galois group $G_F(p)$ of the maximal $p$-extension of a field $F$, must the relations in $G$ take on only certain forms?  

In this chapter we begin to explore that question and will see that even considering only small abelian extensions of $F$ can provide insight into necessary restrictions on relations in pro-$p$ Galois groups.  Further work to extend these results and thereby provide a better understanding of the structure of absolute Galois groups is in progress.

\section{The Case $\zeta _{p^2}\in F$ }   

In this section we assume that $p$ is a prime and $F$ is a field containing a primitive $p^2$-th root of unity $\zeta _{p^2}$.

\begin{thm}
\label{thm:x1x2}
Let $p$ be a prime and $F$ a field containing a primitive $p^2$-th root of unity $\zeta _{p^2}$.  There is no relation $r=x_1^px_2^ps \in R$, where $ S=<x_1, x_2,\ldots >$ is a free pro-$p$ group, $ s\in [S,S] $ and 
\[
\begin{tikzcd}
1 \arrow{r} &R \arrow{r} &S \arrow{r}{\pi} &G_F(p) \arrow{r} &1
\end{tikzcd}
\]
is a minimal presentation of $G_F(p)$.
\end{thm}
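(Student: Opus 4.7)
The plan is to derive a contradiction by producing a character $\chi_1\colon G_F(p)\to \F_p$ that, on the one hand, must admit a lift to a character $G_F(p)\to \Z/p^2\Z$ because $\zeta_{p^2}\in F$, but, on the other hand, can admit no such lift because of the specific form of $r$. The three conceptual steps will be: extract the lifting property from Kummer theory, choose the obstructing character dual to $x_1$, and evaluate any putative lift on $r$ to exhibit a non-zero value.

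First I would use the hypothesis $\zeta_{p^2}\in F$ together with Kummer theory to identify the natural surjection $F^\times/(F^\times)^{p^2}\surj F^\times/(F^\times)^p$ with the reduction-mod-$p$ map $\Hom(G_F,\Z/p^2\Z)\surj \Hom(G_F,\F_p)$, using $\zeta_{p^2}$ to trivialize the Galois action on $\mu_{p^2}$. Since $\Z/p^2\Z$ is a $p$-group, every such homomorphism factors through $G_F(p)$, so every $\chi\in \Hom(G_F(p),\F_p)$ admits a lift $\phi\colon G_F(p)\to \Z/p^2\Z$ with $\phi\equiv \chi\pmod{p}$; equivalently, the Bockstein $\beta\colon H^1(G_F(p),\F_p)\to H^2(G_F(p),\F_p)$ attached to $0\to \F_p\to \Z/p^2\Z\to \F_p\to 0$ vanishes.

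Next I would specialize to the character $\chi_1\in \Hom(G_F(p),\F_p)$ determined by $\chi_1(\pi(x_j))=\delta_{1j}$, which is well-defined since the images $\pi(x_j)$ form an $\F_p$-basis of $G_F(p)/\Phi(G_F(p))$ by minimality of the presentation. A lift $\phi$ of $\chi_1$ pulled back along $\pi$ yields a continuous homomorphism $\widetilde{\phi}\colon S\to \Z/p^2\Z$ killing every element of $R$ and satisfying $\widetilde{\phi}(x_1)=1+pa$, $\widetilde{\phi}(x_j)=pb_j$ for $j\ge 2$. Evaluating on $r$, the crucial arithmetic is that in $\Z/p^2\Z$ multiplication by $p$ annihilates $pa$ and $pb_2$, so $\widetilde{\phi}(x_1^p)=p$ and $\widetilde{\phi}(x_2^p)=0$; combined with $\widetilde{\phi}(s)=0$ (since $s\in[S,S]$ and $\Z/p^2\Z$ is abelian), this forces $\widetilde{\phi}(r)=p\ne 0$, contradicting $r\in R$.

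The main obstacle is conceptual rather than computational: translating the field-theoretic hypothesis $\zeta_{p^2}\in F$ into the cohomological lifting statement for characters of $G_F(p)$, and thereby pinning down which specific character of $G_F(p)$ to test against $r$. Once that translation is made the calculation is forced by a small but decisive fact in $\Z/p^2\Z$---the $p$-th power of an element $\equiv 1\pmod{p}$ is exactly $p$, while the $p$-th power of an element $\equiv 0\pmod{p}$ vanishes---so the generator $x_1$ whose exponent in $r$ is $p$ contributes a non-zero residue while $x_2^p$, the commutator $s$, and every correction coming from the ambiguity in the lift contribute zero.
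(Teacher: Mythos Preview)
Your proof is correct. It is essentially the Pontrjagin-dual formulation of the paper's argument: the paper passes to the large abelian extension $L=F(\sqrt[p^2]{a}\mid a\in F^\times)$, whose Galois group is $\prod_I C_{p^2}$ precisely because $\zeta_{p^2}\in F$, and observes that the image of $r$ there forces $(\sigma_1\sigma_2)^p=1$ while minimality forces $\sigma_1\sigma_2$ to have order $p^2$. You instead select a single coordinate of that product, namely a lift $\phi\colon G_F(p)\to \Z/p^2\Z$ of the dual character $\chi_1$, and evaluate directly. Both arguments rest on the same three ingredients: Kummer theory with $\mu_{p^2}$ trivial, the abelian target killing $s\in[S,S]$, and minimality guaranteeing a nontrivial image. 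Your version is marginally more economical in that only the $x_1^p$ factor contributes (the $x_2^p$ factor already dies under $\widetilde\phi$), so the same calculation immediately yields the paper's subsequent theorem on relations of the form $x_1^p s$ without a separate argument; the paper's version, working in the full group $\prod_I C_{p^2}$, keeps the field-theoretic picture more visible.
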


\begin{proof}
Suppose such an $r$ exists and consider the field $L=F( \sqrt[p^2]{a} \mid a\in \fx )$.  By Kummer theory, $\Gal(L/F)$ is the Pontrjagin dual $ (\fx/ (\fx)^{p^2})^*$ of $ \fx/ (\fx)^{p^2}\cong (\bigoplus_{J} C_{p})\bigoplus (\bigoplus_{I} C_{p^2}) $ \cite[p. 17, Theorem 6]{Kap}.  Since $\zeta _{p^2}\in F$, every cyclic extension $F(\sqrt[p]{a})/F $ of degree $p$ embeds into a cyclic extension $K=F(\sqrt[p^2]{a})/F $ of degree $p^2$, so $J=\phi$ and 
\[
\begin{array}{rl}
\Gal(L/F) &\cong (\bigoplus_{I} C_{p^2})^*\\
&= \prod_{I} C_{p^2}.\\
\end{array}
\]
Let $\sigma_i$ be the image of $\pi(x_i),\ i=1,2$ under the restriction map
\[
\begin{tikzcd}
S \arrow{r}{\pi} &G_F(p) \arrow{r}{res} &\Gal(L/F). 
\end{tikzcd}
\]
Then
\[
\begin{array}{rl}
1 &=res (\pi (r))\\ 
&=res(\pi(x_1)^p \pi(x_2)^p \pi(s))\\
&=(res\ \pi(x_1))^p (res\ \pi(x_2))^p\\
&=\sigma_1^p \sigma_2^p\\
&=(\sigma_1 \sigma_2)^p.
\end{array}
\]
However, since 
\[
\begin{tikzcd}
1 \arrow{r} &R \arrow{r} &S \arrow{r}{\pi} &G_F(p) \arrow{r} &1
\end{tikzcd}
\]
is a minimal presentation of $G_F(p)$, it follows that $\pi(x_1)$ and $\pi(x_2)$ are independent modulo the Frattini subgroup $\Phi (G_F(p))$.  So $\sigma_1 \sigma_2$ has order $p^2$ in $\Gal(L/F)$, which is a contradiction.
\end{proof}

\begin{thm}
Let $p$ be a prime and $F$ a field containing a primitive $p^2$-th root of unity $\zeta _{p^2}$.  There is no relation $r=x_1^ps \in R$, where $ S=<x_1, x_2,\ldots >$ is a free pro-$p$ group, $ s\in [S,S] $ and 
\[
\begin{tikzcd}
1 \arrow{r} &R \arrow{r} &S \arrow{r}{\pi} &G_F(p) \arrow{r} &1
\end{tikzcd}
\]
is a minimal presentation of $G_F(p)$.
\end{thm}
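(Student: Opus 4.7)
The plan is to adapt the argument of Theorem \ref{thm:x1x2} to the simpler setting where only one generator appears with a $p$-th power in the relation. As in that proof, I would work with the field $L = F(\sqrt[p^2]{a} \mid a \in F^\times)$ and exploit the fact that $\zeta_{p^2} \in F$ guarantees $\Gal(L/F) \cong \prod_I C_{p^2}$ via Kummer theory, so every element whose image in the Frattini quotient is nontrivial must have order exactly $p^2$.

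First, I would suppose for contradiction that such a relation $r = x_1^p s$ exists with $s \in [S,S]$. Consider the composition
\[
\begin{tikzcd}
S \arrow{r}{\pi} & G_F(p) \arrow{r}{\mathrm{res}} & \Gal(L/F),
\end{tikzcd}
\]
and let $\sigma_1 := (\mathrm{res}\circ\pi)(x_1)$. Because $\Gal(L/F)$ is abelian, $\mathrm{res}\circ\pi$ kills $[S,S]$, so in particular $(\mathrm{res}\circ\pi)(s) = 1$. Applying $\mathrm{res}\circ\pi$ to the relation $r=1$ then yields
\[
1 \;=\; (\mathrm{res}\circ\pi)(x_1)^p \cdot (\mathrm{res}\circ\pi)(s) \;=\; \sigma_1^p.
\]

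Next I would derive the contradiction by showing $\sigma_1$ has order exactly $p^2$ in $\Gal(L/F)$. Since $\{\pi(x_i)\}$ is a minimal set of generators of $G_F(p)$, the image $\overline{\pi(x_1)}$ in the Frattini quotient $G_F(p)/\Phi(G_F(p)) = \Gal(F^{(2)}/F)$ is nontrivial. But the restriction map $\Gal(L/F) \twoheadrightarrow \Gal(F^{(2)}/F)$ is precisely the quotient by $p$-th powers (using $\zeta_{p^2}\in F$ and the Kummer-theoretic structure $\Gal(L/F)\cong \prod_I C_{p^2}$ as in the previous theorem), so $\sigma_1 \notin \Gal(L/F)^p$. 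In $\prod_I C_{p^2}$ an element is a $p$-th power iff it has order dividing $p$, so $\sigma_1^p \neq 1$, contradicting the equation above.

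The main potential obstacle is justifying carefully that $\Gal(L/F) \cong \prod_I C_{p^2}$ with no $C_p$ direct summands, i.e.\ that every degree $p$ Kummer extension of $F$ embeds in a $C_{p^2}$-extension. This is the same Kaplansky-type structural input used in Theorem \ref{thm:x1x2} (citing \cite[p.~17, Theorem 6]{Kap} together with the hypothesis $\zeta_{p^2}\in F$), so the proof can be presented as a short and essentially verbatim adaptation of the previous one, with the pair $(\sigma_1,\sigma_2)$ replaced by the single generator image $\sigma_1$.
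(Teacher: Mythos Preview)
Your proposal is correct and follows essentially the same approach as the paper: both arguments restrict to $L=F(\sqrt[p^2]{a}\mid a\in F^\times)$, use $\zeta_{p^2}\in F$ to get $\Gal(L/F)\cong\prod_I C_{p^2}$, kill $s\in[S,S]$ by abelianity, and derive $\sigma_1^p=1$ in contradiction with $\pi(x_1)\notin\Phi(G_F(p))$. Your write-up is slightly more explicit than the paper's in justifying why $\sigma_1$ must have order $p^2$, but the argument is the same.
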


\begin{proof}
As in the proof of Theorem \ref{thm:x1x2}, consider the field $L=F( \sqrt[p^2]{a} \mid a\in \fx )$ with Galois group $\Gal(L/F) \cong \prod_{I} C_{p^2}$.  If $\sigma$ is the image of $\pi(x_1)$ under the restriction map
\[
\begin{tikzcd}
S \arrow{r}{\pi} &G_F(p) \arrow{r}{res} &\Gal(L/F), 
\end{tikzcd}
\]
then
\[
\begin{aligned}
1 &=res (\pi (r))\\ 
&=res(\pi(x_1)^p \pi(s))\\
&=(res\ \pi(x_1))^p\\
&=\sigma^p.
\end{aligned}
\]
But since $\pi(x_1)\notin \Phi(G_F(p))$, $\sigma$ generates a cyclic subgroup of $\Gal(L/F)$ of order $p^2$, so this is a contradiction. 
\end{proof}

\section{The Case $\zeta _p\in F,\ \zeta _{p^2}\notin F$}

In this section we assume that $p$ is an odd prime and $F$ is a field which contains a primitive $p$-th root but not a primitive $p^2$-th root of unity.\\

In addition to the dihedral group $D_4$, other small nonabelian $p$-groups also play a fundamental role in the theory of Galois $p$-extensions.  The modular group $M_{p^3}$ is the unique nonabelian group of order $p^3$ and exponent $p^2$ given, in terms of generators and relations, by
\[
M_{p^3}=\langle x,y\mid x^{p^2}=y^p=1,\ yxy^{-1}=x^{1+p} \rangle=\langle x \rangle \rtimes \langle y \rangle.
\]

\begin{thm}
Let  p be an odd prime and let F be a field containing a primitive p-th root of unity $\zeta_p$, but no primitive $p^2$-th root of unity.  Let $ S=<x_1, x_2,\ldots >$ be a free pro-$p$ group such that 
\[
\begin{tikzcd}
1 \arrow{r} &R \arrow{r} &S \arrow{r}{\pi} &G_F(p) \arrow{r} &1
\end{tikzcd}
\]
is a minimal presentation of $G_F(p)$.  Then there is no relation of the form $r=x_1^ps \in R$, where $ s\in [S,S] $ is such that any commutator of the form $[x_i,x_j]$ appearing in the expression for $s$ has $i\neq 1$ and $j\neq 1$. 
\end{thm}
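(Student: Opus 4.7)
The strategy is a cohomological pairing argument. Suppose for contradiction that such a relation $r = x_1^p s$ lies in $R$, with $s$ a product of commutators $[x_i, x_j]$, $i,j\neq 1$. Let $G = G_F(p)$, and let $\chi_1, \chi_2, \ldots \in H^1(G, \mathbb{F}_p)$ be the basis dual to $\pi(x_1), \pi(x_2), \ldots$. The minimal presentation furnishes a non-degenerate trace pairing between the relation module $R/R^p[R,S]$ and $H^2(G, \mathbb{F}_p)$ (see, e.g., \cite[\S 3.9]{NSW}) which records the $p$-th-power and commutator coefficients of a relation in its Zassenhaus expansion. Since the only $p$-th power appearing in $r$ is $x_1^p$, and $s$ has no commutator involving $x_1$, the class $\bar{r}$ satisfies
\[
\langle \bar{r}, \beta(\chi_k)\rangle = \delta_{1k},
\qquad
\langle \bar{r}, \chi_i \cup \chi_j\rangle = 0 \text{ whenever } 1 \in \{i,j\}.
\]

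The hypothesis $\zeta_p \in F$ identifies the Bockstein with cup product by $(\zeta_p)$: writing $(\zeta_p)$ for the Kummer class of $\zeta_p$ in $H^1(G,\mathbb{F}_p) \cong F^\times/(F^\times)^p$, one has $\beta(\chi) = (\zeta_p) \cup \chi$ for every $\chi \in H^1(G,\mathbb{F}_p)$, a classical consequence of the norm residue formalism recalled in Section \ref{sec:quad} (cf.\ also \cite[\S 6.4]{NSW}). The hypothesis $\zeta_{p^2} \notin F$ forces $\zeta_p \notin (F^\times)^p$, so $(\zeta_p) \neq 0$; write $(\zeta_p) = \sum_k m_k \chi_k$ in the dual basis. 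Using the graded-commutativity identity $\chi_1 \cup \chi_1 = 0$, valid precisely because $p$ is odd, one obtains
\[
\beta(\chi_1) = \sum_k m_k(\chi_k \cup \chi_1) = \sum_{k\neq 1} m_k (\chi_k \cup \chi_1).
\]
Pairing $\bar{r}$ against both sides then yields the contradiction
\[
1 = \langle \bar{r}, \beta(\chi_1)\rangle = \sum_{k\neq 1} m_k \langle \bar{r}, \chi_k \cup \chi_1 \rangle = 0.
\]

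The main obstacle will be pinning down the precise form of the trace pairing in the pro-$p$ setting---in particular, the duality between the exponent of $x_k^p$ in the Zassenhaus expansion of a relation and $\beta(\chi_k)$ (as opposed to some other lift)---together with the compatible normalizations in the Bockstein formula and the graded-commutativity conventions; each ingredient is classical, but the contradiction collapses if any factor of $-1$ is misplaced. The odd-$p$ hypothesis enters essentially through $\chi_1 \cup \chi_1 = 0$, matching the fact that the analogous statement for $p=2$ would require a different argument (and indeed fails in its naive form, since for $p=2$ the Bockstein is given by $\chi \mapsto \chi\cup\chi$).
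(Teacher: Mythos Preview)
Your proof is correct and takes a genuinely different route from the paper. The paper argues by constructing an explicit $M_{p^3}$-extension $L = F(\zeta_{p^2}, \sqrt[p^2]{a})/F$ and checking directly that the relation $r$ restricts to a nontrivial element of $\Gal(L/F)$: with $a$ chosen so that $\pi(x_1)$ moves $\sqrt[p]{a}$, the images of the $x_i$ for $i \neq 1$ lie in an abelian subgroup of $M_{p^3}$, so the commutators in $s$ die, while $\pi(x_1)$ restricts to an element of order $p^2$. Your argument replaces this explicit extension by the transgression pairing $R/R^p[R,S] \times H^2(G,\F_p) \to \F_p$ of \cite[\S3.9]{NSW} together with the Bockstein identity $\beta(\chi) = (\zeta_p) \cup \chi$ and the odd-$p$ fact $\chi_1 \cup \chi_1 = 0$. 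The paper's approach is more elementary and self-contained; yours is more conceptual, makes transparent exactly where each hypothesis enters, and in fact does not require $\zeta_{p^2} \notin F$ at all---if $(\zeta_p) = 0$ your displayed contradiction $1 = 0$ persists---so your argument subsumes the paper's preceding theorem for odd $p$ as well. Your caveat about sign conventions is well taken but harmless here, since the right-hand side of the final equation vanishes regardless of normalization.
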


\begin{proof}
Suppose there is such a relation $r$.  For $k=2,3$, choose a primitive $p^k$-th root of unity such that $\zeta_{p^k}^p=\zeta_{p^{k-1}}$.  Consider the field $K=F(\zeta_{p^3})$.  Then $\pi(x_1)(\zeta_{p^2})=\zeta_{p^2}$; otherwise $\rho(\pi(x_1))$ would generate the entire Galois group $\Gal(K/F)\cong C_{p^2}$, contradicting
\[
\rho(\pi(x_1))^p=\rho(\pi(x_1)^p\pi(s))=\rho(r)=1,
\]
where $\rho: G_F(p) \surj \Gal(K/F)$ is the restriction map.

Since the presentation
\[
\begin{tikzcd}
1 \arrow{r} &R \arrow{r} &S \arrow{r}{\pi} &G_F(p) \arrow{r} &1
\end{tikzcd}
\]
is minimal, $\pi(x_1)\notin \Phi(G_F(p))$.  Hence there exists an element $a\in F\setminus F^p$ such that $\pi(x_1)(\sqrt[p]{a})\neq \sqrt[p]{a}$.  

The polynomial $x^{p^2}-a$ is irreducible over $F(\zeta_{p^2})$ so the extension $L:=F(\zeta_{p^2}, \sqrt[p^2]{a})$ is Galois over $F$.  Define automorphisms $\sigma, \tau \in \Gal(L/F)$ by
\[
\tau:\sqrt[p^2]{a}\mapsto \zeta_{p^2}\sqrt[p^2]{a}\quad \text{and}\quad \tau:\zeta_{p^2}\mapsto \zeta_{p^2};
\]
\[
\sigma: \sqrt[p^2]{a}\mapsto \sqrt[p^2]{a}\quad \text{and}\quad \sigma:\zeta_{p^2}\mapsto \zeta_p \zeta_{p^2}=\zeta_{p^2}^p \zeta_{p^2}=\zeta_{p^2}^{p+1}. 
\]
So 
\[
\begin{aligned}
&\sigma\tau\sigma^{-1}(\zeta_{p^2})=\zeta_{p^2}\quad \text{and}\\
&\sigma\tau\sigma^{-1}(\sqrt[p^2]{a})=\sigma\tau(\sqrt[p^2]{a})=\sigma(\zeta_{p^2}\sqrt[p^2]{a})=\zeta_{p^2}^{1+p}\sqrt[p^2]{a}=\tau^{1+p}(\sqrt[p^2]{a}).
\end{aligned}
\]
Thus, $\tau$ has order $p^2$, $\sigma$ has order $p$ and $\sigma\tau\sigma^{-1}=\tau^{1+p}$, so 
\[
\Gal(L/F)=\langle \sigma,\tau \mid \tau^{p^2}=\sigma^p=1,\ \sigma\tau\sigma^{-1}=\tau^{1+p} \rangle \cong M_{p^3}.
\]
Now let $res:G_F(p)\surj \Gal(L/F)$ be the restriction map.  Since $\pi(x_1)(\zeta_{p^2})=\zeta_{p^2}$ and $\pi(x_1)(\sqrt[p]{a})\neq \sqrt[p]{a}$, it follows that $res(\pi(x_1))$ generates $\Gal(L/F(\zeta_{p^2}))$.  Then, since no commutator $[x_i,x_j]$ in the expression for $s$ involves $x_1$, we have
\[
\begin{aligned}
1 &=res(\pi(r))\\
&=res(\pi(x_1)^p\pi(s))\\
&=res(\pi(x_1))^p\\
&=\tau^p,
\end{aligned}
\]
which is a contradiction, since $\tau$ has order $p^2$ in $\Gal(L/F)$.
\end{proof}

We see that for a field $F$ containing a $p$-th root of unity, the presence or absence of a $p^2$-th root of unity in $F$ is already enough to place certain restrictions on the relations among a set of generators of the Galois group of the maximal $p$-extension of $F$.  We look forward to the results of further research in this direction.

\chapter{Conclusion}

In our quest for a deeper understanding of absolute Galois groups we are led to the study of small quotients and central filtrations of these large and largely mysterious objects.  By developing techniques to count Galois $p$-extensions and calculate filtration subquotient dimensions we can, in turn, shed more light on these underlying structures.  Answers to these Galois theoretic problems also have important ramifications in other areas of mathematics.

We have described several known techniques for counting finite Galois $p$-extensions of local fields.  While actually constructing small 2-extensions of the $p$-adic integers is feasible, this would rapidly become unwieldy for local fields with larger square class groups.  A technique involving complex characters and M\"obius functions used by Yamagishi to enumerate the Galois extensions of a local field having a given Galois group $G$ requires knowledge of the character table of $G$ as well as its subgroups.  This also becomes a significant obstacle as the order of $G$ increases.  

With these limitations in mind, we have explored other approaches and shown that by using some deeper results from Galois cohomology and the theory of quadratic forms and quaternion algebras one can develop more efficient combinatorial techniques.  For example, we find that the number $\sN$ of Galois extensions of a formally real pythagorean SAP field with square class group of cardinality $2^n$ having Galois group the dihedral group of order 8 is given by the simple formula
\[
\sN = 2^{n-2}\sum_{k=2}^{n}(_k ^n)(2^{k-1}-1). 
\]

Our main results pertain to the Zassenhaus filtration of finitely generated pro-$p$ groups.  Building on a remarkable theory of Jennings and Lazard, we have developed a method for determining the $\F_p$-dimension of subquotients of this filtration and derived explicit formulas applicable to various families of groups, including free pro-$p$ groups, Demushkin groups and free pro-2 products of finitely many copies of the cyclic group of order 2.  

These subquotient dimensions, $c_n(G)$, provide an important contribution to our knowledge of group theory and Galois theory.  For example, in several significant cases they determine the minimal number of generators of the Zassenhaus subgroups of $G$,  and if $F$ is a pythagorean SAP field or a superpythagorean field, only two of these dimensions, $c_1(G_F(2))$ and $c_2(G_F(2))$ are needed in order to determine the Galois group of the maximal $p$-extension of $F$.  

They are also of considerable interest in current Galois theory research, such as that involving the Kernel Unipotent Conjecture.  If $G$ is isomorphic to the maximal pro-$p$ quotient $G_F(p)$ of the absolute Galois group $G_F$ of a field $F$ and if $F_{(n)}$ denotes the fixed field of $G_F(p)_{(n)}$, then $|\Gal(F_{(n)}/F)|=p^{\sum_{i=1}^{n-1}c_i(G)}$.  If the Kernel Unipotent Conjecture is true, we would obtain a characterization of $G_F(p)_{(n)}$, $n\geq 3$, as the intersection of the kernels of all Galois representations $\rho:G_F(p)\to \U_n(\F_p)$.  In the case when $G_F(p)$ is finitely generated, knowledge of $|\Gal(F_{(n)}/F)|$ would be useful in order to check whether the intersection of the kernels of given representations is in fact $G_F(p)_{(n)}$.

Another interesting area of research is the investigation of the shape of relations in pro-$p$ Galois groups, which is closely related to detailed knowledge of small quotients of these groups.  Further work is planned in this direction.

Certainly, much progress has been made since the time of \'Evariste Galois and much remains to be done.  The journey so far has been fascinating and we look forward to the many miles not yet travelled.

\end{spacing}

\newpage
\addcontentsline{toc}{chapter}{Bibliography}
\bibliographystyle{alpha}
\bibliography{thesis}
\newpage
\addcontentsline{toc}{chapter}{Curriculum Vitae}

%\chapter*{}
\pagestyle{plain}
\begin{center}
\textbf{{\large CURRICULUM VITAE}}
\end{center}
\vspace{.5in}
\begin{tabular}[t]{lll}
\textbf{Name:} & Michael L. Rogelstad& \\
& & \\
\textbf{Post-secondary} & The University of Western Ontario& \\
\textbf{Education and} & London, Ontario, Canada& \\
\textbf{Degrees:} & & \\
& M.D. (cum laude)& 1983\\
& & \\
& FRCSC Ophthalmology& 1988\\
& & \\
& M.Sc. Physics& 1996\\
& & \\
& B.Sc. Honors Mathematics& 2005\\
& & \\
& M.Sc. Mathematics& 2010\\
& & \\
& Ph.D. Mathematics& 2015\\
& & \\
\textbf{Honours and} & UWO Board of Governors Scholarship& 1977-1981\\ 
\textbf{Awards:} & & \\
& J.A.F. Stevenson Memorial Scholarship& 1982\\
& & \\
& Horner Medal in Ophthalmology& 1983\\
& & \\
& Angela Armitt Gold Medal& 2004\\
& & \\
& Western Graduate Research Scholarship& 2009\\
& & \\
& Ontario Graduate Scholarship& 2010-2012\\
& & \\
& NSERC Alexander Graham Bell& 2012-2014\\
& Canada Graduate Scholarship D& \\
&&\\
\textbf{Related Work} & Teaching Assistant& 2009-2013\\
\textbf{Experience:} & The University of Western Ontario & \\
&&\\
\textbf{Publications:}& &\\
\end{tabular}

\begin{itemize}
\item F. B. Yousif, J. B. A. Mitchell, M. Rogelstad, A. Le Paddelec, A. Canosa, and M. I. Chibisov (1994).  \textit{Dissociative recombination of $HeH^+$: a re-examination.}  Phys. Rev. A 49, 4610-4615.
\item F. B. Yousif, M. Rogelstad, and J. B. A. Mitchell.  \textit{Rydberg state formation in $H_3^+$ recombination,} in Proceedings of the Fourth U.S.-Mexico Symposium on Atomic and Molecular Physics, 343-351.  World Scientific, 1995.
\item M. L. Rogelstad, F. B. Yousif, T. J. Morgan, and J. B. A. Mitchell (1997).  \textit{Stimulated radiative recombination of $H^+$ and $He^+$.}  J. Phys. B: At. Mol. Opt. Phys. 30, 3913-3931.
\item J. Min\'a\v{c}, M. Rogelstad, and N. D. T\^an.  \textit{Dimensions of Zassenhaus filtration subquotients of some pro-p groups.}  arXiv: 1405.6980v2, 2015.  To appear in the Israel Journal of Mathematics.  
\end{itemize}

\end{document}